\documentclass[11pt]{article}

\usepackage{amsmath, amsthm, enumitem, amssymb, graphicx, tikz-cd, tikz}
\usetikzlibrary{matrix,arrows,decorations.pathmorphing}
\usetikzlibrary{arrows,decorations.pathmorphing,backgrounds,fit,positioning,shapes.symbols,chains,calc}

\DeclareMathOperator{\Sp}{Sp}

\DeclareMathOperator{\GL}{GL}
\DeclareMathOperator{\SL}{SL}

\newcommand\Field{\ensuremath{\mathbb{F}}}

\DeclareMathOperator{\hh}{H}
\DeclareMathOperator{\rh}{\widetilde{\hh}}

\DeclareMathOperator{\sgn}{sgn }
\DeclareMathOperator{\height}{ht }
\DeclareMathOperator{\lk}{Link }
\DeclareMathOperator{\starry}{star }
\DeclareMathOperator{\simp}{\mathcal{S}}
\DeclareMathOperator{\posety}{\mathcal{P}}

\DeclareMathOperator{\pf}{Pf }

\newcommand\Span[1]{\ensuremath{\langle #1 \rangle}}

\DeclareMathOperator{\Dim}{dim}
\DeclareMathOperator{\rank}{rank }

\newcommand\cT{\ensuremath{\mathcal{T}}}

\newcommand\cTD{\ensuremath{\mathcal{T}^{\pm}}}
\newcommand\cbT{\ensuremath{\mathbb{T}}}

\newcommand\cbTD{\ensuremath{\mathbb{T}^{\pm}}}
\newcommand\sT{\ensuremath{\mathcal{T}^\omega}}

\newcommand\sTD{\ensuremath{\mathcal{T}^{\omega, \pm}}}
\newcommand\sbT{\ensuremath{\mathbb{T}^\omega}}

\newcommand\sbTD{\ensuremath{\mathbb{T}^{\omega, \pm}}}
\newcommand\BP{\ensuremath{\mathcal{B}}}
\newcommand\IP{\ensuremath{\mathcal{I}}}
\DeclareMathOperator{\Gr}{Gr}
\DeclareMathOperator{\St}{St}

\usepackage[vmargin=1in, hmargin=1.25in]{geometry}

\setlist[itemize]{noitemsep}

\usepackage[skip=.5\baselineskip plus 2pt, indent=0pt]{parskip}

\usepackage[bookmarks, bookmarksdepth=2, colorlinks=true, linkcolor=blue, citecolor=blue, urlcolor=blue]{hyperref}

\makeatletter
\newcommand{\setword}[2]{%
  \phantomsection
  #1\def\@currentlabel{\unexpanded{#1}}\label{#2}%
}

\numberwithin{equation}{section}

\newtheorem{theorem}{Theorem}[section]

\newtheorem{maintheoremA}{Theorem}
\newtheorem{maintheoremB}{Theorem}
\newtheorem{maincorollaryC}{Corollary}

\newtheorem{proposition}[theorem]{Proposition}
\newtheorem{lemma}[theorem]{Lemma}
\newtheorem{corollary}[theorem]{Corollary}

\newtheorem{conjecture}[theorem]{Conjecture}

\theoremstyle{definition}
\newtheorem{definition}[theorem]{Definition}
\newtheorem{notation}[theorem]{Notation}

\theoremstyle{remark}
\newtheorem{remark}[theorem]{Remark}
\newtheorem{example}[theorem]{Example}

\usepackage[inkscapelatex=false]{svg}
\usepackage[backend=biber,style=alphabetic,sorting=nty]{biblatex}
\title{\vspace{-40pt}On the top-degree cohomology groups of congruence subgroups of $\Sp_{2n}(\mathbb{Z})$\vspace{-15pt}}
\author{Fabio Capovilla-Searle}
\date{}

\begin{document}

\vspace{-10pt}
\maketitle

\vspace{-18pt}
\begin{abstract}
\noindent
Let $\Gamma_{2n}^\omega(p)$ be the level-$p$ principal congruence subgroup of $\Sp_{2n}(\mathbb{Z})$ for all prime $p$. Borel--Serre demonstrated that the cohomology of $\Gamma_{2n}^\omega(p)$ vanishes above degree $n^2$. We prove that $\hh^{n^2}(\Gamma_{2n}^\omega(p); \mathbb{Q})$ surjects onto the homology of the quotient of the symplectic Tits building for $\mathbb{Q}$ by $\Gamma_{2n}^\omega(p)$ and we compute the homology of this quotient. We conclude that $\hh^{n^2}(\Gamma_{2n}^\omega(p);\mathbb{Q})$ is nontrivial and provide a lower bound of its rank. 
\end{abstract}

\setlength{\parskip}{0pt}
\tableofcontents 
\setlength{\parskip}{\baselineskip}


\section{Introduction}

Let $R$ be a commutative ring. Equip $R^{2n}$ with the \emph{standard symplectic form} $\omega$. This is a skew-symmetric, non-degenerate bilinear form, which, on the \emph{standard symplectic basis} $\{\vec{e}_1, \vec{f}_1, \dots, \vec{e}_n, \vec{f}_n \}$ for $R^{2n}$, satisfies
\begin{equation}
\omega(\vec{e}_i, \vec{e}_j) = \omega(\vec{f}_i, \vec{f}_j) = 0 \text{ and } \omega(\vec{e}_i, \vec{f}_j) = \delta_{ij},
\end{equation}
where $\delta_{ij}$ is the Kronecker delta and $i, j \in \{1, \dots, n\}$. The \emph{symplectic group}, $\Sp_{2n}(R)$, is the group of invertible $2n \times 2n$ matrices with entries in $R$ that preserve the symplectic form. Let
\begin{equation}
\Gamma_{2n}^\omega(p):= \ker(\Sp_{2n}(\mathbb{Z}) \xrightarrow{\pi} \Sp_{2n}(\mathbb{Z}/p\mathbb{Z}))
\end{equation}
denote the \emph{level-$p$ principal congruence subgroup of $\Sp_{2n}(\mathbb{Z})$}, where $p$ is a positive integer and $\pi$ is the reduction mod-$p$ map. $\Gamma_{2n}^\omega(p)$ is a finite index subgroup of $\Sp_{2n}(\mathbb{Z})$. In this article, we study the top-degree cohomology of $\Gamma_{2n}^\omega(p)$ when $p$ is an odd prime. \\
For $n \geq 0$, Borel--Serre \cite[Theorem 11.4.2]{BorelSerreCorners} demonstrated that every finite index subgroup $\Gamma \leq \Sp_{2n}(\mathbb{Z})$ is a rational duality group of dimension $n^2$ in the sense of Bieri--Eckmann \cite{BieriEckmann73}, and they demonstrated that the \emph{symplectic Steinberg module of $\mathbb{Q}$}, denote $\St_{2n}^\omega(\mathbb{Q})$, is its dualizing module. This implies that 
\begin{equation}\label{BorelSerreDualityEq}
\hh^{n^2 - i}(\Gamma; \mathbb{Q}) \cong \hh_i(\Gamma;\St_{2n}^\omega(\mathbb{Q}) \otimes \mathbb{Q})
\end{equation}
for all $i$. The symplectic Steinberg module is a $\Sp_{2n}(\mathbb{Q})$-representation. We can describe $\St_{2n}^\omega(\mathbb{Q})$ as the top reduced homology of the symplectic Tits building of $\mathbb{Q}$ which we denote by $\sT_{2n}(\mathbb{Q})$ (cf. Definitions \ref{SymplecticTitsBuilding} and \ref{SteinbergModules}). $\sT_{2n}(\mathbb{Q})$ is an $(n - 1)$-dimensional simplicial complex that Solomon--Tits \cite[Theorem 1]{Solomon} proved was homotopy equivalent to a wedge of $(n - 1)$-spheres. Thus $\sT_{2n}(\mathbb{Q})$ has only one interesting homology group and this is defined to be the symplectic Steinberg module. That is, 
\begin{equation*}
\St_{2n}^\omega(\mathbb{Q}) := \rh_{n - 1}(\sT_{2n}(\mathbb{Q});\mathbb{Z}).
\end{equation*}
Borel--Serre's result \cite[Theorem 11.4.2]{BorelSerreCorners}, often called \emph{Borel--Serre duality}, implies that the cohomology of finite index subgroups of $\Sp_{2n}(\mathbb{Z})$ vanishes above degree $n^2$. 

\begin{conjecture} \textup{\cite{BruckPatztSroka2023Arxiv}} \label{conjecture}
$\hh^{n^2 - i}(\Sp_{2n}(\mathbb{Z});\mathbb{Q}) = 0$ for $n \geq i + 1$.
\end{conjecture}

Br\"uck--Patzt--Sroka's Conjecture \ref{conjecture} is the symplectic analogue of a conjecture by Church--Farb--Putman \cite{MR3290086} about the top-degree cohomology $\SL_n(\mathbb{Z})$. Br\"uck--Patzt--Sroka \cite[Chapter 5, Theorem 44]{68a8695d93b544d594c5cde50fd83301} proved that Br\"uck--Patzt--Sroka's Conjecture \ref{conjecture} is true for $i = 0$. Br\"uck--Santos Rego--Sroka's \cite{MR4806366} reproved and generalized this result. Br\"uck--Patzt--Sroka's \cite[Chapter 5, Theorem 44]{68a8695d93b544d594c5cde50fd83301} argument builds off of Gunnells' \cite[Theorem 4.11]{MR1749441} work. Br\"uck--Sroka \cite[Proposition 5.1]{BruckSroka24} provide an alternative geometric proof to Gunnells' \cite[Theorem 4.11]{MR1749441} result. Br\"uck--Patzt--Sroka \cite[Theorem A]{BruckPatztSroka2023Arxiv} demonstrated that Br\"uck--Patzt--Sroka's Conjecture \ref{conjecture} is also true for $i = 1$. \\
We show that $\hh^{n^2}(\Gamma_{2n}^\omega(p);\mathbb{Q})$ is nontrivial for all prime $p$.

\begin{maintheoremA}
\label{theorem:main:surjection}
For $n \geq 0$ and $p$ a prime integer, the map
\begin{equation*}
\hh^{n^2}(\Gamma_{2n}^\omega(p);\mathbb{Q}) \rightarrow \rh_{n - 1}(\vert \sT_{2n}(\mathbb{Q})\vert /\Gamma_{2n}^\omega(p); \mathbb{Q})
\end{equation*}
is a surjection.
\end{maintheoremA}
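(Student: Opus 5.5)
Via Borel--Serre duality \eqref{BorelSerreDualityEq} with $i=0$, we have $\hh^{n^2}(\Gamma_{2n}^\omega(p);\mathbb{Q}) \cong \hh_0(\Gamma_{2n}^\omega(p);\St_{2n}^\omega(\mathbb{Q})\otimes\mathbb{Q}) = (\St_{2n}^\omega(\mathbb{Q})\otimes \mathbb{Q})_{\Gamma_{2n}^\omega(p)}$, the coinvariants. The map in the theorem will be defined through this identification. Write $\Gamma=\Gamma_{2n}^\omega(p)$ and $X=\sT_{2n}(\mathbb{Q})$. Then $\St_{2n}^\omega(\mathbb{Q})\otimes\mathbb{Q} = \rh_{n-1}(X;\mathbb{Q})$, and since $X$ is $(n-1)$-dimensional this group is the cycle group $Z_{n-1}(X;\mathbb{Q})\subseteq C_{n-1}(X;\mathbb{Q})$. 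The quotient map $q\colon X\to X/\Gamma$ induces $q_*\colon \rh_{n-1}(X;\mathbb{Q})\to \rh_{n-1}(X/\Gamma;\mathbb{Q})$. This map is $\Gamma$-invariant, so it factors through the coinvariants. The result is the map of the theorem, and the task is to prove that $q_*$ is surjective.

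First I would verify that $X/\Gamma$ carries a CW (indeed $\Delta$-complex) structure whose cells are the $\Gamma$-orbits of simplices of $X$. The action of $\Gamma$ is simplicial and preserves flag type, since a simplex is a flag of isotropic subspaces $V_1\subset\cdots\subset V_k$ and $\Gamma$ preserves dimensions. Hence no simplex is mapped to itself with a nontrivial permutation of its vertices. It follows that $C_*(X/\Gamma;\mathbb{Q}) = C_*(X;\mathbb{Q})_\Gamma$. The top chains are $C_{n-1}(X/\Gamma)=C_{n-1}(X)_\Gamma$, and because $X/\Gamma$ is also $(n-1)$-dimensional we get $\rh_{n-1}(X/\Gamma)=Z_{n-1}(X/\Gamma)$. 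Surjectivity therefore reduces to the following claim: every $\Gamma$-coinvariant top chain with zero boundary lifts to an honest cycle in $X$.

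To prove the claim I would use the standard generators of the Steinberg module: apartment classes. A symplectic basis $\{\vec v_1,\vec w_1,\dots,\vec v_n,\vec w_n\}$ of $\mathbb{Q}^{2n}$ gives an apartment, a sphere made of $2^n n!$ chambers. Its image in $X/\Gamma$ is the image of a fundamental class. The strategy is to show that the images of apartment classes, and more specifically of \emph{integral} apartments coming from symplectic bases of $\mathbb{Z}^{2n}$, span $Z_{n-1}(X/\Gamma;\mathbb{Q})$. For this I would describe $X/\Gamma$ combinatorially. Because $\Sp_{2n}(\mathbb{Z})\to\Sp_{2n}(\mathbb{Z}/p)$ is surjective and every rational isotropic subspace is determined by its saturated integral lattice, $\Gamma$-orbits of isotropic flags in $\mathbb{Q}^{2n}$ correspond to isotropic flags of primitive $\mathbb{Z}$-submodules modulo $p$. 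Concretely, a line corresponds to a primitive vector up to sign, taken mod $p$, that is, a nonzero element of $\mathbb{F}_p^{2n}$ up to $\pm$. More generally, a flag $V_1\subset\cdots\subset V_k$ corresponds to its reduction mod $p$ together with extra data: for a subspace of dimension $d$, a "determinant up to sign" of its lattice basis mod $p$. This gives an identification of $X/\Gamma$ with a complex of "$\pm$-oriented isotropic flags" in $\mathbb{F}_p^{2n}$, which I expect is the complex $\sTD$ or $\sbTD$ introduced by the notation. I would then compute its top homology by a building-style argument and show that it is generated by apartment-like spheres coming from symplectic bases of $\mathbb{F}_p^{2n}$.

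Finally, each such symplectic basis of $\mathbb{F}_p^{2n}$ lifts to a symplectic basis of $\mathbb{Z}^{2n}$, by surjectivity of $\Sp_{2n}(\mathbb{Z})\to\Sp_{2n}(\mathbb{Z}/p)$. The lifted integral apartment then maps onto the chosen sphere, which completes the surjectivity argument. The main obstacle is the middle step: showing that the top homology of the quotient complex is generated by apartment classes. Since the quotient is not a building, Solomon--Tits does not apply directly. My first approach would be induction on $n$ through links. I would filter the complex by whether simplices contain a fixed vertex (a line $L$). The link of such a vertex is again a complex of the same type for $L^\perp/L\cong\mathbb{F}_p^{2n-2}$, and I would combine that induction with a Mayer--Vietoris or long exact sequence argument, applying the induction hypothesis to the links.
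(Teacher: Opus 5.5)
Your outer reductions are sound and agree with the paper. Borel--Serre duality turns the source into coinvariants. The action is type-preserving, so the quotient has the $\Gamma_{2n}^\omega(p)$-orbits of flags as cells. These orbits are the $\pm$-oriented isotropic flags of $\Field_p^{2n}$ (Proposition~\ref{aSimplicialModel}); matching orbits of whole flags, not just of vertices, needs the simultaneous lifting argument of Lemma~\ref{regularityOfTitsBuildings}, which surjectivity of $\Sp_{2n}(\mathbb{Z})\to\Sp_{2n}(\Field_p)$ alone does not give. Finally, symplectic bases of $\Field_p^{2n}$ lift to $\mathbb{Z}^{2n}$.

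The genuine gap is the middle step: the claim that $\rh_{n-1}(\sTD_{2n}(\Field_p);\mathbb{Q})$ is spanned by the oriented apartments of symplectic bases of $\Field_p^{2n}$. The claim is true. But $\St_{2n}^\omega(\mathbb{Q})$ is already known to be spanned by integral apartment classes (Gunnells; Br\"uck--Sroka), so the claim is equivalent to the theorem itself. You have restated the problem rather than reduced it, and your sketch for it does not close.

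Mayer--Vietoris along the star of a line-vertex $v=(L,\pm\alpha)$ lets you handle the part of a class seen in the link $\sTD(L^\perp/L)$, by induction and lifting of bases. What remains is $\rh_{n-1}$ of the complement of the open star of $v$. That is not a complex of the same type, so your induction hypothesis says nothing about it. Deleting all line-vertices at once does not help either: it only realizes $\rh_{n-1}(\sTD_{2n}(\Field_p))$ as a kernel inside $\bigoplus_v \rh_{n-2}$ of the links. Knowing each summand is generated by apartments does not tell you the kernel is generated by global apartments.

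In the unoriented building this step is settled by contractibility of auxiliary subcomplexes. For example, the restricted building contracts via $U\le U+\hat{W}^\perp\ge \hat{W}^\perp$. In the $\pm$-oriented setting every subspace carries $\frac{p-1}{2}$ orientations, so there is no cone point. The remark at the end of Section~\ref{proofofA} records exactly this obstruction as the reason the generator-based approach was abandoned. It also cites a connectivity failure, found by Miller--Patzt--Putman, for quotients of augmented partial basis complexes.

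The paper never produces generators. It proves that $\St_{2n}^\omega(\mathbb{Z})\to\St_{2n}^{\omega,\pm}(\Field_p)$ is surjective, using Church--Putman's fiber criterion (Proposition~\ref{CPQuillenFiber}) for the reduction map $\sbT_{2n}(\mathbb{Z})\to\sbTD_{2n}(\Field_p)$. The target is Cohen--Macaulay because $\sTD_{2n}(\Field_p)$ is a complete join complex over $\sT_{2n}(\Field_p)$. Each fiber $\sbT(\mathbb{Z}^{2n}\mid(\overline{V},\pm\alpha))$ is $(\height(\overline{V})-1)$-connected.

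That fiber connectivity is the real input, proved in Section~\ref{titsconghighconnectivity} by a joint induction. The induction runs over restricted congruence buildings and over (restricted) isotropic partial basis complexes with congruence conditions. It uses a Maazen-style link retraction and van der Kallen--Looijenga's criterion. Some connectivity statement of this kind is what your middle step is missing.
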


Before discussing the codomain of the map in Theorem \ref{theorem:main:surjection}, we will now describe the map in Theorem \ref{theorem:main:surjection}. Let $\Gamma = \Gamma_{2n}^\omega(p)$ and $i = 0$ in Equation \ref{BorelSerreDualityEq}:
\begin{equation*}
\hh^{n^2}(\Gamma_{2n}^\omega(p);\mathbb{Q}) \cong \hh_0(\Gamma_{2n}^\omega(p);\St_{2n}^\omega(\mathbb{Q})) \cong (\St_{2n}^\omega(\mathbb{Q}))_{\Gamma_{2n}^\omega(p)}. 
\end{equation*}
The subscript indicates we are taking coinvariants with respect to $\Gamma_{2n}^\omega(p)$. Consider the quotient map 
\begin{equation}\label{quotientmap}
f\colon \vert \sT_{2n}(\mathbb{Q})\vert \rightarrow \lvert \sT_{2n}(\mathbb{Q})\rvert/\Gamma_{2n}^\omega(p).
\end{equation}
$f$ is $\Gamma_{2n}^\omega(p)$-invariant and it induces the following map on homology. 
\begin{equation*}
f_*\colon\St_{2n}^\omega(\mathbb{Q}) = \rh_{n - 1}(\lvert \sT_{2n}(\mathbb{Q})\rvert; \mathbb{Q}) \rightarrow \rh_{n - 1}(\lvert \sT_{2n}(\mathbb{Q})\rvert/\Gamma_{2n}^\omega(p); \mathbb{Q}). 
\end{equation*}
The coinvariants factor through this map. Hence, 
\begin{equation*}
f_* \colon (\St_{2n}^\omega(\mathbb{Q}))_{\Gamma_{2n}^\omega(p)} \rightarrow \rh_{n - 1}(\vert \sT_{2n}(\mathbb{Q})\vert /\Gamma_{2n}^\omega(p); \mathbb{Q}).
\end{equation*}
Theorem \ref{theorem:main:surjection} is similar to work of Church--Farb--Putman \cite[Theorem D]{MR4011804}, Miller--Patzt--Putman \cite[Theorem A]{MiPaP21}, and Br\"uck-Himes \cite[Theorem 1.1]{MR4917220}. The techniques of those papers do not seem to generalize to this context. Instead, we prove Theorem \ref{theorem:main:surjection} by studying the fibers of the map in Equation \ref{quotientmap}. 

\begin{maintheoremB}
\label{theorem:main:computational}
Fix a prime $p \geq 3$ and let $t^\omega(n, p):= \rank (\rh_{n - 1}(\vert \sT_{2n}(\mathbb{Q})\vert /\Gamma_{2n}^\omega(p); \mathbb{Z}))$. For $n \geq 1$, $t^\omega(n, p)$ equals
\begin{equation} \label{mainequation}
\sum_{0 = m_{-1} < m_0 < \dots < m_k \leq n} \left( \prod_{i = 0}^{k - 1} p^{m_{i+1}-m_i\choose 2} \prod_{j = 0}^{m_i - 1} \frac{p^{m_{i + 1} - j} - 1}{p^{m_i - j} - 1} \right) \left( \prod_{i = 0}^{m_k - 1} \frac{p^{2n - 2i} - 1}{p^{m_k - i} - 1}\right) p^{{m_0 \choose 2} + (n - m_k)^2} \left(\frac{p - 3}{2}\right)^{k + 1}. 
\end{equation}
\end{maintheoremB}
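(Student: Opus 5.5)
Our plan is to identify the quotient $\vert \sT_{2n}(\mathbb{Q})\vert/\Gamma_{2n}^\omega(p)$ with a combinatorial complex over $\mathbb{F}_p$ and then compute its Euler characteristic. We then show it is homotopy equivalent to a wedge of $(n-1)$-spheres, so that $t^\omega(n,p)$ equals $(-1)^{n-1}(\tilde\chi)$.

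\textbf{Step 1: describing the quotient.} A vertex of $\sT_{2n}(\mathbb{Q})$ is a nonzero isotropic subspace $V\subseteq\mathbb{Q}^{2n}$. It is determined by the isotropic summand $V\cap\mathbb{Z}^{2n}$ of $\mathbb{Z}^{2n}$. By strong approximation for $\Sp_{2n}$, two such summands are $\Gamma_{2n}^\omega(p)$-equivalent exactly when their images in $\mathbb{F}_p^{2n}$ agree together with a ``$\pm$-orientation'' datum. This datum is the class of a volume form on the summand, modulo $\{\pm1\}$, since the determinant of the restriction to $V$ only matters up to the units of $\mathbb{Z}$. Hence vertices of the quotient are isotropic subspaces $W\subseteq\mathbb{F}_p^{2n}$ carrying a volume form modulo $\pm1$. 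Each such $W$ has $(p-1)/2$ orientations. The same approximation argument applied to flags shows that the quotient is the simplicial complex of flags of ``$\pm$-oriented'' isotropic subspaces. This is presumably the complex $\sbTD$ hinted at by the paper's notation. The first task is to check carefully that this map is an isomorphism of complexes, i.e.\ that $\Gamma_{2n}^\omega(p)$ acts transitively on integral flags with a fixed reduction and fixed orientations. This uses that $\Gamma^\omega_{2n}(p)$ surjects onto the relevant level-$p$ parabolic data.

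\textbf{Step 2: sphericity.} We show the oriented building is Cohen--Macaulay of dimension $n-1$, in particular $(n-2)$-connected. We would do this by a Quillen fiber/poset argument: filter by the forgetful map to the ordinary building $\sbT_{2n}(\mathbb{F}_p)$. Links in the oriented complex then decompose as joins of oriented $\GL$-type buildings (flags inside a fixed $W$) with smaller oriented symplectic buildings ($W^\perp/W$). The $\GL$ analogue of this sphericity is the oriented Tits building studied by Miller--Patzt--Putman. We would argue by induction on $n$ via a discrete Morse / link-removal argument on the top-dimensional isotropic vertices. \emph{This step is the main obstacle.} The orientation multiplicity breaks the standard Solomon--Tits shelling, and we must prove that the relevant links are highly connected. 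Given sphericity, reduced homology is concentrated in degree $n-1$, and its rank is $\lvert\tilde\chi\rvert$.

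\textbf{Step 3: counting.} We compute $\tilde\chi$ by counting chains $0\ne W_0\subsetneq\dots\subsetneq W_k$ of isotropic subspaces of $\mathbb{F}_p^{2n}$ with orientations, weighted by $(-1)^k$. The factors are as follows:
\begin{itemize}
\item Counting isotropic $m_k$-subspaces gives $\prod_{i}(p^{2n-2i}-1)/(p^{m_k-i}-1)$.
\item Counting refinements inside gives Gaussian binomials, which produce the products with the $p^{\binom{\cdot}{2}}$ factors.
\item Each oriented vertex contributes a factor $(p-1)/2$.
\end{itemize}
The formula has $(p-3)/2=(p-1)/2-1$ in place of $(p-1)/2$, and carries the extra powers $p^{\binom{m_0}{2}+(n-m_k)^2}$, which are Steinberg ranks of $\GL_{m_0}$ and $\Sp_{2(n-m_k)}$. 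So we would instead count cells cleverly. We would use inclusion--exclusion over a standard decomposition of the top homology, in which an apartment-type basis indexed by oriented flags contributes $(p-1)/2-1$ choices per step after normalizing one orientation. Concretely, we would prove a recursion $t^\omega(n,p)=\sum_{m}(\text{\#oriented isotropic }m\text{-spaces}-\ldots)\,t_{\GL}(m)\,\mathrm{St}_{\Sp_{2(n-m)}}$ by a long exact sequence for removing links. Unwinding this recursion gives the sum over $0<m_0<\dots<m_k\le n$ in \eqref{mainequation}; the routine identification of the summands we leave as algebra.
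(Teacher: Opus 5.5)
Your Step~1 is the paper's identification of the quotient with $\sTD_{2n}(\Field_p)$ (Proposition~\ref{aSimplicialModel}, with regularity of the action so that the topological and simplicial quotients agree). However, Steps~2 and~3 are not carried out, and Step~3 as written does not derive \eqref{mainequation}. You correctly observe that counting simplices weights an isotropic flag with $k+1$ members by $\bigl(\frac{p-1}{2}\bigr)^{k+1}$. You then replace the argument with an unspecified recursion, containing an unexplained ``$\ldots$'', taken from an unspecified long exact sequence. Your bookkeeping is also wrong. Only the products $\prod_j \frac{p^{m_{i+1}-j}-1}{p^{m_i-j}-1}$ and $\prod_i \frac{p^{2n-2i}-1}{p^{m_k-i}-1}$ count flags. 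The factors $p^{\binom{m_0}{2}}$, $p^{\binom{m_{i+1}-m_i}{2}}$, $p^{(n-m_k)^2}$ are the ranks of $\St(\overline V_0)$, $\St(\overline V_{i+1}/\overline V_i)$ and $\St^\omega(\overline V_k^\perp/\overline V_k)$. Together they give the top homology of the link $\cT(\overline V_0)*\cT(\overline V_1/\overline V_0)*\cdots*\cT(\overline V_k/\overline V_{k-1})*\sT(\overline V_k^\perp/\overline V_k)$ of the flag in $\sT_{2n}(\Field_p)$, so they must come from link homology, not from a cell count. Conversely, Step~2 is much easier than you expect. Orientations on distinct members of a flag are independent, so $\sTD_{2n}(\Field_p)\to\sT_{2n}(\Field_p)$ is a complete join complex. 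Hatcher--Wahl (Proposition~\ref{HWProp3.5}) together with Solomon--Tits then gives Cohen--Macaulayness at once (Proposition~\ref{SympMPP21Lem3.15}). No shelling, induction or discrete Morse argument is needed.

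What is missing is how $\frac{p-3}{2}$ combines with link homology. This number is the rank of $\rh_0$ of a set of $\frac{p-1}{2}$ points, which is what your ``normalize one orientation'' heuristic is reaching for. The paper applies Bj\"orner--Wachs--Welker (Theorem~\ref{nonrecursiveFormula}) to the forgetful map between the flag posets of $\sbTD_{2n}(\Field_p)$ and $\sbT_{2n}(\Field_p)$. The fiber over a flag with $k+1$ members is a join of $k+1$ discrete sets of size $\frac{p-1}{2}$, hence a wedge of $\bigl(\frac{p-3}{2}\bigr)^{k+1}$ copies of $S^k$. The upper link is the join of buildings above. The resulting wedge decomposition gives Theorem~\ref{theorem:main:computational:restated}, and counting flags via Grassmannians then gives \eqref{mainequation}. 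Your Euler characteristic route also works once Step~2 is in place. Write $\frac{p-1}{2}=1+r$ with $r=\frac{p-3}{2}$, expand $(1+r)^{\lvert F\rvert}=\sum_{S\subseteq F}r^{\lvert S\rvert}$ over subflags, and exchange the sums:
\[
\tilde\chi\bigl(\sTD_{2n}(\Field_p)\bigr) = -1+\sum_{F}(-1)^{\lvert F\rvert-1}(1+r)^{\lvert F\rvert} = \tilde\chi\bigl(\sT_{2n}(\Field_p)\bigr)+\sum_{S\neq\emptyset}r^{\lvert S\rvert}(-1)^{\lvert S\rvert}\,\tilde\chi(\lk S).
\]
Here $F$ and $S$ range over nonempty isotropic flags, and $\lk S$ is the link of $S$ in $\sT_{2n}(\Field_p)$, a wedge of $(n-\lvert S\rvert-1)$-spheres. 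Multiplying by $(-1)^{n-1}$ turns each term into $r^{\lvert S\rvert}$ times a product of Steinberg ranks, which is exactly \eqref{mainequation}. The Bj\"orner--Wachs--Welker argument gives more, namely a splitting of $\St^{\omega,\pm}_{2n}(\Field_p)$ as an abelian group, but only the rank is needed here. Without one of these two arguments, your proposal does not prove the statement.
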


Theorem \ref{theorem:main:computational} yields a nonzero lower bound of the rank of $\hh^{n^2}(\Gamma_{2n}^\omega(p);\mathbb{Q})$. Equation \ref{mainequation} grows exponentially with respect to both $p$ and $n$, demonstrating that the top-degree cohomology of $\Gamma_{2n}^\omega(p)$ is large. We refer the reader to Theorem \ref{theorem:main:computational:restated}, for a conceptual version of Theorem \ref{theorem:main:computational}. \\
Paraschivescu \cite[Theorem 1]{Par97} demonstrated that the rank of the top-degree cohomology of level-$p$ principal congruence subgroups of $\SL_n(\mathbb{Z})$ is at least 
\begin{equation*}
p^{n \choose 2} \left(\frac{p - 1}{2} \right)^{n - 1}
\end{equation*}
for $n \geq 1$. We present a symplectic version of Paraschivescu's \cite[Theorem 1]{Par97} lower bound that follows from Theorem \ref{theorem:main:computational}. 

\begin{maincorollaryC} \label{corollary:main:computational}
Fix a prime $p \geq 3$ and let $t^\omega(n, p)= \rank (\rh_{n - 1}(\vert \sT_{2n}(\mathbb{Q})\vert /\Gamma_{2n}^\omega(p); \mathbb{Z}))$. For $n \geq 1$,
\begin{equation}\label{ParSymp}
t^\omega(n, p) \geq p^{n^2} \left(\frac{p - 1}{2} \right)^n. 
\end{equation}
\end{maincorollaryC}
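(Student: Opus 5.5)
Since Theorem \ref{theorem:main:computational} writes $t^\omega(n,p)$ as a sum of nonnegative terms (each factor is nonnegative for $p \geq 3$), we can bound $t^\omega(n,p)$ from below by a partial sum. The plan is to keep exactly the terms indexed by chains $0 = m_{-1} < m_0 < \dots < m_k \leq n$ with $m_k = n$, and show that these alone already sum to $p^{n^2}\left(\frac{p-1}{2}\right)^n$. If that identity is awkward, the fallback is to prove only the inequality by a suitable induction on $n$.

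\textbf{Step 1: simplify the restricted sum.} When $m_k = n$, the factor $p^{(n-m_k)^2}$ equals $1$. The last product is
\[
\prod_{i=0}^{n-1}\frac{p^{2n-2i}-1}{p^{n-i}-1} = \prod_{j=1}^{n}(p^j+1).
\]
The products over $i$ of $p^{\binom{m_{i+1}-m_i}{2}}$ times Gaussian-binomial-type factors telescope into $q$-multinomial coefficients. Write $d_0 = m_0$ and $d_{i+1} = m_{i+1} - m_i$, so $(d_0,\dots,d_k)$ is a composition of $n$. The restricted sum then becomes
\[
\prod_{j=1}^n (p^j+1)\sum_{(d_0,\dots,d_k)\models n} \binom{n}{d_0,\dots,d_k}_{p}\, p^{\sum_i \binom{d_i}{2}}\left(\frac{p-3}{2}\right)^{k+1}.
\]
I will check this reindexing against the displayed formula, in particular how $p^{\binom{m_0}{2}}$ and the Gaussian factors combine.

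\textbf{Step 2: evaluate the composition sum.} Set $x = \frac{p-3}{2}$. The standard $q$-exponential identity says that the generating function of $\frac{q^{\binom{d}{2}}z^d}{[d]_q!}$ is $E_q(z) = \prod_{j\ge 0}(1+(1-q)q^j z)^{-1}$-type. Summing over compositions weighted by $x^{k+1}$ gives the coefficient of $z^n$ in $\frac{x(E_q(z)-1)}{1-x(E_q(z)-1)}$. That closed form is messy, so rather than aim for exact equality I will use crude monomial bounds.

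\textbf{Step 3: bound the terms directly.} It suffices to show the restricted sum is at least $p^{n^2}\left(\frac{p-1}{2}\right)^n$. Note that $\prod_{j=1}^n(p^j+1) \geq p^{\binom{n+1}{2}}$. For each composition, $\binom{n}{d}_p \, p^{\sum\binom{d_i}{2}} \geq p^{e_2(d) + \sum \binom{d_i}{2}} = p^{\binom{n}{2}}$, where $e_2(d) = \sum_{i<j} d_i d_j$ is the leading exponent of the $q$-multinomial. So every composition contributes at least $p^{\binom{n}{2}}x^{k+1}$.

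The number of compositions of $n$ into $k+1$ parts is $\binom{n-1}{k}$. Summing,
\[
\sum_{k}\binom{n-1}{k}x^{k+1} = x(1+x)^{n-1} = \frac{p-3}{2}\left(\frac{p-1}{2}\right)^{n-1}.
\]
Combining with $p^{\binom{n+1}{2}+\binom{n}{2}} = p^{n^2}$, the restricted sum is at least $p^{n^2}\,\frac{p-3}{2}\left(\frac{p-1}{2}\right)^{n-1}$.

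\textbf{Step 4: close the gap.} The bound from Step 3 is short of the target by the replacement of one factor $\frac{p-1}{2}$ with $\frac{p-3}{2}$. This gap is the main obstacle. To close it I will use the slack that Step 3 discarded:
\begin{itemize}
\item the lower-order terms of the $q$-multinomials, at least one extra $p^{\binom{n}{2}-1}$-type term when $k \geq 1$;
\item the extra terms in $\prod(p^j+1)$, which give $p^{\binom{n+1}{2}}(1 + p^{-1} + \cdots)$;
\item the dropped chains with $m_k < n$.
\end{itemize}
I will try the second source first. Since $\prod_{j=1}^n (1+p^{-j}) \geq 1 + p^{-1}$, it suffices to check $(1+p^{-1})\frac{p-3}{2} \geq \frac{p-1}{2}$. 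This fails, so the gain must instead come from the dropped terms with $m_k < n$. For example, the term $m_0 = n-1$, $k = 0$ contributes about $p^{n^2-n+\binom{n-1}{2}+\dots}$. Failing that, I will set up an induction on $n$ that peels off the last block, mirroring Paraschivescu's argument.
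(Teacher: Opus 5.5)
\textbf{There is a genuine gap: the proposal stops before it reaches the stated bound.} Your Step 3 is correct. It is the same per-term estimate the paper uses, namely $\lvert \Gr_m(\Field_p^{N})\rvert \geq p^{m(N-m)}$ and $\lvert \Gr^\omega_n(\Field_p^{2n})\rvert=\prod_{j=1}^n(p^j+1)\geq p^{\binom{n+1}{2}}$. But you apply it only to chains with $m_k=n$, which leaves you at $p^{n^2}x(1+x)^{n-1}$ with $x=\frac{p-3}{2}$, not at the target.

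Your opening plan cannot succeed, because the terms with $m_k=n$ alone do not reach $p^{n^2}\left(\frac{p-1}{2}\right)^n$:
\begin{itemize}
\item For $n=1$ they sum to $(p+1)\frac{p-3}{2}<p\cdot\frac{p-1}{2}$, for every $p$.
\item For $p=3$ they vanish identically, while the target is $3^{n^2}$.
\end{itemize}
In particular, the empty chain is indispensable. It contributes $p^{n^2}=\rank \St^\omega_{2n}(\Field_p)$, you never mention it, and at $p=3$ it is the only nonzero term and gives equality. You correctly see that the slack in $\prod_j(1+p^{-j})$ cannot close the deficit. After that you only point toward the dropped terms or an unspecified induction, without carrying either out.

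\textbf{The fix is to run your Step 3 bound on every chain, not only those ending at $n$.} Take a chain with top dimension $m=m_k\leq n$. Each factor satisfies $\frac{p^{2n-2i}-1}{p^{m-i}-1}\geq p^{2n-m-i}$, so
\[
\lvert \Gr^\omega_m(\Field_p^{2n})\rvert\, p^{(n-m)^2}\geq p^{m(2n-m)-\binom{m}{2}+(n-m)^2}=p^{n^2-\binom{m}{2}}.
\]
Your $q$-multinomial estimate, applied to the composition of $m$ determined by the chain, bounds the remaining factors below by $p^{\binom{m}{2}}$. Hence every chain, including the empty one, contributes at least $p^{n^2}x^{k+1}$.

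Chains correspond bijectively to subsets of $\{1,\dots,n\}$. Summing gives $t^\omega(n,p)\geq p^{n^2}\sum_{j=0}^n\binom{n}{j}x^j=p^{n^2}(1+x)^n$. This is exactly the paper's argument: it groups terms by flag length and bounds each dimension type by $p^{n^2}x^k$.

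In your own bookkeeping, the chains with $m_k<n$ are the subsets of $\{1,\dots,n-1\}$, including the empty one. They contribute at least $p^{n^2}(1+x)^{n-1}$, which is precisely the deficit $p^{n^2}(1+x)^n-p^{n^2}x(1+x)^{n-1}$ left after Step 3. No induction and no exact evaluation of the composition sum is needed.
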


Similar to Equation \ref{mainequation}, Equation \ref{ParSymp} grows exponentially with respect to $p$ and $n$. But Equation \ref{ParSymp} is easier to compute for large $p$ and $n$. 

\begin{remark}
    It is easy to see that $\St_{2n}^\omega(\mathbb{Z})$ surjects onto $\St_{2n}^\omega(\Field_p)$ for all prime $p$, where $\Field_p$ is the finite field with $p$-many elements. This gives the weaker lower bound of $p^{n^2}$ for the rank of the top degree cohomology of these congruence subgroups.  
\end{remark}

\textbf{Acknowledgments.} I thank my advisor, Jeremy Miller. This article would not have been written without his help. I thank Peter Patzt for explaining his proof of his Theorem \ref{patzt}. I also thank Robin Sroka, Benjamin Br\"uck, and Jennifer Wilson for helpful discussions. I am grateful to Tatiana Abdelnaim, Urshita Pal, and Sarah Anderson for comments on drafts of this article. 


\section{Preliminaries} \label{Top:PosetsSimp}

\subsection{Symplectic Linear Algebra}

Throughout this article $R$ is a principal ideal domain. We work exclusively with finitely generated (f.g.) free $R$-modules. All symplectic $R$-modules are free. We always equip $R^{2n}$ with a symplectic form $\omega$ while we use $R^n$ to denote an $R$-module without a symplectic form. \\
Let $V \subseteq R^n$ be a submodule of rank $k$. We call $V$ a \emph{(direct) summand} if $R^n = V \oplus W$ for some submodule $W \subseteq R^n$ of rank $n - k$. The following two facts are elementary. A proof of the first fact can be found in \cite[Lemma 2.6]{ChurchPutnam17}.  

\begin{lemma}\label{LinearAlgebraSummands}
If $V$ and $W$ are summands of $R^n$ and $V \subseteq W$, then $V$ is a summand of $W$. 
\end{lemma}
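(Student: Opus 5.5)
We must show: if $V \subseteq W$ are summands of $R^n$ over a PID $R$, then $V$ is a summand of $W$. The plan is to reduce to a torsion-freeness statement. Over a PID, a submodule $V$ of a finitely generated free module $M$ is a summand exactly when $M/V$ is torsion-free. Indeed, $M/V$ is then finitely generated and torsion-free, hence free by the structure theorem; so the projection $M \to M/V$ splits and $M \cong V \oplus M/V$. Conversely, if $M = V \oplus U$, then $M/V \cong U$ is a submodule of a free module and so is torsion-free. We will apply this criterion with $M = W$.

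The key steps are as follows.

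\begin{enumerate}
\item Since $W$ is a summand of $R^n$, it is a direct summand of a free module. Hence $W$ is projective and finitely generated, so it is free over the PID $R$. (Alternatively, $W$ is free simply because it is a submodule of $R^n$.)
\item Consider the natural injection $W/V \hookrightarrow R^n/V$, induced by the inclusion $W \subseteq R^n$. It is injective because its kernel is $(W \cap V)/V = 0$.
\item Since $V$ is a summand of $R^n$, we have $R^n/V \cong W'$ for a complement $W'$ of $V$. Thus $R^n/V$ is free, so it is torsion-free.
\item Therefore $W/V$ embeds in a torsion-free module and is itself torsion-free. It is also finitely generated, being a quotient of $W$. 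So $W/V$ is free, and the short exact sequence $0 \to V \to W \to W/V \to 0$ splits. This gives $W = V \oplus U$ with $U \cong W/V$.
\end{enumerate}

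Finally, we check ranks to match the paper's definition of summand, which requires a complement of rank $\operatorname{rank} W - \operatorname{rank} V$. Tensoring the split sequence with the fraction field of $R$ shows that $\operatorname{rank} U = \operatorname{rank} W - \operatorname{rank} V$.

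No step is a serious obstacle. The only point needing care is the criterion "summand $\iff$ torsion-free quotient", which is where the PID hypothesis and finite generation enter, through the structure theorem for finitely generated modules over a PID. Everything else is formal.
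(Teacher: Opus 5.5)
Your proof is correct. You embed $W/V$ into the free module $R^n/V$ to get that it is torsion-free, then use that a finitely generated torsion-free module over a PID is free, so $0 \to V \to W \to W/V \to 0$ splits; this is the standard argument, and your rank check matches the paper's definition of summand. The paper gives no proof of its own here and only cites Church--Putman's Lemma 2.6, so there is no alternative argument in the paper to compare against.
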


\begin{lemma} \label{NormalBasis}
If $V \subseteq R^n$ is a summand and $B = \{\vec{v}_1, \dots, \vec{v}_m\}$ is a basis for $V$, then $B$ can be extended to a basis for $R^n$.
\end{lemma}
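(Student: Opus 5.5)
The plan is to prove Lemma \ref{NormalBasis} by reducing it to the existence of a complement and then concatenating bases. Since $V$ is a summand, there is a submodule $W \subseteq R^n$ of rank $n - m$ with $R^n = V \oplus W$. The argument then proceeds in three steps.

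First, we show that $W$ is free. Any submodule of a finitely generated free module over a principal ideal domain is free. Alternatively, and without invoking that theorem, $W \cong R^n/V$ is a direct summand of a free module, hence projective. It is also finitely generated, and f.g.\ projective modules over a PID are free. Its rank is $n - m$, because rank is additive under direct sums (tensor with the fraction field $K$ of $R$: $K^n = (V \otimes K) \oplus (W \otimes K)$). We choose a basis $\{\vec{w}_1, \dots, \vec{w}_{n-m}\}$ of $W$.

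Second, we show that the concatenated set
\[
B' = \{\vec{v}_1, \dots, \vec{v}_m, \vec{w}_1, \dots, \vec{w}_{n-m}\}
\]
is a basis of $R^n$.
\begin{itemize}
\item \emph{Spanning.} Every $x \in R^n$ decomposes uniquely as $x = v + w$ with $v \in V$ and $w \in W$. Expanding $v$ in the basis $B$ and $w$ in the chosen basis of $W$ writes $x$ as a combination of elements of $B'$.
\item \emph{Linear independence.} Suppose a linear relation among the elements of $B'$ holds. Split it into its $V$-part and its $W$-part. Since $V \cap W = 0$, each part vanishes separately. Linear independence of $B$ and of the basis of $W$ then forces all coefficients to be zero.
\end{itemize}
Thus $B'$ is a basis of $R^n$ extending $B$, as required.

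The only step with real content is the freeness of the complement $W$. This is where the hypothesis that $R$ is a PID is used: over a general commutative ring a complement need only be stably free, and the lemma can fail. Everything else is formal. One could also cite the structure theorem (or the stacked bases theorem) for submodules of $R^n$, but the projective-implies-free route over a PID is the most direct.
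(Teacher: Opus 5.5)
Your proof is correct and is the standard argument. The paper gives no proof of this lemma and simply calls it elementary; showing the complement $W$ is free (using that $R$ is a PID) and then concatenating $B$ with a basis of $W$ is exactly the elementary argument that remark has in mind. Your rank computation for $W$ is harmless but redundant, since the paper's definition of summand already requires the complement to have rank $n - m$.
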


A \emph{partial basis} is a subset of a basis for $R^n$. A partial basis $\{\vec{v}_1, \dots, \vec{v}_m\}$ for $R^{2n}$ is \emph{isotropic} if $\omega(\vec{v}_i, \vec{v}_j) = 0$ for $i, j \in \{1, \dots, m\}$. A basis $\{\vec{v}_1, \vec{w}_1, \dots, \vec{v}_n, \vec{w}_n\}$ for $R^{2n}$ is a \emph{symplectic basis} if there is a symplectic (i.e., form-preserving) automorphism $R^{2n} \rightarrow R^{2n}$ that sends $\vec{v}_i$ to $\vec{e}_i$ and $\vec{w}_i$ to $\vec{f}_i$ for $i \in \{1, \dots, n\}$. A \emph{restricted basis} of $R^{2n}$ is a partial basis of a symplectic basis for $R^{2n}$ of size $2n - 1$. The \emph{symplectic complement} of a summand $V \subseteq R^{2n}$ is 
\begin{equation*}
V^\perp := \{ \vec{u} \in R^{2n} \mid \omega(\vec{v}, \vec{u}) = 0 \text{ for all } \vec{v} \in V\}. 
\end{equation*}

\begin{definition}
Let $V \subseteq R^{2n}$ be a summand. 
\begin{itemize}[topsep=-0.5cm]
\item We call $V$ a \emph{restricted summand} if $V \oplus W = R^{2n}$ for some rank-$1$ summand $W \subseteq R^{2n}$; 
\item We call $V$ an \emph{isotropic summand} if $\omega \vert_V \equiv 0$;
\item We call $V$ a \emph{symplectic summand} if $V$ is of rank $2m$ for $m \leq n$ and there is a symplectic isomorphism $V \cong R^{2m}$.
\end{itemize}
\end{definition}
\vspace{-.2cm}
The next three lemmas follow from Milnor--Husemoller \cite[Lemma I.2.6]{MH73}.

\begin{lemma}\label{IsotropicComplementSummands}
If $V \subseteq R^{2n}$ is a summand, then $V^\perp \subseteq R^{2n}$ is a summand.
\end{lemma}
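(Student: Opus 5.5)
The plan is to identify $V^\perp$ with the kernel of a surjection onto a free module; a surjection onto a free module over a PID splits, which makes the kernel a summand.

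Write $R^{2n} = V \oplus W$ with $V$ of rank $k$, and consider the $R$-linear map
\begin{equation*}
\Phi\colon R^{2n} \rightarrow \operatorname{Hom}_R(V, R), \qquad \vec{u} \mapsto \omega(-, \vec{u})\vert_V.
\end{equation*}
By definition, $V^\perp = \ker \Phi$.

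The first key step is to show that $\Phi$ is surjective. Since $\omega$ is non-degenerate on $R^{2n}$ over $\mathbb{Z}$-type rings, it is unimodular: the map $R^{2n} \rightarrow \operatorname{Hom}_R(R^{2n}, R)$, $\vec{u} \mapsto \omega(-, \vec{u})$, is an isomorphism. This is visible on the standard symplectic basis, where the Gram matrix has determinant $\pm 1$. The map $\Phi$ is this isomorphism followed by the restriction map $\operatorname{Hom}_R(R^{2n}, R) \rightarrow \operatorname{Hom}_R(V, R)$. The restriction map is surjective because $V$ is a summand: any functional on $V$ extends to $R^{2n}$ by declaring it to be zero on $W$. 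Hence $\Phi$ is surjective.

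The second key step is to split $\Phi$. The module $\operatorname{Hom}_R(V, R) \cong R^k$ is free, so there is a section $s$ of $\Phi$. Then
\begin{equation*}
R^{2n} = \ker \Phi \oplus s(\operatorname{Hom}_R(V,R)) = V^\perp \oplus s(R^k),
\end{equation*}
and $V^\perp$ is a summand of rank $2n - k$. Both pieces are free, since submodules of free modules over a PID are free, so this is consistent with the paper's convention that summands are free.

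The only point needing care is the unimodularity in the first step. The paper defines non-degenerate for the standard form, and over a general PID "non-degenerate" might only mean injective. However, $\omega$ is the standard form, whose matrix on the symplectic basis is invertible over $R$, so the adjoint map is an isomorphism and no issue arises. This is the content cited from Milnor--Husemoller.
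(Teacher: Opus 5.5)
Your proof is correct and is the standard argument behind the Milnor--Husemoller citation that the paper gives in place of a proof. In that argument, $V^\perp$ is the kernel of the surjection $R^{2n}\to\operatorname{Hom}_R(V,R)$, obtained by composing the adjoint isomorphism of the standard, unimodular form with restriction to the summand $V$, and a surjection onto a free module splits.

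One phrasing to fix: your opening claim that non-degeneracy implies unimodularity is false in general over $\mathbb{Z}$ (e.g.\ $2\omega$). You do not actually rely on it, though, since you correctly justify the adjoint isomorphism by the invertibility of the Gram matrix of the standard form.
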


\begin{lemma}\label{IsotropicPerp}
If $V \subsetneq R^{2n}$ is an isotropic summand, then the symplectic form on $R^{2n}$ induces a symplectic form on $V^\perp/V$. 
\end{lemma}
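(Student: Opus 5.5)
The plan is to verify that the restriction of $\omega$ to $V^\perp$ descends to $V^\perp/V$, and then to prove non-degeneracy (indeed unimodularity, since we work over a PID) by building an adapted symplectic basis. The whole argument follows the pattern of Milnor--Husemoller \cite[Lemma I.2.6]{MH73}.

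\textbf{Step 1: the form is well defined.} Since $V$ is isotropic, $V \subseteq V^\perp$. By Lemma \ref{IsotropicComplementSummands}, $V^\perp$ is a summand of $R^{2n}$. By Lemma \ref{LinearAlgebraSummands}, $V$ is a summand of $V^\perp$, so $V^\perp/V$ is free. For $\vec{u}, \vec{u}' \in V^\perp$ and $\vec{v}, \vec{v}' \in V$,
\begin{equation*}
\omega(\vec{u}+\vec{v}, \vec{u}'+\vec{v}') = \omega(\vec{u},\vec{u}') + \omega(\vec{u},\vec{v}') + \omega(\vec{v},\vec{u}') + \omega(\vec{v},\vec{v}') = \omega(\vec{u},\vec{u}').
\end{equation*}
The last three terms vanish because $\vec{u}, \vec{u}' \in V^\perp$ and $V$ is isotropic. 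So $\bar\omega(\bar u, \bar u') := \omega(\vec{u}, \vec{u}')$ is a well-defined bilinear form on $V^\perp/V$, and it is skew-symmetric because $\omega$ is.

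\textbf{Step 2: non-degeneracy, by extending an isotropic basis of $V$ to a symplectic basis.} Let $\vec{v}_1, \dots, \vec{v}_m$ be a basis of $V$. By Lemma \ref{NormalBasis} it extends to a basis of $R^{2n}$. Since $\omega$ is unimodular, the map $R^{2n} \to \operatorname{Hom}(R^{2n}, R)$, $\vec{x} \mapsto \omega(\cdot, \vec{x})$, is an isomorphism. Applying its inverse to the dual-basis functionals gives $\vec{w}_1, \dots, \vec{w}_m$ with $\omega(\vec{v}_i, \vec{w}_j) = \delta_{ij}$.

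We then fix the $\vec{w}_j$ so that they are pairwise orthogonal: replace $\vec{w}_j$ by $\vec{w}_j - \sum_{i<j} \omega(\vec{w}_i,\vec{w}_j)\vec{v}_i$, one $j$ at a time. This keeps $\omega(\vec{v}_i, \vec{w}_j) = \delta_{ij}$ because $V$ is isotropic.

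Now $H = \Span{\vec{v}_1, \vec{w}_1, \dots, \vec{v}_m, \vec{w}_m}$ has Gram matrix of determinant $1$. Hence $R^{2n} = H \oplus H^\perp$, via the projection $\vec{x} \mapsto \vec{x} - \sum_i \bigl(\omega(\vec{x},\vec{w}_i)\vec{v}_i - \omega(\vec{x},\vec{v}_i)\vec{w}_i\bigr)$, up to the sign conventions. The restriction of $\omega$ to $H^\perp$ is unimodular, because the restriction to $H$ is and the sum is orthogonal.

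\textbf{Step 3: identify the quotient with $H^\perp$.} An element $\vec{x} = \sum_i (a_i\vec{v}_i + b_i\vec{w}_i) + \vec{h}$ with $\vec{h} \in H^\perp$ lies in $V^\perp$ exactly when all $b_i = 0$. So $V^\perp = V \oplus H^\perp$, and $V^\perp/V \cong H^\perp$ isometrically. Therefore $\bar\omega$ is unimodular, hence non-degenerate.

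Finally, $H^\perp$ has rank $2n-2m$, and a unimodular alternating form on a free module over a PID admits a symplectic basis (the standard normal form). This gives a symplectic isomorphism $V^\perp/V \cong R^{2(n-m)}$.

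The main obstacle is Step 2, specifically choosing the dual vectors $\vec{w}_j$ and orthogonalizing them. This is where both the summand hypothesis and the unimodularity of $\omega$ over $R$, rather than over a field, are actually used.
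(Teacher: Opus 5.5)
Your proof is correct. It is essentially the argument the paper relies on: the paper gives no proof of its own and defers, together with Lemmas \ref{IsotropicComplementSummands} and \ref{SymplecticBases}, to Milnor--Husemoller \cite[Lemma I.2.6]{MH73}. Your Steps 2--3 amount to proving the isotropic case of Lemma \ref{SymplecticBases} and then reading off $V^\perp = V \oplus H^\perp$.

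One sign slip needs fixing. Since $\omega(\vec{v}_i,\vec{w}_j)=\delta_{ij}$ gives $\omega(\vec{w}_i,\vec{v}_i)=-1$, your replacement $\vec{w}_j - \sum_{i<j}\omega(\vec{w}_i,\vec{w}_j)\vec{v}_i$ turns $\omega(\vec{w}_i,\vec{w}_j)$ into $2\,\omega(\vec{w}_i,\vec{w}_j)$ instead of killing it. The correct choice is $\vec{w}_j + \sum_{i<j}\omega(\vec{w}_i,\vec{w}_j)\vec{v}_i$. With that sign, the rest of your argument goes through unchanged.
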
 

\begin{lemma}\label{SymplecticBases}
If $V \subseteq R^{2n}$ is a summand and  
\begin{equation}\label{subset}
B = \{\vec{v}_1, \vec{w}_1, \dots, \vec{v}_m, \vec{w}_m, \vec{v}_{m + 1}, \dots, \vec{v}_{m + k}\}, 
\end{equation}
is a basis for $V$ such that 
\begin{equation*}
\omega(\vec{v}_{i_1}, \vec{v}_{i_2}) = \omega(\vec{w}_{j_1}, \vec{w}_{j_2}) = 0 \text{ and } \omega(\vec{v}_{i_1}, \vec{w}_{j_1}) = \delta_{i_1 j_1},
\end{equation*}
for all $i_1, i_2 \in \{1, \dots m + k\}$ and $j_1, j_2 \in \{1, \dots, m\}$. Then $B$ can be extended to a symplectic basis for $R^{2n}$.
\end{lemma}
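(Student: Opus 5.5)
The plan is to induct on $m$, the number of hyperbolic pairs in $B$, after first treating the purely isotropic case $m = 0$.

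\textbf{The case $m = 0$.} Here $B = \{\vec{v}_1, \dots, \vec{v}_k\}$ is an isotropic basis of the summand $V$.
\begin{enumerate}
\item By Lemma \ref{NormalBasis}, extend $B$ to a basis of $R^{2n}$.
\item Non-degeneracy of $\omega$ on $R^{2n}$ over $R$ means the adjoint map $R^{2n} \to \operatorname{Hom}(R^{2n}, R)$ is an isomorphism. Hence there is $\vec{w}_1$ with $\omega(\vec{v}_1, \vec{w}_1) = 1$ and $\omega(\vec{v}_i, \vec{w}_1) = 0$ for $2 \leq i \leq k$: it is the preimage of the appropriate coordinate functional for the extended basis.
\item The pair $\vec{v}_1, \vec{w}_1$ spans a symplectic summand $H \cong R^2$. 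Its complement $H^\perp$ is a summand by Lemma \ref{IsotropicComplementSummands}, and $R^{2n} = H \oplus H^\perp$ via the projection $\vec{u} \mapsto \omega(\vec{u}, \vec{w}_1)\vec{v}_1 - \omega(\vec{u}, \vec{v}_1)\vec{w}_1$.
\item The form restricted to $H^\perp$ is non-degenerate and unimodular, so $H^\perp \cong R^{2n-2}$ symplectically. Over a PID this uses the classification of unimodular alternating forms, which is the content of Milnor--Husemoller \cite[Lemma I.2.6]{MH73}.
\item The vectors $\vec{v}_2, \dots, \vec{v}_k$ lie in $H^\perp$, since they are orthogonal to both $\vec{v}_1$ and $\vec{w}_1$. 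They span a summand of $H^\perp$ by Lemma \ref{LinearAlgebraSummands}.
\item Induct on $n$ inside $H^\perp$, then adjoin the pair $\vec{v}_1, \vec{w}_1$.
\end{enumerate}

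\textbf{The general case.} If $m \geq 1$, the pairs $\vec{v}_i, \vec{w}_i$ for $i \leq m$ span a symplectic summand $S \cong R^{2m}$. As above, $R^{2n} = S \oplus S^\perp$ with $S^\perp$ symplectic of rank $2n - 2m$. The remaining vectors $\vec{v}_{m+1}, \dots, \vec{v}_{m+k}$ lie in $S^\perp$ and are isotropic. They span $V \cap S^\perp$, which is a summand of $S^\perp$: it is the image of $V$ under the projection onto $S^\perp$, and it is a summand of $V$, hence of $R^{2n}$, so Lemma \ref{LinearAlgebraSummands} applies. The case $m = 0$ applied in $S^\perp$ completes $B$ to a symplectic basis.

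\textbf{Main obstacle.} The key step is the existence of $\vec{w}_1$ with prescribed pairings. This is exactly where the summand hypothesis is used, through unimodularity of $\omega$ together with the extension to a full basis. Without that hypothesis, $\omega(\vec{v}_1, \cdot)$ could have image a proper ideal of $R$, and no such $\vec{w}_1$ would exist.
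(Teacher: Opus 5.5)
Your argument is correct. The paper gives no proof of this lemma; it only states that this lemma and the two preceding ones follow from Milnor--Husemoller \cite[Lemma I.2.6]{MH73}. So your write-up is a self-contained version of the standard hyperbolic-splitting argument that the citation stands in for, not a competing route.

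Your version shows exactly where the summand hypothesis enters. Extending the isotropic vectors to a basis of $R^{2n}$ lets the dual coordinate functional, pulled back through the adjoint isomorphism of $\omega$, produce $\vec{w}_1$. It also gives the orthogonal splitting $R^{2n} = S \oplus S^\perp$ by an explicit projection. The citation, by contrast, buys brevity.

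Two small remarks:
\begin{itemize}
\item Your opening announces an induction on $m$, but the general case actually splits off all $m$ hyperbolic pairs at once and reduces to $m = 0$. That is fine; the announced induction on $m$ is simply not needed.
\item Step 4 still imports the classification of unimodular alternating forms over a PID. You can make the argument fully self-contained by running the induction on rank over arbitrary free $R$-modules with a unimodular alternating form. Handle the case $k = 0$ by taking any basis vector as $\vec{v}_1$. Then $H^\perp$ stays in the same class of modules without needing an identification with $R^{2n-2}$, and the classification comes out of the induction rather than being an input.
\end{itemize}
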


There exist summands of $R^{2n}$ whose bases do not satisfy the conditions for Lemma \ref{SymplecticBases}. 

\begin{example}
Suppose $V \subset \mathbb{Z}^4$ is a summand spanned by $\{\vec{e}_1, 3\vec{f}_1 + \vec{e}_2\}$. This basis cannot be extended to a symplectic basis for $\mathbb{Z}^4$.
\end{example}

We show that restricted summands admit bases that satisfy the conditions for Lemma \ref{SymplecticBases}.  

\begin{definition}
Suppose $W$ is a symplectic $R$-module of rank $2n$ and let $\{\vec{w}_1, \dots, \vec{w}_{2n}\}$ be a set of vectors of $W$. Let $A(\vec{w}_1, \dots, \vec{w}_{2n})$ denote the $2n \times 2n$ \emph{anti-symplectic matrix for the set of vectors $\{\vec{w}_1, \dots, \vec{w}_{2n}\}$}, whose $(i, j)$-th entry is $\omega(\vec{w}_i, \vec{w}_j)$ for $i, j \in \{1, \dots, 2n\}$. Let $\Pi$ denote the set of partitions of $\{1, \dots, 2n\}$ into unordered pairs. We write the elements of $\Pi$ as 
\begin{equation*}
\nu = \{(i_1, j_1), \dots, (i_n, j_n)\}
\end{equation*}
so that $i_k < j_k$ for $k \in \{1, \dots, n\}$ and $i_1 < \dots < i_n$. Let $\Sigma_\Pi$ denote the subset of the symmetric group $\Sigma_{2n}$ whose elements $\lambda_\nu$ satisfy $\lambda_\nu(2k + 1) = i_{k + 1}$ and $\lambda_\nu(2k + 2) = j_{k + 1}$ for $(i_{k + 1}, j_{k + 1}) \in \nu$ and $k \in \{0, \dots, n - 1\}$. We define the \emph{Pfaffian of $A(\vec{w}_1, \dots, \vec{w}_{2n})$} as 
\begin{equation}
\pf(A(\vec{w}_1, \dots, \vec{w}_{2n})) := \sum_{\lambda_\nu \in \Sigma_\Pi} \sgn(\lambda_\nu) ~  \prod_{k = 0}^{n - 1} \omega(\vec{w}_{\lambda_\nu(2k + 1)}, \vec{w}_{\lambda_\nu(2k + 2)}). 
\end{equation}
\end{definition}

The next lemma follows from Knus \cite[Lemma 9.2.1]{Knus91}. 

\begin{lemma}\label{pfaffianOfDet1}
Suppose $\{\vec{w}_1, \dots, \vec{w}_{2n}\}$ is a basis for a symplectic $R$-module $W$ that is not necessarily symplectic. Then $\pf (A(\vec{w}_1, \dots, \vec{w}_{2n}))$ is a unit. 
\end{lemma}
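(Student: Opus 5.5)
The plan is to use the classical identity $\pf(A)^2=\det(A)$ for skew-symmetric matrices, together with the transformation rule for Gram matrices under change of basis. Since $W$ is a symplectic $R$-module of rank $2n$, it has a symplectic basis $\{\vec{e}_1,\vec{f}_1,\dots,\vec{e}_n,\vec{f}_n\}$. The Gram matrix $J$ of this basis is block diagonal with blocks $\begin{pmatrix}0&1\\-1&0\end{pmatrix}$, so $\det J = 1$.

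Write each $\vec{w}_i$ in the symplectic basis, and let $P$ be the resulting $2n\times 2n$ change-of-basis matrix with entries in $R$. Bilinearity gives $A(\vec{w}_1,\dots,\vec{w}_{2n}) = P^{T} J P$. Because $\{\vec{w}_i\}$ is a basis of $W$, the matrix $P$ is invertible over $R$, so $\det P \in R^\times$. Hence
\begin{equation*}
\det A(\vec{w}_1,\dots,\vec{w}_{2n}) = (\det P)^2 \det J = (\det P)^2,
\end{equation*}
which is a unit.

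It remains to connect this with the Pfaffian as defined in the paper. I would invoke the standard identity $\pf(A)^2 = \det A$ for alternating matrices; this is the cited Knus lemma. Alternatively, one can use the sharper statement $\pf(P^TJP) = \det(P)\,\pf(J)$ together with $\pf(J)=1$. The matrix $A$ is alternating, with zero diagonal and $A^T=-A$, because $\omega$ is skew-symmetric and $\omega(\vec{v},\vec{v})=0$. Since $\pf(A)^2$ is a unit, $\pf(A)$ is a unit too: if $\pf(A)^2 u = 1$, then $\pf(A)\cdot(\pf(A)u)=1$.

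The main obstacle is confirming that the paper's combinatorial definition of $\pf$, a sum over $\Sigma_\Pi$ with $\sgn(\lambda_\nu)$, agrees with the standard Pfaffian, so that the identity $\pf(A)^2=\det A$ applies. The definition sums over perfect matchings written with $i_k<j_k$ and $i_1<\dots<i_n$, with the sign of the associated permutation. This is exactly the usual expansion, so I would cite Knus, or note that both sides are universal integer polynomials in the entries $a_{ij}$ ($i<j$). The identity then holds over $\mathbb{Z}[a_{ij}]$ and specializes to $R$. A further check is that alternating, rather than merely skew-symmetric, is what the identity requires in characteristic $2$; this holds here because $\omega(\vec{v},\vec{v})=0$ for a symplectic form.
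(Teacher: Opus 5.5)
Your proposal is correct and takes essentially the paper's route. The paper proves this lemma only by citing Knus (Lemma 9.2.1), and your argument unpacks that citation through the Gram-matrix change of basis $A = P^{T} J P$, the fact $\det J = 1$, and the identity $\pf(A)^2 = \det A$ (or the sharper $\pf(P^{T}JP) = \det(P)\,\pf(J)$ with $\pf(J)=1$). The one implicit input, that $W$ has a symplectic basis and so $\omega$ is unimodular, is exactly how the paper uses ``symplectic $R$-module''.
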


\begin{lemma} \label{restrictedSummandTransitive}
Suppose $V \subsetneq R^{2n}$ is a restricted summand. Then there exists a basis 
\begin{equation}\label{maybesubset}
B = \{\vec{v}_1, \vec{w}_1, \dots, \vec{v}_{n - 1}, \vec{w}_{n - 1}, \vec{v}_n\}
\end{equation} 
for $V$ such that 
\begin{equation*}
\omega(\vec{v}_{i_1}, \vec{v}_{i_2}) = \omega(\vec{w}_{j_1}, \vec{w}_{j_2}) = 0 \text{ and } \omega(\vec{v}_{i_1}, \vec{w}_{j_1}) = \delta_{i_1 j_1},
\end{equation*}
for all $i_1, i_2 \in \{1, \dots n\}$ and $j_1, j_2 \in \{1, \dots, n - 1 \}$.
\end{lemma}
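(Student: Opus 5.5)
Our plan is to argue that the radical of $\omega|_V$ is a rank-$1$ summand, and that the form becomes symplectic on $V$ modulo this radical; we then split $V$ and read off the basis. First, since $V$ is a restricted summand, $R^{2n}=V\oplus W$ with $W$ of rank $1$, so $V$ has rank $2n-1$. Consider the map $\phi\colon R^{2n}\to V^*$ given by $\vec{u}\mapsto \omega(\vec{u},-)|_V$. Because $\omega$ is non-degenerate and unimodular on $R^{2n}$, the map $R^{2n}\to (R^{2n})^*$ is an isomorphism. Restriction $(R^{2n})^*\to V^*$ is surjective because $V$ is a summand. Hence $\phi$ is surjective, and its kernel is $V^\perp$. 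So $V^\perp$ is a rank-$1$ summand (compare Lemma \ref{IsotropicComplementSummands}), say $V^\perp=R\vec{u}$.

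Next we show $V^\perp\subseteq V$. The restriction $\omega|_V$ is alternating on a module of odd rank $2n-1$, so its Gram matrix has determinant zero. Thus the radical $V\cap V^\perp$ is a nonzero submodule of the rank-$1$ module $V^\perp$. Moreover, $V\cap V^\perp$ is the kernel of $V\to V^*$, a map into a free module, so it is saturated in $V$ and therefore a summand of $V$ (over a PID, a saturated submodule of a f.g.\ free module is a summand). It is then also saturated in $R^{2n}$. A nonzero saturated submodule of the saturated rank-$1$ module $V^\perp$ must equal $V^\perp$. Hence $V^\perp\subseteq V$, and we set $\vec{v}_n:=\vec{u}$.

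Now write $V=U\oplus R\vec{v}_n$, using Lemma \ref{LinearAlgebraSummands}, where $U$ is a summand of rank $2n-2$. We claim $\omega|_U$ is unimodular. The map $V\to V^*$ has kernel $R\vec{v}_n$; we need its image to be a summand of corank $1$, so that $\omega$ induces a perfect pairing on $V/R\vec{v}_n\cong U$. To see this, note that $\omega$ identifies $V^{\perp\perp}=V$ with the annihilator of $V^\perp$, and a dual count shows $V/V^\perp\to (V/V^\perp)^*$ is an isomorphism. Concretely, extend a basis of $V$ to a basis of $R^{2n}$ by Lemma \ref{NormalBasis}, and use Lemma \ref{pfaffianOfDet1}, which says the full Gram matrix has unit Pfaffian. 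Since $\vec{v}_n$ pairs trivially with all of $V$, expanding the Pfaffian along $\vec{v}_n$ gives $\pf = \pm\,\omega(\vec{v}_n,\vec{x})\cdot\pf(A|_U)$, where $\vec{x}$ is the added vector. Hence $\pf(A|_U)$ is a unit, and $U$ is a symplectic module. The standard symplectic Gram–Schmidt over $R$, repeatedly picking $\vec{a},\vec{b}$ with $\omega(\vec{a},\vec{b})=1$ (possible by unimodularity) and passing to the complement, gives a symplectic basis $\vec{v}_1,\vec{w}_1,\dots,\vec{v}_{n-1},\vec{w}_{n-1}$ of $U$. Together with $\vec{v}_n$, which is orthogonal to everything in $V$, this is the required basis $B$.

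The main obstacle is the Pfaffian step, namely checking that the expansion along a row that is zero except in one entry yields exactly $\pm$ that entry times the Pfaffian of the minor. This follows from the definition by noting that only pairings containing $(\vec{v}_n,\vec{x})$ contribute.
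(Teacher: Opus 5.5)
Your proof is correct, but it takes a different route from the paper's.

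\textbf{The paper's route.} The paper inducts on $n$. It extends a basis of $V$ by a vector $\vec{u}_{2n}$ and uses Pfaffian cofactors to produce $\vec{v}_n\in V$ with $\omega(\vec{v}_n,\vec{u}_{2n})=1$. It then splits into cases according to whether $\langle \vec{v}_n,\vec{u}_{2n}\rangle^\perp\subseteq V$. In the harder case it does three things:
\begin{enumerate}
\item applies the inductive hypothesis to the restricted summand $V\cap\langle \vec{v}_n,\vec{u}_{2n}\rangle^\perp$ of $\langle\vec{v}_n,\vec{u}_{2n}\rangle^\perp$;
\item replaces the last hyperbolic pair by one lying inside $V$;
\item takes the orthogonal complement of the resulting symplectic partial basis inside $V$ to obtain the final vector.
\end{enumerate}

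\textbf{Your route.} You identify that final vector intrinsically, as a generator of the radical $V\cap V^\perp=V^\perp$ of $\omega|_V$, and split it off. A single Pfaffian (or determinant) expansion along the row of $\vec{v}_n$ then shows that $\omega|_U$ is unimodular. The standard hyperbolic splitting of unimodular alternating forms over a PID finishes the argument.

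\textbf{Comparison.} Your argument needs no case analysis and no re-adjustment of hyperbolic pairs. It also explains why $\vec{v}_n$ is forced up to a unit: it must generate $V^\perp$, a fact the paper never makes explicit. The paper's route is more hands-on, but its bookkeeping is considerably more delicate.

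\textbf{Minor simplifications.}
\begin{itemize}
\item The containment $V^\perp\subseteq V$ follows at once: the rank-one module $V^\perp$ is isotropic, so $V^\perp\subseteq V^{\perp\perp}=V$. The odd-rank determinant and saturation argument is not needed.
\item The ``dual count'' sentence in your third paragraph is superfluous. The Pfaffian expansion already shows that the Pfaffian of $A|_U$, and hence $\det(A|_U)$, is a unit.
\end{itemize}
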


\begin{proof}
We proceed with induction on $n$. The case $2n = 2$ follows from Lemma \ref{SymplecticBases}, since a restricted summand of $R^2$ is isotropic. Fix $2n > 2$. For all $2m < 2n$, suppose for induction that if $U$ is a restricted summand of $R^{2m}$ then there exists a basis 
\begin{equation*}
\{\vec{v}_1, \vec{w}_1, \dots, \vec{v}_{m - 1}, \vec{w}_{m - 1}, \vec{v}_m\}
\end{equation*} 
for $U$ such that 
\begin{equation*}
\omega(\vec{v}_{i_1}, \vec{v}_{i_2}) = \omega(\vec{w}_{j_1}, \vec{w}_{j_2}) = 0 \text{ and } \omega(\vec{v}_{i_1}, \vec{w}_{j_1}) = \delta_{i_1 j_1},
\end{equation*}
for all $i_1, i_2 \in \{1, \dots m\}$ and $j_1, j_2 \in \{1, \dots, m - 1 \}$.\\
Let $V$ be a restricted summand of $R^{2n}$ for the inductive step. Choose a basis $\{\vec{u}_1, \dots, \vec{u}_{2n - 1}\}$ for $V$ and extend it to a basis $\{\vec{u}_1, \dots, \vec{u}_{2n}\}$ for $R^{2n}$ using Lemma \ref{NormalBasis}. We will construct a unimodula vector $\vec{v} \in V$ such that $\omega(\vec{v}, \vec{u}_{2n}) = 1$ using the Pfaffian of $A(\vec{u}_1, \dots, \vec{u}_{2n})$. This allows us to apply the inductive hypothesis on the symplectic $\Span{\vec{v}, \vec{u}_{2n}}^\perp$ which is of rank $2n - 2$.\\
Lemma \ref{pfaffianOfDet1} implies that the Pfaffian of $A(\vec{u}_1, \dots, \vec{u}_{2n})$ equals a unit $q$. Each term of the Pfaffian of $A(\vec{u}_1, \dots, \vec{u}_{2n})$ is a product of $n$-many symplectic forms and corresponds to some partition $\nu$. $\omega(\vec{u}_i, \vec{u}_{2n})$ is a term of each product for some $i \in \{1, \dots, 2n - 1\}$. For $i \in \{1, \dots, n\}$, let $\nu_i$ to be the partition that contains the term $(i, 2n)$ and set
\begin{equation*}
a_i = q^{-1} \cdot \left(\sum_{\lambda_{\nu_i} \in \Sigma_\Pi} \sgn(\lambda_{\nu_i})  ~  \prod_{\substack{k = 0, \\ \lambda_{\nu_i}(2k + 1) \neq i}}^{n - 1} \omega(\vec{u}_{\lambda_{\nu_i}(2k + 1)}, \vec{u}_{\lambda_{\nu_i}(2k + 2)})   \right).
\end{equation*}
It follows that
\begin{equation*}
\sum_{i = 1}^{2n - 1} a_i \cdot \omega(\vec{u}_i, \vec{u}_{2n}) = \omega \left( \sum_{i = 1}^{2n - 1} a_i \vec{u}_i, \vec{u}_{2n}   \right) = 1.
\end{equation*}
Set $\vec{w}_n = \vec{u}_{2n}$ and 
\begin{equation*}
\vec{v}_n = \sum_{i = 1}^{2n - 1} a_i \vec{u}_i.
\end{equation*}
Since $\gcd(\{a_i\}_{i = 1}^{n - 1}) = 1$, $\vec{v}_n$ is unimodular. We consider two cases: either $\Span{\vec{v}_n, \vec{w}_n}^\perp \subset V$ or not. If $\Span{\vec{v}_n, \vec{w}_n}^\perp \subset V$, then we use Lemma \ref{SymplecticBases} to complete the partial symplectic basis $\{\vec{v}_n, \vec{w}_n\}$ to a symplectic basis $\{\vec{v}_1, \vec{w}_1, \dots, \vec{v}_n, \vec{w}_n\}$ for $R^{2n}$ such that $\{\vec{v}_1, \vec{w}_1, \dots,\vec{v}_{n - 1}, \vec{w}_{n - 1}, \vec{v}_n\}$ is a basis for $V$.\\
If instead $\Span{\vec{v}_n, \vec{w}_n}^\perp \not \subseteq V$, then $V \cap \Span{\vec{v}_n, \vec{w}_n}^\perp$ is a restricted summand of $\Span{\vec{v}_n, \vec{w}_n}^\perp$. By the induction hypothesis, there exists a basis $\{\vec{x}_1, \vec{y}_1, \dots, \vec{x}_{n - 1}, \vec{y}_{n - 1}\}$ for $\Span{\vec{v}_n, \vec{w}_n}^\perp$ such that $\{\vec{x}_1, \vec{y}_1, \dots, \vec{x}_{n - 1}\}$ is a basis for $V \cap \Span{\vec{v}_n, \vec{w}_n}^\perp$ and 
\begin{equation*}
\omega(\vec{x}_i, \vec{x}_j) = \omega(\vec{y}_i, \vec{y}_j) = 0 \text{ and } \omega(\vec{x}_i, \vec{y}_j) = \delta_{ij},
\end{equation*}
for $i, j \in \{1, \dots, n - 1\}$.
\begin{equation*}
\{\vec{x}_1, \vec{y}_1, \dots, \vec{x}_{n - 1}, \vec{y}_{n - 1}, \vec{v}_n, \vec{w}_n\}
\end{equation*}
is a symplectic basis for $R^{2n}$. But $\{\vec{x}_1, \vec{y}_1, \dots, \vec{y}_{n - 2}, \vec{x}_{n - 1}, \vec{v}_n\}$ is a partial basis for $V$ and $\{\vec{y}_{n - 1}, \vec{w}_n\} \not \in V$. We remedy this issue by constructing a symplectic pair of vectors in $V$ from $\{\vec{x}_{n - 1}, \vec{y}_{n - 1}, \vec{v}_n, \vec{w}_n\}$ to then use the symplectic partial basis for $V$ to find a vector in $V$ that lies in the complement of this new symplectic partial basis for $V$.\\
Since $V$ is corank-1, then $\Span{\vec{y}_{n - 1}, \vec{w}_n} \cap V = \Span{b_1 \vec{y}_{n - 1} + b_2 \vec{w}_n}$ for some $b_1, b_2 \in R$. Set $\vec{y}_{n - 1}' = b_1 \vec{y}_{n - 1} + b_2 \vec{w}_n$. Since $\omega(\vec{x}_{n - 1}, \vec{y}_{n - 1}') = b_1$, $\omega(\vec{v}_n, \vec{y}_{n - 1}') = b_2$, and $\gcd(b_1, b_2)$ divides a unit, there exists $c_1, c_2 \in R$ such that 
\begin{equation*}
\omega(c_1 \vec{x}_{n - 1} + c_2 \vec{v}_n, \vec{y}_{n - 1}') = b_1c_1 + b_2c_2 = 1. 
\end{equation*}
Set $\vec{x}_{n - 1}' = c_1 \vec{x}_{n - 1} + c_2 \vec{v}_n$. The symplectic partial basis $\{\vec{x}_1, \vec{y}_1, \dots, \vec{x}_{n - 2}, \vec{y}_{n - 2}, \vec{x}_{n - 1}', \vec{y}_{n - 1}'\}$ is a partial basis of $V$. Since $V$ is corank-1,
\begin{equation*}
\Span{\vec{x}_1, \vec{y}_1, \dots, \vec{x}_{n - 2}, \vec{y}_{n - 2}, \vec{x}_{n - 1}', \vec{y}_{n - 1}'}^\perp \cap V = \Span{\vec{x}_n'},
\end{equation*}
for some $\vec{x}_n' \in V$. The basis $\{\vec{x}_1, \vec{y}_1, \dots, \vec{x}_{n - 2}, \vec{y}_{n - 2}, \vec{x}_{n - 1}', \vec{y}_{n - 1}', \vec{x}_n'\}$ of $V$ satisfies the conditions for Lemma \ref{SymplecticBases}. This concludes the inductive step and the proof.  
\end{proof}

The next proposition follows immediately from Lemmas \ref{SymplecticBases} and \ref{restrictedSummandTransitive}. 

\begin{proposition}\label{transitivityProved}
The group of symplectic automorphisms of $R^{2n}$, $\Sp_{2n}(R)$, acts transitively on the set of restricted summands of $R^{2n}$. 
\end{proposition}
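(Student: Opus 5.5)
The plan is to reduce transitivity to a normal form: show that every restricted summand $V \subsetneq R^{2n}$ is the image of one fixed \emph{standard} restricted summand under some element of $\Sp_{2n}(R)$. We take the standard one to be
\begin{equation*}
V_0 := \Span{\vec{e}_1, \vec{f}_1, \dots, \vec{e}_{n-1}, \vec{f}_{n-1}, \vec{e}_n}.
\end{equation*}
Once every restricted summand is carried to $V_0$, any two restricted summands $V, V'$ are related by $g'^{-1} g$, where $g(V) = V_0$ and $g'(V') = V_0$. This gives transitivity.

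First, given a restricted summand $V$, apply Lemma \ref{restrictedSummandTransitive}. It yields a basis
\begin{equation*}
B = \{\vec{v}_1, \vec{w}_1, \dots, \vec{v}_{n-1}, \vec{w}_{n-1}, \vec{v}_n\}
\end{equation*}
of $V$ satisfying exactly the symplectic relations required by Lemma \ref{SymplecticBases}, in the case $m = n-1$, $k = 1$. Since $V$ is a summand by definition, Lemma \ref{SymplecticBases} then extends $B$ to a full symplectic basis
\begin{equation*}
\{\vec{v}_1, \vec{w}_1, \dots, \vec{v}_n, \vec{w}_n\}
\end{equation*}
of $R^{2n}$.

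Next, use the definition of a symplectic basis. There is a symplectic automorphism $g \in \Sp_{2n}(R)$ with $g(\vec{v}_i) = \vec{e}_i$ and $g(\vec{w}_i) = \vec{f}_i$ for all $i$. In particular $g$ maps the basis $B$ of $V$ onto the basis $\{\vec{e}_1, \vec{f}_1, \dots, \vec{e}_{n-1}, \vec{f}_{n-1}, \vec{e}_n\}$ of $V_0$, so $g(V) = V_0$.

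Finally, check that the action is well defined on the set of restricted summands, so that transitivity is a statement about that set. If $V \oplus W = R^{2n}$ with $W$ a rank-$1$ summand, then $hV \oplus hW = R^{2n}$ for any $h \in \Sp_{2n}(R)$, and $hW$ is again a rank-$1$ summand. So $\Sp_{2n}(R)$ permutes restricted summands, and $V_0$ is itself restricted (take $W = \Span{\vec{f}_n}$).

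All of the real content has already been handled in Lemma \ref{restrictedSummandTransitive}, so I do not expect a genuine obstacle here. The one point needing care is matching the hypotheses of Lemma \ref{SymplecticBases}: the basis must be indexed so that the unpaired vector $\vec{v}_n$ pairs trivially with all the others, which is exactly what Lemma \ref{restrictedSummandTransitive} provides. The edge case $n = 1$ is the degenerate case $m = 0$, $k = 1$, where $V$ has rank $1$ and is trivially isotropic.
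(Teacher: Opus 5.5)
Your proposal is correct and is the paper's own argument. The paper deduces the proposition directly from Lemma \ref{restrictedSummandTransitive}, which gives an adapted basis of $V$, and Lemma \ref{SymplecticBases}, which extends it to a symplectic basis; you have simply made explicit the resulting element carrying $V$ to the standard summand $V_0$, and the composite $g'^{-1}g$ relating any two restricted summands.
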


\subsection{Posets and simplicial complexes}

We use the topological properties of several posets and the simplicial complexes associated to these posets to study the symplectic Steinberg module. Let $A$ be a poset. If $a_0 < \dots < a_k = a$ is a maximal chain ending with $a \in A$, then the \emph{height of $a$} is $k$. We work exclusively with posets whose elements have finite height. Let $\height\colon A \rightarrow \mathbb{Z}$ denote the height function that takes an element $a$ to its height. The \emph{dimension of $A$} is the maximum height attained by any $a \in A$. For $a \in A$, we call the subposet $A_{< a}:= \{b \in A \mid b < a\}$ the \emph{lower link of $a$}, we call the subposet $A_{> a}:= \{ b \in A \mid b > a\}$ the \emph{upper link of $a$}, and if $a < a' \in A$ then the subposet $A_{> a} \cap A_{< a'}:= \{b \in A \mid a < b < a'\}$ is the \emph{interval from $a$ to $a'$}.\\
The simplicial complex associated to $A$, denoted $\simp(A)$, is the simplicial complex whose vertices correspond to elements $a \in A$ and whose $k$-simplices correspond to chains of $(k + 1)$-many elements: $a_0 < \dots < a_k$. If $\sigma \in \simp(A)$ is a $k$-simplex that corresponds to $a_0 < \dots < a_k$, then the \emph{link of $\sigma$}, denoted $\lk_{\simp(A)}(\sigma)$, is the full sub-complex of $\simp(A)$ whose vertices correspond to $b \in A$ such that $b < a_0$, $b > a_k$ or $a_i < b < a_{i + 1}$ for $i \in \{0, \dots, k - 1\}$. And whose $k$-simplices are spanned by $(k + 1)$-many of these vertices. If $F\colon A \rightarrow B$ is a non-decreasing poset map, then the \emph{simplicial map associated to $F$}, $\simp(F)\colon \simp(A) \rightarrow \simp(B)$, is a simplicial map that sends a $k$-simplex associated to a chain $a_0 < \dots < a_k$ to $k$-simplex associated to the chain $F(a_0) < \dots < F(a_k)$.\\
The \emph{geometric realization} of $A$, denoted $\lvert A \rvert$, is isomorphic to the geometric realization of $\simp(A)$. For simplicity, when we say that a poset $A$ or the simplicial complex associated to $A$ has a topological property, we mean that $\lvert A \rvert$ has that property. For example, we write $\hh_*(A)$ when we mean $\hh_*(\lvert A \rvert)$. We make the convention that all posets and simplicial complexes are at least $(-2)$-connected and non-empty posets and simplicial complexes are always at least $(-1)$-connected.

\begin{definition}
A poset $A$ is \emph{Cohen--Macaulay} (abbreviated CM) of dimension $n$ if 
\begin{itemize}[nosep, topsep=-0.5cm]
\item $A$ is $n$-dimensional and $(n - 1)$-connected;
\item if for every $a \in A$ the lower link of $a$ is $(\height(a) - 1)$-dimensional and $(\height(a) - 2)$-connected, and the upper link of $a$ is $(n - \height(a) - 1)$-dimensional and $(n - \height(a) - 2)$-connected; 
\item and if for every pair $a < a'$ in $A$, the interval from $a$ to $a'$ is $(\height(a') - \height(a) - 2)$-dimensional and $(\height(a') - \height(a) - 3)$-connected.
\end{itemize}
\end{definition}

\begin{definition}
A simplicial complex $X$ is \emph{Cohen--Macaulay} of dimension $n$ if 
\begin{itemize}[nosep, topsep=-0.5cm]
\item $X$ is $n$-dimensional and $(n - 1)$-connected;
\item and if for every $(k - 1)$-simplex $\sigma \in X$ the link of $\sigma$ is $(n - k)$-dimensional and $(n - k - 1)$-connected.
\end{itemize}
\end{definition}

\begin{remark} \label{PosetToSimpLink}
If $\sigma \in \simp(A)$ is a $(k - 1)$-simplex associated to the chain $a_0 < \dots < a_{k - 1}$, then 
\begin{equation*}
\lk_{\simp(A)}(\sigma) \cong \simp(A_{< a_0}) * \simp(A_{> a_0} \cap A_{< a_1}) * \dots * \simp(A_{> a_{k - 2}} \cap A_{< a_{k - 1}}) * \simp(A_{> a_{k - 1}}). 
\end{equation*}
\end{remark}

A poset map $F\colon A \rightarrow B$ is \emph{$n$-connected} if the induced maps $\pi_i(A) \rightarrow \pi_i(B)$ are isomorphisms for $i < n$ and a surjection when $i = n$. This implies that the induced maps $\rh_i(A) \rightarrow \rh_i(B)$ are isomorphisms for $i < n$ and a surjection when $i = n$. We present several propositions and theorems used throughout the article that pertain to poset maps.

\begin{proposition} \textup{\cite[Proposition 1.6]{Q78}} \label{posetmaphomotopy2}
Let $F\colon A \rightarrow B$ be a poset map. If $F_{\leq b}$ is contractible for all $b \in B$, then $F$ is a homotopy equivalence. 
\end{proposition}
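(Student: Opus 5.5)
We reduce to Quillen's Theorem A: define a map $G\colon B \to A$ and show that $F$ and $G$ induce mutually inverse homotopy equivalences on realizations. Since $F_{\leq b} = \{a \in A \mid F(a) \leq b\}$ need not have a maximum, we do not build $G$ directly. Instead we compare both $|A|$ and $|B|$ to an auxiliary poset that fibers over each.

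\textbf{Auxiliary poset.} Let $M = \{(a,b) \in A \times B \mid F(a) \leq b\}$, ordered by $(a,b) \leq (a',b')$ iff $a \leq a'$ and $b \leq b'$. This is a poset because $F$ is monotone. It has two projections:
\begin{itemize}
\item $p_A\colon M \to A$, sending $(a,b)\mapsto a$;
\item $p_B\colon M \to B$, sending $(a,b)\mapsto b$.
\end{itemize}

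\textbf{Step 1: $p_A$ is a homotopy equivalence.} The inclusion $s\colon A \to M$, $a \mapsto (a, F(a))$, satisfies $p_A s = \mathrm{id}_A$. Moreover $s p_A(a,b) = (a,F(a)) \leq (a,b)$ for every $(a,b) \in M$. Comparable poset maps are homotopic, since a natural inequality $f \leq g$ yields a homotopy via the poset $M \times \{0<1\}$. Hence $s p_A \simeq \mathrm{id}_M$, and $p_A$ is a homotopy equivalence.

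\textbf{Step 2: $p_B$ is a homotopy equivalence.} Consider the subposet $M_b := \{(a,b') \in M \mid b' \leq b\}$. It deformation retracts onto $\{(a,b) \mid F(a) \leq b\} \cong F_{\leq b}$ through the monotone map $(a,b') \mapsto (a,b)$, which is $\geq$ the identity. Hence $M_b$ is contractible by hypothesis. These sets $M_b$ are exactly the fibers $(p_B)_{\leq b}$ in the sense of Quillen. We then carry out the standard argument for a map all of whose lower fibers are contractible: build a homotopy inverse cell by cell, by induction over the simplices of $\simp(B)$. For a chain $b_0<\dots<b_k$, we extend a map $\Delta^k \to |M|$ into $|M_{b_k}|$, which is possible because $|M_{b_k}|$ is contractible. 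This gives a section up to homotopy. Checking that the composite $M \to M$ is homotopic to the identity again uses contractibility of the $M_b$. Equivalently, one may cite the nerve-theorem form: $|M| = \bigcup_b |p_B^{-1}(B_{\leq b})|$, glued over the poset $B$ with contractible pieces, is homotopy equivalent to $|B|$.

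\textbf{Step 3: conclusion.} Since $F\circ p_A \leq p_B$, we have $F \circ p_A \simeq p_B$. Therefore $|F| \simeq |p_B| \circ |p_A|^{-1}$, which is a homotopy equivalence.

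\textbf{Main obstacle.} The main obstacle is Step 2: proving that contractible fibers give a homotopy equivalence. This is the real content of Quillen's fiber lemma and requires either the cellular induction or a Segal-type realization lemma for a diagram of spaces indexed by $B$. I would try the nerve/homotopy-colimit argument first. It amounts to showing that $\operatorname{hocolim}_{b\in B} |F_{\leq b}| \simeq |A|$, and that a homotopy colimit of contractible spaces over $B$ is homotopy equivalent to $|B|$.
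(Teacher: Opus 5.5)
\textbf{Gap: Step 2 is the proposition itself.} The paper does not prove this statement. It imports it from Quillen, where it is the poset case of Quillen's Theorem A, so ``reducing to Theorem A'' amounts to citing the result. Read as a self-contained proof, your argument is circular. Steps 1 and 3 are correct, but Step 2 is exactly the proposition applied to $p_B\colon M\to B$ in place of $F\colon A\to B$. Its hypothesis has the same form: you show $(p_B)_{\leq b}=M_b\simeq F_{\leq b}$ is contractible. Nothing about $p_B$ makes this easier. In particular $M_b$ has no maximum in general, so the obstruction you cite for building $G$ directly is still present. The auxiliary poset $M$ therefore reduces the problem to itself. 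All the real content sits in the one-sentence sketch of Step 2. That sketch also asserts $p_B\circ g\simeq\mathrm{id}$ (``a section up to homotopy'') without justification. The alternative via Thomason's homotopy-colimit theorem relies on a result at least as deep as the statement.

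\textbf{How to close it.} Your cell-by-cell idea is the right one, and once written out it applies verbatim to $F$, so $M$ is unnecessary. By induction over the simplices of $\simp(B)$, build a continuous map $g\colon\lvert B\rvert\to\lvert A\rvert$ with $g(\lvert\sigma\rvert)\subseteq\lvert F_{\leq\max\sigma}\rvert$. Every face $\tau$ of $\sigma$ has $\max\tau\leq\max\sigma$, so $g(\partial\sigma)$ already lies in the contractible space $\lvert F_{\leq\max\sigma}\rvert$ and extends over $\sigma$. No maximum of $F_{\leq b}$ is needed, because $g$ need not be a poset map. Now compare the composites with the identities:
\begin{itemize}
\item $\lvert F\rvert\circ g$ and $\mathrm{id}_{\lvert B\rvert}$ both send $\lvert\sigma\rvert$ into the cone $\lvert B_{\leq\max\sigma}\rvert$;
\item $g\circ\lvert F\rvert$ and $\mathrm{id}_{\lvert A\rvert}$ both send a chain $\tau$ with top element $a$ into $\lvert F_{\leq F(a)}\rvert$.
\end{itemize}
Both assignments are monotone under passing to faces and contractible-valued. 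The carrier lemma then gives both homotopies: two maps carried by the same such carrier are homotopic, by the same skeletal induction on $\lvert K\rvert\times[0,1]$. This argument, applied directly to $F$, is what your proof should consist of.
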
  

\begin{proposition} \textup{\cite[Property 1.3]{Q78}} \label{posetmaphomotopy}
Let $F, G\colon A \rightarrow B$ be poset maps. If $F(a) \leq G(a)$ for all $a \in A$, then $F$ and $G$ are homotopic. 
\end{proposition}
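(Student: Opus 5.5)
This is Quillen's homotopy property: if $F(a) \leq G(a)$ for all $a \in A$, then $F$ and $G$ are homotopic. The plan is to build an explicit poset-level homotopy. Let $I = \{0 < 1\}$ be the two-element poset, whose realization $\lvert I \rvert$ is the unit interval. Define
\begin{equation*}
H\colon A \times I \rightarrow B, \qquad H(a, 0) = F(a), \quad H(a, 1) = G(a).
\end{equation*}
We must check that $H$ is a poset map for the product order. Suppose $(a, \epsilon) \leq (a', \epsilon')$, so that $a \leq a'$ and $\epsilon \leq \epsilon'$. If $\epsilon = \epsilon'$, monotonicity of $H$ is just monotonicity of $F$ or of $G$. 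The only remaining case is $\epsilon = 0$ and $\epsilon' = 1$. Then
\begin{equation*}
H(a, 0) = F(a) \leq F(a') \leq G(a') = H(a', 1),
\end{equation*}
using first that $F$ is a poset map and then the hypothesis. So $H$ is a poset map and induces a continuous map $\lvert H \rvert\colon \lvert A \times I \rvert \rightarrow \lvert B \rvert$.

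Next we identify $\lvert A \times I \rvert$ with $\lvert A \rvert \times \lvert I \rvert \cong \lvert A \rvert \times [0,1]$. This uses the standard fact that geometric realization of order complexes commutes with finite products of posets: the chains of $A \times I$ give the staircase triangulation of the product of simplices. We equip the product with the compactly generated topology if $A$ is infinite. This identification is the one genuinely technical input, and I would cite it as standard rather than reprove it.

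Finally, the inclusions $A \cong A \times \{0\} \hookrightarrow A \times I$ and $A \cong A \times \{1\} \hookrightarrow A \times I$ realize to the inclusions of $\lvert A \rvert \times \{0\}$ and $\lvert A \rvert \times \{1\}$. The composites of these inclusions with $H$ are $F$ and $G$. Hence $\lvert H \rvert$ is a homotopy from $\lvert F \rvert$ to $\lvert G \rvert$.
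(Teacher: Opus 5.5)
Your proof is correct and is the standard argument behind Quillen's Property 1.3, which the paper only cites: the hypothesis makes $(a,0)\mapsto F(a)$, $(a,1)\mapsto G(a)$ a poset map $A \times \{0<1\} \to B$, and the identification $\lvert A \times \{0<1\}\rvert \cong \lvert A \rvert \times [0,1]$ turns its realization into a homotopy from $F$ to $G$. As a minor point, $[0,1]$ is compact, so the product topology already agrees with the CW topology and the compactly generated caveat is not needed.
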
  

\begin{theorem} \textup{\cite[Corollary 9.7]{Q78}} \label{Q78Cor9.7}
Let $F\colon A \rightarrow B$ be a strictly increasing map of posets. Assume that $B$ is CM of dimension $n$ and that for all $b \in B$, the fiber $F_{\leq b}$ is $(\height(b) - 1)$-connected. Then $A$ is CM of dimension $n$. 
\end{theorem}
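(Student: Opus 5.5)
The plan is to run the argument by induction on $n$, splitting it into a global connectivity statement for $A$ and local statements for its links and intervals. Each local statement will be reduced to an instance of the same theorem in lower dimension.

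\textbf{Dimension and heights.} Since $F$ is strictly increasing, a chain $a_0<\dots<a_k$ in $A$ maps to a chain $F(a_0)<\dots<F(a_k)$ of the same length in $B$. Hence $\height_A(a)\le \height_B(F(a))$ and $\dim A\le n$. To get equality, and later purity, I will use the connectivity of the fibers. If $\height(b)=k$, then $F_{\le b}$ is $(k-1)$-connected. It therefore has nontrivial $\rh_{k-1}$ or is contractible of dimension $\ge k-1$, which must be checked to force a chain of length $k$ in $F_{\le b}$. For maximal $b$ this gives chains of length $n$, so $\dim A=n$.

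\textbf{Global connectivity.} Filter $B$ by height, $B_{\le k}:=\{b:\height(b)\le k\}$, and set $A_k:=F^{-1}(B_{\le k})$. Because $F$ is strictly increasing, the elements of $A$ lying over a fixed $b$ form an antichain. Thus $A_k$ is obtained from $A_{k-1}$ by attaching, for each $b$ of height $k$, the space $F_{\le b}$ along $F_{<b}$, and these attachments are disjoint away from $A_{k-1}$. I will compare this with the analogous skeletal filtration of $B$, where $B_{\le b}$ (a cone) is attached along $B_{<b}$. A Mayer--Vietoris / spectral sequence argument (the homotopy-colimit form of Quillen's fiber lemma) then shows that $F$ is $n$-connected. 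The inputs are that the fibers $F_{\le b}$ are $(\height(b)-1)$-connected and that $B_{>b}$ is $(n-\height(b)-2)$-connected by the CM property of $B$. Since $B$ is $(n-1)$-connected, $A$ is $(n-1)$-connected. The bookkeeping is the standard one: the $E^1$ term is concentrated where the combined connectivity $(\height(b)-1)+(n-\height(b)-1)+1=n-1$ forces vanishing below degree $n$.

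\textbf{Links and intervals.} For $a\in A$ with $F(a)=b_0$, I will apply the inductive hypothesis to the restricted strictly increasing maps
\begin{equation*}
A_{>a}\to B_{>b_0},\qquad A_{<a}\to B_{<b_0},\qquad A_{>a}\cap A_{<a'}\to B_{>F(a)}\cap B_{<F(a')}.
\end{equation*}
The targets are CM of the correct smaller dimensions because $B$ is CM. The remaining work is to verify the fiber hypothesis for the restricted maps, e.g.\ that $A_{>a}\cap F_{\le b}$ is $(\height(b)-\height(b_0)-2)$-connected. My approach is to first prove, by induction on $\height(b)$, that each $F_{<b}$ and each $F_{\le b}$ is itself CM. This uses the inductive hypothesis applied to $F_{<b}\to B_{<b}$, whose target has dimension $\height(b)-1$. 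Once that is in hand, links inside $F_{\le b}$ are highly connected, and these links are exactly the restricted fibers.

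The main obstacle is this last step. The stated hypothesis only controls the fibers $F_{\le b}$ globally, not their intersections with upper links $A_{>a}$. Bridging that gap requires the auxiliary claim that the fibers are CM, and that claim has to be threaded through the induction carefully. In particular, the equality $\height_A(a)=\height_B(F(a))$ must be established before the connectivity bounds for the links come out in the right degrees. If that route stalls, my fallback is Quillen's homological characterization of CM posets via local homology of links, proved by the same spectral sequence applied relatively.
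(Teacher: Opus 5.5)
You flagged the right trouble spots, but the gap cannot be closed. As transcribed here, with fibers only assumed $(\height(b)-1)$-connected, the statement is false.

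\emph{First counterexample.} Take $B=\{b_0<b_1\}$, which is CM of dimension $1$, and $A=\{a\}$ with $F(a)=b_0$. Both fibers $F_{\le b_0}=F_{\le b_1}=\{a\}$ are contractible, so every hypothesis holds. Yet $A$ is $0$-dimensional and $A_{>a}=\emptyset$. This is exactly where your dimension step breaks: a $(k-1)$-connected fiber may be contractible of small dimension, so nothing forces a chain of length $k$ in $F_{\le b}$, or even $F^{-1}(b)\neq\emptyset$. Also, $(k-1)$-connectedness already gives $\widetilde{H}_{k-1}(F_{\le b})=0$, so the dichotomy you wrote is empty.

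\emph{Second counterexample.} Adding dimension, or even purity, of the fibers does not rescue your link step. Let $B=\{b_0<b_1<b_2\}$, a chain and hence CM of dimension $2$. Let $A=\{a_1,a_2,c_1,c_2,d_1,d_2\}$ with the order generated by $a_1,a_2<c_1<d_1$ and $a_1<c_2<d_2$, and set $F(a_i)=b_0$, $F(c_i)=b_1$, $F(d_i)=b_2$. The fibers are as follows: $F_{\le b_0}=\{a_1,a_2\}$; $F_{\le b_1}$ is the path $a_2c_1a_1c_2$; and $F_{\le b_2}=A$ is a disk with a triangle wedged on at $a_1$. All three are pure of dimension $\height(b)$ and contractible. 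But $A_{>a_1}=\{c_1<d_1\}\sqcup\{c_2<d_2\}$ is disconnected, so neither $A$ nor $F_{\le b_2}$ is CM. Hence your auxiliary claim that every $F_{\le b}$ is CM does not follow from the hypotheses, and $A_{>a}\cap F_{\le b}$ really is uncontrolled. No spectral-sequence or local-homology fallback can help, since the conclusion itself fails.

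What is missing is a hypothesis, not an argument. The correct hypothesis, which is the one in Quillen's Corollary 9.7, is that each fiber $F_{\le b}$ is CM of dimension $\height(b)$. The paper only cites Quillen, and its one application supplies exactly this: the fibers there are $\posety(\BP(U))$, CM of dimension $\height(U)$ by Maazen.

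Under that hypothesis your outline works, and the induction on fibers becomes unnecessary:
\begin{itemize}
\item \emph{Heights, lower links, intervals.} Since $F$ is strictly increasing, $a$ is maximal in $F_{\le F(a)}$ and $A_{<a}=(F_{\le F(a)})_{<a}$. Purity of that fiber gives $\height_A(a)=\height(F(a))$, and lower links and intervals of $A$ are lower links and intervals inside CM fibers.
\item \emph{Upper links.} $A_{>a}\cap F_{\le b}=(F_{\le b})_{>a}$ is an upper link in a CM poset, hence $(\height(b)-\height(F(a))-2)$-connected. Apply your global fiber argument (the van der Kallen--Looijenga theorem quoted in the paper, with $t(b)=\height(b)-\height(F(a))$) to $A_{>a}\to B_{>F(a)}$ to get the required connectivity. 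The fibers over height-$n$ elements above $F(a)$ give the exact dimension.
\item \emph{Global connectivity.} Your treatment of the global connectivity of $A$ is fine as it stands.
\end{itemize}
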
   

\begin{proposition} \textup{\cite[Proposition 2.3]{ChurchPutnam17}} \label{CPQuillenFiber}
Let $F\colon A \rightarrow B$ be a map of posets. Assume that $B$ is CM of dimension $n$ and that for all $b \in B$, the fiber $F_{\leq b}$ is $(\height(b) - 1)$-connected. Then $F_*\colon \rh_n(A) \rightarrow \rh_n(B)$ is a surjection. 
\end{proposition}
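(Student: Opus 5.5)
The statement to prove is Proposition \ref{CPQuillenFiber}: if $F\colon A \to B$ is a poset map, $B$ is CM of dimension $n$, and every fiber $F_{\leq b}$ is $(\height(b)-1)$-connected, then $F_*\colon \rh_n(A) \to \rh_n(B)$ is surjective. I would argue through a chain-level lift, using a spectral sequence (or induction on skeleta) in the spirit of Quillen's fiber lemma, while keeping track only of surjectivity in top degree. The guiding picture is that, because $B$ is CM, $\rh_n(B)$ is generated by cycles supported on chains of length $n$. I want to lift each such cycle simplex by simplex through the fibers.

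\textbf{Step 1: build a chain map $\phi$ by induction on height.} For each $b \in B$ choose an $(\height(b))$-chain in $\simp(F_{\leq b})$. Concretely, I aim for a map $\phi\colon C_*(\simp(B)) \to C_*(\simp(A))$, defined on each simplex $b_0<\dots<b_k$, whose image is supported in $F_{\leq b_k}$ and which satisfies $\partial\phi = \phi\partial$. The construction goes by induction on $k$:
\begin{itemize}
\item For $k=0$, the fiber $F_{\leq b_0}$ is $(\height(b_0)-1)$-connected, hence nonempty, so I send $b_0$ to a point of it.
\item For $k \ge 1$, the chain $\phi(\partial\sigma)$ is a $(k-1)$-cycle supported in $F_{\leq b_k}$, since all faces lie under $b_k$. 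The reduced version of this cycle has augmentation zero when $k=1$.
\item Because $\height(b_k) \ge k$, the fiber $F_{\leq b_k}$ is $(k-1)$-connected, so $\rh_{k-1}(F_{\leq b_k})=0$. The cycle therefore bounds, and I define $\phi(\sigma)$ to be a chosen filling.
\end{itemize}
This works up to $k=n$, because every simplex of $\simp(B)$ has length at most $n$.

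\textbf{Step 2: compare $F_*\circ\phi$ with the identity.} The composite $F_*\phi(\sigma)$ is supported in $\simp(B_{\leq b_k})$, which is a cone and hence contractible. By the standard acyclic carrier argument, $F_*\circ\phi$ is chain homotopic to the identity. More precisely, both $F_*\circ\phi$ and the identity are carried by the acyclic carrier $\sigma \mapsto \simp(B_{\leq \max\sigma})$, so they induce the same map on homology. Hence every class in $\rh_n(B)$ equals $F_*[\phi(z)]$, and $F_*$ is surjective.

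\textbf{Main obstacle.} The delicate point is the connectivity bookkeeping at the top step $k=n$. There the simplex has top element $b_n$ of height exactly $n$, so the fiber is only $(n-1)$-connected, and filling an $(n-1)$-cycle requires exactly that much connectivity. I would check this carefully, including the reduced/augmented convention at $k=1$ and the convention that empty posets are $(-2)$-connected. Notably, the CM hypothesis on $B$ is used only to ensure $\dim B = n$ and that heights bound chain lengths, i.e.\ that $\height(b_k)\ge k$ for the top element of any $k$-chain.
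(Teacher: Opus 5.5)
Your argument is correct, and it takes a different and more elementary route than the one the paper relies on. The paper gives no proof of its own: it imports the statement from Church--Putman \cite{ChurchPutnam17}, where it is a variant of Quillen's \cite[Theorem 9.1]{Q78}.

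Within the paper's own toolkit, the quickest derivation is van der Kallen--Looijenga's Theorem \ref{VdKaL11Cor2.2v2} with $t(b)=\mathrm{ht}(b)+1$. The CM hypothesis makes each upper link $B_{>b}$ $(n-\mathrm{ht}(b)-2)$-connected, so $F$ is $n$-connected and in particular surjective on $\widetilde{H}_n$. That route needs the connectivity of the upper links, which is where CM-ness of $B$ genuinely enters. In exchange it also gives isomorphisms on $\pi_i$ for $i<n$.

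Your chain-level section never looks at upper links. A $k$-simplex of $\simp(B)$ with top element $b$ has $k\le \mathrm{ht}(b)$, so $F_{\le b}$ is $(k-1)$-connected exactly when you need to fill $\phi(\partial\sigma)$. The acyclic carrier $\sigma\mapsto\simp(B_{\le\max\sigma})$ then gives $F_*\circ\phi\simeq\mathrm{id}$. This proves more than the statement: $F_*$ is split surjective on $\widetilde{H}_i$ for every $i$. Running the same induction with maps of spaces even gives a homotopy right inverse of $|F|$. The CM hypothesis is used only to know $\dim B=n$ and to make heights unambiguous. Since Proposition \ref{surjectivityProposition} needs only surjectivity in degree $n$, your argument would serve the paper equally well.

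Three cosmetic points:
\begin{itemize}
\item Drop the opening sentence of Step 1, about choosing an $\mathrm{ht}(b)$-chain for each $b$. It does not match the construction you actually carry out, which is defined on simplices.
\item The step $k=n$ is not delicate. It uses the same inequality $\mathrm{ht}(b_k)\ge k$ as every other stage.
\item Say explicitly that $F$ need not be strictly increasing. Chains on which $F$ is not injective map to degenerate simplices and hence to $0$ under the induced chain map; this does not affect the carrier argument.
\end{itemize}
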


\begin{theorem} \textup{\cite[Corollary 2.2]{VanDerKallenLooijenga}} \label{VdKaL11Cor2.2v2}
Let $F\colon A \rightarrow B$ be a map of posets, $n \in \mathbb{Z}$ and $t\colon B \rightarrow \mathbb{Z}$. Assume that for every $b \in B$, the poset fiber $F_{\leq b}$ is $(t(b) - 2)$-connected and that $B_{> b}$ is $(n - t(b) - 1)$-connected. Then the map $F$ is $n$-connected. 
\end{theorem}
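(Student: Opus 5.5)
The statement is Theorem \ref{VdKaL11Cor2.2v2} of van der Kallen--Looijenga. The plan is to pass to the mapping cylinder of $F$ and rebuild it from $A$ by coning off one element of $B$ at a time. Each coning step will be $n$-connected by a join-connectivity estimate.

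\textbf{Step 1: the cylinder poset.} Let $M = A \sqcup B$, ordered so that the orders on $A$ and on $B$ are the given ones, and $a < b$ exactly when $F(a) \leq b$; no element of $B$ lies below an element of $A$. Define $r\colon M \rightarrow B$ by $r(a) = F(a)$ and $r(b) = b$. Viewed as a self-map of $M$, $r$ satisfies $x \leq r(x)$ for all $x$, so by Proposition \ref{posetmaphomotopy} it is homotopic to the identity of $M$. Hence the inclusion $B \hookrightarrow M$ is a homotopy equivalence with inverse $r$. Moreover $F = r \circ \iota_A$, where $\iota_A\colon A \hookrightarrow M$ is the inclusion. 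It therefore suffices to show that $\iota_A$ is $n$-connected.

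\textbf{Step 2: adding $B$ from the top down.} We work under the standing assumption that all heights are finite; assume also for now that the heights of elements of $B$ are bounded. Let $M_h = A \cup \{b \in B \mid \height(b) \geq h\}$. Then $M$ is obtained from $A$ by passing successively from $M_{h+1}$ to $M_h$, decreasing $h$. Elements of $B$ of equal height are pairwise incomparable, so all of them can be added at once. Each such $b$ is attached to $|M_{h+1}|$ as a cone over its link in $M_{h+1}$, and this link is
\begin{equation*}
(M_{h+1})_{<b} * (M_{h+1})_{>b} = F_{\leq b} * B_{>b}.
\end{equation*}
The lower part is exactly $F_{\leq b}$ because no element of $B$ of height less than or equal to $h$ is present yet. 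The upper part is all of $B_{>b}$ because every element of $B$ above $b$ has larger height and has already been added.

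\textbf{Step 3: connectivity of each step.} By hypothesis $F_{\leq b}$ is $(t(b)-2)$-connected and $B_{>b}$ is $(n-t(b)-1)$-connected. The join estimate $\operatorname{conn}(X*Y) \geq \operatorname{conn}X + \operatorname{conn}Y + 2$ then shows the link is $(n-1)$-connected. This also covers the degenerate cases of empty factors, via the convention that every poset is at least $(-2)$-connected. Gluing a cone along an $(n-1)$-connected subcomplex is an $n$-connected inclusion, by relative Hurewicz or excision on the pair $(\mathrm{Cone}\,L, L)$. So each inclusion $M_{h+1} \hookrightarrow M_h$ is $n$-connected, and a finite composite of $n$-connected maps is $n$-connected. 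Hence $\iota_A$, and therefore $F$, is $n$-connected.

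\textbf{Main obstacle.} I expect the delicate part to be the bookkeeping: checking that the link is literally the join $F_{\leq b} * B_{>b}$ (Remark \ref{PosetToSimpLink} applied inside $M_{h+1}$), and handling the edge cases of the connectivity convention. If the heights in $B$ are unbounded, the intermediate spaces are still well defined, since $M_h$ simply contains infinitely many elements of $B$. The issue is only that passing from $A$ to $M$ is then not a finite composite. I would handle this by noting that $|M|$ is the filtered union of the subcomplexes $|A \cup \{b \in B \mid h \leq \height(b) \leq N\}|$. By compactness, every sphere in $|M|$ and every homotopy between spheres lies in one of these subcomplexes, and the finite argument above applies to each of them.
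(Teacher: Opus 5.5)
The paper gives no proof of this statement; it quotes it from van der Kallen--Looijenga. Your mapping-cylinder, cone-attaching argument is the standard way to prove such a result, and Steps 1--3 are correct as long as the heights of elements of $B$ are bounded. The cylinder poset, the homotopy $r \simeq \mathrm{id}_M$, the incomparability of elements of equal height, and the identification of the link of $b$ in $M_{h+1} \cup \{b\}$ with $F_{\leq b} * B_{>b}$ all check out.

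One justification should be replaced. Relative Hurewicz and excision only give the homological version of the cone step. The homotopical version holds because the $n$-connected pair $(\mathrm{Cone}\,L, L)$ can be replaced rel $L$ by a relative CW complex with cells only in dimensions $\geq n+1$, and attaching such cells is $n$-connected by cellular approximation.

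The bounded case is all the paper needs. In its only application (the map $\mathrm{Span}$ in the inductive step for Item 4), the target $\sbT(W \vert (\overline{V}, \pm \alpha))$ is $(n-1)$-dimensional.

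Your last paragraph, however, contains a genuine error, and it cannot be repaired. In the truncated poset $A \cup \{b \in B \mid h \leq \height(b) \leq N\}$, the upper part of the link of $b$ is $B_{>b}$ cut off at height $N$, not $B_{>b}$. For $\height(b) = N$ it is empty, so the hypothesis on $B_{>b}$ no longer applies. The untruncated filtration $M_0 \supset M_1 \supset \cdots$ does not help either: it reaches $A$ only as a decreasing intersection, where compactness gives nothing.

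In fact the statement is false if one assumes only that every element has finite height. Take $A = \emptyset$, $B = (\mathbb{Z}_{\geq 0}, \leq)$, $t \equiv 0$ and $n = 0$. Every $F_{\leq b} = \emptyset$ is $(-2)$-connected, and every $B_{>b}$ is an infinite chain, hence contractible. Yet $F$ is not surjective on $\pi_0$. So a finiteness hypothesis on $B$ is needed.

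The hypothesis adapted to your method is that each $B_{>b}$ is finite-dimensional. Filter by co-height instead, adding the elements $b$ with $\dim B_{\geq b} = k$ for $k = 0, 1, 2, \ldots$. Elements of equal co-height are incomparable. When $b$ is added, all of $B_{>b}$ is already present and no element of $B$ below $b$ is, so the link is again $F_{\leq b} * B_{>b}$. Now $|M|$ is an increasing union of subcomplexes starting at $|A|$, and your compactness argument goes through.
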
  

\subsection{Simplicial complexes and group actions}

We call a simplicial complex $X$ together with a discrete group $G$ acting simplicially on $X$ without rotations a \emph{$G$-complex}. A group acts without rotations when every simplex $\sigma \in X$ is stabilized pointwise by its stabilizer.

\begin{definition} \label{X/G}
Suppose $X$ is a $G$-complex. We denote the simplicial quotient of $X$ by $G$ with $X/G$. A vertex $s^*$ is in $X/G$ if $s^*$ is the $G$-orbit of a vertex $s \in X$. $\{s_0^*, \dots, s_k^*\}$ spans a $k$-simplex in $X/G$ if at least one choice of representatives $\{s_0, \dots, s_k\}$ of the $G$-orbits spans a $k$-simplex in $X$.
\end{definition}

The topological quotient $\lvert X \rvert/G$ does not always agree with $\lvert X/G \rvert$. However, there exists a map $\lvert X \rvert/G \rightarrow \lvert X/G \rvert$ that sends $G$-orbits of points of $\lvert X \rvert$ to points of $\lvert X/G \rvert$.  

\begin{example}
Let $X$ be a triangle spanned by the vertices $\{s_0, s_1, s_2\}$ and let $G$ be the cyclic group $C_3$. $X$ is a $C_3$-complex where all the vertices $s_i$ lie in the same orbit and all the edges $\{s_i, s_j\}$ lie in the same orbit. The simplicial quotient $X/C_3$ is a single vertex: $s^*$. The map $\varphi\colon \lvert X \rvert/C_3 \rightarrow \lvert X/C_3 \rvert$ sends all $C_3$-orbits of points of $X$ to $s^*$. $\varphi$ is not injective.
\end{example}

\begin{definition} \label{regularcomplex}
Let $X$ is a $G$-complex and let $g_0, \dots, g_k \in G$. Suppose that $\{s_0, \dots, s_k\}$ and $\{g_0 \cdot s_0, \dots, g_k \cdot s_k\}$ are sets of vertices that span simplices in $X$. The action of $G$ is called \emph{regular} if there exists $g \in G$ such that $g \cdot s_i = g_i \cdot s_i$ for $i \in \{0, \dots, k\}$. If the $G$-action is regular, then we call $X$ a \emph{regular $G$-complex}. 
\end{definition}

The following theorem is implied by Bredon \cite[Chapter III.1]{Bredon72}, although not stated explicitly. 

\begin{theorem} \textup{\cite[Chapter III.1]{Bredon72}}\label{Bredon}
Suppose $X$ is a regular $G$-complex. Then $\lvert X \rvert/G \cong \lvert X/G \rvert$ is a homeomorphism. 
\end{theorem}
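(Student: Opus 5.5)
We construct a continuous bijection $\lvert X\rvert/G\to\lvert X/G\rvert$ and check it is a homeomorphism. First, the natural map $\varphi\colon \lvert X\rvert\to\lvert X/G\rvert$ is induced by the simplicial map $X\to X/G$ sending $s\mapsto s^*$. It is well defined because a simplex $\{s_0,\dots,s_k\}$ of $X$ maps to the simplex $\{s_0^*,\dots,s_k^*\}$ of $X/G$ by Definition~\ref{X/G}. This simplex is genuinely $k$-dimensional: if $s_i^*=s_j^*$ with $i\neq j$, then regularity applied to $\{s_i,s_j\}$ and $\{s_i, g s_i\}=\{s_i,s_j\}$ produces $g$ fixing $s_i$ and sending $s_i\mapsto s_j$, which contradicts the assumption that the action has no rotations. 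The map $\varphi$ is continuous and $G$-invariant, so it factors through a continuous map $\bar\varphi\colon\lvert X\rvert/G\to\lvert X/G\rvert$.

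Surjectivity of $\bar\varphi$ is immediate: every simplex of $X/G$ has, by definition, a representative simplex in $X$, and $\varphi$ maps that simplex onto it affinely.

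Injectivity is where regularity is used, and it is the key step. Suppose $x\in\sigma=\{s_0,\dots,s_k\}$ and $y\in\tau=\{t_0,\dots,t_k\}$ lie in open simplices with $\varphi(x)=\varphi(y)$. Then both lie in the interior of the same simplex of $X/G$. After reordering, $t_i=g_i s_i$ for some $g_i\in G$, and the barycentric coordinates of $x$ and $y$ agree. Regularity gives a single $g$ with $g s_i=g_i s_i=t_i$ for all $i$, so $g\sigma=\tau$. Since $g$ acts affinely on simplices and preserves barycentric coordinates, $gx=y$. Hence $x$ and $y$ have the same image in $\lvert X\rvert/G$.

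It remains to show $\bar\varphi^{-1}$ is continuous. I would do this by checking that $\bar\varphi$ is a quotient map. A set $U\subseteq\lvert X/G\rvert$ is open iff its intersection with each closed simplex is open. Each closed simplex $\Delta^*$ of $X/G$ is the homeomorphic image under $\varphi$ of some closed simplex $\Delta$ of $X$, because $\varphi$ is injective on each closed simplex by the no-rotation argument above. So if $\varphi^{-1}(U)$ is open in $\lvert X\rvert$, then $U\cap\Delta^*=\varphi(\varphi^{-1}(U)\cap\Delta)$ is open in $\Delta^*$. Therefore $\varphi$ is a quotient map, and since the quotient $\lvert X\rvert\to\lvert X\rvert/G$ is also a quotient map, the bijection $\bar\varphi$ is a homeomorphism.

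The main obstacle is injectivity, specifically matching simplices orbitwise with a single group element; this is exactly what Definition~\ref{regularcomplex} provides. The topological part is routine, given the weak topology on both realizations.
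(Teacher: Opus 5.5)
Your proof is correct. The paper gives no argument of its own for this statement: it defers to Bredon, Chapter III.1, and remarks that the result is only implicit there. Your write-up supplies the standard argument that the citation stands in for:
\begin{itemize}
\item a continuous $G$-invariant simplicial map to $\lvert X/G\rvert$;
\item surjectivity from the definition of $X/G$;
\item injectivity from regularity together with matching barycentric coordinates;
\item continuity of the inverse by checking that $\varphi$ is a quotient map for the weak topology.
\end{itemize}

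One sentence needs repair, namely the justification that no two vertices of a simplex lie in the same orbit. As written it is garbled. The sets $\{s_i,s_j\}$ and $\{s_i,gs_i\}$ are unordered, so it is unclear which $g_0,g_1$ you are feeding into Definition~\ref{regularcomplex}. Also, an element ``fixing $s_i$ and sending $s_i\mapsto s_j$'' is contradictory on its face, not by way of the no-rotation hypothesis.

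The clean version runs as follows. Suppose $s_j=gs_i$ with $s_i\neq s_j$ in a common simplex. Apply regularity to the pair $(s_i,s_j)$ with $(g_0,g_1)=(g,g^{-1})$; the image pair $(s_j,s_i)$ spans the same edge. This gives $h\in G$ with $hs_i=s_j$ and $hs_j=s_i$. Then $h$ stabilizes the edge $\{s_i,s_j\}$ without fixing it pointwise, contradicting the assumption that $X$ is a $G$-complex.

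Alternatively, the paper's wording (``sets of vertices that span simplices'') permits the degenerate choice $(s_i,s_i)$ with $(g_0,g_1)=(e,g)$. This yields an $h$ with $hs_i=s_i$ and $hs_i=s_j$ directly.

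With this fixed, the fact that $\varphi$ is injective on every closed simplex is justified. You use that fact in three places: the dimension count, the barycentric-coordinate comparison in the injectivity step, and the claim that each closed simplex of $X/G$ is a homeomorphic image of one in $X$. The rest of your argument goes through as written.
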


We postpone the proof that the symplectic Tits building $\sT_{2n}(\mathbb{Q})$ is a regular $\Gamma_{2n}^\omega(p)$-complex to Section \ref{simplicialmodelisquotient}. We use the simplicial quotient $\sT_{2n}(\mathbb{Q})/\Gamma_{2n}^\omega(p)$ to compute the homology of the topological quotient $\lvert \sT_{2n}(\mathbb{Q}) \rvert/\Gamma_{2n}^\omega(p)$.  


\section{Tits Buildings}

\subsection{Tits Buildings of PIDs}\label{TitsBuildingSection}

Let $R$ be a PID and let $\Field$ be a field. In this subsection we recall the definitions of the classical Tits building for $\SL_n(\Field)$ and $\Sp_{2n}(\Field)$, and in the following subsections we generalize these constructions.

\begin{definition}
Suppose $V$ is a f.g.~free $R$-module. Let $\cbT(V)$ denote the poset of proper nonzero summands $U \subset V$ ordered by inclusion of summands. We call $\cbT(V)$ and $\cT(V):= \simp(\cbT(V))$ the \emph{Tits buildings of $V$}. 
\end{definition}

When $V = R^n$, we write $\cbT(R^n) = \cbT_n(R)$ and $\cT(R^n) = \cT_n(R)$. 

\begin{definition}\label{SymplecticTitsBuilding}
Suppose $W$ is a f.g.~symplectic $R$-module. Let $\sbT(W)$ denote the poset of proper nonzero isotropic summands $U \subset W$ ordered by inclusion of summands. We call $\sbT(W)$ and $\sT(W):= \simp(\sbT(W))$ the \emph{symplectic Tits buildings of $W$}.
\end{definition}

If $W = R^{2n}$, we write $\sbT(R^{2n}) = \sbT_{2n}(R)$ and $\sT(R^{2n}) = \sT_{2n}(R)$. 

\begin{theorem} \textup{\cite[Theorem 1]{Solomon}} \label{SBrB}
For $n > 0$, $m > 0$ and a field $\Field$, suppose $V$ is a $\Field$-vector space of dimension $n$ and $W$ is a symplectic $\Field$-vector space of dimension $2m$. Then $\cT(V)$ is CM of dimension $(n - 2)$ and $\sT(W)$ is CM of dimension $(m - 1)$. Furthermore, if $\lvert \Field \rvert$ is finite then $\cT(V)$ is homotopy equivalent to a wedge of $\lvert \Field \rvert^{n \choose 2}$ many $(n - 2)$-spheres and $\sT(W)$ is homotopy equivalent to a wedge of $\lvert \Field \rvert^{m^2}$ many $(m - 1)$-spheres. 
\end{theorem}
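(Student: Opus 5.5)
The statement is the Solomon--Tits theorem, cited from \cite{Solomon}. Rather than redo Solomon's argument, I would prove it through the BN-pair/building structure, treating the linear and symplectic cases in parallel.

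\textbf{Cohen--Macaulay property.} The plan is to induct on dimension, using that links are again Tits buildings of smaller rank.

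For $\cT(V)$, take a flag $U_0 \subset \dots \subset U_{k-1}$. By Remark \ref{PosetToSimpLink}, its link is the join
\begin{equation*}
\cT(U_0) * \cT(U_1/U_0) * \dots * \cT(V/U_{k-1}).
\end{equation*}
Each factor is a Tits building of a smaller vector space. Intervals and upper links in $\cbT(V)$ are identified by the correspondence theorem, e.g.\ $\cbT(V)_{>U} \cong \cbT(V/U)$.

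For $\sT(W)$, take an isotropic flag with top element $U$. The lower-link and interval factors are ordinary buildings $\cT(\cdot)$. The upper link is $\sbT(W)_{>U} \cong \sbT(U^\perp/U)$, which is a symplectic building of smaller rank by Lemma \ref{IsotropicPerp}.

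Since a join of an $a$-connected and a $b$-connected complex is $(a+b+2)$-connected, the dimension and connectivity counts match. The whole CM property therefore reduces to the global statement: $\cT(V)$ is $(n-3)$-connected and $\sT(W)$ is $(m-2)$-connected.

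\textbf{Global connectivity.} I would use a standard Quillen-type retraction. Fix a line $L$ (isotropic in the symplectic case), and consider the subposet $Y$ of elements $U$ with $U+L$ still proper, respectively $U+L$ still isotropic. This amounts to taking $U \subseteq L^\perp$ in the symplectic case.

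On $Y$, the inequalities $U \le U+L \ge L$ and Proposition \ref{posetmaphomotopy} show that $Y$ is contractible. The complement consists of elements opposite to $L$: hyperplanes not containing $L$, respectively maximal isotropic $U$ with $U \cap L^\perp$ of codimension one.

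Each such element is attached along its link inside $Y$. That link is a building of smaller rank, so it is $(\text{dim}-2)$-connected by induction. Attaching cones along such links, and noting that these elements are pairwise incomparable, preserves $(\text{dim}-1)$-connectivity. Here the dim is $n-2$ or $m-1$ respectively.

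\textbf{Counting spheres over a finite field.} Since the complex is $(d-1)$-connected and $d$-dimensional, it is a wedge of $d$-spheres. The number of spheres equals $(-1)^d\tilde\chi$. The same retraction gives a recursion: the number of spheres is the sum, over the opposite elements, of the number of spheres in their links.

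In the linear case there are $q^{n-1}$ hyperplanes complementary to $L$, each contributing a $\cT_{n-1}$. This gives
\begin{equation*}
N_n = q^{n-1}N_{n-1} = q^{\binom n2}.
\end{equation*}

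In the symplectic case, the number of Lagrangians opposite a fixed Lagrangian is $q^{m(m+1)/2}$. I would set up the analogous recursion, or alternatively use the BN-pair fact that the number of apartment-chambers opposite a fixed chamber is $q^{\ell(w_0)}$. For type $C_m$ we have $\ell(w_0)=m^2$, which gives $q^{m^2}$.

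\textbf{Main obstacle.} The step I expect to be hardest is the symplectic sphere count. The cleanest route is Solomon's result that $\tilde H_{\text{top}}$ has rank equal to the number of chambers opposite a fixed chamber, namely $q^{\ell(w_0)}$. So I would invoke the BN-pair structure of $\Sp_{2m}(\mathbb{F}_q)$ rather than build an ad hoc recursion.
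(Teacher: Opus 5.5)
\textbf{What is fine.} The paper gives no proof of this theorem; it cites Solomon and Brown, Chapter IV.6, so your proposal has to stand on its own. Two parts of it do.
\begin{itemize}
\item The reduction of the Cohen--Macaulay property to global connectivity, via links that are joins of smaller buildings, is correct, and the dimension and connectivity counts check out.
\item The linear case is Quillen's argument and is correct. The complement of $Y$ is exactly the antichain of hyperplanes not containing $L$, each attached along its full link $\cT(U)$. This gives $N_n=q^{n-1}N_{n-1}$.
\end{itemize}

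\textbf{The gap: the symplectic global step.} With $Y=\{U : U\subseteq L^\perp\}$, the complement of $Y$ is \emph{every} isotropic $U$ with $U\not\subseteq L^\perp$, in every dimension from $1$ to $m$, not just the maximal ones. Three things follow.
\begin{itemize}
\item Your condition that $U\cap L^\perp$ has codimension one in $U$ is automatic. It is the kernel of $\omega(\cdot,\vec{l})$ restricted to $U$, for a generator $\vec{l}$ of $L$.
\item The complement is not an antichain. A line $\ell$ with $\omega(\ell,L)\neq 0$ lies in Lagrangians, all of which are also outside $Y$.
\item For $m\ge 2$, a Lagrangian $U\not\subseteq L^\perp$ has link in $Y$ equal to the poset of nonzero subspaces of $U\cap L^\perp$. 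This poset has a maximum, so it is contractible, not a building of smaller rank. Attaching Lagrangians along such links would produce a contractible space.
\end{itemize}
So the attaching picture you describe does not compute $\sT(W)$.

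\textbf{How to fix it.} Add the complement of $Y$ from the top down, one dimension at a time; elements of equal dimension are pairwise incomparable.
\begin{itemize}
\item When $\dim U\ge 2$, everything above $U$ is already present. The elements below $U$ already present are exactly those of $Y$, and they have maximum $U\cap L^\perp\neq 0$. So the link is a cone, and each such step is a homotopy equivalence.
\item Only the $q^{2m-1}$ lines $\ell$ with $\omega(\ell,L)\neq 0$ contribute. Each is attached along its full upper link $\sbT(W)_{>\ell}\cong\sbT(\ell^\perp/\ell)$.
\end{itemize}
Hence $\sT(W)\simeq\bigvee_{\ell}\Sigma\,\sT(\ell^\perp/\ell)$. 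By induction this gives $(m-2)$-connectivity, together with the recursion $N_m=q^{2m-1}N_{m-1}$, so $N_m=q^{m^2}$.

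This also removes what you called the main obstacle: you do not need the BN-pair count $q^{\ell(w_0)}$. Appealing to ``Solomon's result'' for the top homology rank would in any case be citing the very statement you are asked to prove, which turns the symplectic half into a citation rather than an argument.
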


Brown \cite[Chapter IV.6]{BrownBuildings} provides a proof of Solomon--Tits' Theorem \ref{SBrB}. The following lemma mentioned by Miller--Patzt--Putman \cite[Lemma 3.10]{MiPaP21} extends Solomon--Tits' Theorem \ref{SBrB} from fields to PIDs. 

\begin{lemma} \textup{\cite[Lemma 3.10]{MiPaP21}} \label{MPP21Lem3.10}
Suppose $R$ is a PID, $\Field$ its field of fractions, and $V$ a f.g.~free $R$-module. Then $\cbT(V) \cong \cbT(V \otimes \Field)$. 
\end{lemma}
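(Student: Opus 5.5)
We construct mutually inverse order-preserving bijections between $\cbT(V)$ and $\cbT(V \otimes \Field)$. Since $R$ is a PID, $R \hookrightarrow \Field$ and $V$ is free, the natural map $V \to V \otimes \Field$ is injective, so we regard $V \subset V\otimes\Field$ as a lattice. Define
\[
\Phi\colon \cbT(V)\to\cbT(V\otimes\Field),\quad U\mapsto U\otimes\Field,
\]
and
\[
\Psi\colon \cbT(V\otimes\Field)\to\cbT(V),\quad L\mapsto L\cap V.
\]
Both maps are order-preserving, since $\otimes\Field$ preserves inclusions and so does intersecting with $V$.

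First check that $\Phi$ is well defined. A summand $U$ with $V = U \oplus W$ gives $V\otimes\Field = (U\otimes\Field)\oplus(W\otimes\Field)$ with $\Dim(U\otimes\Field)=\rank U$. So a proper nonzero summand goes to a proper nonzero subspace.

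Next check that $\Psi$ is well defined; this is the main point. Let $L$ be a subspace of dimension $k$ with $0<k<\rank V$. Then $L\cap V$ is a submodule of a free module over a PID, hence free. It is also saturated: if $r\vec v\in L\cap V$ with $r\neq 0$ and $\vec v\in V$, then $\vec v\in L$, since $L$ is a $\Field$-subspace. Therefore $V/(L\cap V)$ is torsion-free and finitely generated, hence free over the PID $R$. The short exact sequence $0\to L\cap V\to V\to V/(L\cap V)\to 0$ then splits, so $L\cap V$ is a summand.

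For the rank, clear denominators: any $\vec x\in L$ has $d\vec x\in V\cap L$ for some nonzero $d\in R$, because $V\otimes\Field = \Field\cdot V$. Hence $(L\cap V)\otimes\Field = L$. In particular $\rank(L\cap V)=k$, so $L\cap V$ is proper and nonzero. The same computation shows $\Phi\circ\Psi=\mathrm{id}$.

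Finally show $\Psi\circ\Phi=\mathrm{id}$, that is, $(U\otimes\Field)\cap V = U$ for a summand $U$. The inclusion $\supseteq$ is clear. For $\subseteq$, write $V=U\oplus W$ and let $\vec v = \vec u+\vec w \in (U\otimes\Field)\cap V$. Then $\vec w = \vec v - \vec u$ lies in $(W\otimes\Field)\cap(U\otimes\Field)=0$, so $\vec v\in U$.

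Thus $\Phi$ and $\Psi$ are inverse poset isomorphisms. Two side remarks: if $V$ carries a symplectic form, the bijection also preserves isotropy, since $\omega$ extends bilinearly and $U$ spans $U\otimes\Field$. The only step needing care is the saturation-implies-summand argument, which uses exactly that $R$ is a PID.
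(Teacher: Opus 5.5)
Your proof is correct and follows the same route as the paper. The paper justifies the lemma only by noting that $U \mapsto U \otimes \Field$ and $L \mapsto L \cap V$ are mutually inverse poset maps; you supply the details it leaves implicit, namely that $L\cap V$ is saturated and hence a summand over a PID, that clearing denominators gives $(L\cap V)\otimes\Field = L$, and that the direct-sum argument yields $(U\otimes\Field)\cap V = U$.
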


Lemma \ref{MPP21Lem3.10} follows from the fact that the poset map that takes a summand $U \subset V$ to $U \otimes \Field$ and the poset map that takes a subspace $U \subset V \otimes \Field$ to $U \cap V$ are inverses of each other. These poset maps have analogous poset maps in the symplectic context. Let $W$ be a f.g.~symplectic $R$-module. The poset map sending an isotropic summand $U \subset W$ to an isotropic subspace $U \otimes \Field$, and the poset map sending an isotropic subspace $U \subset \Field^{2n}$ to an isotropic summand $U \cap W$, are also inverses of each other.

\begin{proposition} \label{SympMPP21Lem3.10}
Suppose $R$ is a PID, $\Field$ its field of fractions, and $W$ a f.g.~symplectic $R$-module. Then $\sbT(W) \cong \sbT(W \otimes \Field)$. 
\end{proposition}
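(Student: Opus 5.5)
We prove Proposition \ref{SympMPP21Lem3.10} by restricting the mutually inverse poset maps of Lemma \ref{MPP21Lem3.10} to isotropic elements. Write $W_\Field := W \otimes \Field$, which carries the $\Field$-bilinear extension of $\omega$; this extension is still skew-symmetric and non-degenerate. Since $R$ is a domain and $W$ is free, $W$ embeds in $W_\Field$, so we may regard $W \subseteq W_\Field$. Define
\begin{equation*}
\Phi\colon \sbT(W) \rightarrow \sbT(W_\Field), \quad U \mapsto U \otimes \Field,
\end{equation*}
\begin{equation*}
\Psi\colon \sbT(W_\Field) \rightarrow \sbT(W), \quad L \mapsto L \cap W.
\end{equation*}
By Lemma \ref{MPP21Lem3.10}, these maps give mutually inverse, order-preserving bijections between the proper nonzero summands of $W$ and the proper nonzero subspaces of $W_\Field$. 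It therefore suffices to check that $\Phi$ and $\Psi$ send isotropic elements to isotropic elements. The restricted maps are then order-preserving, mutually inverse bijections, which is a poset isomorphism.

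\textbf{The map $\Phi$ preserves isotropy.} Let $U$ be isotropic. Every element of $U \otimes \Field$ is an $\Field$-linear combination of elements of $U$. Bilinearity of the extended form then gives $\omega \vert_{U \otimes \Field} \equiv 0$.

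\textbf{The map $\Psi$ preserves isotropy.} Let $L$ be isotropic. The form on $W$ is the restriction of the form on $W_\Field$, so $L \cap W \subseteq L$ is isotropic. By Lemma \ref{MPP21Lem3.10}, $L \cap W$ is a proper nonzero summand.

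The only point requiring care is that isotropy is an intrinsic condition, independent of any choice of basis. This is what lets the summand correspondence of Lemma \ref{MPP21Lem3.10} restrict cleanly to isotropic elements, so I expect no real obstacle here. We also remark that $\sT(W)$ is empty when $W$ has rank $0$, and the statement holds trivially in that case.
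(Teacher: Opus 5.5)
Your proposal is correct and follows the same route as the paper. The paper also gets the isomorphism by restricting the mutually inverse maps $U \mapsto U \otimes \Field$ and $L \mapsto L \cap W$ from Lemma \ref{MPP21Lem3.10} to isotropic elements; your explicit check that both maps preserve isotropy (by bilinearity in one direction, by restriction of the form in the other) fills in the detail the paper leaves implicit.
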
  

\begin{definition}\label{SteinbergModules}
Let $V$ be a f.g.~free $R$-module of rank $n$ and let $W$ be a f.g.~symplectic $R$-module of rank $2m$. We denote by $\St(V) := \rh_{n - 2}(\cbT(V); \mathbb{Z})$ the \emph{Steinberg module of $V$} and we denote by $\St^\omega(W):= \rh_{m - 1}(\sbT(W); \mathbb{Z})$ the \emph{symplectic Steinberg module of $W$}.
\end{definition} 

We write $\St(R^n) = \St_n(R)$ and $\St^\omega(R^{2n}) = \St_{2n}^\omega(R)$ when $V = R^n$ and $W = R^{2n}$. We now consider the links of $\sbT(W)$. These are well-known computations. We include a proof to make it clear how we generalize these computations to similar contexts.

\begin{lemma}\label{SympPosetLinks}
Suppose $U, U' \in \sbT(W)$, then 
\begin{itemize}[nosep, topsep=-0.5cm]
\item the lower link of $U$ is isomorphic to $\cbT(U)$;
\item the upper link of $U$ is isomorphic to $\sbT(U^\perp/U)$;
\item and if $U \subsetneq U'$, the interval from $U$ to $U'$ is isomorphic to $\cbT(U'/U)$. 
\end{itemize}
\end{lemma}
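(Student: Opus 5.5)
I will write down explicit order-preserving bijections for each of the three pieces and check that their inverses are also order-preserving. Throughout, $W$ is a f.g.\ symplectic $R$-module and $U \subsetneq U'$ are proper nonzero isotropic summands of $W$. Lemma \ref{LinearAlgebraSummands} is used repeatedly to move between "summand of $W$" and "summand of a smaller module."

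\textbf{Lower link.} Send $X \in \sbT(W)_{<U}$ to $X$ itself, viewed as a submodule of $U$.
\begin{itemize}
\item Such an $X$ is a proper nonzero summand of $W$ contained in $U$, so by Lemma \ref{LinearAlgebraSummands} it is a proper nonzero summand of $U$.
\item Conversely, let $X$ be a proper nonzero summand of $U$. Write $U \oplus U'' = W$ and $X \oplus Y = U$; then $X \oplus (Y \oplus U'') = W$, so $X$ is a summand of $W$. It is isotropic because $U$ is.
\item Both maps preserve inclusion, so $\sbT(W)_{<U} \cong \cbT(U)$.
\end{itemize}

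\textbf{Interval.} Send $X$ with $U \subsetneq X \subsetneq U'$ to $X/U \subset U'/U$.
\begin{itemize}
\item By Lemma \ref{LinearAlgebraSummands}, $U$ is a summand of $X$ and $X$ is a summand of $U'$. Hence $U'/U$ is free and $X/U$ is a proper nonzero summand of it.
\item Conversely, a proper nonzero summand $Z \subset U'/U$ has preimage $\tilde Z$ under $U' \to U'/U$ with $U \subsetneq \tilde Z \subsetneq U'$.
\item $\tilde Z$ is a summand of $U'$: lift a complement of $Z$, which is possible since $U'/U$ is free. It is therefore a summand of $W$, and it is isotropic because $\tilde Z \subseteq U'$.
\item These maps are mutually inverse and order-preserving, which gives $\cbT(U'/U)$.
\end{itemize}

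\textbf{Upper link.} This is the step needing real care: the map $X \mapsto X/U$ must land in isotropic summands of $U^\perp/U$.
\begin{itemize}
\item If $X \supsetneq U$ is isotropic, then $X \subseteq X^\perp \subseteq U^\perp$.
\item By Lemma \ref{IsotropicComplementSummands}, $U^\perp$ is a summand of $W$. By Lemma \ref{LinearAlgebraSummands}, $X$ is a summand of $U^\perp$ and $U$ is a summand of $X$, so $X/U$ is a summand of $U^\perp/U$.
\item By Lemma \ref{IsotropicPerp}, $U^\perp/U$ carries the induced form $\bar\omega(\bar a,\bar b)=\omega(a,b)$. Then $X/U$ is isotropic for $\bar\omega$, nonzero since $X \ne U$, and proper because $X$ is isotropic while $U^\perp$ is not. (If $U^\perp = U$ then $U$ is Lagrangian, so its upper link is empty, and $\sbT(0)$ is empty too.)
\item Conversely, take a proper nonzero isotropic summand $Z \subset U^\perp/U$ with preimage $\tilde Z \subseteq U^\perp$. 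Then $\tilde Z$ is isotropic: for $a,b \in \tilde Z$ we have $\omega(a,b)=\bar\omega(\bar a,\bar b)=0$.
\item $\tilde Z$ is a summand of $U^\perp$ by lifting a complement of $Z$, using that $U^\perp/U$ is free. Since $U^\perp$ is a summand of $W$, $\tilde Z$ is a summand of $W$, and it strictly contains $U$.
\end{itemize}
The two constructions are inverse and preserve order, so $\sbT(W)_{>U} \cong \sbT(U^\perp/U)$.

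The only point I expect to be delicate is freeness of the quotients. This holds because over a PID a quotient by a summand is isomorphic to a complement, hence free. Once that is in place, the statement of Lemma \ref{IsotropicPerp}, that the induced form on $U^\perp/U$ is nondegenerate, is exactly what is needed to make sense of $\sbT(U^\perp/U)$.
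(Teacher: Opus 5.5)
Your proof is correct and is essentially the paper's argument: the identity map gives the lower link, and $X \mapsto X/U$ (with preimages as the inverse) gives both the interval and the upper link. The only difference is in the upper link. The paper chooses a symplectic summand $S$ with $U \oplus S = U^\perp$, uses $Q \mapsto Q \cap S$ and $Q \mapsto Q \oplus U$, and then identifies $S \cong U^\perp/U$; you work with quotients and preimages directly, which avoids choosing $S$ and relies instead on the nondegeneracy in Lemma \ref{IsotropicPerp}. That same nondegeneracy is also what justifies your unproved remark that $U^\perp$ is not isotropic once $U \subsetneq U^\perp$.
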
  

\begin{proof}
Since every summand of $U$ is isotropic, the lower link of $U$ is isomorphic to $\cbT(U)$. \\
The projection map $W \rightarrow U^\perp/U$ induces a poset isomorphism between $\sbT(W)_{> U}$ and $\sbT(U^\perp/U)$ in the following manner. Pick a symplectic summand $S$ such that $U \oplus S = U^\perp$. The poset maps 
\begin{align*}
f & \colon \sbT(W)_{> U} \rightarrow \sbT(S)\colon Q \mapsto Q \cap S, \\
g & \colon \sbT(S) \rightarrow \sbT(W)_{> U} \colon Q \mapsto Q \oplus U,
\end{align*}
are inverses of each other that send isotropic summands to isotropic summands. Thus, $\sbT(W)_{> U}$ is isomorphic to $\sbT(S)$. The canonical symplectic isomorphism $S \cong U^\perp/U$ induces the isomorphism $\sbT(S) \cong \sbT(U^\perp/U)$. \\
The interval from $U$ to $U'$ is isomorphic to $\cbT(U')_{> U}$. The projection map $U' \rightarrow U'/U$ induces a poset isomorphism between $\cbT(U')_{> U}$ and $\cbT(U'/U)$.
\end{proof}

\subsection{$\pm$-oriented Tits Buildings of $\Field_p$} \label{pmTitsBuildingSection}

Let $p$ be a prime integer and let $\Field_p$ denote the finite field with $p$-many elements. We denote $\Field_p$-vector spaces with an overline. We extend a construction introduced by Miller--Patzt--Putman \cite{MiPaP21} to the symplectic context. The construction we describe does not satisfy the technical definition of a building. In Section \ref{simplicialmodelisquotient} we show that this construction is isomorphic to the simplicial quotient $\sT_{2n}(\mathbb{Q})/\Gamma_{2n}^\omega(p)$. 

\begin{definition}
Let $\vec{v}$ be a nonzero vector of a f.g.~free $R$-module $V$. A \emph{$\pm$-vector} is a set $\{-\vec{v}, + \vec{v}\}$. We will write $\pm \vec{v}$ for the associated $\pm$-vector $\{-\vec{v}, + \vec{v}\}$. 
\end{definition}

\begin{definition}
Let $V$ be a f.g.~free $R$-module. We say that an \emph{orientation on $V$} is an element $\alpha \in \wedge^\text{top} V$ that generates $\wedge^\text{top} V$ as an $R$-module, where top is shorthand for $\rank V$. A \emph{$\pm$-orientation on $V$} is a $\pm$-vector $\pm \alpha$ where $\alpha$ is an orientation on $V$. Oftentimes, we denote $V$ equipped with $\pm \alpha$ as $(V, \pm \alpha)$.
\end{definition}  

\begin{remark}\label{setOfOrientations}
Note that $\wedge^{\text{top}} V$ is a rank-1 module. The set of orientations on an $R$-module $V$ is in bijection $R^\times$. So, the set of $\pm$-orientations on $V$ is in bijection with $R^\times/\{-1, + 1\}$. Also, the group $R^\times$ acts transitively on the set of $\pm$-orientations on $V$. 
\end{remark}

Let $R = \Field_p$ and let $\overline{V}$ be an $\Field_p$-vector space. By Remark \ref{setOfOrientations}, the set of $\pm$-orientations on $\overline{V}$ is in bijection with the set $\Field_p^\times/\mathbb{Z}^\times$ and $\Field_p^\times$ acts transitively on the set of $\pm$-orientations for $\overline{V}$. $\overline{V}$ admits $(\frac{p - 1}{2})$-many distinct $\pm$-orientations. 

\begin{definition}
Suppose $\overline{V}$ is an $\Field_p$-vector space.  Let $\cbTD(\overline{V})$ denote the poset of proper nonzero subspaces $\overline{U} \subset \overline{V}$ equipped with $\pm$-orientations ordered by inclusion of subspaces and let $\cTD(\overline{V}):= \simp(\cbTD(\overline{V}))$. We call $\cbTD(\overline{V})$ and $\cTD(\overline{V})$ the \emph{$\pm$-oriented Tits building of $\overline{V}$}. 
\end{definition}

\begin{remark}\label{pmorientationdonotmatter}
Let $(\overline{U}, \pm \alpha), (\overline{Q}, \pm \beta) \in \cbTD(\overline{V})$. $(\overline{U}, \pm \alpha) \subsetneq (\overline{Q}, \pm \beta)$ if and only if $\overline{U} \subsetneq \overline{Q}$. In the case that $\overline{U} = \overline{Q}$ but $\pm \alpha \neq \pm \beta$, then $(\overline{U}, \pm \alpha)$ and $(\overline{U}, \pm \beta)$ are incomparable. 
\end{remark}

If $\overline{V}= \Field_p^n$, we write $\cbTD(\Field_p^n) = \cbTD_n(\Field_p)$ and $\cTD(\Field_p^n) = \cTD_n(\Field_p)$.

\begin{definition}
Suppose $V$ and $V'$ are f.g.~free $R$-modules, and suppose there exists an isomorphism $\varphi\colon V \rightarrow V'$. The map $\varphi$ induces a map $\varphi_*\colon \wedge^\text{top} V \rightarrow \wedge^\text{top} V'$ that sends a $\pm$-orientation on $V$ to a $\pm$-orientation on $V'$. Let $\alpha_\varphi := \varphi_*(\alpha)$ in $\wedge^\text{top} V'$. We write $\varphi\colon (V, \pm \alpha) \rightarrow (V', \pm \alpha_\varphi)$ to imply that $\pm \alpha_\varphi$ is the induced $\pm$-orientation on $V'$. 
\end{definition}

\begin{definition}
Suppose $\overline{W}$ is a symplectic $\Field_p$-vector space. Let $\sbTD(\overline{W})$ denote the poset of proper nonzero isotropic subspaces $\overline{U} \subset \overline{W}$ equipped with $\pm$-orientations ordered by inclusion of subspaces and let $\sTD(\overline{W}):= \simp(\sbTD(\overline{W}))$. We call $\sbTD(\overline{W})$ and $\sTD(\overline{W})$ \emph{$\pm$-oriented symplectic Tits buildings of $\overline{W}$}.
\end{definition}

When $\overline{W} = \Field_p^{2n}$, we write $\sbTD(\Field_p^{2n}) = \sbTD_{2n}(\Field_p)$ and $\sTD(\Field_p^{2n}) = \sTD_{2n}(\Field_p)$. 

\begin{proposition} \textup{\cite[Lemma 3.15]{MiPaP21}}\label{lemma:tdconn}
Suppose $n \geq 0$ and $\overline{V}$ is an $\Field_p$-vector space of dimension $n$. Then $\cTD(\overline{V})$ is CM of dimension $(n - 2)$.
\end{proposition}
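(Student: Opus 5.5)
We prove the statement by applying Quillen's fiber criterion, Theorem \ref{Q78Cor9.7}, to the forgetful map
\begin{equation*}
F\colon \cbTD(\overline{V}) \rightarrow \cbT(\overline{V}), \qquad (\overline{U}, \pm\alpha) \mapsto \overline{U}.
\end{equation*}
By Remark \ref{pmorientationdonotmatter}, $(\overline{U},\pm\alpha) \subsetneq (\overline{Q},\pm\beta)$ holds if and only if $\overline{U} \subsetneq \overline{Q}$. Hence $F$ is strictly increasing. By Theorem \ref{SBrB}, the target $\cbT(\overline{V})$ is CM of dimension $n-2$.

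So it remains to check the fiber condition: for each $\overline{U} \in \cbT(\overline{V})$ of height $h = \dim \overline{U} - 1$, the fiber $F_{\leq \overline{U}}$ should be $(h-1)$-connected. By Remark \ref{pmorientationdonotmatter}, this fiber is the poset
\begin{equation*}
F_{\leq \overline{U}} = \{(\overline{Q},\pm\beta) : \overline{Q} \subsetneq \overline{U}\} \cup \{(\overline{U},\pm\alpha)\}_{\pm\alpha}.
\end{equation*}
Its first piece is the lower link $\cbTD(\overline{U})$, the $\pm$-oriented Tits building of $\overline{U}$. Its second piece consists of the $\tfrac{p-1}{2}$ pairwise incomparable top elements. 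Each top element lies above every element of the first piece. Therefore the fiber is the join of $\cbTD(\overline{U})$ with a discrete set of $\tfrac{p-1}{2}$ points.

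We now argue by induction on $n$. For $\dim \overline{U} = h+1 < n$, induction shows that $\cbTD(\overline{U})$ is CM of dimension $h-1$, and in particular $(h-2)$-connected. A join with a nonempty discrete set, which is $(-1)$-connected, raises connectivity by one. So the fiber is $(h-1)$-connected, as required. The base cases $n \leq 2$ are immediate: for $n \le 1$ the poset is empty, and for $n=2$ it is a discrete set, which is CM of dimension $0$ by our conventions. Theorem \ref{Q78Cor9.7} then gives that $\cbTD(\overline{V})$ is CM of dimension $n-2$, and hence so is $\cTD(\overline{V})$.

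The main obstacle is to verify carefully that the join description of the fiber is correct at the poset level and that the dimension bookkeeping matches. An alternative to the join argument is to check the links directly. The lower link of $(\overline{U},\pm\alpha)$ is $\cbTD(\overline{U})$, which is covered by the induction. The upper link and the intervals have the form $\cbTD(\overline{V}/\overline{U})$-like posets, but with the orientations of larger subspaces being free choices. This gives products of Tits buildings with discrete choices, and the induction handles them as well.
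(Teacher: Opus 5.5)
Your proof is correct, but it takes a different route from the one the paper relies on.

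\emph{The paper's route.} The paper does not reprove this statement; it cites Miller--Patzt--Putman. For the symplectic analogue, Proposition \ref{SympMPP21Lem3.15}, it observes that the forgetful simplicial map onto the unoriented building makes the $\pm$-oriented building a complete join complex in the sense of Definition \ref{completeJoinComplex}. The preimage of a vertex is the discrete set of $\pm$-orientations, and the part lying over a simplex is the join of these preimages. Hatcher--Wahl's Proposition \ref{HWProp3.5} then transfers the Cohen--Macaulay property from the Solomon--Tits building in one step. This needs no induction and works directly with simplicial complexes.

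\emph{Your route.} You stay at the poset level and apply Quillen's Theorem \ref{Q78Cor9.7} to the strictly increasing forgetful map $F$. You identify $F_{\le \overline{U}}$ as the ordinal sum $\cbTD(\overline{U}) * D$, where $D$ is the nonempty discrete set of orientations of $\overline{U}$. The join connectivity formula together with strong induction on dimension gives exactly the required $(\dim \overline{U}-2)$-connectivity. The degenerate cases $\dim \overline{U} \in \{1,2\}$ also work under the paper's conventions.

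\emph{Comparison.} Your argument replaces the Hatcher--Wahl black box with a tool the paper already states. The cost is the induction, plus the passage from poset CM to CM of the order complex. That passage is standard via Remark \ref{PosetToSimpLink}, but you assert it without comment. A useful by-product is that your argument gives the symplectic case directly. There, the fibre over an isotropic $\overline{U}$ is again $\cbTD(\overline{U}) * D$, so the linear result you just proved feeds in with no further induction.

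\emph{Minor points.} For $p=2$ there is exactly one $\pm$-orientation, not $\frac{p-1}{2}$; this is harmless, since you only use nonemptiness. The link-based ``alternative'' in your last paragraph would not suffice on its own, because checking links does not give the global $(n-3)$-connectivity of $\cTD(\overline{V})$. Also, those links are exactly smaller $\pm$-oriented buildings, as in the linear analogue of Lemma \ref{pmSympPosetLinks}, not products with discrete choices.
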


We prove the symplectic analogue of Miller--Patzt--Putman's Proposition \ref{lemma:tdconn} using the following definition and proposition.

\begin{definition}\label{completeJoinComplex}
Suppose $X, Y$ are simplicial complexes and $\varphi\colon Y \rightarrow X$ is a simplicial map. $Y$ with the map $\varphi$ is a \emph{complete join complex over $X$} if $\varphi$ is a surjection, $\varphi$ is an injection on individual simplices, and, if $\sigma$ is $k$-simplex in $X$ spanned by the vertex set $\{s_0, \dots, s_k\}$, then the full sub-complex of $Y$ that projects down to $\sigma$ is the join $\varphi^{-1}(s_0) * \dots * \varphi^{-1}(s_k)$.
\end{definition}

\begin{proposition} \textup{\cite[Proposition 3.5]{HatcherWahl}}\label{HWProp3.5}
Suppose $Y$ is a complete join complex over a CM complex $X$ of dimension $n$. Then $Y$ is also CM of dimension $n$.
\end{proposition}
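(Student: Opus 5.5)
The plan is to reduce everything to a connectivity statement proved by induction on $n$, using that links in $Y$ are again complete join complexes. Write $\varphi\colon Y\to X$ for the structure map.

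\textbf{Step 1: dimension and links.} Since $\varphi$ is surjective and injective on individual simplices, $Y$ has the same dimension $n$ as $X$. Next, let $\tau\in Y$ be a $(k-1)$-simplex with $\sigma=\varphi(\tau)$. The defining property of a complete join complex is that the full subcomplex over any simplex of $X$ is the join of the vertex fibers. From this I would check directly that $\varphi$ restricts to a map $\lk_Y(\tau)\to\lk_X(\sigma)$ making $\lk_Y(\tau)$ a complete join complex over $\lk_X(\sigma)$. Concretely, a vertex $y\notin\tau$ is joined to $\tau$ exactly when $\varphi(y)\in\lk_X(\sigma)$, and the join condition over a simplex $\rho$ of $\lk_X(\sigma)$ follows from the join condition over $\rho*\sigma$. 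Since $\lk_X(\sigma)$ is CM of dimension $n-k$, induction on dimension reduces the whole statement to one claim: a complete join complex over a CM complex of dimension $n$ is $(n-1)$-connected.

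\textbf{Step 2: a section.} For each vertex $x$ of $X$, choose a lift $s(x)\in\varphi^{-1}(x)$. Because of the full join condition, any choice of lifts of the vertices of a simplex spans a simplex. Hence $s\colon X\to Y$ is simplicial, and $\varphi\circ s=\mathrm{id}$, so $X$ is a retract of $Y$. It therefore suffices to show that every map $f\colon S^k\to Y$ with $k\le n-1$ is homotopic to a map landing in $s(X)$, which is $(n-1)$-connected.

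\textbf{Step 3: moving vertices into $s(X)$.} Take $f$ simplicial for some triangulation of $S^k$. I would first arrange that $\varphi\circ f$ is simplexwise injective, using the standard argument that a map of a $k$-sphere into a CM complex of dimension $n\ge k+1$ can be homotoped to a simplexwise injective one. I expect this to be the main technical obstacle; I would follow Hatcher--Wahl's coning-off argument, which uses the connectivity of links in $X$.

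Then process the vertices $v$ of $S^k$ one at a time. Suppose $f(v)=y$, $\varphi(y)=x$ and $y\neq s(x)$. By simplexwise injectivity, $f(\lk v)$ maps into $\lk_X(x)$, so it lies in $\varphi^{-1}(\lk_X(x))$. Both $y$ and $s(x)$ are joined to all of $\varphi^{-1}(\lk_X(x))$, and this subcomplex is a complete join complex over $\lk_X(x)$. By induction it is $(n-2)$-connected, which is at least $(k-1)$-connected.

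So $f|_{\lk v}$, a map from a $(k-1)$-sphere, extends to a disk $D\to\varphi^{-1}(\lk_X(x))$. Inside $(y*D)\cup(s(x)*D)$, which is a suspension of a disk and hence contractible, the two cones $y*f(\lk v)$ and $s(x)*f(\lk v)$ are homotopic rel boundary. This replaces $y$ by $s(x)$ at $v$. The new vertices coming from $D$ may again lie off $s(X)$, so I would choose $D$ simplexwise injective over $\lk_X(x)$ and induct on the number of vertices not in $s(X)$, or more cleanly choose the filling inside $s(\lk_X(x))$ directly. Since $\lk_X(x)$ is CM of dimension $n-1$, it is $(n-2)$-connected, and $s(\lk_X(x))\cong\lk_X(x)$.

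After finitely many steps $f$ lands in $s(X)$ and is therefore null-homotopic. Combining this with Step 1 gives the CM property for $Y$.
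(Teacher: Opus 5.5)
The paper does not prove this statement; it quotes Hatcher--Wahl. As far as I recall, their argument lifts a filling rather than retracting onto a section. Given a simplicial $f\colon S^k\to Y$ with $k\le n-1$, they extend $\varphi\circ f$ to a simplicial map $D^{k+1}\to X$ that is simplexwise injective on the interior. This is their coning-off result, used in $X$, where the CM hypothesis is given. They then lift this disk to $Y$ rel $f$ by choosing arbitrary lifts of interior vertices, and the join condition makes the lift simplicial. Links are handled as in your Step 1: apply the same connectivity statement to $\lk_Y(\tau)$ over $\lk_X(\varphi\tau)$, so no induction on $n$ is needed. Your route is genuinely different: you take a section $s$ and push $f$ into $s(X)\cong X$ one vertex at a time through the suspensions $\{y,s(x)\}*\lk_Y(y)$. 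Steps 1 and 2 and the replacement move are correct, and the argument works once one step is relocated.

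\textbf{The simplexwise-injectivity step must be done in $Y$, not $X$.} A homotopy of $\varphi\circ f$ in $X$ does not lift to $Y$; lifting is the whole problem. Nor can the coning lemma for CM targets be applied to $Y$ itself, since $Y$ is not yet known to be CM. Here is what works. Since $\varphi$ is injective on simplices, $\varphi\circ f$ is simplexwise injective iff $f$ is. The coning-off procedure run in $Y$ only fills spheres inside links $\lk_Y(\eta)$, where $\eta=f(\rho)$ is a simplex of $Y$. By your Step 1 and the induction hypothesis, each such link is CM of dimension $n-\dim\eta-1$, which is enough to apply the coning lemma there.

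\textbf{Your worry about new vertices from $D$ is unfounded.} After the move, $f$ differs only in $f(v)=s(x)$, on the same triangulation, and $\varphi\circ f$ is unchanged. So simplexwise injectivity persists and the number of vertices sent off $s(X)$ drops by one; $D$ enters only the homotopy. Filling inside $s(\lk_X(x))$ would not be possible anyway, since $f(\lk v)$ need not lie in $s(X)$.

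\textbf{You can skip simplexwise injectivity altogether.} The vertex map $y\mapsto s(x)$, identity elsewhere, is a simplicial self-map $r$ of $Y$. Moreover $r\circ f\simeq f$ relative to the full subcomplex spanned by the vertices outside $f^{-1}(y)$. On the closed star of $f^{-1}(y)$, both maps land in $\{y,s(x)\}*\lk_Y(y)$, which is $(n-1)$-connected. The relevant relative CW pair has dimension $k+1\le n$, so the homotopy extends. With this change your proof uses only the connectivity of $X$ and, inductively, of vertex links of $Y$, with no coning lemma at all. The price is the induction on $n$, which Hatcher--Wahl avoid by using the coning lemma once, directly in $X$.
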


\begin{proposition} \label{SympMPP21Lem3.15}
Suppose $n > 0$ and $\overline{W}$ is a symplectic $\Field_p$-vector space of dimension $2n$. Then $\sTD(\overline{W})$ is Cohen--Macaulay of dimension $(n - 1)$.
\end{proposition}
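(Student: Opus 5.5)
We will deduce the statement from Hatcher--Wahl's Proposition \ref{HWProp3.5}, applied with $Y = \sTD(\overline{W})$ and $X = \sT(\overline{W})$. By Solomon--Tits' Theorem \ref{SBrB}, $\sT(\overline{W})$ is CM of dimension $(n - 1)$, so it suffices to show that $\sTD(\overline{W})$, together with the simplicial map $\varphi\colon \sTD(\overline{W}) \rightarrow \sT(\overline{W})$ induced by the forgetful poset map $(\overline{U}, \pm\alpha) \mapsto \overline{U}$, is a complete join complex over $\sT(\overline{W})$.

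By Remark \ref{pmorientationdonotmatter}, $(\overline{U}, \pm\alpha) < (\overline{Q}, \pm\beta)$ holds if and only if $\overline{U} \subsetneq \overline{Q}$, so the forgetful map is a strictly increasing poset map and $\varphi$ is simplicial. We then check the three conditions of Definition \ref{completeJoinComplex}.

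\emph{Surjectivity.} Every isotropic subspace admits at least one $\pm$-orientation, since $\wedge^{\text{top}}\overline{U}$ is a one-dimensional space and so has a generator. Hence every chain $\overline{U}_0 \subsetneq \dots \subsetneq \overline{U}_k$ lifts to a chain by choosing an orientation on each $\overline{U}_i$ independently.

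\emph{Injectivity on simplices.} The vertices of a simplex of $\sTD(\overline{W})$ form a chain, so by Remark \ref{pmorientationdonotmatter} their underlying subspaces are distinct. Thus $\varphi$ is injective on each simplex.

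\emph{Join condition.} Let $\sigma$ be the simplex of $\sT(\overline{W})$ spanned by $\overline{U}_0 \subsetneq \dots \subsetneq \overline{U}_k$. Each fiber $\varphi^{-1}(\overline{U}_i)$ is a discrete set of $\pm$-orientations on $\overline{U}_i$. Any choice of one vertex from each fiber has underlying subspaces forming a chain, so by the remark these vertices form a chain in $\sbTD(\overline{W})$ and hence span a simplex. Conversely, two distinct vertices in the same fiber are incomparable, so no simplex contains both. Therefore the full subcomplex over $\sigma$ is exactly $\varphi^{-1}(\overline{U}_0) * \dots * \varphi^{-1}(\overline{U}_k)$.

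These three checks show that $\sTD(\overline{W})$ is a complete join complex over $\sT(\overline{W})$, and Proposition \ref{HWProp3.5} then gives that $\sTD(\overline{W})$ is CM of dimension $(n - 1)$.

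The only point requiring care is the convention identifying CM for posets with CM for their simplicial complexes; by Remark \ref{PosetToSimpLink} the two notions agree, so this is not a real obstacle. The argument is otherwise a direct mirror of Miller--Patzt--Putman's proof of Proposition \ref{lemma:tdconn}.
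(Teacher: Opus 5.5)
Your proposal is correct and follows the paper's proof: both show that the forgetful map $\sTD(\overline{W}) \rightarrow \sT(\overline{W})$ makes $\sTD(\overline{W})$ a complete join complex by checking discrete fibers, surjectivity, injectivity on simplices and the join condition, and then combine Hatcher--Wahl's Proposition \ref{HWProp3.5} with Solomon--Tits' Theorem \ref{SBrB}. Your checks, which rely on the incomparability of distinct $\pm$-orientations (Remark \ref{pmorientationdonotmatter}), are in fact stated a bit more explicitly than in the paper.
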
  

\begin{proof}
Consider the following poset map 
\begin{align*}
\varphi & \colon \sbTD(\overline{W}) \rightarrow \sbT(\overline{W}),\\
\varphi & \colon (\overline{V}, \pm \alpha) \mapsto \overline{V}. 
\end{align*}
We check that $\sTD(\overline{W})$ with the simplicial map associated to $\varphi$, 
\begin{equation*}
\tilde{\varphi} \colon \sTD(\overline{W}) \rightarrow \sT(\overline{W}), 
\end{equation*}
is a complete join complex over $\sT(\overline{W})$. If $\overline{V} \in \sbT(\overline{W})$, then 
\begin{equation*}
\varphi^{-1}(\overline{V}) = \{(\overline{V}, \pm \alpha) \mid \alpha \in \wedge^{\text{top}} \overline{V}\}.
\end{equation*}
This is a discrete set because different $\pm$-orientations on the same vector space are incomparable. Given an isotropic flag 
\begin{equation*}
0 \subsetneq \overline{V}_0 \subsetneq \dots \subsetneq \overline{V}_k \subsetneq \overline{W}
\end{equation*}
associated to a $k$-simplex $\sigma \in \sT(\overline{W})$, the full sub-complex that $\tilde{\varphi}$ projects onto $\sigma$ is 
\begin{equation*}
    X := \simp(\varphi^{-1}(\overline{V}_0)) * \dots * \simp(\varphi^{-1}(\overline{V}_k)).
\end{equation*}
This follows from the fact that different $\pm$-orientations do not play a role in deciding if two simplices of different dimensions form a simplex. \\
If $\tau \in X$ is a $k$-simplex, then $\tilde{\varphi}(\tau) = \sigma$. Hence $\tilde{\varphi}$ is surjective. If we fix an $r$-simplex $\gamma \in X$, then there is a bijection between the faces of $\gamma$ and the faces of $\tilde{\varphi}(\gamma)$. Thus, $\tilde{\varphi}$ is injective on individual simplices. We conclude that $\sTD(\overline{W})$ with $\tilde{\varphi}$ is a complete join complex over $\sT(\overline{W})$. Hatcher--Wahl's Proposition \ref{HWProp3.5} and Solomon--Tit's Theorem \ref{SBrB} imply that $\sTD(\overline{W})$ is CM of dimension $(n - 1)$.
\end{proof}

\begin{definition}
Let $\overline{V}$ be an $n$-dimensional $\Field_p$-vector space and $\overline{W}$ is a $2m$-dimensional symplectic $\Field_p$-vector space. Denote by $\St^\pm(\overline{V}):= \rh_{n - 2}(\cTD(\overline{V}); \mathbb{Z})$ the \emph{$\pm$-oriented Steinberg module} and denote by $\St^{\omega, \pm}(\overline{W}):= \rh_{m - 1}(\sTD(\overline{W}); \mathbb{Z})$ the \emph{$\pm$-oriented symplectic Steinberg module}.
\end{definition}

We write $\St^\pm(\Field_p^n) = \St_n^\pm(\Field_p)$ and $\St^{\omega, \pm}(\Field_p^{2n}) = \St_{2n}^{\omega, \pm}(\Field_p)$ when $\overline{V} = \Field_p^n$ and $\overline{W} = \Field_p^{2n}$. \\
We conclude this section by computing the links of the $\pm$-oriented symplectic Tits building. We state several properties of $\pm$-orientations that we use.  

\begin{remark}\label{behaviorOfpmOrientations}
Let $U$ and $V$ be summands of a f.g.~free $R$-module. If a $\pm$-orientation on $U$ is fixed, we argue that there is a natural bijection between the set of $\pm$-orientations on $V/U$ and the set of $\pm$-orientations on $V$. Pick $W$ such that $U \oplus W = V$. The map $\varphi\colon U \oplus W \rightarrow V$ induces a map 
\begin{align*}
\varphi_* & \colon \wedge^{\text{top}} U \otimes \wedge^{\text{top}} W \rightarrow \wedge^{\text{top}} V, \\
\varphi_* & \colon \alpha \otimes \beta \mapsto \alpha \wedge \beta,
\end{align*}
where $\alpha \wedge \beta = \gamma \in \wedge^{\text{top}} V$. If we fix an orientation $\alpha \in \wedge^{\text{top}} U$, then we get a natural bijection of the set of orientations on $W$ and the set of orientations on $V$. Thus, if we fix the $\pm$-orientation on $U$, then there is a natural bijection between the set of $\pm$-orientations on $W$ and the set of $\pm$-orientations on $V$. \\
The canonical isomorphism $\psi\colon W \cong V/U$ induces an isomorphism $\psi_*\colon \wedge^{\text{top}} W \cong \wedge^{\text{top}} V/U$. Thus, there is a natural bijection between the set of $\pm$-orientations on $W$ and the set of $\pm$-orientations on $V/U$. We conclude that that there is a natural bijection between the set of $\pm$-orientations on $V/U$ and the set of $\pm$-orientations on $V$ when a $\pm$-orientation on $U$ is fixed. \\
It is easy to check that this natural bijection is independent of the choice of $W$. This is because the determinant of block upper triangular matrix associated to $V$ depends on the determinant of the blocks on the diagonal, which are associated to $U$ and $V/U$. 
\end{remark}

\begin{lemma}\label{pmSympPosetLinks}
Suppose $(\overline{U}, \pm \beta), (\overline{U}', \pm \beta') \in \sbTD(\overline{W})$ such that $\overline{U} \subsetneq \overline{U}'$. Then 
\begin{itemize}[nosep, topsep=-0.5cm]
\item the lower link of $(\overline{U}, \pm \beta)$ is isomorphic to $\cbTD(\overline{U})$;
\item the upper link of $(\overline{U}, \pm \beta)$ is isomorphic to $\sbTD(\overline{U}^\perp/\overline{U})$;
\item and the interval from $(\overline{U}, \pm \beta)$ to $(\overline{U}', \pm \beta')$ is isomorphic to $\cbTD(\overline{U}'/\overline{U})$.
\end{itemize}
\end{lemma}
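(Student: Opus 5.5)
The plan is to mirror the proof of Lemma \ref{SympPosetLinks}, using Remark \ref{pmorientationdonotmatter} and Remark \ref{behaviorOfpmOrientations} to keep track of the $\pm$-orientations. Throughout, fix $(\overline{U}, \pm\beta)$.

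\textbf{Lower link.} By Remark \ref{pmorientationdonotmatter}, an element $(\overline{Q}, \pm\gamma)$ lies below $(\overline{U}, \pm\beta)$ exactly when $\overline{Q} \subsetneq \overline{U}$, and the orientation $\pm\gamma$ is unrestricted. Every subspace of the isotropic space $\overline{U}$ is isotropic. Hence the identity map $(\overline{Q}, \pm\gamma) \mapsto (\overline{Q}, \pm\gamma)$ is a poset isomorphism from $\sbTD(\overline{W})_{<(\overline{U},\pm\beta)}$ onto $\cbTD(\overline{U})$.

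\textbf{Upper link.} Choose a symplectic subspace $\overline{S}$ with $\overline{U} \oplus \overline{S} = \overline{U}^\perp$. Elements above $(\overline{U}, \pm\beta)$ are pairs $(\overline{Q}, \pm\gamma)$ with $\overline{U} \subsetneq \overline{Q} \subsetneq \overline{W}$ and $\overline{Q}$ isotropic; such a $\overline{Q}$ automatically lies in $\overline{U}^\perp$.
\begin{itemize}
\item As in Lemma \ref{SympPosetLinks}, the maps $\overline{Q} \mapsto \overline{Q} \cap \overline{S}$ and $\overline{P} \mapsto \overline{P} \oplus \overline{U}$ are inverse bijections between such $\overline{Q}$ and proper nonzero isotropic subspaces of $\overline{S}$. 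Note that $\overline{Q} \neq \overline{U}^\perp$ when $\overline{U}^\perp/\overline{U} \neq 0$, since $\overline{U}^\perp$ is not isotropic in that case.
\item For orientations, fix $\beta$. Remark \ref{behaviorOfpmOrientations} gives a natural bijection between $\pm$-orientations on $\overline{Q}$ and $\pm$-orientations on $\overline{Q}/\overline{U} \cong \overline{Q} \cap \overline{S}$, namely $\pm\gamma = \pm(\beta \wedge \delta)$.
\item Send $(\overline{Q}, \pm\beta\wedge\delta)$ to $(\overline{Q}\cap\overline{S}, \pm\delta)$.
\end{itemize}

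This map preserves the order because, by Remark \ref{pmorientationdonotmatter}, comparability depends only on strict inclusion of the underlying subspaces, and strict inclusion is preserved in both directions. Composing with the canonical isomorphism $\overline{S} \cong \overline{U}^\perp/\overline{U}$ then identifies the upper link with $\sbTD(\overline{U}^\perp/\overline{U})$.

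\textbf{Interval.} Elements strictly between $(\overline{U}, \pm\beta)$ and $(\overline{U}', \pm\beta')$ are pairs $(\overline{Q}, \pm\gamma)$ with $\overline{U} \subsetneq \overline{Q} \subsetneq \overline{U}'$. Map each to $(\overline{Q}/\overline{U}, \pm\bar\gamma)$, where $\pm\bar\gamma$ is the orientation corresponding to $\pm\gamma$ under the bijection of Remark \ref{behaviorOfpmOrientations} with $\pm\beta$ fixed. This is a bijection onto $\cbTD(\overline{U}'/\overline{U})$, and it preserves order by the same argument as for the upper link.

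\textbf{Main obstacle.} The only delicate point is well-definedness of the orientation bijection. It must not depend on the choice of complement $\overline{S}$ or on the choice of representative $\beta$ rather than $-\beta$. The sign issue is harmless because we work with $\pm$-orientations throughout. Independence of the complement follows from the block upper triangular determinant argument in Remark \ref{behaviorOfpmOrientations}.
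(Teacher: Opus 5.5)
Your proposal is correct and follows essentially the same route as the paper. The lower link is identified directly, and the upper link and interval come from the complement/quotient isomorphism of Lemma \ref{SympPosetLinks}, upgraded to $\pm$-orientations via the bijection $\pm\beta\wedge\delta \leftrightarrow \pm\delta$ of Remark \ref{behaviorOfpmOrientations}, with order-preservation from Remark \ref{pmorientationdonotmatter}; you simply spell out the orientation bookkeeping more explicitly than the paper does.
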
  

\begin{proof}
The lower link of $(\overline{U}, \pm \beta)$ is isomorphic to $\cbTD(\overline{U})$ for the same reason that the lower link of $\overline{U}$ in $\sT(\overline{W})$ is isomorphic to $\cbT(\overline{U})$. \\
By Lemma \ref{SympPosetLinks}, there exists an isomorphism between $\cbT(\overline{W})_{> \overline{U}}$ and $\cbT(\overline{U}^\perp/\overline{U})$. This isomorphism is induced by the projection map $\varphi\colon \overline{W} \rightarrow \overline{U}^\perp/\overline{U}$. $\varphi$ induces a map that sends $\pm$-orientations on $\overline{Q} \supsetneq \overline{U}$ to $\pm$-orientations on $\overline{Q}/\overline{U}$. Since the $\pm$-orientation on $\overline{U}$ is $\pm \beta$, Remark \ref{behaviorOfpmOrientations} says the induced map on $\pm$-orientations is a bijection. Thus, the induced map on posets 
\begin{align*}
\varphi_* & \colon \cbTD(\overline{W})_{> (\overline{U}, \pm \beta)} \rightarrow \cbTD(\overline{U}^\perp/\overline{U}), \\
\varphi_* & \colon (\overline{Q}, \pm \gamma) \mapsto (\overline{Q}/\overline{U}, \pm \gamma_\varphi),
\end{align*}
is an isomorphism. \\
The interval from $(\overline{U}, \pm \beta)$ to $(\overline{U}', \pm \beta')$ is isomorphic to $\cbTD(\overline{U}')_{>(\overline{U}, \pm \beta)}$. Using similar reasoning as with the upper link, the map $\overline{U}' \rightarrow \overline{U}'/\overline{U}$ induces a poset isomorphism between $\cbTD(\overline{U}')_{>(\overline{U}, \pm \beta)}$ and $\cbTD(\overline{U}'/\overline{U})$.
\end{proof}

\subsection{Symplectic Tits Buildings of $\mathbb{Z}$ with $\pm$-congruence conditions}\label{TitsBuildingPmCongruence}

A f.g.~free $\mathbb{Z}$-module has a unique $\pm$-orientation by Remark \ref{setOfOrientations}. 

\begin{definition}
Suppose $V$ is a f.g.~free $\mathbb{Z}$-module, $\overline{V}$ is an $\Field_p$-vector space, and $\pi\colon V \rightarrow \overline{V}$ is a surjection. The map $\pi$ induces a map $\pi_*\colon \wedge^\text{top} V \rightarrow \wedge^\text{top} \overline{V}$ that sends the unique $\pm$-orientation on $V$ to a $\pm$-orientation on $\overline{V}$. Set $\alpha_\pi := \pi_*(\alpha)$ for $\alpha \in \wedge^\text{top} V$. $\pm \alpha_\pi$ is the \emph{$\pm$-orientation on $\overline{V}$ induced by $\pi$}. We write $\pi\colon V \rightarrow (\overline{V}, \pm \alpha_\pi)$ to imply that $\pm \alpha_\pi$ is the induced $\pm$-orientation on $\overline{V}$. 
\end{definition}

Let $W$ be a f.g.~symplectic $\mathbb{Z}$-module, let $\overline{W}$ be a symplectic $\Field_p$-vector space and let $\pi\colon W \rightarrow \overline{W}$ be a surjection. $\pi$ induces a poset map $\sbT(W) \rightarrow \sbTD(\overline{W})$. In this section, we consider the poset fibers of $(\overline{V}, \pm \alpha) \in \sbTD(\overline{W})$ with respect to the induced poset map. This is a subposet of $\sbT(W)$. We postpone to Section \ref{titsconghighconnectivity} the demonstration that this subposet is CM of dimension $\height(\overline{V}, \pm \alpha)$. 

\begin{notation}
We decorate $\Field_p$-vector spaces with an overline to distinguish them from $\mathbb{Z}$-modules. In general, we use the same variable to imply that there exists a surjection between the $\mathbb{Z}$-module and the $\Field_p$-module. We use $W$ to denote a f.g.~symplectic $\mathbb{Z}$-module and $\overline{W}$ to denote a symplectic $\Field_p$-vector space, and we use $\overline{V} \subset \overline{W}$ to denote an isotropic $\Field_p$-subspace. Given a surjective map $\pi\colon W \rightarrow \overline{W}$, we write $\pi\colon \sbT(W) \rightarrow \sbTD(\overline{W})$ to mean the induced poset map.
\end{notation}

\begin{definition}
Let $W$ be a f.g.~symplectic $\mathbb{Z}$-module, let $\overline{W}$ be a symplectic $\Field_p$-vector space, let $\pi\colon W \rightarrow \overline{W}$ be a surjection, and let $(\overline{V}, \pm \alpha) \in \sbTD(\overline{W})$. We denote by $\sbT(W \vert (\overline{V}, \pm \alpha))$ the poset whose elements are isotropic summands $U \subset W$ ordered by inclusion that satisfy the following conditions: we require that $\overline{U} \subseteq \overline{V}$, where $\pi(U)= (\overline{U}, \pm \beta_\pi)$, and, if $\overline{U} = \overline{V}$, we require $\pm \beta_\pi = \pm \alpha$. We call $\sbT(W \vert (\overline{V}, \pm \alpha))$ and $\sT(W \vert (\overline{V}, \pm \alpha)):= \simp(\sbT(W \vert (\overline{V}, \pm \alpha)))$ the \emph{symplectic Tits buildings of $W$ with a $\pm$-congruence condition with respect to $(\overline{V}, \pm \alpha)$}. 
\end{definition}

\begin{remark}
$\sbT(W \vert (\overline{V}, \pm \alpha))$ is the poset fiber of $(\overline{V}, \pm \alpha) \in \sbTD(\overline{W})$ with respect to a surjective poset map $\pi\colon \sbT(W) \rightarrow \sbTD(\overline{W})$. That is, 
\begin{equation*}
\pi^{-1}(\sbTD(\overline{W})_{\leq (\overline{V}, \pm \alpha)}) = \sbT(W \vert (\overline{V}, \pm \alpha)). 
\end{equation*}
\end{remark}

\begin{lemma} \label{SympCongruencePosetLinks}
Let $\pi\colon W \rightarrow \overline{W}$ be a surjection. Suppose $U, U' \in \sbT(W \vert (\overline{V}, \pm \alpha))$ such that $U \subsetneq U'$. Then
\begin{itemize}[nosep, topsep=-0.5cm]
\item the lower link of $U$ is isomorphic to $\cbT(U)$;
\item the upper link of $U$ is isomorphic to $\sbT(U^\perp/U \vert (\overline{V}/\overline{U}, \pm \eta))$, where $(\overline{V}/\overline{U}, \pm \eta) \in \sbTD(\overline{U}^\perp/\overline{U})$ and $\pi_*\colon U^\perp/U \rightarrow \overline{U}^\perp/\overline{U}$ is a surjection induced by $\pi$;
\item the interval from $U$ to $U'$ is isomorphic to $\cbT(U'/U)$.
\end{itemize} 
\end{lemma}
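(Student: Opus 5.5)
The plan is to reduce each item to Lemma \ref{SympPosetLinks} and then check which congruence conditions survive. Set $P := \sbT(W \vert (\overline{V}, \pm \alpha))$, a subposet of $\sbT(W)$.

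For the lower link of $U \in P$, let $Q \subsetneq U$ be a proper nonzero summand. Then $Q$ is isotropic and $\overline{Q} \subseteq \overline{U} \subseteq \overline{V}$. We must rule out $\overline{Q} = \overline{V}$. If that held, then $\overline{Q} = \overline{U}$, so $Q$ and $U$ would have equal rank because $\pi$ restricted to a summand has image of the same dimension. But $Q$ is a proper summand of $U$, so this is impossible. Hence no orientation condition is imposed on $Q$, and $P_{<U} = \sbT(W)_{<U} \cong \cbT(U)$ by Lemma \ref{SympPosetLinks}.

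The interval from $U$ to $U'$ is handled the same way. Any $Q$ with $U \subsetneq Q \subsetneq U'$ has $\overline{Q} \subsetneq \overline{U'} \subseteq \overline{V}$, by the same rank argument. So the interval equals the interval in $\sbT(W)$, which is isomorphic to $\cbT(U'/U)$.

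The upper link is the only substantive item. Here I use the isomorphism from Lemma \ref{SympPosetLinks}, $\sbT(W)_{>U} \cong \sbT(U^\perp/U)$ via $Q \mapsto Q/U$. This should be stated canonically through the projection, so that it is compatible with $\pi$.

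First I check that $\pi$ induces a surjection $\pi_*\colon U^\perp/U \to \overline{U}^\perp/\overline{U}$. I choose a symplectic basis of $W$ extending a basis of $U$ (Lemma \ref{SymplecticBases}); its reduction mod $p$ is a symplectic basis of $\overline{W}$ extending a basis of $\overline{U}$. Hence $\pi(U^\perp) = \overline{U}^\perp$, and by the same argument $\overline{U^\perp} = \overline{U}^\perp$.

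Next, set $\pm\eta$ to be the $\pm$-orientation on $\overline{V}/\overline{U}$ corresponding to $\pm\alpha$ under the bijection of Remark \ref{behaviorOfpmOrientations}, with $\overline{U}$ carrying its induced orientation $\pm\beta_\pi$. I then verify that $Q \in P_{>U}$ if and only if $Q/U \in \sbT(U^\perp/U \vert (\overline{V}/\overline{U}, \pm\eta))$. Isotropy of $Q \supseteq U$ forces $Q \subseteq U^\perp$, and $\overline{Q} \subseteq \overline{V}$ if and only if $\overline{Q}/\overline{U} \subseteq \overline{V}/\overline{U}$. When $\overline{Q} = \overline{V}$, I must match the orientation conditions. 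I pick a complement $Q = U \oplus C$. The orientation of $Q$ is the wedge of those of $U$ and $C$, and reduction mod $p$ commutes with wedging. By Remark \ref{behaviorOfpmOrientations}, $\pm\beta_{Q,\pi} = \pm\alpha$ holds exactly when the induced orientation on $\overline{Q}/\overline{U}$ equals $\pm\eta$.

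One degenerate case needs attention: if $\overline{U} = \overline{V}$, then $\overline{V}/\overline{U} = 0$ is not in $\sbTD$. I expect to handle this by noting that it forces $P_{>U} = \emptyset$ (any $Q \supsetneq U$ would need $\overline{Q} \subseteq \overline{V} = \overline{U}$, which is impossible by the rank argument), and by interpreting the target poset as empty. The main obstacle I anticipate is this orientation compatibility, which requires independence from the choice of complement; that independence is exactly the block-triangular determinant remark in Remark \ref{behaviorOfpmOrientations}.
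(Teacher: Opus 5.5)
Your proposal is correct and follows essentially the paper's proof. The lower link and the interval are identified directly with the unconditioned ones. The upper link is identified via the projection $Q \mapsto Q/U$ of Lemma \ref{SympPosetLinks}, with the orientation condition transported through Remark \ref{behaviorOfpmOrientations}; the paper packages this as the commuting square $\pi_* \circ \varphi = \psi \circ \pi$ and pulls back down-sets, where you check membership elementwise.

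The one step the paper adds is passing from your induced orientation $\pm\eta = \pm\alpha_\psi$ to an arbitrary prescribed $\pm\eta$. It does this with a symplectic automorphism of $\overline{U}^\perp/\overline{U}$ lifted to $U^\perp/U$ (Theorem \ref{surjectivesympelctic}). This step is not needed later, since the inductive hypothesis in Proposition \ref{anotherInduction3} covers every orientation.
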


\begin{proof}
Since every summand $Q \subset U$ is isotropic and $\pi(Q) = \overline{Q} \subset \overline{U} \subset \overline{V}$, the lower link of $U$ is isomorphic to $\cbT(U)$. Likewise, the interval from $U$ to $U'$ is isomorphic to $\cbT(U')_{> U}$. $\cbT(U')_{> U}$ is isomorphic to $\cbT(U'/U)$.   \\
We show that there is a natural bijection between $\sbT(W \vert (\overline{V}, \pm \alpha))_{> U}$ and $\sbT(U^\perp/U \vert (\overline{V}/\overline{U}, \pm \eta))$ by constructing a commutative diagram that delineates the congruence condition. Let $\pi(U) = (\overline{U}, \pm \beta_\pi)$, and consider the following diagram of poset maps:
\begin{equation} \label{computingLinksDiagram}
\begin{tikzcd}
\sbT(W)_{> U} \arrow[d, "\pi"'] \arrow[r, "\varphi"] & \sbT(U^\perp/U)   \arrow[d, "\pi_*"'] \\
\sbTD(\overline{W})_{> (\overline{U}, \pm \beta_\pi)} \arrow[r, "\psi"] & \sbTD(\overline{U}^\perp/\overline{U})
\end{tikzcd}
\end{equation}
Let $\varphi$ and $\psi$ be the poset maps induced by the projection maps $W \rightarrow U^\perp/U$ and $\overline{W} \rightarrow \overline{U}^\perp/\overline{U}$, respectively. These are poset isomorphisms by Lemmas \ref{SympPosetLinks} and \ref{pmSympPosetLinks}. $\pi_*$ is a surjection induced by $\pi$. Thus, $\pi_* \circ \varphi = \psi \circ \pi$. Set $\psi(\overline{V}, \pm \alpha) = (\overline{V}/\overline{U}, \pm \alpha_{\psi})$. Note that 
\begin{equation*}
\pi^{-1}(\sbTD(\overline{W})_{\leq (\overline{V}, \pm \alpha)} \cap \sbTD(\overline{W})_{> (\overline{U}, \pm \beta)}) = \sbT(W \vert (\overline{V}, \pm \alpha))_{> U}
\end{equation*}
and 
\begin{equation*}
\pi_*^{-1}(\sbTD(\overline{U}^\perp/\overline{U})_{\leq (\overline{V}/\overline{U}, \pm \alpha_{\psi})}) = \sbT(U^\perp/U \vert (\overline{V}/\overline{U}, \pm \alpha_{\psi})).
\end{equation*}
Since Diagram \ref{computingLinksDiagram} commutes, it follows that 
\begin{equation*}
\sbT(W \vert (\overline{V}, \pm \alpha))_{> U} \cong \sbT(U^\perp/U \vert (\overline{V}/\overline{U}, \pm \alpha_{\psi})).
\end{equation*}
If $\pm \alpha_{\psi} \neq \pm \eta$, let $\psi'\colon \sbTD(\overline{U}^\perp/\overline{U}) \rightarrow \sbTD(\overline{U}^\perp/\overline{U})$ be a poset isomorphism induced by a symplectic automorphism that sends $(\overline{V}/\overline{U}, \pm \alpha_{\psi})$ to $(\overline{V}/\overline{U}, \pm \eta)$. Pick $V/U \subset U^\perp/U$ such that $\pi_*(V/U) = (\overline{V}/\overline{U}, \pm \alpha_\psi)$ and pick $V'/U \subset U^\perp/U$ such that $\pi_*(V'/U) = (\overline{V}/\overline{U}, \pm \eta)$. Let $\varphi'\colon \sbT(U^\perp/U) \rightarrow \sbT(U^\perp/U)$ be the poset isomorphism induced by a symplectic automorphism that sends $V/U$ to $V'/U$. The following diagram is commutative:
\begin{equation}\label{commutativeDiagram2}
\begin{tikzcd}
\sbT(U^\perp/U) \arrow[d, "\pi_*"'] \arrow[r, "\varphi'"] & \sbT(U^\perp/U)   \arrow[d, "\pi_*"'] \\
\sbTD(\overline{U}^\perp/\overline{U}) \arrow[r, "\psi'"] & \sbTD(\overline{U}^\perp/\overline{U})
\end{tikzcd}
\end{equation}
So there exists an isomorphism between 
\begin{equation*}
\pi_*^{-1}(\sbTD(\overline{U}^\perp/\overline{U})_{\leq (\overline{V}/\overline{U}, \pm \alpha_{\psi})}) = \sbT(U^\perp/U \vert (\overline{V}/\overline{U}, \pm \alpha_{\psi})),
\end{equation*}
and 
\begin{equation*}
\pi_*^{-1}(\sbTD(\overline{U}^\perp/\overline{U})_{\leq (\overline{V}/\overline{U}, \pm \eta)}) = \sbT(U^\perp/U \vert (\overline{V}/\overline{U}, \pm \eta)).
\end{equation*}
\end{proof}

\subsection{Restricted symplectic Tits Buildings of $\mathbb{Z}$ with $\pm$-congruence conditions}\label{ResTitsBuildingPmCongruence}

Let $W$ be a symplectic $R$-module. Recall that a summand is a restricted summand if it is of corank-1. We make the convention of denoting restricted summands with a hat: $\hat{W}$. This applies for both $\mathbb{Z}$-modules and $\Field_p$-vector spaces. 

\begin{definition}
Let $R$ be a PID. Let $W$ be a f.g.~symplectic $R$-module and let $\hat{W} \subsetneq W$ be a restricted summand. Let $\sbT(\hat{W})$ denote the subposet of $\sbT(W)$ whose elements are nonzero proper isotropic summands $U \subseteq \hat{W}$. We call $\sbT(\hat{W})$ and $\sT(\hat{W}):= \simp(\sbT(\hat{W}))$ the \emph{restricted symplectic Tits building of $\hat{W}$}. 
\end{definition}

For a thorough investigation of a restricted symplectic Tits building of $\mathbb{Q}$ we refer the reader to Br\"uck--Sroka \cite{BruckSroka24}. We compute the poset links of $\sbT(\hat{W})$. 

\begin{lemma} \label{ResSympPosetLinks}
Suppose $U, U' \in \sbT(\hat{W})$ such that $U \subsetneq U'$. 
\begin{itemize}[nosep, topsep=-0.5cm]
\item The lower link of $U$ is isomorphic to $\cbT(U)$;
\item if $U^\perp \cap \hat{W} = U^\perp$, then the upper link of $U$ is isomorphic to $\sbT(U^\perp/U)$;
\item if $U^\perp \cap \hat{W} \neq U^\perp$, then the upper link of $U$ is isomorphic to $\sbT((U^\perp \cap \hat{W})/U)$;
\item the interval from $U$ to $U'$ is isomorphic to $\cbT(U'/U)$.
\end{itemize} 
\end{lemma}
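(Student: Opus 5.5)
The plan is to mirror the proof of Lemma \ref{SympPosetLinks} and check each of the four items separately, using that $\sbT(\hat{W})$ is a full subposet of $\sbT(W)$.

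\textbf{Lower link and interval.} Every summand $Q \subset U$ is isotropic. Since $U \subseteq \hat{W}$, any such $Q$ also lies in $\hat{W}$. By Lemma \ref{LinearAlgebraSummands}, summands of $W$ contained in $U$ are the same as summands of $U$. Hence $\sbT(\hat{W})_{<U} = \cbT(U)$ exactly as in Lemma \ref{SympPosetLinks}. For the interval, the elements strictly between $U$ and $U'$ are summands of $U'$, which are automatically isotropic and contained in $\hat{W}$. So the interval is $\cbT(U')_{>U}$, and the projection $U' \to U'/U$ identifies this with $\cbT(U'/U)$.

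\textbf{Upper link when $U^\perp \subseteq \hat{W}$.} Every isotropic summand $Q \supsetneq U$ satisfies $Q \subseteq Q^\perp \subseteq U^\perp \subseteq \hat{W}$. The condition of lying in $\hat{W}$ is therefore vacuous, so $\sbT(\hat{W})_{>U} = \sbT(W)_{>U}$. Lemma \ref{SympPosetLinks} then gives $\sbT(U^\perp/U)$.

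\textbf{Upper link when $U^\perp \not\subseteq \hat{W}$.} Now $\sbT(\hat{W})_{>U}$ consists of isotropic summands $Q$ with $U \subsetneq Q \subseteq U^\perp \cap \hat{W}$, and $Q$ must be proper in $W$. I will show that $U^\perp \cap \hat{W}$ is a restricted summand of $U^\perp$ containing $U$. It is a summand as an intersection of summands, using Lemma \ref{IsotropicComplementSummands} and the fact that it is saturated. It has corank one in $U^\perp$ because $\hat{W}$ has corank one in $W$ and $U^\perp \not\subseteq \hat{W}$. Choose a symplectic summand $S$ with $U \oplus S = U^\perp$; I expect this to be the main obstacle. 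As in Lemma \ref{SympPosetLinks}, the maps $Q \mapsto Q \cap S$ and $P \mapsto P \oplus U$ are inverse isomorphisms between $\sbT(W)_{>U}$ and $\sbT(S)$. Under them, the condition $Q \subseteq \hat{W}$ corresponds to $P \subseteq S \cap \hat{W}$. Since $U \subseteq \hat{W}$, we have $(U \oplus S)\cap \hat{W} = U \oplus (S \cap \hat{W})$ by the modular law, and $S \cap \hat{W}$ is a restricted summand of $S$. The bijection thus restricts to $\sbT(\hat{W})_{>U} \cong \sbT(S \cap \hat{W})$, the restricted symplectic Tits building of $S\cap\hat W \subsetneq S$. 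Finally, the canonical symplectic isomorphism $S \cong U^\perp/U$ carries $S \cap \hat{W}$ onto $(U^\perp \cap \hat{W})/U$. This gives the stated identification with $\sbT((U^\perp\cap\hat{W})/U)$, read as a restricted building inside $U^\perp/U$.

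In the last case I also need that "proper" matches up on both sides. This is automatic here, because no isotropic $Q$ can equal $W$, and $S \cap \hat{W} \neq S$.
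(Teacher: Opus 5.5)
Your proposal is correct and follows essentially the same route as the paper: pick a symplectic $S$ with $U\oplus S=U^\perp$, and restrict the inverse bijections $Q\mapsto Q\cap S$, $P\mapsto P\oplus U$ to the restricted summand $\hat S\subsetneq S$ with $U\oplus\hat S=U^\perp\cap\hat W$. The step you flag as the main obstacle is routine: it follows from Lemma \ref{SymplecticBases} applied to a basis of $U$, exactly as in Lemma \ref{SympPosetLinks}. Your modular-law identification $\hat S=S\cap\hat W$ and your argument $Q\subseteq Q^\perp\subseteq U^\perp\subseteq\hat W$ for the first upper-link case are slightly more explicit than the paper. The paper also invokes Lemma \ref{restrictedSummandTransitive} for a step that the symplectic isomorphism $S\cong U^\perp/U$ already handles.
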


\begin{proof}
Except for the upper link of $U$ in the case that $U^\perp \cap \hat{W} \neq U^\perp$, we compute the links of $\sbT(\hat{W})$ using the same arguments as in the proof of Lemma \ref{SympPosetLinks}. If $U^\perp \cap \hat{W} \neq U^\perp$ then $U^\perp \cap \hat{W}$ is a restricted summand of $U^\perp$. Choose a symplectic summand $S$ such that $U \oplus S = U^\perp$. In addition, choose a restricted summand $\hat{S} \subsetneq S$ such that $U \oplus \hat{S} = U^\perp \cap \hat{W}$. The poset maps 
\begin{align*}
\hat{f} & \colon \sbT(\hat{W})_{> U} \rightarrow \sbT(\hat{S})\colon Q \mapsto Q \cap S, \\
\hat{g} & \colon \sbT(\hat{S}) \rightarrow \sbT(\hat{W})_{> U} \colon Q \mapsto Q \oplus U,
\end{align*}
are inverses of each other that send isotropic summands to isotropic summands. So $\sbT(\hat{W})_{> U}$ is isomorphic to $\sbT(\hat{S})$. By Lemma \ref{restrictedSummandTransitive} we can find a basis for $\hat{S}$ that is subset of a symplectic basis for $S$. The canonical isomorphism $S \cong U^\perp/U$ sends the symplectic basis for $S$ to a symplectic basis for $U^\perp/U$ and it sends the basis for $\hat{S}$ to a basis for $(U^\perp \cap \hat{W})/U$. Thus, the canonical isomorphism induces the isomorphism $\hat{S} \cong (U^\perp \cap \hat{W})/U$ which in turn induces an isomorphism between $\sbT(\hat{S})$ and $\sbT((U^\perp \cap \hat{W})/U)$. This is independent of the choice of $\hat{S}$. Thus, the projection map $\hat{W} \rightarrow (U^\perp \cap \hat{W})/U$ induces a poset isomorphism between $\sbT(\hat{W})_{> U}$ and $\sbT((U^\perp \cap \hat{W})/U)$. 
\end{proof}

\begin{definition}
Let $\overline{W}$ be a symplectic $\Field_p$-module and let $\hat{\overline{W}} \subsetneq \overline{W}$ be a restricted subspace. Let $\sbTD(\hat{\overline{W}})$ denote the subposet of $\sbTD(\overline{W})$ whose elements are nonzero proper isotropic subspaces $\overline{U} \subseteq \hat{\overline{W}}$ equipped with a $\pm$-orientation. We call $\sbTD(\hat{\overline{W}})$ and $\sTD(\hat{\overline{W}}):= \simp(\sbTD(\hat{\overline{W}}))$ the \emph{restricted $\pm$-oriented symplectic Tits building of $\hat{\overline{W}}$}.
\end{definition}

\begin{lemma} \label{pmResSympPosetLinks}
Suppose $(\overline{U}, \pm \beta), (\overline{U}', \pm \beta') \in \sbTD(\hat{\overline{W}})$ such that $\overline{U} \subsetneq \overline{U}'$. 
\begin{itemize}[nosep, topsep=-0.5cm]
\item The lower link of $(\overline{U}, \pm \beta)$ is isomorphic to $\cbTD(\overline{U})$;
\item if $\overline{U}^\perp \cap \hat{\overline{W}} = \overline{U}^\perp$, then the upper link of $(\overline{U}, \pm \beta)$ is isomorphic to $\sbTD(\overline{U}^\perp/\overline{U})$;
\item if $\overline{U}^\perp \cap \hat{\overline{W}} \neq \overline{U}^\perp$, then the upper link of $(\overline{U}, \pm \beta)$ is isomorphic to $\sbTD((\overline{U}^\perp \cap \hat{\overline{W}})/\overline{U})$;
\item the interval from $(\overline{U}, \pm \beta)$ to $(\overline{U}', \pm \beta')$ is isomorphic to $\cbTD(\overline{U}'/\overline{U})$.
\end{itemize} 
\end{lemma}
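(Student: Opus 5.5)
The plan is to follow the proofs of Lemma \ref{pmSympPosetLinks} and Lemma \ref{ResSympPosetLinks}: do the unoriented computation over $\Field_p$ first, then lift it to $\pm$-orientations using Remark \ref{behaviorOfpmOrientations}. Since $\hat{\overline{W}}$ has codimension one, every isotropic subspace of $\overline{U}$ lies in $\hat{\overline{W}}$. Comparability in $\sbTD(\hat{\overline{W}})$ depends only on the underlying subspaces (Remark \ref{pmorientationdonotmatter}). Hence the lower link of $(\overline{U}, \pm\beta)$ is exactly the set of proper nonzero subspaces of $\overline{U}$ with arbitrary $\pm$-orientations, which is $\cbTD(\overline{U})$.

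For the interval, note that every $\overline{Q}$ with $\overline{U} \subsetneq \overline{Q} \subsetneq \overline{U}'$ is isotropic and lies in $\hat{\overline{W}}$. So the interval equals $\cbTD(\overline{U}')_{>(\overline{U},\pm\beta)}$. The projection $\overline{U}' \to \overline{U}'/\overline{U}$ gives a bijection $\overline{Q} \mapsto \overline{Q}/\overline{U}$ onto the proper nonzero subspaces of $\overline{U}'/\overline{U}$. Because $\pm\beta$ on $\overline{U}$ is fixed, Remark \ref{behaviorOfpmOrientations} gives a natural bijection between $\pm$-orientations on $\overline{Q}$ and on $\overline{Q}/\overline{U}$. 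Both maps preserve inclusion, so the interval is isomorphic to $\cbTD(\overline{U}'/\overline{U})$.

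For the upper link, the elements above $(\overline{U},\pm\beta)$ are isotropic $\overline{Q} \supsetneq \overline{U}$ contained in $\hat{\overline{W}}$, equipped with $\pm$-orientations. Such $\overline{Q}$ satisfy $\overline{Q} \subseteq \overline{U}^\perp \cap \hat{\overline{W}}$. The proof splits into two cases.
\begin{itemize}
\item If $\overline{U}^\perp \subseteq \hat{\overline{W}}$, the constraint from $\hat{\overline{W}}$ is vacuous. The upper link is then the same as in $\sbTD(\overline{W})$, which Lemma \ref{pmSympPosetLinks} identifies with $\sbTD(\overline{U}^\perp/\overline{U})$.
\item If not, $\overline{U}^\perp \cap \hat{\overline{W}}$ is a restricted subspace of $\overline{U}^\perp$ containing $\overline{U}$. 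Its image $(\overline{U}^\perp \cap \hat{\overline{W}})/\overline{U}$ is a restricted subspace of the symplectic space $\overline{U}^\perp/\overline{U}$. The projection $\overline{U}^\perp \to \overline{U}^\perp/\overline{U}$ gives the unoriented isomorphism, as in Lemma \ref{ResSympPosetLinks}: choose a symplectic complement $\overline{S}$ with $\overline{U}\oplus \overline{S} = \overline{U}^\perp$ and $\hat{\overline{S}} = \overline{S}\cap \hat{\overline{W}}$, and use $\overline{Q}\mapsto \overline{Q}\cap \overline{S}$ and $\overline{Q}'\mapsto \overline{Q}'\oplus\overline{U}$. Over a field, Lemma \ref{restrictedSummandTransitive} is not even needed. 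Composing with Remark \ref{behaviorOfpmOrientations}, with $\pm\beta$ fixed, transports $\pm$-orientations bijectively. This yields $\sbTD((\overline{U}^\perp \cap \hat{\overline{W}})/\overline{U})$, read as the restricted $\pm$-oriented building of that restricted subspace.
\end{itemize}

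The only delicate point is checking that the orientation bijection is compatible with the order and independent of choices. Compatibility is automatic, since order ignores orientations. Independence holds because the bijection $\overline{Q}\mapsto\overline{Q}/\overline{U}$ on orientations is canonical (the block-triangular determinant argument of Remark \ref{behaviorOfpmOrientations}), so it does not depend on $\overline{S}$. The case split in the upper link is the main point to state carefully.
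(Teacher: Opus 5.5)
Your proposal is correct and follows the paper's route. The paper reruns the argument of Lemma \ref{pmSympPosetLinks}, namely projection to the quotient together with the orientation bijection of Remark \ref{behaviorOfpmOrientations} with $\pm\beta$ fixed, and treats the case $\overline{U}^\perp \cap \hat{\overline{W}} \neq \overline{U}^\perp$ via Lemma \ref{ResSympPosetLinks}, exactly as you do. One small slip: subspaces of $\overline{U}$ lie in $\hat{\overline{W}}$ simply because $\overline{U} \subseteq \hat{\overline{W}}$, not because $\hat{\overline{W}}$ has codimension one.
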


We compute the links of $\sbTD(\hat{\overline{W}})$ using the same arguments presented in the proof of Lemma \ref{pmSympPosetLinks}. The computation of the upper link of $U$ for the case of $\overline{U}^\perp \cap \hat{\overline{W}} \neq \overline{U}^\perp$ follows from Lemma \ref{ResSympPosetLinks} instead of Lemma \ref{SympPosetLinks}. 

\begin{definition}
Let $W$ be a f.g.~symplectic $\mathbb{Z}$-module, let $\overline{W}$ be a symplectic $\Field_p$-vector space, let $\hat{W} \subsetneq W$ be a restricted summand, let $\hat{\overline{W}} \subsetneq \overline{W}$ be a restricted subspace, let $\pi\colon W \rightarrow \overline{W}$ be a surjection such that $\pi(\hat{W}) = \hat{\overline{W}}$, and let $(\overline{V}, \pm \alpha) \in \sbTD(\hat{\overline{W}})$. We denote by $\sbT(\hat{W} \vert (\overline{V}, \pm \alpha))$ the subposet of $\sbT(W \vert (\overline{V}, \pm \alpha))$ whose elements are isotropic summands $U \subseteq \hat{W}$. We call $\sbT(\hat{W} \vert (\overline{V}, \pm \alpha))$ and $\sT(\hat{W} \vert (\overline{V}, \pm \alpha)):= \simp(\sbT(\hat{W} \vert (\overline{V}, \pm \alpha)))$ the \emph{restricted symplectic Tits buildings with a $\pm$-congruence condition}. 
\end{definition} 

\begin{lemma} \label{linkposetrestricted}
Let $\pi\colon W \rightarrow \overline{W}$ be a surjection such that $\pi(\hat{W}) = \hat{\overline{W}}$. Suppose $U, U' \in \sbT(\hat{W} \vert (\overline{V}, \pm \alpha))$ such that $U \subset U'$. 
\begin{itemize}[nosep, topsep=-0.5cm]
\item The lower link of $U$ is isomorphic to $\cbT(U)$;
\item if $U^\perp \cap \hat{W} = U^\perp$, the upper link of $U$ is isomorphic to $\sbT(U^\perp/U \vert (\overline{V}/\overline{U}, \pm \eta))$, $(\overline{V}/\overline{U}, \pm \eta) \in \sbTD(\overline{U}^\perp/\overline{U})$ and $\pi_*\colon U^\perp/U \rightarrow \overline{U}^\perp/\overline{U}$ is a surjection induced by $\pi$;
\item if $U^\perp \cap \hat{W} \neq U^\perp$, the upper link of $U$ is isomorphic to $\sbT((U^\perp \cap \hat{W})/U \vert (\overline{V}/\overline{U}, \pm \eta))$, where $(\overline{V}/\overline{U}, \pm \eta) \in \sbTD((\overline{U}^\perp \cap \hat{\overline{W}})/\overline{U})$ and $\pi_*\colon (U^\perp \cap \hat{W})/U \rightarrow (\overline{U}^\perp \cap \hat{\overline{W}})/\overline{U}$ is a surjection induced by $\pi$;
\item the interval from $U$ to $U'$ is isomorphic to $\cbT(U'/U)$.
\end{itemize} 
\end{lemma}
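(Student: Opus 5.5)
The plan is to treat the four bullets separately, reducing each to an earlier link computation and then checking that the congruence condition carries over. This is the same pattern as the proof of Lemma \ref{SympCongruencePosetLinks}, with Lemma \ref{ResSympPosetLinks} taking over the role of Lemma \ref{SympPosetLinks} wherever the restriction $U \subseteq \hat{W}$ matters.

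\textbf{Lower link and interval.} Every summand $Q \subsetneq U$ is isotropic and lies in $\hat{W}$. Its reduction satisfies $\overline{Q} \subseteq \overline{U} \subseteq \overline{V}$. If $\overline{Q} = \overline{V}$, then $\overline{Q} = \overline{U} = \overline{V}$, which cannot happen for a proper summand $Q \subsetneq U$ of smaller rank, since reduction preserves rank. So no orientation condition is imposed, and the lower link is all of $\cbT(U)$. For the same reason the interval from $U$ to $U'$ is $\cbT(U')_{>U} \cong \cbT(U'/U)$, via the projection $U' \to U'/U$ exactly as in Lemma \ref{SympPosetLinks}.

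\textbf{Upper link when $U^\perp \subseteq \hat{W}$.} Here every isotropic summand $Q \supsetneq U$ lies in $U^\perp \subseteq \hat{W}$. Hence $\sbT(\hat{W}\vert(\overline{V},\pm\alpha))_{>U} = \sbT(W\vert(\overline{V},\pm\alpha))_{>U}$, and the claim is exactly Lemma \ref{SympCongruencePosetLinks}.

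\textbf{Upper link when $U^\perp \not\subseteq \hat{W}$.} I would set up the analogue of Diagram \ref{computingLinksDiagram}:
\begin{equation*}
\begin{tikzcd}
\sbT(\hat{W})_{> U} \arrow[d, "\pi"'] \arrow[r, "\hat\varphi"] & \sbT((U^\perp\cap\hat{W})/U) \arrow[d, "\pi_*"'] \\
\sbTD(\hat{\overline{W}})_{> (\overline{U}, \pm \beta_\pi)} \arrow[r, "\hat\psi"] & \sbTD((\overline{U}^\perp\cap\hat{\overline{W}})/\overline{U})
\end{tikzcd}
\end{equation*}
Here $\hat\varphi$ and $\hat\psi$ are induced by the projections. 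They are isomorphisms by Lemmas \ref{ResSympPosetLinks} and \ref{pmResSympPosetLinks}, and $\hat\psi$ is well defined on $\pm$-orientations by Remark \ref{behaviorOfpmOrientations}. Commutativity holds because reduction mod $p$ commutes with taking quotients, and the orientation induced on $\overline{Q}/\overline{U}$ is determined by those on $\overline{Q}$ and $\overline{U}$ (again Remark \ref{behaviorOfpmOrientations}).

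The diagram then identifies the two preimages of lower cones: the preimage of the cone below $(\overline{V},\pm\alpha)$ on the left corresponds to the preimage of the cone below $\hat\psi(\overline{V},\pm\alpha)=(\overline{V}/\overline{U},\pm\alpha_{\hat\psi})$ on the right. Passing from $\pm\alpha_{\hat\psi}$ to an arbitrary $\pm\eta$ uses the same diagram trick as Diagram \ref{commutativeDiagram2}. Choose a symplectic automorphism of $U^\perp/U$ that preserves the restricted summand $(U^\perp\cap\hat{W})/U$ and carries a lift with orientation $\pm\alpha_{\hat\psi}$ to a lift with orientation $\pm\eta$. I expect to get this by working in a basis adapted via Lemma \ref{restrictedSummandTransitive}: a diagonal rescaling inside the isotropic part of the lift, compensated on the symplectic partner, changes the determinant mod $p$.

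\textbf{Main obstacle.} The delicate points are two. First, one must check that $\pi_*$ really maps $(U^\perp\cap\hat{W})/U$ onto $(\overline{U}^\perp\cap\hat{\overline{W}})/\overline{U}$; this needs $\overline{U^\perp\cap\hat{W}} = \overline{U}^\perp\cap\hat{\overline{W}}$, which I would prove by comparing ranks and dimensions. Second, one must realize the orientation change by an automorphism that preserves the restricted summand. If a suitable lift is not directly available, I would instead fix a preimage $V$ of $(\overline{V},\pm\alpha)$ inside $\hat{W}$ and define $\pm\eta$ as the orientation induced by $V/U$, which avoids the automorphism altogether.
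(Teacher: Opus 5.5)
You handle the lower link, the interval, and the case $U^\perp\subseteq\hat{W}$ correctly. In that last case the upper link coincides with the one in $\sbT(W\vert(\overline{V},\pm\alpha))$, so Lemma \ref{SympCongruencePosetLinks} applies. In the remaining case you follow the paper's sketch: the analogue of Diagram \ref{computingLinksDiagram}, then an automorphism to reach an arbitrary $\pm\eta$, for which the paper cites Proposition \ref{transitivityProved}. The gap is exactly at the two points you call delicate. Both claims are false in general, so your repairs cannot work, and the paper's sketch does not handle them either.

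\emph{Orientation step.} Let $p\geq 5$ and $W=\mathbb{Z}^4$ with symplectic basis $\vec{e}_1,\vec{f}_1,\vec{e}_2,\vec{f}_2$, and write $\bar{x}=\pi(\vec{x})$. Take $\hat{W}=\langle\vec{e}_1,\vec{f}_1,\vec{e}_2\rangle$, $\overline{V}=\langle\bar{e}_1,\bar{e}_2\rangle$ with $\pm\alpha=\pm 2\,\bar{e}_1\wedge\bar{e}_2$, and $U=\langle\vec{e}_1\rangle$.
\begin{itemize}
\item Then $U^\perp\cap\hat{W}=\langle\vec{e}_1,\vec{e}_2\rangle\neq U^\perp$. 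This is the only isotropic summand of $\hat{W}$ strictly containing $U$. Its reduction carries $\pm\bar{e}_1\wedge\bar{e}_2\neq\pm\alpha$, so the upper link of $U$ is empty.
\item But $(U^\perp\cap\hat{W})/U$ has rank one, so $\sbT((U^\perp\cap\hat{W})/U\vert(\overline{V}/\overline{U},\pm\eta))$ is a single point for $\pm\eta=\pm[\bar{e}_2]$.
\item No automorphism can bridge this. Any automorphism preserving a rank-one summand acts on it by $\pm1$. Over $\mathbb{Z}$, a diagonal rescaling compensated on the symplectic partner has entries $\pm1$ only.
\end{itemize}
So only the induced orientation $\pm\eta=\pm\alpha_{\hat\psi}$ works. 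It must be defined mod $p$ via Remark \ref{behaviorOfpmOrientations}. Your fallback through a preimage $V\supseteq U$ of $(\overline{V},\pm\alpha)$ inside $\hat{W}$ fails here, since no such $V$ exists.

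\emph{Surjectivity step.} Comparing dimensions cannot give $\pi(U^\perp\cap\hat{W})=\overline{U}^\perp\cap\hat{\overline{W}}$. Keep the same $W$ and $\hat{W}$, and take $U=\langle\vec{e}_2+p\vec{e}_1\rangle$. Then $U^\perp\cap\hat{W}=\langle\vec{e}_1,\vec{e}_2\rangle\neq U^\perp$, while $\overline{U}^\perp=\hat{\overline{W}}$, so the two sides have dimensions $2$ and $3$.
\begin{itemize}
\item In general, writing $\hat{W}=\vec{h}^\perp$, this happens whenever $\bar{h}\in\overline{U}$ but $\vec{h}\notin U$. The lemma's integral case distinction does not match the mod-$p$ one.
\item The correct lower-right corner of your diagram is $\pi(U^\perp\cap\hat{W})/\overline{U}$, and $\overline{V}/\overline{U}$ need not lie in it.
\item For $\overline{V}=\langle\bar{e}_2,\bar{f}_1\rangle$, the upper link of this $U$ is empty. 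The lemma's right-hand side then falls outside the definition of $\sbT(\hat{W}\vert(\overline{V},\pm\alpha))$.
\end{itemize}

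\emph{What survives.} Your diagram argument does prove a weaker statement. If $\overline{V}\subseteq\pi(U^\perp\cap\hat{W})$, the upper link is $\sbT((U^\perp\cap\hat{W})/U\vert(\overline{V}/\overline{U},\pm\alpha_{\hat\psi}))$, with reductions taken in $\pi(U^\perp\cap\hat{W})/\overline{U}$.

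This weaker version cannot rescue the lemma's downstream use. In the first example, $\langle\vec{e}_1\rangle$ is an isolated point of $\sbT(\hat{W}\vert(\overline{V},\pm\alpha))$, and that poset also contains $\langle\vec{e}_2\rangle$. So the poset is disconnected, contradicting the first item of Proposition \ref{anotherInduction3} for $n=2$.
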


\begin{proof}
We can compute the links of $\sbT(\hat{W} \vert (\overline{V}, \pm \alpha))$ using the arguments presented in the proof of Lemma \ref{SympCongruencePosetLinks}. The argument for the upper link of $U$ when $U^\perp \cap \hat{W} \neq U^\perp$ will use Lemmas \ref{ResSympPosetLinks} and \ref{pmResSympPosetLinks} to construct an analogous Diagram \ref{computingLinksDiagram}. In the case that the diagram produces a different $\pm$-orientation from $\pm \eta$, we use Proposition \ref{transitivityProved} to find symplectic automorphisms that induce poset isomorphisms as in Diagram \ref{commutativeDiagram2}. 
\end{proof}


\section{A simplicial model for $\lvert \sT_{2n}(\mathbb{Q}) \rvert/\Gamma_{2n}^\omega(p)$} \label{simplicialmodelisquotient}

Proposition \ref{SympMPP21Lem3.10} implies that $\sT_{2n}(\mathbb{Q}) \cong \sT_{2n}(\mathbb{Z})$. Hence, the topological quotients $\lvert \sT_{2n}(\mathbb{Q}) \rvert/\Gamma_{2n}^\omega(p)$ and $\lvert \sT_{2n}(\mathbb{Z}) \rvert/\Gamma_{2n}^\omega(p)$ are isomorphic. In this section we prove that $\sT_{2n}(\mathbb{Z})$ is a regular $\Gamma_{2n}^\omega(p)$-complex in the sense of Definition \ref{regularcomplex}. This means that the topological quotient $ \lvert \sT_{2n}(\mathbb{Z}) \rvert/\Gamma_{2n}^\omega(p)$ is homeomorphic to the geometric realization of the simplicial quotient $\sT_{2n}(\mathbb{Z})/\Gamma_{2n}^\omega(p)$, by Bredon's Theorem \ref{Bredon}. We finish this section by constructing a simplicial isomorphism between the simplicial quotient $\sT_{2n}(\mathbb{Z})/\Gamma_{2n}^\omega(p)$ and $\sTD_{2n}(\Field_p)$, providing a simplicial model for $\lvert \sT_{2n}(\mathbb{Q}) \rvert/\Gamma_{2n}^\omega(p)$. \\
The action of $\Sp_{2n}(\mathbb{Z})$ on a simplex $\sigma \in \sT_{2n}(\mathbb{Z})$ comes from an action on the summands of the isotropic flag $\sigma$ is associated to. That is, if $g \in \Sp_{2n}(\mathbb{Z})$ and $\sigma$ corresponds to the isotropic flag 
\begin{equation*}
0 \subsetneq U_0 \subsetneq \dots \subsetneq U_k \subsetneq \mathbb{Z}^{2n},
\end{equation*}
then $g \cdot \sigma$ corresponds to the isotropic flag
\begin{equation*}
0 \subsetneq g \cdot U_0 \subsetneq \dots \subsetneq g \cdot U_k \subsetneq \mathbb{Z}^{2n}. 
\end{equation*}
$\Sp_{2n}(\mathbb{Z})$ acts transitively on the set of isotropic summands of a fixed rank, so $g \cdot \sigma$ is also a $k$-simplex. $\Gamma_{2n}^\omega(p)$ acts on $\sT_{2n}(\mathbb{Z})$ via the restriction to $\Gamma_{2n}^\omega(p)$, hence $\sT_{2n}(\mathbb{Z})$ is a $\Gamma_{2n}^\omega(p)$-complex. 

\begin{theorem} \textup{\cite[Theorem VII.21]{N72}} \label{surjectivesympelctic}
Let $n \geq 1$ and let $p$ be a prime integer and $\pi\colon \Sp_{2n}(\mathbb{Z}) \rightarrow \Sp_{2n}(\Field_p)$ be the reduction mod-$p$ map. Then $\pi$ is a surjection. 
\end{theorem}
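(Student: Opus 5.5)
We prove surjectivity of $\pi\colon \Sp_{2n}(\mathbb{Z}) \rightarrow \Sp_{2n}(\Field_p)$ by induction on $n$, using generation of $\Sp_{2n}(\Field_p)$ by elementary symplectic transvections.

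First we show that $\Sp_{2n}(\Field_p)$ is generated by symplectic transvections of the form
\begin{equation*}
\tau_{\vec{v}, a}(\vec{x}) = \vec{x} + a\,\omega(\vec{x}, \vec{v})\vec{v}, \qquad a \in \Field_p,
\end{equation*}
where $\vec{v}$ ranges over the vectors of the standard symplectic basis and vectors of the form $\vec{e}_i \pm \vec{e}_j$, $\vec{e}_i \pm \vec{f}_j$. Equivalently, the group is generated by the elementary matrices $I + aE$ with $E$ one of the standard root-group generators. The argument is the usual one, done by induction on $n$. Given $g \in \Sp_{2n}(\Field_p)$, we multiply by transvections to move $g\vec{e}_1$ back to $\vec{e}_1$. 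This is possible because the transvection group acts transitively on nonzero vectors: if $\omega(\vec{u}, \vec{w}) \neq 0$, a single transvection along $\vec{u} - \vec{w}$, suitably scaled, sends $\vec{u}$ to $\vec{w}$; otherwise we pass through an intermediate vector pairing nontrivially with both. We then fix $\vec{f}_1$ in the same way, using transvections that fix $\vec{e}_1$. What remains is an element preserving $\Span{\vec{e}_1, \vec{f}_1}$ and hence its complement, where the inductive hypothesis in $\Sp_{2n-2}(\Field_p)$ applies. This reduces the problem to writing arbitrary transvections over $\Field_p$ as products of the elementary root-subgroup matrices, which is a finite computation.

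The second step is to lift generators. Each elementary root-subgroup element over $\Field_p$, for example
\begin{equation*}
\begin{pmatrix} 1 & a \\ 0 & 1 \end{pmatrix}
\end{equation*}
in a hyperbolic plane, or $\vec{e}_i \mapsto \vec{e}_i + a\vec{e}_j$ paired with $\vec{f}_j \mapsto \vec{f}_j - a\vec{f}_i$, is given by an integral formula that is visibly symplectic for every integer $a$. We therefore lift $a$ to any $\tilde{a} \in \mathbb{Z}$ and obtain an element of $\Sp_{2n}(\mathbb{Z})$ reducing to the generator. Since $\pi$ is a homomorphism and its image contains a generating set, $\pi$ is surjective.

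The main obstacle is the generation statement in the first step, specifically the transitivity argument for transvections on nonzero vectors and on hyperbolic pairs. Even so, this part is classical.

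An alternative route avoids generation entirely. Take $\bar g \in \Sp_{2n}(\Field_p)$ and lift its columns one hyperbolic pair at a time to integral vectors. We would use the Pfaffian unimodularity of Lemma \ref{pfaffianOfDet1} together with Lemma \ref{SymplecticBases} to correct each lift by multiples of $p$, so that the lifted columns form a symplectic basis of $\mathbb{Z}^{2n}$. This requires a Chinese-remainder-type strong approximation argument, and it is harder to make rigorous than the generator approach, so we would try the generator approach first.
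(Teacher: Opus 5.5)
The paper does not prove this statement. It imports it from Newman's book as a black box and uses it only to lift elements of $\Sp_{2n}(\Field_p)$ (in Lemma~\ref{ElementOfCongruenceSubgroup} and Lemma~\ref{SympCongruenceSimpLinks}). Your proposal is therefore a genuinely different route: a correct, self-contained argument along the classical lines of generating by transvections and then lifting generators. Your inductive step is Artin's proof that $\Sp_{2n}(\Field_p)$ is generated by symplectic transvections $\tau_{\bar{v},\bar{a}}$ along \emph{arbitrary} nonzero vectors $\bar{v}$. It works as you describe, including the detour through an intermediate vector when fixing $\vec{f}_1$: one can take $\vec{e}_1+\vec{f}_1$, and both transvections used then fix $\vec{e}_1$. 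The argument uses that $\Field_p$ is a field, since you invert $\omega(\vec{u},\vec{w})$, so it is tailored to prime level. That is all the paper needs. The citation buys brevity; your route buys an elementary proof.

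The one soft spot is your claim that writing an arbitrary transvection as a product of root-subgroup elements is ``a finite computation.'' It is not. It amounts to saying that $\Sp_{2n}(\Field_p)$ equals its elementary subgroup, which needs its own argument. For example, you would show the elementary subgroup is transitive on nonzero vectors, so that $\tau_{\bar{v},\bar{a}} = h\,\tau_{\bar{e}_1,\bar{a}}\,h^{-1}$ with $h$ elementary.

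You can skip this detour entirely. Take any lifts $\vec{v}\in\mathbb{Z}^{2n}$ of $\bar{v}$ and $a\in\mathbb{Z}$ of $\bar{a}$. The map $\vec{x}\mapsto \vec{x}+a\,\omega(\vec{x},\vec{v})\,\vec{v}$ has the following properties:
\begin{itemize}
\item it is given by an integral matrix;
\item it preserves $\omega$, because the cross terms cancel by skew-symmetry and the quadratic term carries $\omega(\vec{v},\vec{v})=0$;
\item it has integral inverse, obtained by replacing $a$ with $-a$;
\item it reduces mod $p$ to $\tau_{\bar{v},\bar{a}}$.
\end{itemize}
So every generator produced in your first step already lifts to $\Sp_{2n}(\mathbb{Z})$, and surjectivity follows at once. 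No root-subgroup bookkeeping is required.
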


\begin{definition}
We say that a basis $\{\vec{v}_1, \dots, \vec{v}_n\}$ for a  f.g.~free $R$-module $V$ is \emph{compatible with a $\pm$-orientation $\pm \alpha$ on $V$} if $\pm \alpha = \pm (\vec{v}_1 \wedge \dots \wedge \vec{v}_n)$.
\end{definition}

\begin{lemma} \label{ElementOfCongruenceSubgroup}
Let $n \geq 1$, let $p$ be a prime integer, let $\overline{V} \in \Field_p^{2n}$ be an isotropic subspace equipped with $\pm \alpha$ and let $\pi\colon \mathbb{Z}^{2n} \rightarrow \Field_p^{2n}$ be the reduction mod-$p$ map. $\Gamma_{2n}^\omega(p)$ acts transitively on the set of isotropic summands $V \subset \mathbb{Z}^{2n}$ that satisfy $\pi(V) = (\overline{V}, \pm \alpha)$.
\end{lemma}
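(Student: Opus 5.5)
Given two isotropic summands $V, V' \subset \mathbb{Z}^{2n}$ of rank $k$ with $\pi(V) = \pi(V') = (\overline{V}, \pm\alpha)$, the plan is to produce $g \in \Gamma_{2n}^\omega(p)$ with $g \cdot V = V'$. The strategy is to choose symplectic bases of $\mathbb{Z}^{2n}$ adapted to $V$ and $V'$ whose reductions mod $p$ agree exactly. The change-of-basis matrix is then a symplectic integral matrix that reduces to the identity, so it lies in $\Gamma_{2n}^\omega(p)$ and sends $V$ to $V'$.

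\textbf{Step 1: adapted bases with matching reductions on $V$ and $V'$.} Choose a basis $\{\vec{v}_1,\dots,\vec{v}_k\}$ of $V$. Its reduction $\{\overline{v}_1,\dots,\overline{v}_k\}$ is a basis of $\overline{V}$, since $V$ is a summand and hence $\pi(V)\cong V/pV$. Choose any basis $\{\vec{v}'_1,\dots,\vec{v}'_k\}$ of $V'$. The reduction of this basis is related to $\{\overline{v}_i\}$ by some $\bar{A}\in \GL_k(\Field_p)$. The $\pm$-orientation hypothesis says $\det \bar A = \pm 1$. Replacing $\vec{v}'_1$ by $-\vec{v}'_1$ if necessary, we may assume $\det \bar A = 1$. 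Since $\SL_k(\mathbb{Z}) \to \SL_k(\Field_p)$ is surjective (the $\SL$ analogue of Theorem \ref{surjectivesympelctic}, classical via elementary matrices), lift $\bar A^{-1}$ to $A \in \SL_k(\mathbb{Z})$ and change the basis of $V'$ by $A$. We obtain bases of $V$ and $V'$ with $\vec{v}'_i \equiv \vec{v}_i \pmod p$ for all $i$. This is the only step that uses the $\pm$-orientation: without it, the reductions could differ by a matrix of determinant $\neq \pm1$, which cannot be lifted to $\GL_k(\mathbb{Z})$.

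\textbf{Step 2: extend to symplectic bases and correct by an element of $\Sp_{2n}(\mathbb{Z})$.} By Lemma \ref{SymplecticBases}, with $m=0$ since $V$ is isotropic, extend $\{\vec{v}_i\}$ to a symplectic basis $\{\vec{v}_1,\vec{w}_1,\dots,\vec{v}_n,\vec{w}_n\}$ of $\mathbb{Z}^{2n}$, and likewise extend $\{\vec{v}'_i\}$ to a symplectic basis $\{\vec{v}'_1,\vec{w}'_1,\dots,\vec{v}'_n,\vec{w}'_n\}$. Let $h\in\Sp_{2n}(\mathbb{Z})$ send the first basis to the second; then $hV = V'$. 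Its reduction $\bar h \in \Sp_{2n}(\Field_p)$ fixes each $\overline{v}_i$ for $i\le k$, but may move the remaining reduced basis vectors. Let $P\le \Sp_{2n}(\mathbb{Z})$ be the subgroup fixing each $\vec{v}_1,\dots,\vec{v}_k$, and let $\bar P\le\Sp_{2n}(\Field_p)$ be the pointwise stabilizer of $\overline{v}_1,\dots,\overline{v}_k$. The key claim is that reduction $P\to\bar P$ is surjective. Granting this, choose $q\in P$ with $\bar q = \bar h$ and set $g = h q^{-1}$. Then $\bar g = 1$, so $g\in\Gamma_{2n}^\omega(p)$, and $gV = h q^{-1}V = hV = V'$.

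\textbf{Main obstacle: surjectivity of $P\to\bar P$.} In the symplectic basis $\{\vec{v}_i,\vec{w}_i\}$, an element fixing $\vec{v}_1,\dots,\vec{v}_k$ must preserve $V^\perp$. It acts on $V^\perp/V\cong\mathbb{Z}^{2(n-k)}$ by an element of $\Sp_{2(n-k)}$, and the remaining data are unipotent block entries. These entries are integer matrices constrained by the symplectic condition: a linear map, plus a symmetric-type piece that also involves a quadratic term. Concretely, $\bar P$ should decompose as $\Sp_{2(n-k)}(\Field_p)\ltimes U(\Field_p)$, where $U$ is the unipotent radical, and the same decomposition should hold over $\mathbb{Z}$. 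The Levi factor surjects by Theorem \ref{surjectivesympelctic}. For $U$, I would write its elements explicitly as products of root-subgroup elements (elementary symplectic transvections), each of which lifts entrywise from $\Field_p$ to $\mathbb{Z}$. Careful bookkeeping of the quadratic constraint is where the proof needs the most attention.

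A simpler alternative is to avoid $\bar P$ altogether. Use Theorem \ref{surjectivesympelctic} together with Witt-type extension over $\Field_p$ to pick the extension $\{\vec{w}'_i\}$, $\{\vec{v}'_j\}_{j>k}$ so that it already reduces to the reduction of $\{\vec{w}_i\},\{\vec{v}_j\}_{j>k}$. To do this, take any symplectic extension of $\{\vec v'_i\}$, compare reductions via an element of $\bar P$, lift that element to $P'$ (the stabilizer of the $\vec{v}'_i$), and apply it. This reduces to the same lifting claim, so the unipotent-radical lifting is the step to verify.
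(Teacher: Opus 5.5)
Your argument is correct, and it follows a different and more complete route than the paper's. The paper fixes a standard isotropic summand $E$ and picks $\overline{g} \in \Sp_{2n}(\Field_p)$ carrying a basis of $\overline{E}$ to an isotropic basis of $\overline{V}$ compatible with $\pm\alpha$. It lifts $\overline{g}$ to $g, h \in \Sp_{2n}(\mathbb{Z})$ by Newman's theorem and then asserts $g \cdot E = V$ and $h \cdot E = V'$, so that $hg^{-1} \in \Gamma_{2n}^\omega(p)$. That assertion does not follow from $\pi(g) = \overline{g}$, since Newman's theorem only supplies \emph{some} lift. Such a lift carries $E$ to some isotropic summand reducing to $(\overline{V}, \pm\alpha)$, not necessarily to $V$. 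For instance, with $n = 1$ and $\overline{g} = 1$, the lift $I$ does not carry $E = \Span{\vec{e}_1}$ to $\Span{\vec{e}_1 + p\vec{f}_1}$. Being able to choose the lift so that $g \cdot E = V$ is essentially the content of the lemma. Your two steps are exactly what is needed to justify it. Step 1 is where the $\pm$-orientation actually enters, matching integral frames of $V$ and $V'$ mod $p$ via $\SL_k(\mathbb{Z}) \twoheadrightarrow \SL_k(\Field_p)$. Step 2 reduces everything else to surjectivity of the pointwise stabilizer $P \to \overline{P}$. The paper's version is shorter; yours is an actual proof, at the cost of that lifting claim.

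The lifting claim is true, and you can close it directly rather than via root subgroups.
\begin{itemize}
\item[] \emph{Setup.} Order the basis as $\vec{v}_1, \dots, \vec{v}_k$, then $S = \Span{\vec{v}_{k+1}, \vec{w}_{k+1}, \dots, \vec{v}_n, \vec{w}_n}$ with Gram matrix $J$ (so $J^{T} = -J$ and $J^{2} = -I$), then $\vec{w}_1, \dots, \vec{w}_k$. An element acting trivially on $V$, on $V^\perp/V$ and on $\mathbb{Z}^{2n}/V^\perp$ has the form
\[
u = \begin{pmatrix} I & A & B \\ 0 & I & C \\ 0 & 0 & I \end{pmatrix}.
\]
Such a $u$ is symplectic exactly when $C = JA^{T}$ and $B - B^{T} = AJA^{T}$.
\item[] \emph{Lifting the unipotent part.} Lift $\overline{A}$ to any integral $A$, which determines $C$, and lift $\overline{B}$ to any integral $B_1$. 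Then $D = AJA^{T} - (B_1 - B_1^{T})$ is an integral skew-symmetric matrix, hence has zero diagonal, and all its entries are divisible by $p$. Let $D_{+}$ be its strictly upper triangular part. Then $B = B_1 + D_{+}$ satisfies $B - B^{T} = AJA^{T}$ and $B \equiv B_1 \pmod p$.
\item[] \emph{Lifting the Levi part.} We have $\overline{P} = \Sp(\overline{S}) \ltimes \overline{U}$, where the Levi factor acts on $\overline{S}$ and fixes $\overline{v}_i, \overline{w}_i$ for $i \le k$. This factor lifts by Theorem \ref{surjectivesympelctic} applied to $S$.
\end{itemize}
Hence $P \to \overline{P}$ is surjective. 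The argument also covers $p = 2$ and the Lagrangian case $k = n$, where $S = 0$.
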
  

\begin{proof}
Suppose $V, V' \subset \mathbb{Z}^{2n}$ are isotropic summands of rank $m$ that satisfy $\pi(V) = (\overline{V}, \pm \alpha)$. Let $\{\vec{e}_1, \dots, \vec{e}_m\}_\mathbb{Z}$ be a basis for a summand $E \subset \mathbb{Z}^{2n}$ and let $\{\vec{e}_1, \dots, \vec{e}_m\}_{\Field_p}$ be a basis for $\overline{E} \subset \Field_p^{2n}$ equipped with $\pm \epsilon = \pm (\vec{e}_1 \wedge \dots \wedge \vec{e}_m)$. Choose an isotropic basis $\{\vec{v}_1, \dots, \vec{v}_m\}_{\Field_p}$ for $\overline{V}$ that is compatible with $\pm \alpha$, and extend it to symplectic basis $\{\vec{v}_1, \vec{w}_1, \dots, \vec{v}_n, \vec{w}_n\}_{\Field_p}$ of $\Field_p^{2n}$ using Lemma \ref{SymplecticBases}. Let $\overline{g} \in \Sp_{2n}(\Field_p)$ be the symplectic automorphism that sends $\vec{e}_i$ to $\vec{v}_i$ and $\vec{f}_i$ to $\vec{w}_i$ for $i \in \{1, \dots, n\}$. By Newman's Theorem \ref{surjectivesympelctic}, there exists $g, h \in \Sp_{2n}(\mathbb{Z})$ such that $\pi(g) = \pi(h) = \overline{g}$. Since $\pi(E) = (\overline{E}, \pm \epsilon)$, $g \cdot E = V$ and $h \cdot E = V'$. Now, $hg^{-1} \cdot V = V'$ and 
\begin{equation*}
\pi(hg^{-1}) = \pi(h)\pi(g^{-1}) = \overline{g} \overline{g}^{-1},
\end{equation*}
so $hg^{-1} \in \Gamma_{2n}^\omega(p)$ as desired. 
\end{proof}

\begin{lemma} \textup{\cite[Lemma 3.19]{MiPaP21}}\label{MPP21Lem3.19}
Let $V$ be a f.g.~free $\mathbb{Z}$-module of rank $n$, let $\overline{V}$ be an $\Field_p$-vector space of dimension $n$ and let $\pi\colon V \rightarrow (\overline{V}, \alpha_\pi)$ be a surjection. Suppose $\{\vec{x}_1, \dots, \vec{x}_n\}$ is a basis for $\overline{V}$ compatible with $\pm \alpha$ and $\{\vec{X}_1, \dots, \vec{X}_m\}$ is a partial basis for $V$ such that $\pi(\vec{X}_i) = \vec{x}_i$ for $1 \leq i \leq m$. We can then complete our partial $\mathbb{Z}$-basis to a basis $\{\vec{X}_1, \dots, \vec{X}_n\}$ that spans $V$ and satisfies $\pi(\vec{X}_i) = \vec{x}_i$ for $1 \leq i \leq n$.
\end{lemma}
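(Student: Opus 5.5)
We prove Lemma~\ref{MPP21Lem3.19} by induction on $n - m$, the number of vectors still to be chosen. Since $\{\vec{X}_1, \dots, \vec{X}_m\}$ is a partial basis, Lemma~\ref{NormalBasis} lets us extend it to some basis $\{\vec{X}_1, \dots, \vec{X}_m, \vec{Y}_{m+1}, \dots, \vec{Y}_n\}$ of $V$. The images $\pi(\vec{X}_1), \dots, \pi(\vec{X}_m), \pi(\vec{Y}_{m+1}), \dots, \pi(\vec{Y}_n)$ form a basis of $\overline{V}$, because $\pi$ is surjective and both modules have rank $n$. Thus there is a matrix $\bar{A} \in \GL_n(\Field_p)$ expressing $\vec{x}_1, \dots, \vec{x}_n$ in terms of this image basis. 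Its first $m$ columns are the standard basis vectors, since $\pi(\vec{X}_i) = \vec{x}_i$ for $i \leq m$. So $\bar{A}$ is block lower triangular, with identity block $I_m$ and lower-right block $\bar{B} \in \GL_{n-m}(\Field_p)$.

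Next we determine $\det \bar{B}$ from the orientation data. The $\pm$-orientation induced by $\pi$ is $\pm \pi_*(\vec{X}_1 \wedge \dots \wedge \vec{Y}_n)$, because a $\mathbb{Z}$-basis wedge generates $\wedge^{\text{top}} V$ and is determined up to $\pm 1$. Since $\{\vec{x}_i\}$ is compatible with $\pm \alpha = \pm \alpha_\pi$, we get $\det \bar{A} = \det \bar{B} = \pm 1$ in $\Field_p$. Replacing $\vec{Y}_n$ by $-\vec{Y}_n$ if necessary, we may assume $\det \bar{B} = 1$, that is, $\bar{B} \in \SL_{n-m}(\Field_p)$.

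The key input is the surjectivity of the reduction map $\SL_{n-m}(\mathbb{Z}) \to \SL_{n-m}(\Field_p)$. This is the classical special-linear analogue of Theorem~\ref{surjectivesympelctic}; it holds because $\SL_k(\Field_p)$ is generated by elementary matrices, and each elementary matrix lifts to an elementary matrix over $\mathbb{Z}$. Choose a lift $B \in \SL_{n-m}(\mathbb{Z})$ of $\bar{B}$. The lower-left block of $\bar{A}$ lifts entrywise to an integer matrix $C$, and we set $A = \begin{pmatrix} I_m & 0 \\ C & B \end{pmatrix} \in \SL_n(\mathbb{Z})$. Define $\vec{X}_{m+1}, \dots, \vec{X}_n$ by applying the last $n - m$ columns of $A$ to the basis $\{\vec{X}_1, \dots, \vec{X}_m, \vec{Y}_{m+1}, \dots, \vec{Y}_n\}$. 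Because $A$ is invertible over $\mathbb{Z}$, the vectors $\{\vec{X}_1, \dots, \vec{X}_n\}$ form a basis of $V$ that keeps the original $\vec{X}_1, \dots, \vec{X}_m$. Since $A$ reduces to $\bar{A}$, we get $\pi(\vec{X}_i) = \vec{x}_i$ for all $i$. With this direct construction, no actual induction is needed.

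The main obstacle is the determinant bookkeeping in the second step. The lift must have determinant exactly $\pm 1$, and this is precisely where compatibility with $\pm \alpha$ is used. Without it, $\det \bar{B}$ could be any unit of $\Field_p$, and no integral lift would be invertible. The sign ambiguity is absorbed by the freedom to negate $\vec{Y}_n$. We also need $m < n$ for this step; if $m = n$ there is nothing to prove.
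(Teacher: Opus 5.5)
The paper does not prove this lemma; it imports it from Miller--Patzt--Putman \cite[Lemma 3.19]{MiPaP21}, so there is no internal proof to compare against. Your argument is a correct, self-contained proof along the standard lines:
\begin{itemize}
\item extend $\{\vec{X}_1, \dots, \vec{X}_m\}$ to a basis of $V$;
\item note that the change-of-basis matrix is block triangular with an $I_m$ block;
\item use compatibility with $\pm \alpha_\pi$ to force $\det \bar{B} = \pm 1$, and fix the sign by negating $\vec{Y}_n$;
\item lift $\bar{B}$ through the surjection $\SL_{n-m}(\mathbb{Z}) \to \SL_{n-m}(\Field_p)$.
\end{itemize}
Your diagnosis that compatibility with $\pm\alpha$ is exactly what makes an integral lift possible is right. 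The cases $m = n$ and $p = 2$ are harmless.

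One bookkeeping slip needs fixing. If the $j$-th column of $\bar{A}$ records the coordinates of $\vec{x}_j$, then $\bar{A} = \begin{pmatrix} I_m & \bar{D} \\ 0 & \bar{B} \end{pmatrix}$ is block \emph{upper} triangular. The lift should therefore be $A = \begin{pmatrix} I_m & D \\ 0 & B \end{pmatrix}$, with $D$ an entrywise integer lift of $\bar{D}$.

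With your $A = \begin{pmatrix} I_m & 0 \\ C & B \end{pmatrix}$, the last $n-m$ columns are $\begin{pmatrix} 0 \\ B \end{pmatrix}$. The new vectors then lose their $\vec{X}_1, \dots, \vec{X}_m$-components and need not reduce to $\vec{x}_{m+1}, \dots, \vec{x}_n$. Equivalently, $A$ reduces to $\begin{pmatrix} I_m & 0 \\ 0 & \bar{B} \end{pmatrix} \neq \bar{A}$ unless $\bar{D} = 0$.

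Either use the upper triangular form, or switch consistently to the row convention, where the matrix really is block lower triangular and you apply its last $n-m$ rows. With that correction, and with the vestigial mention of induction dropped, the proof is complete.
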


The following lemma is a corollary of Miller--Patzt--Putman's Lemma \ref{MPP21Lem3.19}.

\begin{lemma}\label{CompleteIsoBasis}
Let $V$ be an isotropic $\mathbb{Z}$-module of rank $n$, let $\overline{V}$ be an isotropic $\Field_p$-vector space of dimension $n$ and let $\pi\colon V \rightarrow (\overline{V}, \pm \alpha_\pi)$ be a surjection. Suppose $\{\vec{x}_1, \dots, \vec{x}_n\}$ is an isotropic basis for $\overline{V}$ compatible with $\pm \alpha$ and $\{\vec{X}_1, \dots, \vec{X}_m\}$ is an isotropic partial basis for $V$ such that $\pi(\vec{X}_i) = \vec{x}_i$ for $1 \leq i \leq m$. We can then complete our partial $\mathbb{Z}$-basis to a basis $\{\vec{X}_1, \dots, \vec{X}_n\}$ that spans $V$ and satisfies $\pi(\vec{X}_i) = \vec{x}_i$ for $1 \leq i \leq n$.
\end{lemma}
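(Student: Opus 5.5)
The plan is to reduce to Miller--Patzt--Putman's Lemma \ref{MPP21Lem3.19} by forgetting the symplectic form. The key point is that an isotropic module carries the zero form. So any basis of $V$ is automatically isotropic, and so is any basis of $\overline{V}$. The isotropy hypotheses in the statement therefore impose no extra constraint on the completed basis.

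Concretely, first regard $V$ as a f.g.~free $\mathbb{Z}$-module of rank $n$ and $\overline{V}$ as an $n$-dimensional $\Field_p$-vector space. The map $\pi\colon V \rightarrow (\overline{V}, \pm\alpha_\pi)$ is then a surjection inducing the $\pm$-orientation $\pm \alpha_\pi$, and the basis $\{\vec{x}_1,\dots,\vec{x}_n\}$ is compatible with it. The partial basis $\{\vec{X}_1,\dots,\vec{X}_m\}$ lifts $\vec{x}_1,\dots,\vec{x}_m$. These are exactly the hypotheses of Lemma \ref{MPP21Lem3.19}. Applying that lemma produces a basis $\{\vec{X}_1,\dots,\vec{X}_n\}$ of $V$ with $\pi(\vec{X}_i)=\vec{x}_i$ for all $i$.

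Second, observe that $\omega(\vec{X}_i,\vec{X}_j)=0$ for all $i,j$. This holds because all $\vec{X}_i$ lie in $V$ and $\omega|_V\equiv 0$. Hence the completed basis is isotropic, which is all the statement requires.

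The only point needing care is matching notation. The statement says the basis is compatible with $\pm\alpha$, and this should be read as $\pm\alpha_\pi$, the orientation induced by $\pi$. This is exactly the compatibility condition used in Lemma \ref{MPP21Lem3.19}. I expect no genuine obstacle; the lemma is a direct corollary, as the paper indicates.
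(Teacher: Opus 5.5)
Your proposal is correct and matches the paper, which states this lemma without a separate proof as an immediate corollary of Miller--Patzt--Putman's Lemma~\ref{MPP21Lem3.19}. As you say, you apply that lemma to the underlying free $\mathbb{Z}$-module with the induced orientation $\pm\alpha_\pi$, and isotropy of the completed basis is automatic because $\omega\vert_V \equiv 0$.
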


\begin{lemma} \label{regularityOfTitsBuildings}
For $n \geq 1$, $\sT_{2n}(\mathbb{Z})$ and $\sT_{2n}(\mathbb{Q})$ are regular $\Gamma_{2n}^\omega(p)$-complexes.
\end{lemma}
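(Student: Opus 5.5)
By Proposition \ref{SympMPP21Lem3.10}, $\sbT_{2n}(\mathbb{Q}) \cong \sbT_{2n}(\mathbb{Z})$, and this isomorphism is $\Sp_{2n}(\mathbb{Z})$-equivariant, since the maps $U \mapsto U \otimes \mathbb{Q}$ and $U \mapsto U \cap \mathbb{Z}^{2n}$ commute with the action. So it suffices to treat $\sT_{2n}(\mathbb{Z})$.

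The action is without rotations. A simplex is a flag $U_0 \subsetneq \dots \subsetneq U_k$ of isotropic summands of distinct ranks, and any element of $\Sp_{2n}(\mathbb{Z})$ preserves rank. So an element stabilizing the simplex must fix each $U_i$.

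For regularity, fix a flag $U_0 \subsetneq \dots \subsetneq U_k$ and elements $g_0, \dots, g_k \in \Gamma_{2n}^\omega(p)$ such that $g_0U_0 \subsetneq \dots \subsetneq g_kU_k$ is also a flag. We must produce a single $g \in \Gamma_{2n}^\omega(p)$ with $gU_i = g_iU_i$ for all $i$. Write $U_i' = g_iU_i$ and let $m_i$ be the ranks. Since each $g_i \equiv 1 \pmod p$, we have $\pi(U_i') = \pi(U_i)$ as $\pm$-oriented subspaces: both equal $(\overline{U}_i, \pm\alpha_i)$, because $g_i$ reduces to the identity on $\wedge^{\mathrm{top}}$.

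The plan is to build adapted bases inductively. First choose an isotropic basis $\vec{x}_1, \dots, \vec{x}_{m_k}$ of $\overline{U}_k$ such that, for each $i$, the first $m_i$ vectors form a basis of $\overline{U}_i$ compatible with $\pm\alpha_i$. This is possible by rescaling one vector at each stage, using Remark \ref{behaviorOfpmOrientations}. Then build a basis $\vec{X}_1, \dots, \vec{X}_{m_k}$ of $U_k$ with $\vec{X}_1, \dots, \vec{X}_{m_i}$ a basis of $U_i$ and $\pi(\vec{X}_j) = \vec{x}_j$. We proceed stage by stage: given such a basis of $U_{i-1}$, extend it to a basis of $U_i$ lifting the $\vec{x}_j$ by Lemma \ref{CompleteIsoBasis}, applied to the surjection $\pi\colon U_i \to (\overline{U}_i, \pm\alpha_i)$ with the partial basis coming from $U_{i-1}$. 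Here Lemma \ref{LinearAlgebraSummands} guarantees $U_{i-1}$ is a summand of $U_i$, so that partial basis really is a partial basis of $U_i$. Do the same for the primed flag to get $\vec{X}_j'$ with the same reductions $\vec{x}_j$.

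Next, extend both isotropic bases to symplectic bases $\{\vec{X}_j, \vec{Y}_j\}$ and $\{\vec{X}_j', \vec{Y}_j'\}$ of $\mathbb{Z}^{2n}$ by Lemma \ref{SymplecticBases}. The element $h \in \Sp_{2n}(\mathbb{Z})$ sending the first basis to the second satisfies $hU_i = U_i'$ for all $i$, but its reduction need not be the identity. The reductions of the two symplectic bases agree on the $\vec{X}_j$ but may differ on the $\vec{Y}_j$.

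This is the main obstacle, and I would resolve it as follows. Let $\bar h = \pi(h) \in \Sp_{2n}(\Field_p)$. It fixes $\vec{x}_1, \dots, \vec{x}_{m_k}$, and hence fixes every $\overline{U}_i$ pointwise. Consider the stabilizer in $\Sp_{2n}(\mathbb{Z})$ of the flag $U_\bullet'$ that acts trivially on each $U_i'$; I claim it surjects onto the corresponding pointwise stabilizer in $\Sp_{2n}(\Field_p)$ of the flag $\overline{U}_\bullet$. In the basis $\{\vec{X}_j', \vec{Y}_j'\}$ both stabilizers are generated by explicit unipotent elementary symplectic transvections, namely $\vec{Y}_a' \mapsto \vec{Y}_a' + c\vec{X}_b' + c\vec{X}_a'$-type moves, together with the Levi factor $\Sp$ of the complement of $U_k'$ and the unipotent radical. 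Each of these lifts integrally: the unipotent generators lift directly, and the $\Sp_{2(n-m_k)}$ factor lifts by Newman's Theorem \ref{surjectivesympelctic}. Pick such a lift $s$ of $\bar h^{-1}$. Then $g = sh$ satisfies $gU_i = U_i'$ and $\pi(g) = 1$, so $g \in \Gamma_{2n}^\omega(p)$ as required. This parabolic-lifting step is where I expect the real work to lie, since the surjectivity must be checked carefully for the stabilizer of the full flag, not just for a single subspace.
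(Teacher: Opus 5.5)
Your proof is correct, and up to its last step it follows the paper's route. Like the paper, you reduce to $\sT_{2n}(\mathbb{Z})$, choose a mod-$p$ basis $\vec x_1,\dots,\vec x_{m_k}$ adapted to the flag and compatible with every $\pm\alpha_i$, and lift it via Lemma \ref{CompleteIsoBasis} to adapted bases $\{\vec X_j\}$, $\{\vec X_j'\}$ of the two flags.

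The routes diverge when producing an element of $\Gamma_{2n}^\omega(p)$ with $\vec X_j\mapsto\vec X_j'$. The paper simply cites Lemma \ref{ElementOfCongruenceSubgroup}, which does not quite deliver this, for two reasons. First, that lemma is about summands, not bases. After moving $U_k$ onto $U_k'$ one would still have to realize the change of basis $A\in\GL_{m_k}(\mathbb{Z})$, $A\equiv I\pmod p$, on $U_k'$ inside $\Gamma_{2n}^\omega(p)$ (e.g.\ as $A\oplus A^{-T}\oplus I$ in a suitable symplectic basis). Second, that lemma's proof takes arbitrary lifts $g,h$ of $\bar g$ and asserts $g\cdot E=V$ and $h\cdot E=V'$, which is exactly what needs proving.

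Your route supplies this content directly. You obtain $h\in\Sp_{2n}(\mathbb{Z})$ by extending both bases to symplectic bases, then correct it by a lift of $\pi(h)^{-1}$ from the pointwise stabilizer of $U_k'$. With $k=0$ it also gives a complete proof of Lemma \ref{ElementOfCongruenceSubgroup}. The paper's version is shorter; yours is self-contained.

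Two small adjustments to your last step.

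\emph{No flag-level analysis is needed.} Acting trivially on every $U_i'$ just means fixing $U_k'$ pointwise. Such an element automatically acts trivially on $\mathbb{Z}^{2n}/(U_k')^\perp$. So in the basis $\{\vec X_j',\vec Y_j'\}$ the group is $\Sp_{2(n-m_k)}\ltimes N$ over both $\mathbb{Z}$ and $\Field_p$.

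\emph{Your generator description needs fixing.} The move $\vec Y_a'\mapsto\vec Y_a'+c\vec X_b'+c\vec X_a'$ alone is not symplectic for $a\neq b$. The moves come paired: $\vec Y_a'\mapsto\vec Y_a'+c\vec X_b'$ together with $\vec Y_b'\mapsto\vec Y_b'+c\vec X_a'$.

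A clean check that $N(\mathbb{Z})\to N(\Field_p)$ is onto goes as follows. For $g\in N$, write $g\vec Y_i'=\vec Y_i'+\gamma_i+\sum_l d_{li}\vec X_l'$ with $\gamma_i\in S=\langle\vec X_j',\vec Y_j' : j>m_k\rangle$. Then the action of $g$ on $S$ is forced to be $v\mapsto v+\sum_j\omega(\gamma_j,v)\vec X_j'$. The only remaining condition is $d_{ij}-d_{ji}=\omega(\gamma_i,\gamma_j)$. This has integral solutions lifting any mod-$p$ solution: take the strictly upper-triangular part of $(\omega(\gamma_i,\gamma_j))$ plus a lifted symmetric matrix. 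Together with Theorem \ref{surjectivesympelctic} for $\Sp_{2(n-m_k)}$, this gives the surjectivity you need.
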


\begin{proof}
Proposition \ref{SympMPP21Lem3.10} says that $\sT_{2n}(\mathbb{Q}) \cong \sT_{2n}(\mathbb{Z})$, so it suffices to check that $\sT_{2n}(\mathbb{Z})$ is a regular $\Gamma_{2n}^\omega(p)$-complex. Suppose $\sigma \in \sT_{2n}(\mathbb{Z})$ is a $k$-simplex corresponding to the isotropic flag 
\begin{equation} \label{regular2}
0 \subsetneq V_0 \subsetneq \dots \subsetneq V_k \subsetneq \mathbb{Z}^{2n},
\end{equation}
and $\tau \in \sT_{2n}(\mathbb{Z})$ is a $k$-simplex associated to the isotropic flag
\begin{equation} \label{regular1}
0 \subsetneq V_0' \subsetneq \dots \subsetneq V_k' \subsetneq \mathbb{Z}^{2n},
\end{equation}
such that $V_i' = g_i \cdot V_i$ for $g_i \in \Gamma_{2n}^\omega(p)$, $i \in \{0, \dots, k\}$. Let $\pi\colon\mathbb{Z}^{2n} \rightarrow \Field_p^{2n}$ be the reduction mod-$p$ map, let $m_i = \Dim V_i$, and let $\pi(V_i) = (\overline{V}_i, \pm (\alpha_i)_\pi)$ for $i \in \{0, \dots,  k\}$. Since $g_i \in \Gamma_{2n}^\omega(p)$, then $\pi(V_i') = (\overline{V}_i, \pm (\alpha_i)_\pi)$ for $i \in \{0, \dots, k\}$. We pick an isotropic basis $\{\vec{x}_1, \dots, \vec{x}_{m_k}\}$ for $\overline{V}_k$ that is compatible with $\pm (\alpha_k)_\pi$ for which $\{\vec{x}_1, \dots, \vec{x}_{m_i}\}$ is an isotropic basis for $\overline{V}_i$ that is compatible with $\pm (\alpha_i)_\pi$ for $i \in \{0, \dots, k - 1\}$. Using Lemma \ref{CompleteIsoBasis} iteratively, we lift $\{\vec{x}_1, \dots, \vec{x}_{m_i}\}$ to an isotropic basis $\{\vec{X}_1, \dots, \vec{X}_{m_i}\}$ for $V_i$ such that $\pi(\vec{X}_j) = \vec{x}_j$ for $j \in \{1, \dots, m_i\}$, and to an isotropic basis $\{\vec{X}_1', \dots, \vec{X}_{m_i}'\}$ for $V_i'$ such that $\pi(\vec{X}_j') = \vec{x}_j$ for $j \in \{1, \dots, m_i\}$. By Lemma \ref{ElementOfCongruenceSubgroup}, there exists $h \in \Gamma_{2n}^\omega(p)$ such that $h \cdot \vec{X}_i = \vec{X}_i'$ for $i \in \{1, \dots, m_k\}$. We conclude that $h \cdot V_i = V_i'$ for $i \in \{0, \dots, k\}$. Thus, $\Gamma_{2n}^\omega(p)$ is a regular group action on $\sT_{2n}(\mathbb{Z})$.
\end{proof}

The next corollary follows directly from Lemma \ref{regularityOfTitsBuildings} and Bredon's Theorem \ref{Bredon}. 

\begin{corollary}
For $n \geq 1$ and $p$ a prime,
\begin{align*}
\lvert \sT_{2n}(\mathbb{Q}) \rvert/\Gamma_{2n}^\omega(p) & \cong \lvert \sT_{2n}(\mathbb{Q})/\Gamma_{2n}^\omega(p) \rvert, \\
\lvert \sT_{2n}(\mathbb{Z}) \rvert/\Gamma_{2n}^\omega(p) & \cong \lvert \sT_{2n}(\mathbb{Z})/\Gamma_{2n}^\omega(p) \rvert.
\end{align*}
\end{corollary}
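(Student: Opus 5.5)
The corollary is a formal consequence of two earlier results, and the plan is simply to combine them. Lemma \ref{regularityOfTitsBuildings} shows that $\sT_{2n}(\mathbb{Z})$ and $\sT_{2n}(\mathbb{Q})$ are regular $\Gamma_{2n}^\omega(p)$-complexes. Bredon's Theorem \ref{Bredon} then gives a homeomorphism $\lvert X \rvert/G \cong \lvert X/G \rvert$ for any regular $G$-complex $X$. Applying it with $G = \Gamma_{2n}^\omega(p)$ and $X$ equal to each of the two buildings yields the two homeomorphisms. The only real work is confirming that the hypotheses of Theorem \ref{Bredon} hold: a $G$-complex must be acted on simplicially and without rotations, and the action must be regular.

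First, I check that the action is simplicial. The action of $\Sp_{2n}(\mathbb{Z})$, and hence of $\Gamma_{2n}^\omega(p)$, on isotropic summands preserves inclusion and rank. It therefore sends isotropic flags to isotropic flags of the same length, as observed at the start of this section.

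Second, I check that the action is without rotations. Suppose $g$ stabilizes the simplex attached to a flag $U_0 \subsetneq \dots \subsetneq U_k$. Then $g$ permutes the $U_i$. Since $g$ preserves rank and the ranks in a flag are strictly increasing, $g \cdot U_i = U_i$ for each $i$. So every stabilizer fixes its simplex pointwise. For $\sT_{2n}(\mathbb{Q})$ the same argument works with dimensions of subspaces, or one can transport it along the equivariant isomorphism of Proposition \ref{SympMPP21Lem3.10}.

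Third, regularity is exactly Lemma \ref{regularityOfTitsBuildings}, and invoking Theorem \ref{Bredon} completes the proof. The one point that needs care, and the closest thing to an obstacle, is the $\mathbb{Q}$ case. There I must note that the isomorphism $U \mapsto U \otimes \mathbb{Q}$ from Proposition \ref{SympMPP21Lem3.10} commutes with the $\Sp_{2n}(\mathbb{Z})$-action. This holds because $g(U \otimes \mathbb{Q}) = gU \otimes \mathbb{Q}$. Consequently the isomorphism is $\Gamma_{2n}^\omega(p)$-equivariant, so it induces homeomorphisms of both the topological and the simplicial quotients, which is consistent with the two displayed statements.
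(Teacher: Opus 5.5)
Your proof is correct and is the same argument as the paper's, which derives the corollary directly from Lemma \ref{regularityOfTitsBuildings} and Bredon's Theorem \ref{Bredon}. Your explicit checks simply spell out hypotheses the paper leaves implicit: that the action is simplicial and without rotations, and that $U \mapsto U \otimes \mathbb{Q}$ is $\Gamma_{2n}^\omega(p)$-equivariant.
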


For the rest of this article, we do not distinguish between the topological quotient and the simplicial quotient. 

\begin{definition}
We denote by 
\begin{equation*}
\Gamma_n(p) := \ker(\SL_n(\mathbb{Z}) \xrightarrow{\pi} \SL_n(\mathbb{Z}/p\mathbb{Z}))
\end{equation*}
the \emph{principal congruence subgroup of level-$p$ of $\SL_n(\mathbb{Z})$}, where $p$ is a prime integer and $\pi$ is the reduction mod-$p$ map.
\end{definition}

\begin{remark}
$\cT_n(\mathbb{Q})$ and $\cT_n(\mathbb{Z})$ are regular $\Gamma_n(p)$-complexes. By Bredon's Theorem \ref{Bredon}, 
\begin{align*}
\lvert \cT_{2n}(\mathbb{Q}) \rvert/\Gamma_{2n}^\omega(p) & \cong \lvert \cT_{2n}(\mathbb{Q})/\Gamma_{2n}^\omega(p) \rvert, \\
\lvert \cT_{2n}(\mathbb{Z}) \rvert/\Gamma_{2n}^\omega(p) & \cong \lvert \cT_{2n}(\mathbb{Z})/\Gamma_{2n}^\omega(p) \rvert.
\end{align*}
As with the symplectic Tits building, we do not distinguish between the topological quotient and the simplicial quotient. 
\end{remark}

\begin{proposition} \textup{\cite[Proposition 3.16]{MiPaP21}} \label{isomorphismlinear}
Let $n \geq 1$ and $p$ be prime. Then $\cT_n(\mathbb{Q})/\Gamma_n(p) \cong \cTD_n(\Field_p)$. 
\end{proposition}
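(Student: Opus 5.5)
The proof of Proposition \ref{isomorphismlinear} mirrors the symplectic argument already carried out for $\sT_{2n}(\mathbb{Z})$. First, Lemma \ref{MPP21Lem3.10} gives $\cT_n(\mathbb{Q}) \cong \cT_n(\mathbb{Z})$ equivariantly, so we may work with summands of $\mathbb{Z}^n$.

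Let $\pi\colon \mathbb{Z}^n \to \Field_p^n$ be reduction mod $p$. A summand $V \subset \mathbb{Z}^n$ of rank $m$ reduces to an $m$-dimensional subspace $\overline{V}$. The unique $\pm$-orientation of $V$ induces a $\pm$-orientation $\pm\alpha_\pi$ on $\overline{V}$. This gives a poset map $\Pi\colon \cbT_n(\mathbb{Z}) \to \cbTD_n(\Field_p)$, $V \mapsto (\overline{V}, \pm \alpha_\pi)$, which is strictly increasing because proper inclusions of summands reduce to proper inclusions of subspaces. Elements of $\Gamma_n(p)$ reduce to the identity, so $\Pi$ is $\Gamma_n(p)$-invariant and descends to a simplicial map $\cT_n(\mathbb{Z})/\Gamma_n(p) \to \cTD_n(\Field_p)$.

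It remains to show this map is bijective on vertices and on simplices.

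\textbf{Surjectivity on vertices.} Given $(\overline{V}, \pm\alpha)$, pick a basis $\{\vec{x}_1,\dots,\vec{x}_m\}$ of $\overline{V}$ compatible with $\pm\alpha$ and extend it to a basis of $\Field_p^n$ whose full wedge is $\pm 1$. This is possible by rescaling the last added vector, which is available since $m<n$. Let $\overline{g} \in \SL_n(\Field_p)$ send the standard basis to this basis. Since $\SL_n(\mathbb{Z}) \to \SL_n(\Field_p)$ is surjective (the linear analogue of Theorem \ref{surjectivesympelctic}), we can lift $\overline{g}$ to $g$. Then $g\cdot\Span{\vec e_1,\dots,\vec e_m}$ maps to $(\overline{V},\pm\alpha)$.

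\textbf{Injectivity on vertices.} This is the linear analogue of Lemma \ref{ElementOfCongruenceSubgroup}. Given $V, V'$ with the same image, lift a compatible basis of $\overline{V}$ to bases of $V$ and $V'$ using Lemma \ref{MPP21Lem3.19}. Extend both to bases of $\mathbb{Z}^n$ lifting one common basis of $\Field_p^n$ with determinant $\pm1$, again by Lemma \ref{MPP21Lem3.19} applied to $\mathbb{Z}^n$. After adjusting signs, the two lifted bases define $g,h\in\SL_n(\mathbb{Z})$ with equal reductions, and $hg^{-1}\in\Gamma_n(p)$ carries $V$ to $V'$.

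\textbf{Simplices.} Surjectivity on simplices uses the same construction applied to a flag. Choose a basis of $\overline{V}_k$ adapted to the flag and compatible with each $\pm\alpha_i$; this is possible by rescaling one vector in each successive block. Its lift under a single $g$ yields a flag of summands mapping to the given oriented flag. Injectivity on simplices is exactly regularity of the $\Gamma_n(p)$-action, proved as in Lemma \ref{regularityOfTitsBuildings}: lift compatible adapted bases iteratively via Lemma \ref{MPP21Lem3.19} to both flags, then produce one element of $\Gamma_n(p)$ carrying one lifted basis to the other.

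\textbf{Main obstacle.} The delicate step is the determinant bookkeeping. The orientation only determines $\det$ up to sign, so we must ensure the lifted bases have determinant exactly $1$ rather than $-1$. If needed, negate the last basis vector, which lies outside the flag's top summand. This is harmless because the orientations are taken modulo $\pm$, but it must be checked so that the lifted matrices land in $\SL_n(\mathbb{Z})$ rather than $\GL_n(\mathbb{Z})$.
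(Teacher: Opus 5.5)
Your proposal is correct and is essentially the argument the paper gives for the symplectic analogue, Proposition \ref{aSimplicialModel} together with Lemmas \ref{ElementOfCongruenceSubgroup} and \ref{regularityOfTitsBuildings}; for this linear case the paper itself only cites Miller--Patzt--Putman. As there, reduction mod $p$ gives a $\Gamma_n(p)$-invariant map, and bijectivity on vertices and simplices comes from lifting adapted, orientation-compatible bases via Lemma \ref{MPP21Lem3.19} and from the surjectivity of $\SL_n(\mathbb{Z}) \to \SL_n(\Field_p)$. One refinement of your ``main obstacle'' concerns the injectivity step, where the sign fix is genuinely needed only when $p=2$. For odd $p$ the lifts $g,h$ have equal reductions and hence equal determinants, so $hg^{-1}\in\SL_n(\mathbb{Z})$ automatically. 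For $p=2$, negating the last lifted vector, which lies outside the top summand, corrects the determinant without changing its reduction.
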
  

\begin{proposition}\label{aSimplicialModel}
Let $n \geq 1$ and $p$ be prime. Then $\sT_{2n}(\mathbb{Q})/\Gamma_{2n}^\omega(p) \cong \sTD_{2n}(\Field_p)$. 
\end{proposition}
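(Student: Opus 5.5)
We work with $\sT_{2n}(\mathbb{Z})$ in place of $\sT_{2n}(\mathbb{Q})$, which is allowed by Proposition \ref{SympMPP21Lem3.10}. The plan is to define a simplicial map
\begin{equation*}
\Phi\colon \sT_{2n}(\mathbb{Z}) \rightarrow \sTD_{2n}(\Field_p), \qquad V \mapsto \pi(V) = (\overline{V}, \pm \alpha_\pi),
\end{equation*}
where $\pi$ is reduction mod $p$, and to show that it descends to an isomorphism $\sT_{2n}(\mathbb{Z})/\Gamma_{2n}^\omega(p) \rightarrow \sTD_{2n}(\Field_p)$.

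First, $\Phi$ is a well-defined poset map. If $V$ is an isotropic summand of rank $m$, then $\pi(V)$ is an isotropic subspace of dimension $m$, since a basis of $V$ extends to a basis of $\mathbb{Z}^{2n}$ and so reduces to a linearly independent set. The unique $\pm$-orientation on $V$ pushes forward to a $\pm$-orientation on $\overline{V}$. If $V \subsetneq V'$, then $\overline{V} \subsetneq \overline{V'}$, because the ranks differ. So $\Phi$ sends flags to flags and is simplicial. For $g \in \Gamma_{2n}^\omega(p)$ we have $\pi(gV) = \pi(V)$ with the same induced $\pm$-orientation, so $\Phi$ is $\Gamma_{2n}^\omega(p)$-invariant. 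It therefore induces a simplicial map $\overline{\Phi}$ on the simplicial quotient of Definition \ref{X/G}.

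Next we prove that $\overline{\Phi}$ is a bijection on vertices.
\begin{itemize}
\item \emph{Surjectivity.} Given $(\overline{V}, \pm\alpha)$, choose an isotropic basis of $\overline{V}$ compatible with $\pm\alpha$ and extend it to a symplectic basis by Lemma \ref{SymplecticBases}. This gives $\overline{g} \in \Sp_{2n}(\Field_p)$ sending $\Span{\vec{e}_1,\dots,\vec{e}_m}$, with its induced orientation, to $(\overline{V},\pm\alpha)$. Lift $\overline{g}$ to $g \in \Sp_{2n}(\mathbb{Z})$ by Theorem \ref{surjectivesympelctic}. Then $g\Span{\vec{e}_1,\dots,\vec{e}_m}_\mathbb{Z}$ maps to $(\overline{V},\pm\alpha)$.
\item \emph{Injectivity.} This is exactly Lemma \ref{ElementOfCongruenceSubgroup}.
\end{itemize}

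It remains to show that $\overline{\Phi}$ is a bijection on simplices.
\begin{itemize}
\item \emph{Injectivity.} By Definition \ref{X/G}, a simplex of the quotient is a set of vertex orbits admitting some representatives that span a simplex. Two simplices of the quotient with the same image have vertexwise the same orbits, so they coincide as simplices of $X/G$. The regularity established in Lemma \ref{regularityOfTitsBuildings} is what guarantees that the quotient is an honest simplicial complex, with no identification subtleties.
\item \emph{Surjectivity.} This is the main obstacle. Given an oriented isotropic flag $(\overline{V}_0,\pm\alpha_0) \subsetneq \dots \subsetneq (\overline{V}_k,\pm\alpha_k)$, we must produce an integral isotropic flag reducing to it with all the prescribed $\pm$-orientations at once. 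Choose a single isotropic basis $\vec{x}_1,\dots,\vec{x}_{m_k}$ of $\overline{V}_k$ such that each initial segment $\vec{x}_1,\dots,\vec{x}_{m_i}$ spans $\overline{V}_i$ and is compatible with $\pm\alpha_i$. This is possible by rescaling the first vector of each successive block of the basis by a suitable element of $\Field_p^\times$. Extend it to a symplectic basis of $\Field_p^{2n}$ via Lemma \ref{SymplecticBases}, obtaining $\overline{g} \in \Sp_{2n}(\Field_p)$. Lift $\overline{g}$ to $g \in \Sp_{2n}(\mathbb{Z})$ by Newman's Theorem \ref{surjectivesympelctic}. The integral flag $g\Span{\vec{e}_1,\dots,\vec{e}_{m_i}}_\mathbb{Z}$, for $i=0,\dots,k$, then reduces to the given oriented flag.
\end{itemize}

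Finally, $\overline{\Phi}$ is a simplicial bijection whose inverse is also simplicial, hence an isomorphism. Combined with Proposition \ref{SympMPP21Lem3.10}, this gives $\sT_{2n}(\mathbb{Q})/\Gamma_{2n}^\omega(p) \cong \sTD_{2n}(\Field_p)$.
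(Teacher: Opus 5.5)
Your proof is correct. It shares the paper's skeleton (the reduction map is $\Gamma_{2n}^\omega(p)$-invariant, and vertex injectivity is Lemma \ref{ElementOfCongruenceSubgroup}), but the two harder steps go differently.

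\emph{Surjectivity on simplices.} The paper first lifts the top space $V_k$ via Lemma \ref{CompleteIsoBasis}. It then lifts the rest of the flag inside $V_k$ using Miller--Patzt--Putman's linear isomorphism $\cT(V_k)/\Gamma_{m_k}(p)\cong\cTD(\overline{V}_k)$ (Proposition \ref{isomorphismlinear}) and appends $V_k$. You use one device for both vertices and simplices. You take an isotropic basis adapted to the flag and compatible with every $\pm\alpha_i$, extend it to a symplectic basis, and lift the resulting matrix by Newman's Theorem \ref{surjectivesympelctic}. Your block-rescaling justification is exactly what is needed here; the paper uses the same kind of basis in its proof of Lemma \ref{regularityOfTitsBuildings} without justifying it. Your route is more self-contained, since it avoids the $\SL_n$ case entirely.

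\emph{Injectivity on simplices.} The paper combines Lemma \ref{ElementOfCongruenceSubgroup} with regularity to show that two simplices with the same image lie in a single $\Gamma_{2n}^\omega(p)$-orbit. You observe, correctly, that in the simplicial quotient of Definition \ref{X/G} a simplex is just its set of vertex orbits, so this step is formal once vertices are handled. Likewise, your inverse is simplicial because the lifted flag's vertices lie in the prescribed orbits, again by vertex injectivity. What the paper's argument buys is the stronger statement that $\Gamma_{2n}^\omega(p)$-orbits of simplices are in bijection with simplices of $\sTD_{2n}(\Field_p)$. That is the form relevant to the topological quotient; in your route it is supplied separately by regularity together with Bredon's Theorem \ref{Bredon}.

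\emph{A small misstatement.} Your remark that regularity ``guarantees that the quotient is an honest simplicial complex'' misplaces its role. $X/G$ is a simplicial complex by definition. Regularity is what gives $\lvert X\rvert/G\cong\lvert X/G\rvert$, and your isomorphism never uses it.
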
  

\begin{proof}
Proposition \ref{SympMPP21Lem3.10} says that $\sT_{2n}(\mathbb{Q}) \cong \sT_{2n}(\mathbb{Z})$, so $\sT_{2n}(\mathbb{Q})/\Gamma_{2n}^\omega(p) \cong \sT_{2n}(\mathbb{Z})/\Gamma_{2n}^\omega(p)$. We construct a simplicial isomorphism between $\sT_{2n}(\mathbb{Z})/\Gamma_{2n}^\omega(p)$ and $\sTD_{2n}(\Field_p)$.\\
Let $\pi\colon \mathbb{Z}^{2n} \rightarrow \Field_p^{2n}$ be the reduction mod-$p$ map and let $\varphi\colon \sT_{2n}(\mathbb{Z}) \rightarrow \sTD_{2n}(\Field_p)$ be the simplicial map induced by $\pi$. $\varphi$ takes simplices that correspond to flags of isotropic summands to simplices that correspond to flags of isotropic subspaces equipped with a $\pm$-orientation induced by $\pi$. $\varphi$ is $\Gamma_{2n}^\omega(p)$-invariant, so it induces a well-defined map
\begin{equation*}
\varphi_*\colon \sT_{2n}(\mathbb{Z})/\Gamma_{2n}^\omega(p) \rightarrow \sTD_{2n}(\Field_p).
\end{equation*}
We check that $\varphi_*$ is an isomorphism between the set of simplices of $\sT_{2n}(\mathbb{Z})/\Gamma_{2n}^\omega(p)$ and the set of simplices of $\sTD_{2n}(\Field_p)$.\\
We first prove that $\varphi_*$ is surjective on vertices. Suppose $\overline{s} \in \sTD_{2n}(\Field_p)$ corresponds to the isotropic flag $0 \subsetneq \overline{V} \subsetneq \Field_p^{2n}$ for some dimension-$m$ isotropic subspace $\overline{V}$ equipped with $\pm \alpha$. Pick an isotropic basis $\{\vec{x}_1, \dots, \vec{x}_m\}$ for $\overline{V}$ that is compatible with $\pm \alpha$. Using Lemma \ref{CompleteIsoBasis}, lift $\{\vec{x}_1, \dots, \vec{x}_m\}$ to an isotropic basis $\{\vec{X}_1, \dots, \vec{X}_m\}$ that satisfies $\pi(\vec{X}_i) = \vec{x}_i$ for $i \in \{1, \dots, m\}$. Let $V$ be the span of $\{\vec{X}_1, \dots, \vec{X}_m\}$ and let $s \in \sT_{2n}(\mathbb{Z})$ correspond to the isotropic flag $0 \subsetneq V \subsetneq \mathbb{Z}^{2n}$. Since $\varphi(s) = \overline{s}$, it follows that $\varphi_*$ is surjective on vertices. \\
We now establish the surjectivity of $\varphi_*$ on the set of $k$-simplices of $\sT_{2n}(\mathbb{Z})/\Gamma_{2n}^\omega(p)$ for $k \geq 1$. Suppose $\overline{\sigma} \in \sTD_{2n}(\Field_p)$ corresponds to the isotropic flag 
\begin{equation}
0 \subsetneq \overline{V}_0 \subsetneq \dots \subsetneq \overline{V}_k \subsetneq \Field_p^{2n},
\end{equation}
where $\overline{V}_i$ is of dimension $m_i$ and is equipped with $\pm \alpha_i$. But 
\begin{equation}
0 \subsetneq \overline{V}_0 \subsetneq \dots \subsetneq \overline{V}_{k - 1} \subsetneq \overline{V}_k
\end{equation}
is a flag that corresponds to a $(k - 1)$-simplex $\overline{\tau} \in \simp(\sbTD_{2n}(\Field_p)_{< (\overline{V}_k, \pm \alpha_k)})$. Lemma \ref{pmSympPosetLinks} states that $\simp(\sbTD_{2n}(\Field_p)_{< (\overline{V}_k, \pm \alpha_k)})$ is isomorphic to $\cTD(\overline{V}_k)$. We can find an isotropic $V_k \subset \mathbb{Z}^{2n}$ such that $\pi(V_k) = (\overline{V}_k, \pm \alpha_k)$ using an isotropic basis for $\overline{V}_k$ compatible with $\pm \alpha_k$ and Lemma \ref{CompleteIsoBasis}. By Miller--Patzt--Putman's Propositions \ref{MPP21Lem3.10} and \ref{isomorphismlinear}, there exists a simplicial isomorphism $\gamma\colon \cT(V_k)/\Gamma_{m_k}(p) \rightarrow \cTD(\overline{V}_k)$. Let $\iota\colon \cT(V_k) \rightarrow \cT(V_k)/\Gamma_{m_k}(p)$ be the quotient map. We can lift $\overline{\tau} \in \cTD(\overline{V}_k)$ using $(\iota \circ \gamma)$ to some $\tau \in \cT(V_k)$ such that $\varphi \vert_{V_k}(\tau) = \overline{\tau}$. Appending $V_k$ to the isotropic flag of $\tau$ yields an isotropic flag corresponding to a $k$-simplex $\sigma \in \sT_{2n}(\mathbb{Z})$ such that $\varphi(\sigma) = \overline{\sigma}$. Therefore $\varphi_*$ is surjective on $k$-simplices. \\
We prove that $\varphi_*$ is injective on vertices. Let $s, s' \in \sT_{2n}(\mathbb{Z})$ be vertices such that $\varphi(s) = \varphi(s') = \overline{s} \in \sTD_{2n}(\Field_p)$. Suppose $s, s', \overline{s}$ correspond to the isotropic flags
\begin{align*}
    0 & \subsetneq V \subsetneq \mathbb{Z}^{2n}, \\
    0 & \subsetneq V' \subsetneq \mathbb{Z}^{2n}, \\
    0 & \subsetneq \overline{V} \subsetneq \Field_p^{2n},
\end{align*}
respectively, where $\pi(V)= \pi(V') = (\overline{V}, \pm \alpha_\pi)$. By Lemma \ref{ElementOfCongruenceSubgroup}, there exists $g \in \Gamma_{2n}^\omega(p)$ such that $g \cdot V = V'$. So $g \cdot s = s'$. Therefore $\varphi$ is injective up to a $\Gamma_{2n}^\omega(p)$-action on vertices, which implies that $\varphi_*$ is injective on vertices.   \\
We now establish the injectivity of $\varphi_*$ on the set of $k$-simplices of $\sT_{2n}(\mathbb{Z})/\Gamma_{2n}^\omega(p)$ for $k \geq 1$. Suppose that $\sigma, \sigma' \in \sT_{2n}(\mathbb{Z})$ are $k$-simplices for which $\varphi(\sigma) = \varphi(\sigma') = \overline{\sigma} \in \sTD_{2n}(\Field_p)$. Let $\sigma, \sigma', \overline{\sigma}$ correspond to the isotropic flags
\begin{align*}
    0 & \subsetneq V_0 \subsetneq \dots \subsetneq V_k \subsetneq \mathbb{Z}^{2n},\\
    0 & \subsetneq V_0' \subsetneq \dots \subsetneq V_k' \subsetneq \mathbb{Z}^{2n}, \\
    0 & \subsetneq \overline{V}_0 \subsetneq \dots \subsetneq \overline{V}_k \subsetneq \Field_p^{2n}, 
\end{align*}
respectively, where $\overline{V}_i$ is equipped with $\pm \alpha_i$ for $i \in \{0, \dots, k\}$. $V_i$ and $V_i'$ are isotropic summands that satisfy $\pi(V_i) = \pi(V_i') = (\overline{V}_i, \pm (\alpha_i)_\pi)$ for $i \in \{0, \dots, k\}$. By Lemma \ref{ElementOfCongruenceSubgroup}, there exists $g_i \in \Gamma_{2n}^\omega(p)$ such that $g_i \cdot V_i = V_i'$. Since $\sT_{2n}(\mathbb{Z})$ is a regular $\Gamma_{2n}^\omega(p)$-complex by Lemma \ref{regularityOfTitsBuildings}, there exists $g \in \Gamma_{2n}^\omega(p)$ such that $g \cdot V_i = g_i \cdot V_i = V_i'$ for $i \in \{0, \dots, k\}$. So, $g \cdot \sigma = \sigma'$. Hence, $\varphi$ is injective up to a $\Gamma_{2n}^\omega(p)$-action on $k$-simplices. We conclude that $\varphi_*$ is injective on $k$-simplices. Hence, there is a one-to-one correspondence between the set of simplices of $\sT_{2n}(\mathbb{Z})/\Gamma_{2n}^\omega(p)$ and the set of simplices of $\sTD_{2n}(\Field_p)$.
\end{proof}


\section{Partial basis complexes}

Let $R$ be a PID. The \emph{partial basis complex} is a simplicial complex whose $(k - 1)$-simplices correspond to partial bases of size $k$.

\begin{definition}
Let $V$ be a f.g.~free $R$-module, $W$ a f.g.~symplectic $R$-module and $\hat{W}$ a restricted summand of $W$. 
    \begin{itemize}[nosep, topsep=-0.5cm]
        \item Let $\BP(V)$ denote the simplicial complex whose $(k - 1)$-simplices correspond to unordered partial $R$-bases of size $k$ of $V$.
        \item Let $\IP(W)$ denote the simplicial complex whose $(k - 1)$-simplices correspond to unordered partial isotropic $R$-bases of size $k$ of $W$.
        \item Let $\IP(\hat{W})$ denote the full simplicial sub-complex of $\IP(W)$ whose $(k - 1)$-simplices correspond to unordered partial isotropic $R$-bases of size $k$ of $\hat{W}$.
    \end{itemize}
    We call $\BP(V)$ the \emph{partial basis complex of $V$}, $\IP(W)$ the \emph{isotropic partial basis complex of $W$}, and $\IP(\hat{W})$ the \emph{restricted isotropic partial basis complex of $\hat{W}$}.  
\end{definition}  

\begin{theorem} \textup{\cite[Corollary 4.5]{Maaz}} \label{CP17Thm4.2}
Suppose $V$ is a f.g.~free $\mathbb{Z}$-module of rank $n$. Then $\BP(V)$ is CM of dimension $(n - 1)$. 
\end{theorem}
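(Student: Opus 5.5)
Although the paper quotes this result from Maazen, here is how I would prove it directly. The claim has three parts: $\BP(V)$ is $(n-1)$-dimensional, $\BP(V)$ is $(n-2)$-connected, and links behave correctly. The dimension is immediate, since a partial basis has at most $n$ elements and every partial basis extends to a basis (Lemma \ref{NormalBasis}).

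For links, fix a $(k-1)$-simplex $\sigma=\{\vec v_1,\dots,\vec v_k\}$ and set $U=\Span{\vec v_1,\dots,\vec v_k}$. The link of $\sigma$ is the complex $\BP(V\,\vert\,U)$ whose simplices are the sets $\{\vec w_1,\dots,\vec w_r\}$ for which $\{\vec v_1,\dots,\vec v_k,\vec w_1,\dots,\vec w_r\}$ is a partial basis. This is not isomorphic to $\BP(V/U)$, because the lifts $\vec w_i$ may be changed by arbitrary elements of $U$. So I would prove the following stronger statement, following van der Kallen and Maazen.

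\emph{Relative claim.} For every summand $U\subseteq V=\mathbb{Z}^n$ of rank $k$, the complex $\BP(V\,\vert\,U)$ is $(n-k-1)$-dimensional and $(n-k-2)$-connected.

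The case $U=0$ gives the connectivity of $\BP(V)$. The link of a simplex $\tau$ in $\BP(V\,\vert\,U)$ is again of the form $\BP(V\,\vert\,U')$ with $U'=U+\Span{\tau}$. Hence the relative claim, for all $U$ at once, gives the Cohen--Macaulay property.

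I would prove the relative claim by induction on $n-k$. Choose a basis so that $U=\Span{\vec e_1,\dots,\vec e_k}$, and let $h(\vec w)=\lvert w_n\rvert$ be the absolute value of the last coordinate. Let $X_0\subseteq\BP(V\,\vert\,U)$ be the full subcomplex on vertices with $h=0$. It is essentially $\BP(\mathbb{Z}^{n-1}\,\vert\,U)$ joined with the vertices $\pm\vec e_n+u$ for $u\in\mathbb{Z}^{n-1}$. Using the induction hypothesis, $X_0$ (or a cone-like enlargement of it through $\vec e_n$) is $(n-k-2)$-connected.

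Next I would add the remaining vertices in order of increasing $h$, a standard badness or discrete Morse argument. When a vertex $\vec v$ with $h(\vec v)=m>0$ is attached, its descending link consists of partial-basis simplices $\tau$ in the link of $\vec v$ all of whose vertices satisfy $h<m$, or $h=m$ and precede $\vec v$ in a fixed tie-breaking order. It suffices to show that each descending link is $(n-k-3)$-connected.

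The key tool here is the division algorithm. Every $\vec w$ in the link of $\vec v$ can be replaced by $\vec w-q\vec v$ with $\lvert w_n-qv_n\rvert<m$, and this replacement preserves being a partial basis together with $U$ and $\vec v$. This gives a retraction, or a poset homotopy in the sense of Proposition \ref{posetmaphomotopy}, from the full link of $\vec v$, which is $\BP(V\,\vert\,U+\Span{\vec v})$, onto the descending link. Consequently the descending link is at least as connected as $\BP(V\,\vert\,U+\Span{\vec v})$, which is $(n-k-3)$-connected by induction. Gluing along $(n-k-3)$-connected links preserves $(n-k-2)$-connectivity, which closes the induction.

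I expect the main obstacle to be this descending-link step. The map $\vec w\mapsto\vec w-q\vec v$ has to be simplicial: distinct vertices of a simplex must not collapse, and the image of a partial basis must remain a partial basis relative to $U+\Span{\vec v}$. Ties in the value of $h$, and the sign ambiguity from $\pm\vec v$, must also be handled. I would first try the usual fix, taking $q$ to be the unique integer with $0\le w_n-qv_n<m$ and ordering vertices lexicographically by $(h,\text{sign},\dots)$. If the map is not simplicial on the nose, I would instead argue with the link-deletion (Hatcher--Wahl style) method, showing that any map of a sphere of dimension at most $n-k-2$ can be homotoped to remove vertices of maximal $h$ one at a time.
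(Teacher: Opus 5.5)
Your architecture is Maazen's argument: relative complexes $\BP(V\,\vert\,U)$ whose links are again relative complexes, induction on the corank, height $h(\vec w)=\lvert w_n\rvert$, and division by a vertex of maximal height. The paper only cites that argument here, and reruns it in sphere-pushing form for $\IP(W\vert(\overline{V},\pm\alpha))$ in the proof of Proposition \ref{anotherInduction3}. But the step you call the crux fails as stated.

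With $q$ normalized by $0\le w_n-qv_n<m$, the map $\vec w\mapsto\vec w-q\vec v$ does not fix the descending link $D$. Take $\mathbb{Z}^3$, $U=0$ and $\vec v=(0,1,2)$. The vertex $(1,0,2)$ is in the link and has the same height, yet it goes to $(1,-1,0)$. Even $(1,0,-1)$, of height $1<m$, goes to $(1,1,1)$. So you get a retraction neither onto $D$ nor onto the height-$<m$ part. The latter would not suffice anyway, since $D$ also contains tied vertices $\vec w\prec\vec v$, and as the example shows these can be adjacent to $\vec v$.

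The poset-homotopy alternative is also unavailable. For $q\neq 0$, $\vec w$ and $\vec w-q\vec v$ are never adjacent in the link, because together with $\vec v$ they are dependent, so Proposition \ref{posetmaphomotopy} does not apply. Conversely, the obstacle you worry about is vacuous. Adding multiples of $\vec v$ is a unimodular change of basis of the summand spanned by $U$, $\vec v$ and the $\vec w_j$. Hence \emph{any} vertexwise choice of multiples sends simplices of $L:=\BP(V\,\vert\,U+\Span{\vec v})$ to simplices of the same dimension.

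That last observation is the repair. Set $r(\vec w)=\vec w$ for every vertex of $D$, and $r(\vec w)=\vec w-q_{\vec w}\vec v$ with $\lvert w_n-q_{\vec w}v_n\rvert<m$ for every other vertex of $L$. Then:
\begin{itemize}
\item $r$ is simplicial, by the observation above;
\item every vertex image lies in $D$, since either it was already there or it now has height $<m$, and all height-$<m$ vertices of $L$ precede $\vec v$ or lie in the base;
\item $D$ is full in $L$ and $r\vert_D=\mathrm{id}$.
\end{itemize}
So $D$ is a retract of $L$, hence $(n-k-3)$-connected by induction, and your gluing step closes the induction.

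You must also correct the base. The vertices $\pm\vec e_n+\vec u$ have $h=1$, not $h=0$. Moreover $X_0\cong\BP(\mathbb{Z}^{n-1}\,\vert\,U)$ is only $(n-k-3)$-connected by induction (for $n=2$, $k=0$ it is two points), so the induction hypothesis does not give what you claim. Start instead from the full subcomplex on the vertices of $X_0$ together with $\vec e_n$. By Lemma \ref{LinearAlgebraSummands}, each simplex of $X_0$ spans together with $U$ a summand of $\mathbb{Z}^{n-1}$, so it joins to $\vec e_n$. This makes the starting subcomplex the contractible cone $X_0*\vec e_n$; afterwards $\vec e_n$ is just a vertex of $D$ fixed by $r$.

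With these two changes your vertex-by-vertex filtration is a valid variant of the standard proof. The sphere-pushing version used by Maazen, Church--Putman and the paper handles ties differently. It chooses a simplex $\sigma$ maximal among those all of whose vertices attain the maximal height, and retracts the link of the whole simplex $\varphi(\sigma)$ onto its part of height $<R$.
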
  

The simplices of $\IP(W)$ correspond to isotropic partial bases. These isotropic partial bases span isotropic summands of $\sbT(W)$. $\IP(\hat{W})$ is related to $\sbT(\hat{W})$ in the same manner. We introduce simplicial complexes that are related to $\sbT(W \vert (\overline{V}, \pm \alpha))$ and $\sbT(\hat{W} \vert (\overline{V}, \pm \alpha))$.

\begin{definition}
Let $W$ be a f.g.~symplectic $\mathbb{Z}$-module, let $\overline{W}$ be a symplectic $\Field_p$-vector space, let $\hat{W} \subsetneq W$ be a restricted summand, let $\hat{\overline{W}} \subsetneq \overline{W}$ be a restricted subspace, let $\pi\colon W \rightarrow \overline{W}$ be a surjection such that $\pi(\hat{W}) = \hat{\overline{W}}$, and let $(\overline{V}, \pm \alpha) \in \sbTD(\hat{\overline{W}})$.
\begin{itemize}[nosep, topsep=-0.5cm]
\item We denote by $\IP(W \vert (\overline{V}, \pm \alpha))$ the simplicial complex for which a $(k - 1)$-simplex corresponds to an isotropic partial $\mathbb{Z}$-basis $\{\vec{v}_1, \dots, \vec{v}_k\}$ for $W$ such that $\{\pi(\vec{v}_1), \dots, \pi(\vec{v}_k)\}$ spans a subspace $\overline{U} \subseteq \overline{V}$. We require that $\{\pi(\vec{v}_1), \dots, \pi(\vec{v}_k)\}$ be compatible with $\pm \alpha$ when the isotropic partial basis spans $\overline{V}$. We call this simplicial complex an \emph{isotropic partial basis complex of $W$ with a $\pm$-congruence condition}.
\item We denote by $\IP(\hat{W} \vert (\overline{V}, \pm \alpha))$ the sub-complex of $\IP(W \vert (\overline{V}, \pm \alpha))$ whose $(k - 1)$-simplices correspond to isotropic partial $\mathbb{Z}$-basis of size $k$ for $\hat{W}$. We call $\IP(\hat{W} \vert (\overline{V}, \pm \alpha))$ a \emph{restricted isotropic partial basis complex of $\hat{W}$ with a $\pm$-congruence condition}.
\end{itemize}
\end{definition}

\begin{definition}\label{LabelingSystem}
Suppose $X$ is a simplicial complex and $S$ is a non-empty set. Let $X \Span{S}$ denote the simplicial complex whose vertices are the pairs $(x, s)$ such that $x \in X$ is a vertex and $s \in S$. A $k$-simplex of $X \Span{S}$ is a set of vertices $\{(x_0, s_0), \dots, (x_k, s_k)\}$. We call $X \Span{S}$ a \emph{labeling system for $X$}.
\end{definition}

\begin{remark}\label{LabelingSystemRemark}
$X \Span{S}$ is a complete join complex over $X$. The projection map $\varphi\colon X \Span{S} \rightarrow X$ sends a simplex spanned by $\{(x_0, s_0), \dots, (x_k, s_k)\}$ to a simplex spanned by $\{x_0, \dots, x_k\}$.
\end{remark}

\begin{lemma}\textup{\cite[Lemma 3.2]{MiVdK}}\label{linkofIsotropic}
Let $W$ be a f.g.~symplectic $R$-module, let $\sigma \in \IP(W)$ be a $(k - 1)$-simplex and let $\Span{\sigma} = U$. Then the simplicial map
\begin{align*}
\varphi & \colon \lk_{\IP(W)}(\sigma) \xrightarrow{\cong} \IP(U^\perp/U) \Span{U}, \\
\varphi & \colon \{\vec{v}_1, \dots, \vec{v}_t\} \mapsto \{(\vec{w}_1 + U , \vec{s}_1), \dots, (\vec{w}_t + U, \vec{s}_t)\}, 
\end{align*} 
is a simplicial isomorphism, where $\vec{s}_i \in U$ and $\vec{v}_i = \vec{w}_i + \vec{s}_i$ for $i \in \{1, \dots, t\}$. 
\end{lemma}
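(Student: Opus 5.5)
The plan is to write down the two mutually inverse simplicial maps explicitly and check directly that they are well defined and bijective on simplices. Lemma \ref{linkofIsotropic} identifies the link of a $(k-1)$-simplex $\sigma=\{\vec{u}_1,\dots,\vec{u}_k\}$ of $\IP(W)$ with the labeling system $\IP(U^\perp/U)\Span{U}$, where $U=\Span{\sigma}$.

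\textbf{Step 1 (description of the link).} A vertex $\vec{v}$ lies in $\lk_{\IP(W)}(\sigma)$ if and only if $\{\vec{u}_1,\dots,\vec{u}_k,\vec{v}\}$ is an isotropic partial basis. Isotropy forces $\vec{v}\in U^\perp$. By Lemma \ref{IsotropicComplementSummands} and Lemma \ref{LinearAlgebraSummands}, $U$ is a summand of $U^\perp$. Fix a complement $S$ with $U^\perp=U\oplus S$; as in the proof of Lemma \ref{SympPosetLinks}, $S$ is symplectic and canonically isomorphic to $U^\perp/U$. Write $\vec{v}=\vec{w}+\vec{s}$ with $\vec{w}\in S$ and $\vec{s}\in U$. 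The map $\varphi$ sends $\vec{v}$ to $(\vec{w}+U,\vec{s})$. The first thing to check is that the image is independent of choices. Since $\vec{v}+U$ is determined by $\vec{v}$, the only real choice is $S$. I would therefore either fix $S$ once and for all, or note that changing $S$ gives a compatible relabeling that is again a bijection.

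\textbf{Step 2 (simplices correspond).} Take vectors $\vec{v}_1,\dots,\vec{v}_t\in U^\perp$.
\begin{itemize}
\item Adjoining them to $\sigma$ gives an isotropic partial basis of $W$ exactly when the classes $\vec{v}_i+U$ form an isotropic partial basis of $U^\perp/U$.
\item \emph{Partial basis condition.} $\sigma\cup\{\vec{v}_i\}$ is a partial basis of $W$ iff it is a partial basis of the summand $U^\perp$. Since $U$ is a summand of $U^\perp$, this holds iff the images in $U^\perp/U$ are a partial basis. This uses Lemma \ref{NormalBasis} together with the fact that a quotient of a summand by a summand is free.
\item \emph{Isotropy.} By Lemma \ref{IsotropicPerp}, $\omega(\vec{v}_i,\vec{v}_j)$ equals the induced form on the classes, because $U$ lies in the radical of $\omega|_{U^\perp}$.
\end{itemize}
The labels $\vec{s}_i\in U$ are arbitrary. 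This matches the definition of the labeling system, in which any collection of labeled vertices whose underlying vertices span a simplex is a simplex. So $\varphi$ is a simplicial map that is bijective on simplices of each dimension.

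\textbf{Step 3 (inverse).} The inverse sends $(\vec{x}+U,\vec{s})$ to $\tilde{\vec{x}}+\vec{s}$, where $\tilde{\vec{x}}$ is the unique lift of $\vec{x}+U$ in $S$. One checks that both composites are the identity on vertices, so $\varphi$ is a simplicial isomorphism.

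I expect the main obstacle to be the subtle point in Step 2 that distinct vertices of a simplex might have the same class in $U^\perp/U$. In that case the simplex would collapse, and $\varphi$ would fail to be injective on simplices. This does not happen: if $\vec{v}_i+U=\vec{v}_j+U$ with $i\neq j$, then $\vec{v}_i-\vec{v}_j\in U$, which contradicts linear independence of $\sigma\cup\{\vec{v}_i,\vec{v}_j\}$.

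A second check concerns the summand argument when $R$ is a general PID. The quotient of a partial basis being a partial basis needs $U^\perp/U$ to be free, and this follows from Lemma \ref{LinearAlgebraSummands}.
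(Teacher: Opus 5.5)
Your proposal is correct and takes essentially the approach the paper relies on. The paper gives no proof of its own here; it cites Mirzaii--van der Kallen and Charney's explicit construction, and your argument is the direct verification of that explicit map. You fix a complement $S$ with $U^\perp = U \oplus S$ so the labels $\vec{s}_i$ are well defined, which the statement leaves implicit. You then check that the partial-basis and isotropy conditions on $\sigma \cup \{\vec{v}_i\}$ transfer exactly to the classes $\vec{v}_i + U$, including their distinctness, and that the labels in $U$ are unconstrained.
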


The proof of Mirzaii-van der Kallen's Lemma \ref{linkofIsotropic} is based on Charney's \cite[Proof of Lemma 3.4]{Charney87} work, who constructed the explicit map. We extend Mirzaii-van der Kallen's Lemma \ref{linkofIsotropic} to the context of $\IP(W \vert (\overline{V}, \pm \alpha))$. 

\begin{lemma} \label{SympCongruenceSimpLinks}
Let $W$ be a f.g.~symplectic $\mathbb{Z}$-module, let $\overline{W}$ be a symplectic $\Field_p$-vector space, let $\pi\colon W \rightarrow (\overline{W}, \pm \nu_\pi)$ be a surjection, and let $(\overline{V}, \pm \alpha) \in \sbTD(\overline{W})$. Furthermore, let $\sigma \in \IP(W \vert (\overline{V}, \pm \alpha))$ be a $(k - 1)$-simplex, let $\Span{\sigma} = U$, and let $\pi(U) = \overline{U}$. Then
\begin{equation*}
\lk_{\IP(W \vert (\overline{V}, \pm \alpha))}(\sigma) \cong \IP(U^\perp/U \vert (\overline{V}/\overline{U}, \pm \eta)) \Span{U}.
\end{equation*} 
for some $\pm$-orientation $\pm \eta$ on $\overline{V}/\overline{U}$.
\end{lemma}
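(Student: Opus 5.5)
The plan is to restrict the Mirzaii--van der Kallen isomorphism of Lemma \ref{linkofIsotropic} to the subcomplex cut out by the congruence condition, and to check that the congruence condition on a simplex of the link translates exactly into a congruence condition on $U^\perp/U$ with a suitably induced $\pm$-orientation. Since $\IP(W \vert (\overline{V}, \pm \alpha))$ is a full subcomplex of $\IP(W)$ in the sense that its simplices are the isotropic partial bases satisfying an extra condition, the link of $\sigma$ in it is the subcomplex of $\lk_{\IP(W)}(\sigma)$ consisting of $\tau = \{\vec{v}_1, \dots, \vec{v}_t\}$ such that $\sigma \cup \tau \in \IP(W \vert (\overline{V}, \pm \alpha))$. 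So it suffices to identify the image of this subcomplex under the isomorphism $\varphi$ of Lemma \ref{linkofIsotropic}.

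First I will fix a splitting $U^\perp = U \oplus S$ with $S$ symplectic, as in the proof of Lemma \ref{SympPosetLinks}, so that each $\vec{v}_i \in U^\perp$ decomposes as $\vec{w}_i + \vec{s}_i$ with $\vec{w}_i \in S \cong U^\perp/U$. Reducing mod $p$, $\pi$ induces a surjection $\pi_*\colon U^\perp/U \rightarrow \overline{U}^\perp/\overline{U}$, and $\overline{V}/\overline{U}$ is an isotropic subspace there (note $\overline{V} \subseteq \overline{U}^\perp$ because $\overline{V}$ is isotropic and contains $\overline{U}$). Next I will check the span condition: $\pi(\Span{\sigma \cup \tau}) \subseteq \overline{V}$ if and only if $\pi_*(\Span{\vec{w}_1 + U, \dots, \vec{w}_t + U}) \subseteq \overline{V}/\overline{U}$, which holds because $\overline{U} \subseteq \overline{V}$ and $\vec{v}_i \equiv \vec{w}_i \pmod U$. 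The labels $\vec{s}_i \in U$ are unconstrained, which is why the labeling system $\Span{U}$ appears unchanged.

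The step I expect to need the most care is the orientation condition when $\sigma \cup \tau$ spans $\overline{V}$ mod $p$. Here I will apply Remark \ref{behaviorOfpmOrientations}: $\sigma$ determines the $\pm$-orientation $\pm\beta = \pm(\pi(\vec{u}_1) \wedge \dots \wedge \pi(\vec{u}_k))$ on $\overline{U}$, where $\sigma = \{\vec{u}_1, \dots, \vec{u}_k\}$. Then
\begin{equation*}
\pi(\vec{u}_1) \wedge \dots \wedge \pi(\vec{u}_k) \wedge \pi(\vec{v}_1) \wedge \dots \wedge \pi(\vec{v}_t) = \pi(\vec{u}_1) \wedge \dots \wedge \pi(\vec{u}_k) \wedge \pi(\vec{w}_1) \wedge \dots \wedge \pi(\vec{w}_t),
\end{equation*}
since the $\vec{s}_i$ lie in $U$ and are killed by the first wedge factors. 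By the natural bijection of Remark \ref{behaviorOfpmOrientations}, with $\pm\beta$ fixed there is a unique $\pm\eta$ on $\overline{V}/\overline{U}$ such that this wedge equals $\pm\alpha$ exactly when $\pm(\pi_*(\vec{w}_1 + U) \wedge \dots \wedge \pi_*(\vec{w}_t + U)) = \pm\eta$. This defines $\pm\eta$; it depends on $\sigma$ only through $\pm\beta$. One subtlety is that the lemma concerns unordered partial bases, whereas wedges depend on order up to sign. This is harmless because we work with $\pm$-orientations; I will fix an ordering of $\sigma$ once and note that reordering $\tau$ changes both sides by the same sign.

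Combining the span and orientation conditions shows that $\varphi$ restricts to a bijection between simplices of the link and simplices of $\IP(U^\perp/U \vert (\overline{V}/\overline{U}, \pm\eta))\Span{U}$, preserving faces. It is therefore a simplicial isomorphism. The case $\overline{U} = \overline{V}$ needs no special treatment, since then $\overline{V}/\overline{U} = 0$ and any nonempty $\tau$ is excluded on both sides.
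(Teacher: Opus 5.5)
Your proposal is correct and follows essentially the same route as the paper. You restrict the isomorphism of Lemma \ref{linkofIsotropic} to the link in $\IP(W \vert (\overline{V}, \pm \alpha))$, check that the $U$-components $\vec{s}_i$ affect neither the span condition nor (by the wedge identity) the orientation condition, and transport $\pm\alpha$ to $\pm\eta$ on $\overline{V}/\overline{U}$ via Remark \ref{behaviorOfpmOrientations}. The paper adds a step lifting a symplectic automorphism by Newman's theorem to reach an arbitrary $\pm\eta$, but this is unnecessary for the ``for some'' statement, so your choice $\pm\eta := \pm\alpha_\psi$ already suffices.
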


\begin{proof}
Set $\sigma = \{\vec{u}_1, \dots, \vec{u}_k\}_\mathbb{Z}$, $\overline{\sigma} = \{\pi(\vec{u}_1), \dots,\pi(\vec{u}_k)\}_{\Field_p}$, $\Span{\overline{\sigma}}_{\Field_p} = \overline{U}$, and set 
\begin{equation*}
\pm \beta_\pi := \pm \pi(\vec{u}_1) \wedge \dots \wedge \pi(\vec{u}_k).
\end{equation*}
By Lemma \ref{linkofIsotropic},
\begin{align*}
\varphi& \colon \lk_{\IP(W)}(\sigma) \xrightarrow{\cong} \IP(U^\perp/U) \Span{U}, \\
\varphi& \colon \{\vec{v}_1, \dots, \vec{v}_t\}_\mathbb{Z} \mapsto \{(\vec{w}_1 + U , \vec{s}_1), \dots, (\vec{w}_t + U, \vec{s}_t)\}_\mathbb{Z}
\end{align*}
Let $\psi\colon \overline{W} \rightarrow \overline{U}^\perp/\overline{U}$ be the projection map that sends isotropic subspaces of $\overline{W}$ to isotropic subspaces of $\overline{U}^\perp/\overline{U}$. Since the $\pm$-orientation on $\overline{U}$ is $\pm \beta_\pi$, Remark \ref{behaviorOfpmOrientations} says that there is a natural bijection between the set of $\pm$-orientations on $\overline{V}$ and the $\pm$-orientations on $\overline{V}/\overline{U}$. Set $\psi(\overline{V}, \pm \alpha) = (\overline{V}/\overline{U}, \pm \alpha_\psi)$. We check that
\begin{equation*}
\varphi(\lk_{\IP(W \vert (\overline{V}, \pm \alpha))}(\sigma)) = \IP(U^\perp/U \vert (\overline{V}/\overline{U}, \pm \alpha_\psi)) \Span{U}. 
\end{equation*}
Let $\tau = \{\vec{y}_1, \dots, \vec{y}_t\}$ be a $(t - 1)$-simplex in $\lk_{\IP(W \vert (\overline{V}, \pm \alpha))}(\sigma)$ and set $\varphi(\tau) = \{(\vec{x}_1 + U, \vec{s}_1), \dots, (\vec{x}_t + U, \vec{s}_t)\}$. Since $\vec{y}_i = \vec{x}_i + \vec{s}_i$ for $i \in \{1, \dots, t\}$, then  
\begin{equation*}
\Span{\pi(\vec{u}_1), \dots, \pi(\vec{u}_k), \pi(\vec{y}_1), \dots, \pi(\vec{y}_t)}_{\Field_p} \subseteq \overline{V},
\end{equation*}
if and only if 
\begin{equation*}
\Span{\pi(\vec{u}_1), \dots, \pi(\vec{u}_k), \pi(\vec{x}_1), \dots, \pi(\vec{x}_t)}_{\Field_p} \subseteq \overline{V},
\end{equation*}
if and only if 
\begin{equation*}
\Span{\pi(\vec{x}_1 + U), \dots, \pi(\vec{x}_t + U)}_{\Field_p} \subseteq \overline{V}/\overline{U}. 
\end{equation*}
Since the exterior product is a skew-symmetric bilinear form, 
\begin{align*}
\pi(\vec{u}_1) \wedge \dots \wedge \pi(\vec{u}_k) \wedge \pi(\vec{y}_i) & = \pi(\vec{u}_1) \wedge \dots \wedge \pi(\vec{u}_k) \wedge \pi(\vec{x}_i) + \pi(\vec{u}_1) \wedge \dots \wedge \pi(\vec{u}_k) \wedge \pi(\vec{s}_i), \\
									& = \pi(\vec{u}_1) \wedge \dots \wedge \pi(\vec{u}_k) \wedge \pi(\vec{x}_i),
\end{align*}
for $i \in \{1, \dots, t\}$. When $\dim \overline{V} = k + t$, 
\begin{equation*}
\pi(\vec{u}_1) \wedge \dots \wedge \pi(\vec{u}_k) \wedge \pi(\vec{y}_1) \wedge \dots \wedge \pi(\vec{y}_t) = \pm \alpha,
\end{equation*}
if and only if 
\begin{equation*}
\pi(\vec{u}_1) \wedge \dots \wedge \pi(\vec{u}_k) \wedge \pi(\vec{x}_1) \wedge \dots \wedge \pi(\vec{x}_t) = \pm \alpha,
\end{equation*}
if and only if 
\begin{equation*}
\pi(\vec{x}_1) \wedge \dots \wedge \pi(\vec{x}_t) = \pm \gamma,
\end{equation*}
for some $\pm$-orientation $\pm \gamma$ on $\Span{\pi(\vec{x}_1), \dots, \pi(\vec{x}_t)}_{\Field_p}$ that satisfies $\pm \beta_\pi \wedge \gamma = \pm \alpha$. By Remark \ref{behaviorOfpmOrientations}, $\pm \gamma$ is in a natural bijection with $\pm \alpha_\psi$. Thus, 
\begin{equation*}
\pi(\vec{x}_1) \wedge \dots \wedge \pi(\vec{x}_t) = \pm \gamma,
\end{equation*}
if and only if 
\begin{equation*}
\pi(\vec{x}_1 + U) \wedge \dots \wedge \pi(\vec{x}_t + U) = \pm \alpha_\psi. 
\end{equation*}
We conclude that $\tau \in \lk_{\IP(W \vert (\overline{V}, \pm \alpha))}(\sigma)$ if and only if $\varphi(\tau) \in \IP(U^\perp/U \vert (\overline{V}/\overline{U}, \pm \alpha_\psi)) \Span{U}$. Since $\varphi$ is a simplicial isomorphism, $\gamma \in \IP(U^\perp/U \vert (\overline{V}/\overline{U}, \pm \alpha_\psi)) \Span{U}$ if and only if $\varphi^{-1}(\gamma) \in \lk_{\IP(W \vert (\overline{V}, \pm \alpha))}(\sigma)$. Thus, 
\begin{equation*}
\varphi(\lk_{\IP(W \vert (\overline{V}, \pm \alpha))}(\sigma)) = \IP(U^\perp/U \vert (\overline{V}/\overline{U}, \pm \alpha_\psi)) \Span{U}. 
\end{equation*}
Therefore,
\begin{equation*}
\varphi \vert_{(\lk_{\IP(W \vert (\overline{V}, \pm \alpha))}(\sigma))} \colon \lk_{\IP(W \vert (\overline{V}, \pm \alpha))}(\sigma) \xrightarrow{\cong} \IP(U^\perp/U \vert (\overline{V}/\overline{U}, \pm \alpha_\psi)) \Span{U},
\end{equation*}
If $\pm \alpha_\psi \neq \pm \eta$, then we consider a symplectic automorphism of $\Field_p$-vector spaces $\overline{U}^\perp/\overline{U} \rightarrow \overline{U}^\perp/\overline{U}$ that sends $ (\overline{V}/\overline{U}, \pm \alpha_\psi)$ to $ (\overline{V}/\overline{U}, \pm \eta)$. We lift this automorphism using Newman's Theorem \ref{surjectivesympelctic} to a symplectic automorphism of $\mathbb{Z}$-modules $U^\perp/U \rightarrow U^\perp/U$. This symplectic automorphism induces an isomorphism 
\begin{equation*}
\IP(U^\perp/U \vert (\overline{V}/\overline{U}, \pm \alpha_\psi)) \Span{U} \cong \IP(U^\perp/U \vert (\overline{V}/\overline{U}, \pm \eta)) \Span{U}.
\end{equation*} 
\end{proof} 


\section{Symplectic Tits buildings of $\mathbb{Z}$ with $\pm$-congruence conditions are highly connected} \label{titsconghighconnectivity}

This section is devoted to proving the following proposition using induction. 

\begin{proposition} \label{anotherInduction2}
Let $W$ be a f.g.~symplectic $\mathbb{Z}$-module, let $\overline{W}$ be a symplectic $\Field_p$-vector space, let $\pi\colon W \rightarrow \overline{W}$ be a surjection, and let $(\overline{V}, \pm \alpha) \in \sbTD(\overline{W})$. If $W$ is of rank $2m$ and $\overline{V}$ is of dimension $n$ for $m \geq n > 0$, then $\sbT(W \vert (\overline{V}, \pm \alpha))$ is CM of dimension $(n - 1)$. 
\end{proposition}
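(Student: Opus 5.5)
I will argue by induction on $n = \dim \overline{V}$, following the strategy Miller--Patzt--Putman use for the linear analogue $\cT(V \vert (\overline{V}, \pm\alpha))$, and routing the connectivity through the isotropic partial basis complexes of the previous section.

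\textbf{Reducing to global connectivity.} The links of $\sbT(W \vert (\overline{V}, \pm \alpha))$ are computed in Lemma \ref{SympCongruencePosetLinks}.
\begin{itemize}
\item The lower link of $U$ and the intervals are ordinary Tits buildings $\cbT(U)$ and $\cbT(U'/U)$. By Lemma \ref{MPP21Lem3.10} and Theorem \ref{SBrB} these are CM of the correct dimensions.
\item The upper link of $U$ is $\sbT(U^\perp/U \vert (\overline{V}/\overline{U}, \pm\eta))$, with $\dim \overline{V}/\overline{U} < n$. It is handled by the inductive hypothesis. When $\overline{U} = \overline{V}$ the upper link is empty, which matches dimension $-1$.
\end{itemize}
So the dimension count is immediate: maximal chains end at summands mapping onto $(\overline{V},\pm\alpha)$ and have length $n-1$. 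The real content is showing $\sbT(W\vert(\overline{V},\pm\alpha))$ is $(n-2)$-connected. The base case $n=1$ only requires nonemptiness, which follows from lifting a compatible vector via Lemma \ref{CompleteIsoBasis}.

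\textbf{Establishing connectivity.} I would compare with $\IP(W \vert (\overline{V},\pm\alpha))$. The first task is to show that this complex is CM of dimension $n-1$, or at least $(n-2)$-connected. By Lemma \ref{SympCongruenceSimpLinks}, its links are labeling systems $\IP(U^\perp/U \vert \cdots)\Span{U}$. These are complete join complexes over smaller complexes of the same type, so Proposition \ref{HWProp3.5} gives the link conditions inductively. For global connectivity of $\IP(W\vert(\overline{V},\pm\alpha))$, I would use a standard badness or retraction argument, as in Maazen's proof of Theorem \ref{CP17Thm4.2} and Mirzaii--van der Kallen's. Concretely, fix a lift $\vec{X}$ of a compatible vector and use it to cone off or retract, adjusting vectors by multiples of $p$ so that the congruence condition is preserved.

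Next I would build the poset of simplices $\sigma$ of $\IP(W\vert(\overline{V},\pm\alpha))$ and the map $\sigma \mapsto \Span{\sigma}$ into $\sbT(W\vert(\overline{V},\pm\alpha))$. A simplex spanning all of $\overline{V}$ would need care, since its span may equal a top element. I would apply Theorem \ref{VdKaL11Cor2.2v2} to this map or its reverse with $t(U) = \rank U$. The fiber over $U$ is the poset of partial bases of $U$ with appropriate congruence, which is essentially $\BP(U)$ or a congruence version of it. Its connectivity is $(\rank U - 2)$, by Theorem \ref{CP17Thm4.2} or the induction. The upper links $\sbT(W\vert\cdots)_{>U}$ have connectivity $(n - \rank U - 2)$ by induction. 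The theorem then transfers $(n-2)$-connectivity from the partial basis side to the Tits side.

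\textbf{Expected obstacle.} The main obstacle is the global connectivity of $\IP(W\vert(\overline{V},\pm\alpha))$. The $\pm$-orientation constraint on top-dimensional simplices breaks the naive coning argument, and isotropy prevents the free choice of complementary vectors available in the linear case. My first attempt is to follow Miller--Patzt--Putman's proof of their congruence partial basis complex connectivity, replacing $\SL$ by $\Sp$. I would use Proposition \ref{transitivityProved} and Lemma \ref{restrictedSummandTransitive} to pass to the restricted complexes $\IP(\hat{W}\vert(\overline{V},\pm\alpha))$ that arise when removing a vertex's symplectic partner. This is presumably why those restricted objects were introduced, and I would carry the restricted version along in a simultaneous induction.
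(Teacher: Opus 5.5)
Your outline has the same architecture as the paper's proof: links via Lemma \ref{SympCongruencePosetLinks}, Theorem \ref{VdKaL11Cor2.2v2} applied to $\mathrm{Span}$ with $t(U)=\rank U$ and fibers $\posety(\BP(U))$, links of $\IP(W\vert(\overline{V},\pm\alpha))$ as labeling systems handled by Proposition \ref{HWProp3.5}, and restricted complexes carried in a simultaneous induction. However, the step you flag as the main obstacle, the $(n-2)$-connectivity of $\IP(W\vert(\overline{V},\pm\alpha))$, is never proved, and the conclusion rests on it. The paper settles it with a Maazen retraction using a specific height function and base. The function $F$ is the coordinate along a line $L=\Span{\vec{l}}$ with $\pi(\vec{l})\notin\overline{V}$. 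Then $\ker F=\hat{W}$ is a restricted summand with $\overline{V}\subset\pi(\hat{W})$, and a sphere of complexity $0$ lies in $\IP(\hat{W}\vert(\overline{V},\pm\alpha))$. Complexity is lowered by Lemma \ref{linkRetractionsymp}: on the link of a maximal bad simplex $\sigma$, replace $\vec{x}$ by $\vec{x}-q\vec{u}_1$, where $\vec{u}_1$ is a vertex of $\sigma$ itself with $\lvert F(\vec{u}_1)\rvert=R$. Because $\vec{u}_1\in\sigma$, isotropy and unimodularity are preserved. Because $\pi(\vec{u}_1)\in\overline{V}$ and $\pi(\vec{u}_1)\wedge\pi(\vec{u}_1)=0$, the span condition and the $\pm$-orientation are preserved too. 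So arbitrary integers $q$ are allowed and no multiples of $p$ are needed. Your concrete suggestion (cone off or retract using a fixed lift $\vec{X}$) does not work as stated. The set $\sigma\cup\{\vec{X}\}$ is in general neither isotropic nor a partial basis, and subtracting multiples of a vector outside $\sigma$ destroys isotropy with $\sigma$.

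The more serious gap is that ``carry the restricted version along'' is not an argument. The base of the retraction needs $\IP(\hat{W}\vert(\overline{V},\pm\alpha))$ to be $(n-2)$-connected at the same level $n$, and none of the inputs you list gives this. The paper proves it at level $n$ before treating the unrestricted $\IP$, in two steps. First, $\sbT(\hat{W}\vert(\overline{V},\pm\alpha))$ is shown to be CM. By Lemma \ref{linkposetrestricted}, its upper links are restricted or unrestricted complexes with smaller $n$. For global connectivity:
\begin{itemize}
\item pick a rank-one $L\subset\hat{W}$ with $\pi(L)\subset\overline{V}$, and a summand $W'\subset\hat{W}$ orthogonal to $L$ with $W'\cap L=0$;
\item set $\overline{V}'=\pi(W')\cap\overline{V}$, and let $f$ be the inclusion of $\sbT(W'\vert(\overline{V}',\pm\alpha'))$ (or its restricted analogue);
\item $f$ is null-homotopic, since $f(U)\subseteq U\oplus L$ and $U\mapsto U\oplus L$ has cone point $L$;
\item $f$ is $(n-2)$-connected, using Proposition \ref{posetmaphomotopy2} to add the elements of positive height and Proposition \ref{GalatiusRandal-Williams14} to add the rank-one ones.
\end{itemize}
A null-homotopic $(n-2)$-connected map forces its target to be $(n-2)$-connected. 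Second, Theorem \ref{Q78Cor9.7} applied to $\mathrm{Span}\colon\posety(\IP(\hat{W}\vert(\overline{V},\pm\alpha)))\to\sbT(\hat{W}\vert(\overline{V},\pm\alpha))$, whose fibers are $\posety(\BP(U))$, shows that $\IP(\hat{W}\vert(\overline{V},\pm\alpha))$ is CM. Without this step, or a substitute, your induction does not close.
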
  

Our proof for Proposition \ref{anotherInduction2} is inspired by Br\"uck--Sroka's \cite[Proof of Theorem 3.5]{BruckSroka24} work. Our proof strategy involves using an induction argument on $n$ to prove that the following four constructions are CM of dimension $(n - 1)$: $\sbT(\hat{W} \vert (\overline{V}, \pm \alpha))$, $\IP(\hat{W} \vert (\overline{V}, \pm \alpha))$, and $\IP(W \vert (\overline{V}, \pm \alpha))$, and $\sbT(W \vert (\overline{V}, \pm \alpha))$. When we consider one of these constructions, we use the high connectivity of the previous construction to prove that the construction is highly connected. Thus, we prove they are highly connected in the order we wrote them down. In the next subsection we present preliminary results needed to prove Proposition \ref{anotherInduction2}.

\subsection{Preliminaries}

We computed the links of $\sbT(\hat{W} \vert (\overline{V}, \pm \alpha))$, $\IP(W \vert (\overline{V}, \pm \alpha))$, and $\sbT(W \vert (\overline{V}, \pm \alpha))$ already: Lemmas \ref{linkposetrestricted}, \ref{SympCongruenceSimpLinks}, and \ref{SympCongruencePosetLinks}, respectively. The links are isomorphic to smaller dimensional constructions. Our inductive argument implies that these links are sufficiently connected. We omit the link computations of $\IP(\hat{W} \vert (\overline{V}, \pm \alpha))$ since they are not necessary for our proof. \\
Let $\posety(X)$ denote \emph{the poset associated to a simplicial complex $X$}. The elements of $\posety(X)$ are $k$-simplices and the order of $\posety(X)$ is inclusion of simplices. $\posety(X)$ and $X$ have the same homotopy type. The posets associated to $\IP(W \vert (\overline{V}, \pm \alpha))$ and $\IP(\hat{W} \vert(\overline{V}, \pm \alpha))$ are posets whose elements are unordered partial bases and whose order is proper inclusion of unordered partial bases as sets. \\
We can construct poset maps between $\posety(\IP(W \vert (\overline{V}, \pm \alpha)))$ and $\sbT(W \vert (\overline{V}, \pm \alpha))$, and between $\posety(\IP(\hat{W} \vert (\overline{V}, \pm \alpha)))$ and $\sbT(\hat{W} \vert (\overline{V}, \pm \alpha))$. We invoke Quillen's Theorem \ref{Q78Cor9.7} to establish that $\posety(\IP(\hat{W} \vert (\overline{V}, \pm \alpha)))$ is CM. We use van der Kallen--Looijenga's Theorem \ref{VdKaL11Cor2.2v2} to argue that $\sbT(W \vert (\overline{V}, \pm \alpha))$ is highly connected. \\
The connectivity arguments for $\sbT(\hat{W} \vert (\overline{V}, \pm \alpha))$ and $\IP(W \vert (\overline{V}, \pm \alpha))$ are in terms of simplicial complexes instead of posets. We use the following proposition due to Galatius--Randall-Williams \cite{GalatiusRandal-Williams14} to argue that $\sT(\hat{W} \vert (\overline{V}, \pm \alpha))$ is $(n - 2)$-connected.

\begin{proposition} \textup{\cite[Proposition 2.5]{GalatiusRandal-Williams14}} \label{GalatiusRandal-Williams14}
Let $X$ be a simplicial complex and $Y \subset X$ a full sub-complex. Let $n$ be an integer with the property that for each $k$-simplex $\sigma \in X \setminus Y$, the complex $Y \cap \lk_X(\sigma)$ is $(n - k - 1)$-connected. Then the inclusion $Y \hookrightarrow X$ is $n$-connected.
\end{proposition}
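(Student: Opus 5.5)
Proposition~\ref{GalatiusRandal-Williams14} was stated right before this point, so the task is to plan its proof. The approach is the standard ``bad simplex'' argument, building $X$ from $Y$ in stages. For a simplex $\sigma\in X\setminus Y$, all of whose vertices lie outside $Y$, let $\dim\sigma=k$ and write $X^{(k)}$ for the subcomplex of $X$ consisting of $Y$ together with every simplex whose ``bad part'' has dimension at most $k$. Here the bad part of a simplex $\tau$ of $X$ is the face spanned by those vertices of $\tau$ not in $Y$; since $Y$ is full, it is empty exactly when $\tau\in Y$. This gives a filtration
\[
Y=X^{(-1)}\subseteq X^{(0)}\subseteq X^{(1)}\subseteq\cdots,
\]
whose union is $X$. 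We show each inclusion $X^{(k-1)}\hookrightarrow X^{(k)}$ is $n$-connected. Since homotopy groups commute with directed colimits of subcomplexes, $Y\hookrightarrow X$ is then $n$-connected.

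The key step is to describe $X^{(k)}$ as a pushout. Let $\sigma$ range over the $k$-simplices of $X$ with all vertices outside $Y$. Any simplex of $X^{(k)}$ not in $X^{(k-1)}$ has bad part equal to a unique such $\sigma$, and it is the join of $\sigma$ with a simplex of $Y\cap\lk_X(\sigma)$. Hence
\[
X^{(k)}=X^{(k-1)}\cup\bigcup_\sigma \sigma*(Y\cap\lk_X(\sigma)),
\]
where the pieces for distinct $\sigma$ meet only inside $X^{(k-1)}$, and each piece meets $X^{(k-1)}$ in $\partial\sigma*(Y\cap\lk_X(\sigma))$. Checking this intersection carefully is where fullness of $Y$ is used, and it is the main point to get right: a simplex of $\sigma*L$ whose bad part is a proper face of $\sigma$ must lie in $\partial\sigma*L$, and such a simplex lies in $X^{(k-1)}$.

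So $X^{(k)}$ is obtained from $X^{(k-1)}$ by attaching cones, one for each $\sigma$, with $\sigma*L_\sigma$ attached along $\partial\sigma*L_\sigma$, where $L_\sigma=Y\cap\lk_X(\sigma)$. Since $\partial\sigma\cong S^{k-1}$, we have $\partial\sigma*L_\sigma\simeq\Sigma^{k}L_\sigma$. By hypothesis $L_\sigma$ is $(n-k-1)$-connected, so $\Sigma^k L_\sigma$ is $(n-1)$-connected, and the contractible $\sigma*L_\sigma$ is attached along an $(n-1)$-connected subspace. A pair $(C,A)$ with $C$ contractible and $A$ $(n-1)$-connected is $n$-connected. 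By excision for homotopy, or simply by building such cones from cells of dimension at least $n+1$ up to homotopy, each attachment is an $n$-connected inclusion. The attachments are done simultaneously for all $\sigma$ at level $k$, possibly infinitely many, and this is handled by the same colimit argument.

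The expected obstacle is only bookkeeping: verifying the intersection formula and the degenerate cases. When $L_\sigma$ is empty ($-1$-connected, so $k\ge n$) we attach $\sigma$ along $\partial\sigma=S^{k-1}$, which is an $n$-connected inclusion when $k\ge n$, consistent with the convention. When the hypothesis only says $(-2)$-connected, no condition is imposed and $k\ge n+1$, which is again fine.
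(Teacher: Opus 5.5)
The paper gives no proof of this proposition; it only cites Galatius--Randal-Williams. Your argument is correct apart from the degenerate-case remark discussed below. It also takes a different route from the usual proof.

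The usual proof works one homotopy class at a time, and it is the same technique the paper uses in the inductive step for Item~\ref{Item3} of Proposition~\ref{anotherInduction3}. One represents an element of $\pi_q(X,Y)$, $q\le n$, by a simplicial map $f$ from a PL-triangulated disk. Then one chooses a simplex $\sigma$ of maximal dimension all of whose vertices map outside $Y$. Fullness and maximality force $f(\lk(\sigma))\subseteq Y\cap\lk_X(f(\sigma))$. The connectivity of that complex, together with $\dim f(\sigma)\le\dim\sigma$, lets one replace $f$ on $\sigma*\lk(\sigma)$ by $\partial\sigma*D$, which removes that maximal bad simplex.

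You instead prove a global structural statement. $X$ is obtained from $Y$ by attaching, in order of the dimension of the bad part, the contractible pieces $\sigma*L_\sigma$ along $\partial\sigma*L_\sigma\simeq\Sigma^kL_\sigma$. You then conclude from the fact that a pushout along an $n$-connected cofibration is $n$-connected. You only use the hypothesis for simplices with no vertex in $Y$, which is all that is needed.

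Your route avoids simplicial approximation and the nuisance that $f$ may collapse $\sigma$. It also yields more, e.g.\ $\hh_*(X^{(k)},X^{(k-1)})\cong\bigoplus_\sigma\rh_{*-k-1}(L_\sigma)$. The surgery argument is more hands-on and needs no pushout or CW-approximation lemma, and it is the template the paper reuses for Item~\ref{Item3}.

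One correction: your degenerate-case remark is wrong as written. Under the paper's convention the empty complex is $(-2)$-connected but not $(-1)$-connected. Also, attaching an $n$-cell is never an $n$-connected inclusion, since $\hh_n(A\cup e^n,A)\cong\mathbb{Z}$. With your convention the proposition would actually fail: take $Y=\emptyset\subset X=\{v\}$ and $n=0$.

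Correctly, $L_\sigma=\emptyset$ satisfies the hypothesis only when $n-k-1\le -2$, i.e.\ $k\ge n+1$. This case needs no separate treatment: your general step applies verbatim, because $\partial\sigma*\emptyset=S^{k-1}$ is $(k-2)$-connected, hence $(n-1)$-connected. Your final sentence already says this, so just delete the sentence before it.
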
  

We use a link retraction argument to show that $\IP(W \vert (\overline{V}, \pm \alpha))$ is highly connected. The link retraction argument we use was invented by Maazen \cite[Corollary 4.5]{Maaz}. Church--Putman's \cite[Proof of Theorem 4.2]{ChurchPutnam17} provide an exposition of this link retraction argument. We require the following definition and lemma. 

\begin{definition}
Suppose $X$ is sub-complex of $\IP(W \vert (\overline{V}, \pm \alpha))$. Let $F\colon W \rightarrow \mathbb{Z}$ be a linear map and let $N > 0$. We denote by $X^{< N}$ the sub-complex of $X$ spanned by the set of vertices $\Span{\vec{v}}$ of $X$ satisfying $\lvert F(\vec{v}) \rvert < N$. 
\end{definition}

\begin{lemma} \label{linkRetractionsymp}
Let $W$ be a f.g.~symplectic $\mathbb{Z}$-module, let $\overline{W}$ be a symplectic $\Field_p$-vector space, let $\pi\colon W \rightarrow (\overline{W}, \pm \nu_\pi)$ be a surjection, and let $(\overline{V}, \pm \alpha) \in \sbTD(\overline{W})$. Furthermore, let $F\colon W \rightarrow \mathbb{Z}$ be a linear map and $N > 0$. Suppose $\sigma$ is a simplex of $\IP(W \vert (\overline{V}, \pm \alpha))$ such that some vertex $\Span{\vec{w}}$ of $\sigma$ satisfies $\lvert F(\vec{w})\rvert= N$. Then there exists a simplicial retraction 
\begin{equation*}
\varphi\colon \lk_{\IP(W \vert (\overline{V}, \pm \alpha))}(\sigma) \twoheadrightarrow \lk_{\IP(W \vert (\overline{V}, \pm \alpha))}(\sigma)^{< N}.
\end{equation*}
\end{lemma}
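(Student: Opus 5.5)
Following Maazen's link retraction as presented by Church--Putman, I would build the retraction one vertex at a time. Write $\sigma = \{\vec{w}_1, \dots, \vec{w}_k\}$ and fix a vertex $\vec{w} := \vec{w}_j$ with $\lvert F(\vec{w})\rvert = N$; replacing $\vec{w}$ by $-\vec{w}$ if needed, assume $F(\vec{w}) = N$. For a vertex $\vec{v}$ of $\lk(\sigma)$, division with remainder gives a unique integer $q_{\vec{v}}$ with
\begin{equation*}
-\tfrac{N}{2} < F(\vec{v}) - q_{\vec{v}} N \leq \tfrac{N}{2}.
\end{equation*}
If $\lvert F(\vec{v}) \rvert < N$ I would leave $\vec{v}$ fixed. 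Otherwise I would set $\varphi(\vec{v}) = \vec{v} - q_{\vec{v}}\vec{w}$, so that $\lvert F(\varphi(\vec{v}))\rvert \leq N/2 < N$. To make this a genuine retraction onto $\lk(\sigma)^{<N}$ I would have to check that it fixes that subcomplex. The cleanest fix is to define $\varphi(\vec{v}) = \vec{v} - c_{\vec{v}}\vec{w}$ for every vertex, with $c_{\vec{v}} = 0$ when $\lvert F(\vec{v})\rvert < N$ and $c_{\vec{v}} = q_{\vec{v}}$ otherwise.

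Next I would verify that $\varphi$ is simplicial into $\lk(\sigma)$. Take a simplex $\tau = \{\vec{v}_1, \dots, \vec{v}_t\}$ of the link. Then $\sigma \cup \tau$ is an isotropic partial basis, and $\sigma \cup \{\vec{v}_i - c_i \vec{w}\}$ is obtained from it by elementary column operations with $\vec{w} \in \sigma$. So it is again a partial basis of $W$, spanning the same summand $\Span{\sigma \cup \tau}$. Isotropy is preserved because $\vec{w}$ is orthogonal to $\sigma$, to every $\vec{v}_i$, and to itself, so $\omega(\vec{v}_i - c_i\vec{w}, \vec{v}_l - c_l\vec{w}) = \omega(\vec{v}_i, \vec{v}_l)$. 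The new vertices are distinct and not in $\sigma$ by the partial-basis property, so the image of $\tau$ is a simplex of $\lk_{\IP(W)}(\sigma)$.

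The remaining point, which I expect to be the only real issue, is the $\pm$-congruence condition. The image set spans the same $\mathbb{Z}$-summand, so the mod-$p$ span is the same subspace $\overline{U} \subseteq \overline{V}$. When that span equals $\overline{V}$, I need the $\pm$-orientation to be unchanged. This holds because
\begin{equation*}
\pi(\vec{w}_1) \wedge \dots \wedge \pi(\vec{w}_k) \wedge \pi(\vec{v}_1 - c_1 \vec{w}) \wedge \dots = \pi(\vec{w}_1) \wedge \dots \wedge \pi(\vec{w}_k) \wedge \pi(\vec{v}_1) \wedge \dots,
\end{equation*}
since $\vec{w}$ already appears among the $\vec{w}_i$. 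This is the same computation as in the proof of Lemma~\ref{SympCongruenceSimpLinks}.

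Finally, the image of each vertex satisfies $\lvert F\rvert < N$ by construction, and vertices already in $\lk(\sigma)^{<N}$ are fixed. Hence $\varphi$ is a simplicial retraction onto $\lk(\sigma)^{<N}$, which is the full subcomplex on those vertices.
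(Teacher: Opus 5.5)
Your proposal is correct and follows essentially the paper's argument. Like the paper, you subtract from each link vertex a multiple of the vertex of $\sigma$ with $\lvert F \rvert = N$, check that the partial-basis and isotropy conditions are preserved, and use the wedge identity to see that the mod-$p$ span and $\pm$-orientation are unchanged.

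The differences are cosmetic. The paper truncates $F(\vec{x})/N$ toward zero, which gives $q = 0$ automatically on $\lk(\sigma)^{<N}$; you use a centered remainder and impose $c_{\vec{v}} = 0$ there by hand. Your normalization $F(\vec{w}) = N$ also makes explicit a sign convention that the paper leaves implicit.
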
  

\begin{proof}
Let $\sigma = \{\vec{u}_1, \dots, \vec{u}_k\}$ be a $(k - 1)$-simplex of $\IP(W \vert (\overline{V}, \pm \alpha))$, $\Span{\sigma} = U$, and set $\lvert F(\vec{u}_1)\rvert= N$. We describe the simplicial map that induces a retraction 
\begin{equation*}
\lk_{\IP(W)}(\sigma) \twoheadrightarrow \lk_{\IP(W)}(\sigma)^{<N}
\end{equation*}
and then check that this map respects the $\pm$-congruence condition. \\
Let $\tau = \{\vec{x}_1, \dots, \vec{x}_t\}$ be a $(t - 1)$-simplex of $\lk_{\IP(W)}(\sigma)$. If $F(\vec{x}_i) > 0$ set $q_{\vec{x}_i} = \lfloor \frac{F(\vec{x}_i)}{N} \rfloor$; if $F(\vec{x}_i) < 0$ set $q_{\vec{x}_i} = \lceil \frac{F(\vec{x}_i)}{N} \rceil$; and if $F(\vec{x}_i) = 0$ set $q_{\vec{x}_i} = 0$. In all three cases, $0 \leq \lvert F(\vec{x}_i) - q_{\vec{x}_i}N \rvert  < N$. Set $\vec{x}_i' = \vec{x}_i - q_{\vec{x}_i}\vec{u}_1$ and define
\begin{equation*}
\varphi(\{\vec{x}_1, \dots,\vec{x}_t\}) = \{\vec{x}_1', \dots,\vec{x}_t'\}. 
\end{equation*}
$\{\vec{u}_1, \dots, \vec{u}_k, \vec{x}_1, \dots, \vec{x}_t\}$ is unimodular if and only if $\{\vec{u}_1, \dots, \vec{u}_k, \vec{x}_1', \dots, \vec{x}_t'\}$ is unimodular. Since 
\begin{equation*}
\omega(\vec{u}_i, \vec{x}_j') = \omega(\vec{u}_i, \vec{x}_j) + q_{\vec{x}_i}\omega(\vec{u}_i, \vec{u}_1) = \omega(\vec{u}_i, \vec{x}_j)
\end{equation*}
for $i \in \{1, \dots, k\}$ and $j \in \{1, \dots, t\}$, then $\{\vec{u}_1, \dots, \vec{u}_k, \vec{x}_1, \dots, \vec{x}_t\}$ is isotropic if and only if $\{\vec{u}_1, \dots, \vec{u}_k, \vec{x}_1', \dots, \vec{x}_t'\}$ is isotropic. $\lvert F(\vec{x}_i') \rvert < N$ for $i \in \{1, \dots, t\}$ by construction. Thus, $\varphi(\tau) \in \lk_{\IP(W)}(\sigma)^{<N}$. If $\gamma = \{\vec{y}_1, \dots, \vec{y}_t\}$ is a $(t - 1)$ simplex in $\lk_{\IP(W)}(\sigma)^{<N}$, then $\lvert F(\vec{y}_i) \rvert < N$ for $i \in \{1, \dots, t\}$. Thus, $\varphi(\gamma) = \gamma$. Therefore
\begin{equation*}
\varphi \colon \lk_{\IP(W)}(\sigma) \twoheadrightarrow \lk_{\IP(W)}(\sigma)^{<N}
\end{equation*}
is a simplicial retraction. \\
We establish that the following simplicial map is a simplicial retraction:
\begin{equation*}
\varphi \vert_{(\lk_{\IP(W \vert (\overline{V}, \pm \alpha))}(\sigma))}\colon \lk_{\IP(W \vert (\overline{V}, \pm \alpha))}(\sigma) \twoheadrightarrow \lk_{\IP(W \vert (\overline{V}, \pm \alpha))}(\sigma)^{< N}.
\end{equation*}
For this purpose, we check that 
\begin{equation*}
\varphi(\lk_{\IP(W \vert (\overline{V}, \pm \alpha))}(\sigma)) \subseteq \lk_{\IP(W \vert (\overline{V}, \pm \alpha))}(\sigma)^{< N},
\end{equation*}
and 
\begin{equation*}
\varphi(\lk_{\IP(W \vert (\overline{V}, \pm \alpha))}(\sigma)^{< N}) = \lk_{\IP(W \vert (\overline{V}, \pm \alpha))}(\sigma)^{< N}. 
\end{equation*}
Let $\tau = \{\vec{x}_1, \dots, \vec{x}_t\}$ be a $(t - 1)$-simplex of $\lk_{\IP(W \vert (\overline{V}, \pm \alpha))}(\sigma)$ and let $\varphi(\tau) = \{\vec{x}_1', \dots, \vec{x}_t'\}$. Since 
\begin{equation*}
\pi(\vec{x}_i') = \pi(\vec{x}_i) + \pi(q_{\vec{x}_i} \vec{u}_1),
\end{equation*}
for $i \in \{1, \dots, t\}$, then 
\begin{equation*}
\Span{\pi(\vec{u}_1), \dots, \pi(\vec{u}_k), \pi(\vec{x}_1), \dots, \pi(\vec{x}_t)}_{\Field_p} \subseteq \overline{V},
\end{equation*}
if and only if 
\begin{equation*}
\Span{\pi(\vec{u}_1), \dots, \pi(\vec{u}_k), \pi(\vec{x}_1'), \dots, \pi(\vec{x}_t')}_{\Field_p} \subseteq \overline{V}.
\end{equation*}
Note that 
\begin{equation*}
\pi(\vec{u}_1) \wedge \pi(\vec{x}_i') = \pi(\vec{u}_1) \wedge \pi(\vec{x}_i) + q_{\vec{x}_i} \pi(\vec{u}_1) \wedge \pi(\vec{u}_1) = \pi(\vec{u}_1) \wedge \pi(\vec{x}_i),
\end{equation*}
for $i \in \{1, \dots, t\}$ because the exterior product is a skew-symmetric bilinear form. When $\Dim \overline{V} = k + t$,
\begin{equation*}
\pi(\vec{u}_1) \wedge \dots \wedge \pi(\vec{u}_k) \wedge \pi(\vec{x}_1) \wedge \dots \wedge \pi(\vec{x}_t) = \pm \alpha
\end{equation*}
if and only if 
\begin{equation*}
\pi(\vec{u}_1) \wedge \dots \wedge \pi(\vec{u}_k) \wedge \pi(\vec{x}_1') \wedge \dots \wedge \pi(\vec{x}_t') = \pm \alpha.
\end{equation*}
We conclude that $\tau \in \lk_{\IP(W \vert (\overline{V}, \pm \alpha))}(\sigma)$ if and only if $\varphi(\tau) \in \lk_{\IP(W \vert (\overline{V}, \pm \alpha))}(\sigma)^{< N}$. Thus, 
\begin{equation*}
\varphi(\lk_{\IP(W \vert (\overline{V}, \pm \alpha))}(\sigma)) \subseteq \lk_{\IP(W \vert (\overline{V}, \pm \alpha))}(\sigma)^{< N}. 
\end{equation*}
As with the non-restricted version, if $\gamma = \{\vec{y}_1, \dots, \vec{y}_t\}$ is a $(t - 1)$ simplex in $\lk_{\IP(W \vert (\overline{V}, \pm \alpha))}(\sigma)^{<N}$, $\lvert F(\vec{y}_i) \rvert < N$ for $i \in \{1, \dots, t\}$ in addition to satisfying the $\pm$-congruence conditions. Thus, $\varphi(\gamma) = \gamma$. Therefore, 
\begin{equation*}
\varphi \vert_{(\lk_{\IP(W \vert (\overline{V}, \pm \alpha))}(\sigma))}\colon \lk_{\IP(W \vert (\overline{V}, \pm \alpha))}(\sigma) \twoheadrightarrow \lk_{\IP(W \vert (\overline{V}, \pm \alpha))}(\sigma)^{<N}
\end{equation*}
is a simplicial retraction.
\end{proof}

\subsection{The symplectic Tits buildings and isotropic partial basis complexes with congruence conditions are CM} 

Proposition \ref{anotherInduction2} is a special case of the following proposition. 

\begin{proposition} \label{anotherInduction3}
Let $W$ be a f.g.~symplectic $\mathbb{Z}$-module, let $\overline{W}$ be a symplectic $\Field_p$-vector space, let $\hat{W} \subsetneq W$ be a restricted summand, let $\hat{\overline{W}} \subsetneq \overline{W}$ be a restricted subspace, let $\pi\colon W \rightarrow \overline{W}$ be a surjection, let $\pi(\hat{W}) = \hat{\overline{W}}$, and let $(\overline{V}, \pm \alpha) \in \sbTD(\hat{\overline{W}}) \subset \sbTD(\overline{W})$. If $W$ is of rank $2m$ and $\overline{V}$ is of dimension $n$ for $m \geq n > 0$, then 
\begin{enumerate}[nosep, topsep=-0.5cm]
\item \label{Item1} $\sbT(\hat{W} \vert (\overline{V}, \pm \alpha))$ is CM of dimension $(n - 1)$;
\item \label{Item2} $\IP(\hat{W} \vert (\overline{V}, \pm \alpha))$ is CM of dimension $(n - 1)$;
\item \label{Item3} $\IP(W \vert (\overline{V}, \pm \alpha))$ is CM of dimension $(n - 1)$;
\item \label{Item4} $\sbT(W \vert (\overline{V}, \pm \alpha))$ is CM of dimension $(n - 1)$. 
\end{enumerate}
\end{proposition}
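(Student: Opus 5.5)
Induct on $n = \dim \overline{V}$. At each stage we prove the four items in the order listed, each time using only earlier items at the same $n$ and all four items for smaller dimension. For $n = 1$ every complex in question is a nonempty discrete set, or of dimension $0$, so being CM of dimension $0$ amounts to nonemptiness. Nonemptiness follows by lifting a basis compatible with $\pm\alpha$ via Lemma \ref{CompleteIsoBasis}, choosing the lift inside $\hat{W}$ using Lemma \ref{restrictedSummandTransitive} and Newman's Theorem \ref{surjectivesympelctic}.

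The link conditions at level $n$ reduce to the inductive hypothesis through the link computations.
\begin{itemize}
\item Lower links and intervals are ordinary Tits buildings $\cbT(U)$ or $\cbT(U'/U)$. These are CM by Solomon--Tits' Theorem \ref{SBrB} together with Lemma \ref{MPP21Lem3.10}.
\item Upper links in the posets are smaller congruence buildings, by Lemmas \ref{SympCongruencePosetLinks} and \ref{linkposetrestricted}, with $\overline{V}/\overline{U}$ of dimension $n - \operatorname{rank} U$.
\item Simplicial links in $\IP(W \vert (\overline{V},\pm\alpha))$ are labeling systems over smaller complexes, by Lemma \ref{SympCongruenceSimpLinks}. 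They are CM by induction, since labeling systems are complete join complexes (Remark \ref{LabelingSystemRemark}, Proposition \ref{HWProp3.5}).
\end{itemize}
By Remark \ref{PosetToSimpLink} this leaves only the global $(n-2)$-connectivity of each of the four objects.

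\textbf{Item \ref{Item1}.} View $Y = \sT(\hat{W} \vert (\overline{V},\pm\alpha))$ as a full subcomplex of $X = \sT(W \vert (\overline{V},\pm\alpha))$ and apply Galatius--Randal-Williams' Proposition \ref{GalatiusRandal-Williams14}. For a simplex $\sigma$ of $X \setminus Y$, the intersection $Y\cap \lk_X(\sigma)$ is a join of Tits-building pieces with a restricted congruence building of smaller rank, coming from the part of the flag inside $\hat{W}$. That restricted building is highly connected by the induction hypothesis on item \ref{Item1}.

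The difficulty is that $X$ is item \ref{Item4}, which is not yet known at level $n$. I would therefore use a variant in the style of Br\"uck--Sroka: compare $Y$ with the subposet $\{U : U \subseteq \hat W\}$. Alternatively, note that if $\overline{V}^\perp \not\subseteq \hat{\overline{W}}$ then every admissible $U$ has $U^\perp\cap\hat W$ restricted, and run a Quillen fiber argument over the poset of $U \in \sbT(W\vert(\overline V,\pm\alpha))$ using upper links computed by Lemma \ref{linkposetrestricted}. I expect this step to be the main obstacle, because the order of the induction has to be arranged so that no circularity occurs.

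\textbf{Item \ref{Item2}.} Apply Quillen's Theorem \ref{Q78Cor9.7} to the strictly increasing span map $\posety(\IP(\hat W\vert(\overline V,\pm\alpha))) \to \sbT(\hat W\vert(\overline V,\pm\alpha))\cup\{\text{top}\}$. Here I would add a formal top element $\hat W$ or $V$ if needed to match dimensions. The fiber over $U$ is the partial basis complex of $U$ with the congruence condition induced from $\pi(U)$. When $\pi(U)\subsetneq \overline V$ there is no orientation constraint and the fiber is $\BP(U)$, which is CM by Maazen's Theorem \ref{CP17Thm4.2}. When $\pi(U)=\overline V$ the fiber is the $\pm$-congruence partial basis complex of $U$, handled by the $\GL$ analogue in Miller--Patzt--Putman. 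In both cases the fiber is $(\height-1)$-connected.

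\textbf{Item \ref{Item3}.} Use Maazen's link retraction. Choose a linear map $F$ nonzero on $\hat W$'s complement, so that $\IP(W\vert\cdot)^{<1}=\IP(\hat W\vert\cdot)$, taking $\hat W=\ker F$. Then add vertices with $|F|=N$ in increasing order of $N$. Each newly added vertex has link in the new complex equal to $\lk^{<N}$, which by Lemma \ref{linkRetractionsymp} is a retract of the full link. The full link is highly connected by Lemma \ref{SympCongruenceSimpLinks} and induction, so every stage preserves $(n-2)$-connectivity, starting from item \ref{Item2}.

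\textbf{Item \ref{Item4}.} Apply van der Kallen--Looijenga's Theorem \ref{VdKaL11Cor2.2v2} to the span map $\posety(\IP(W\vert(\overline V,\pm\alpha)))\to \sbT(W\vert(\overline V,\pm\alpha))$, with $t(U)=\operatorname{rank}U$. The fibers are again (congruence) partial basis complexes of $U$, which are $(t-2)$-connected. The upper links are $(n-t-2)$-connected by Lemma \ref{SympCongruencePosetLinks} and induction, possibly after an index shift to fit the theorem's conventions. The map is therefore $(n-1)$-connected, and item \ref{Item3} gives the $(n-2)$-connectivity.
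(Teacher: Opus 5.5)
The link reductions and your arguments for Items~\ref{Item2}--\ref{Item4} follow the paper's route: Quillen's Theorem~\ref{Q78Cor9.7} for the span map, Maazen's link retraction, and Theorem~\ref{VdKaL11Cor2.2v2} for the span map. Two small corrections there:
\begin{itemize}
\item The fiber of the span map over $U$ is always $\posety(\BP(U))$, even when $\pi(U)=\overline V$. Every basis of $U$ reduces to a basis compatible with the $\pm$-orientation $\pm\alpha$ induced by $U$, so no congruence partial basis result from Miller--Patzt--Putman is needed.
\item In Item~\ref{Item3}, vertices with $\lvert F\rvert=N$ can be adjacent, so adding them one at a time does not work as stated. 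Run the filtration through Proposition~\ref{GalatiusRandal-Williams14} applied to $X^{<N}\subseteq X^{\le N}$: the relevant complexes are $\lk(\sigma)^{<N}$, which are retracts of $\lk(\sigma)$ by Lemma~\ref{linkRetractionsymp}. Also choose $F$ with $\overline V\subseteq\pi(\ker F)$, so that Item~\ref{Item2} applies to $\ker F$.
\end{itemize}

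The genuine gap is the global $(n-2)$-connectivity of $\sbT(\hat W\vert(\overline V,\pm\alpha))$ in Item~\ref{Item1}. Your main route deduces it from $\sT(W\vert(\overline V,\pm\alpha))$, which is Item~\ref{Item4} at the same level. But you obtain Item~\ref{Item4} from Item~\ref{Item3}, that from Item~\ref{Item2}, and that from Item~\ref{Item1}, so the argument is circular, as you suspected. The fallbacks do not repair this. Inside $\sbT(W\vert(\overline V,\pm\alpha))$, the subposet $\{U : U\subseteq\hat W\}$ is $\sbT(\hat W\vert(\overline V,\pm\alpha))$ itself. A Quillen fiber argument over $\sbT(W\vert(\overline V,\pm\alpha))$ again needs Item~\ref{Item4} at level $n$. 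Since all four items at level $n$ rest on this step, the induction does not close.

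The missing idea is the paper's coning argument (after Br\"uck--Sroka), which uses only the link computations and lower levels.
\begin{itemize}
\item Pick a rank-one summand $L=\Span{\vec l}\subseteq\hat W$ with $\pi(L)\subseteq\overline V$.
\item Let $W'$ be a complement of $L$ in $L^\perp\cap\hat W$. It is symplectic of rank $2m-2$ if $L^\perp=\hat W$, and restricted of rank $2m-3$ otherwise.
\item Set $\overline V'=\pi(W')\cap\overline V$, which has dimension $n-1$.
\end{itemize}
Let $f$ be the inclusion of $\sbT(W'\vert(\overline V',\pm\alpha'))$ (resp.\ its restricted analogue) into $\sbT(\hat W\vert(\overline V,\pm\alpha))$. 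Then $f(U)\subseteq U\oplus L$, and the map $U\mapsto U\oplus L$ has cone point $L$, so $f$ is nullhomotopic by Proposition~\ref{posetmaphomotopy}. Next one shows $f$ is $(n-2)$-connected in two stages. First adjoin all elements of rank at least $2$, using Proposition~\ref{posetmaphomotopy2} with cone point $U\cap W'$. Then adjoin the remaining rank-one elements via Proposition~\ref{GalatiusRandal-Williams14}; its hypotheses involve only full links of $\sT(\hat W\vert(\overline V,\pm\alpha))$, which are already controlled by induction. A nullhomotopic $(n-2)$-connected map forces the target to be $(n-2)$-connected.

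When you implement this, take care at the top rank. The orientation $\pm\alpha'$ is forced by $\alpha'\wedge\pi(\vec l)=\pm\alpha$, so that $U\oplus L$ lies in the poset. Suppose, say, $\hat W=W'\oplus L$ and a rank-$n$ element $U$ projects onto $dL$ with $d\not\equiv\pm1\pmod p$, which can happen when $p\geq5$. Then $U\cap W'$ carries the wrong $\pm$-orientation, so its fiber is $\cbT(U\cap W')$ (empty if $n=2$) rather than a cone. Such elements must be handled separately.
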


\begin{proof}
We proceed with induction on $n$. Set $n = 1$ and let $m \geq n$. In this case, $\sbT(\hat{W} \vert (\overline{V}, \pm \alpha)) = \sbT(W \vert (\overline{V}, \pm \alpha))$ and $\IP(\hat{W} \vert (\overline{V}, \pm \alpha)) = \IP(W \vert (\overline{V}, \pm \alpha))$. $\sbT(W \vert (\overline{V}, \pm \alpha))$ is the set of summands $U \in \pi^{-1}(\overline{V}, \pm \alpha)$. Hence, $\sbT(W \vert (\overline{V}, \pm \alpha))$ is a non-empty poset whose summands all have height 0 and for which the links of these summands is empty. Therefore $\sbT(W \vert (\overline{V}, \pm \alpha))$ and $\sbT(\hat{W} \vert (\overline{V}, \pm \alpha))$ are CM of dimension $0$. $\IP(\hat{W} \vert (\overline{V}, \pm \alpha))$ is the simplicial complex composed of vertices that are associated to bases of the rank 1 summands $U \in \pi^{-1}(\overline{V}, \pm \alpha)$. The links of these vertices are empty. Thus $\IP(W \vert (\overline{V}, \pm \alpha))$ and $\IP(\hat{W} \vert (\overline{V}, \pm \alpha))$ are CM of dimension $0$.

\emph{Inductive Hypothesis.} Fix $n > 1$. Suppose that
\begin{enumerate}[labelindent=\parindent,leftmargin=*,nosep, topsep=-0.5cm, label=\emph{\arabic*($n'$).}, ref=\arabic*($n'$)]
\item \label{ItemA} $\sbT(\hat{W}' \vert (\overline{V}', \pm \alpha'))$ is CM of dimension $(n' - 1)$;
\item \label{ItemB} $\IP(\hat{W}' \vert (\overline{V}', \pm \alpha'))$ is CM of dimension $(n' - 1)$;
\item \label{ItemC} $\IP(W' \vert (\overline{V}', \pm \alpha'))$ is CM of dimension $(n' - 1)$;
\item \label{ItemD} $\sbT(W' \vert (\overline{V}', \pm \alpha'))$ is CM of dimension $(n' - 1)$;
\end{enumerate}
for all f.g.~symplectic $\mathbb{Z}$-modules $W'$ of rank $2m'$, all restricted summands $\hat{W}' \subsetneq W'$ of rank $2m' - 1$, and all $(\overline{V}', \pm \alpha') \in \sbTD(\hat{\overline{W}}')$ of dimension $n'$ such that $1 < n' < n$ and $m' \geq n' > 0$. 

For the inductive steps, let $W$ be a f.g.~symplectic $\mathbb{Z}$-module of rank $2m$, let $\hat{W} \subsetneq W$ be a restricted summand of rank $2m - 1$, and let $(\overline{V}, \pm \alpha) \in \sbTD(\hat{\overline{W}})$ be of dimension $n$. We require that $m \geq n$.

\setword{\emph{Inductive step for Item \ref{Item1}}}{InductiveStep1}. Suppose $U, U' \in \sbT(\hat{W} \vert (\overline{V}, \pm \alpha))$ such that $U \subsetneq U'$, $\height(U) = k$, and $\height(U') = k'$. Let $\pi(U) = (\overline{U}, \pm \beta_{\pi})$ and $\pi(U') = (\overline{U}', \pm \beta_{\pi}')$. Lemma \ref{linkposetrestricted} says that
\begin{itemize}[nosep, topsep=-0.5cm]
\item the lower link of $U$ is isomorphic to $\cbT(U)$, which is $(\height(U) - 1)$-dimensional and $(\height(U) - 2)$-connected by Solomon--Tits' Theorem \ref{SBrB} and Miller--Patzt--Putman's Lemma \ref{MPP21Lem3.10}.
\item The upper link of $U$ is isomorphic to either $\sbT(U^\perp/ U \vert (\overline{V}/\overline{U}, \pm \eta))$ if $U^\perp \cap \hat{W} = U^\perp$ or $\sbT((U^\perp \cap \hat{W})/ U \vert (\overline{V}/\overline{U}, \pm \eta))$ if $U^\perp \cap \hat{W} \neq U^\perp$. Note that
\begin{equation*}
n - (k + 1) = (n - 1) - \height(U).
\end{equation*}
For the case $U^\perp \cap \hat{W} = U^\perp$, the Inductive Hypothesis of Item \emph{\ref{ItemD}} for $n' = (n - 1) - \height(U)$ says that the upper link of $U$ is $((n - 1) - \height(U) - 1)$-dimensional and $((n - 1) - \height(U) - 2)$-connected. For the case $U^\perp \cap \hat{W} \neq U^\perp$, the Inductive Hypothesis of Item \emph{\ref{ItemA}} for $n' = (n - 1) - \height(U)$ says that the upper link of $U$ is $((n - 1) - \height(U) - 1)$-dimensional and $((n - 1) - \height(U) - 2)$-connected.
\item The interval from $U$ to $U'$ is isomorphic to $\cbT(U'/U)$. One computes that 
\begin{equation*}
(k' + 1) - (k  + 1) =  \height(U') - \height(U).
\end{equation*}
By Solomon--Tits' Theorem \ref{SBrB} and Miller--Patzt--Putman's Lemma \ref{MPP21Lem3.10} $\cbT(U'/U)$ is $(\height(U') - \height(U) - 2)$-dimensional and $(\height(U') - \height(U) - 3)$-connected.
\end{itemize}
$(\overline{V}, \pm \alpha)$ is of height $(n - 1)$. So $\sbT(\hat{W} \vert (\overline{V}, \pm \alpha))$ is $(n - 1)$-dimensional. All that remains to prove is that $\sbT(\hat{W} \vert (\overline{V}, \pm \alpha))$ is $(n - 2)$-connected. Pick a rank-1 summand $L \subset \hat{W}$ such that $\pi(L) \subset \overline{V}$. There exists a summand $W'$ that satisfies $\omega(\vec{l}, \vec{w}) = 0$ for all $\vec{l} \in L$ and $\vec{w} \in W'$, as well as $W' \cap L = 0$. Set $\overline{V}' := \pi(W') \cap \overline{V}$, which is codimension-1. $W'$ is either a symplectic summand of rank $2n - 2$, in which case $W' \oplus L = \hat{W}$, or a restricted summand of rank $2n - 3$, in which case there exists a rank-1 summand $L'$ such that $L \oplus L'$ is symplectic and $W' \oplus L \oplus L' = \hat{W}$. \\
Suppose $W'$ is a symplectic summand of rank $2n - 2$. Consider the poset inclusion maps
\begin{equation*}
\begin{tikzcd}
\sbT(W' \vert (\overline{V}', \pm \alpha')) \arrow[r, "g"', shift right, hook] \arrow[r, "f", shift left, hook] & \sbT(\hat{W} \vert (\overline{V}, \pm \alpha)),
\end{tikzcd}
\end{equation*}
where $f$ is an inclusion poset map that sends a summand $U$ to $U$ and $g$ is a poset map that sends a summand $U$ to $U \oplus L$. The Inductive Hypothesis of Item \emph{\ref{ItemD}} for $n' = n - 1$ says that $\sbT(W' \vert (\overline{V}', \pm \alpha'))$ is $(n - 3)$-connected. We demonstrate that $f$ is an $(n - 2)$-connected poset map with which we conclude that $\sbT(\hat{W} \vert (\overline{V}, \pm \alpha))$ is $(n - 3)$-connected. In addition, we show that $f$ is nullhomotopic which implies the stronger and desired result: $\sbT(\hat{W} \vert (\overline{V}, \pm \alpha))$ is $(n - 2)$-connected. \\
$L \subset g(U)$ for all $U \in \sbT(W' \vert (\overline{V}', \pm \alpha'))$. So the image of $g$ is contractible with cone point $L$. Since $f(U) \subseteq g(U)$ for all $U \in \sbT(W' \vert (\overline{V}', \pm \alpha'))$, then Quillen's Proposition \ref{posetmaphomotopy} says that $f$ and $g$ are homotopic. We conclude that $f$ is nullhomotopic. All that remains to show is that $f$ is $(n - 3)$-connected. \\
Consider the following filtration of $\sbT(\hat{W} \vert (\overline{V}, \pm \alpha))$:  
\begin{align*}
F_{1} & := \sbT(W' \vert (\overline{V}', \pm \alpha')),\\
F_2 & := F_1 \cup \{U \in \sbT(\hat{W} \vert (\overline{V}, \pm \alpha)) \mid \height(U) > 0\}, \\
F_3 &:= \sbT(\hat{W} \vert (\overline{V}, \pm \alpha)).
\end{align*}
Set $h_1\colon F_1 \hookrightarrow F_2$ and $h_2\colon F_2 \hookrightarrow F_3$ to be the inclusion maps that send summands $U$ to $U$. $(h_1)_{\leq U}$ is contractible with cone point $U$ if $U \in F_1$ or with cone point $U \cap W'$ if $U \in F_2 \setminus F_1$. By Quillen's Proposition \ref{posetmaphomotopy2}, $h_1$ is a homotopy equivalence. Since $F_2$ adds summands of rank $1$, the fibers of these summands do not have an obvious cone point like before. \\
Let $\tilde{h}_{2}\colon \simp(F_{2}) \hookrightarrow \simp(F_3)$ be the simplicial map that is associated to the poset inclusion map $h_{2}\colon F_{2} \hookrightarrow F_3$. The isotropic flag that corresponds to a $k$-simplex $\sigma \in \simp(F_3) \setminus \simp(F_{2})$ must be of the form
\begin{equation*}
0 \subsetneq U \subsetneq Q_1 \subsetneq \dots \subsetneq Q_k \subsetneq \hat{W},
\end{equation*}
where $U \in F_3 \setminus F_{2}$ of rank $1$ and $Q_i \in F_{2}$ for $i \in \{1, \dots, k\}$. So, 
\begin{equation*}
\simp(F_{2}) \cap \lk_{\simp(F_3)}(\sigma) = \lk_{\sT(\hat{W} \vert (\overline{V}, \pm \alpha))}(\sigma).
\end{equation*}
By Lemma \ref{linkposetrestricted} and Remark \ref{PosetToSimpLink},
\begin{equation*}
\lk_{\sT(\hat{W} \vert (\overline{V}, \pm \alpha))}(\sigma) \cong \cT(U) * \cT(Q_1/U) * \dots *\cT(Q_{k - 1}/Q_{k - 1}) * \sT(Q_k^\perp/Q_k \vert (\overline{V}/\overline{Q}_k, \pm \eta)),
\end{equation*}
if $Q_k^\perp \cap \hat{W} = Q_k^\perp$, or
\begin{equation*}
\lk_{\sT(\hat{W} \vert (\overline{V}, \pm \alpha))}(\sigma) \cong \cT(U) * \cT(Q_1/U) * \dots *\cT(Q_{k - 1}/Q_{k - 1}) * \sT((Q_k^\perp \cap \hat{W})/Q_k \vert (\overline{V}/\overline{Q}_k, \pm \eta))
\end{equation*}
if $Q_k^\perp \cap \hat{W} \neq Q_k^\perp$, for some $\pm$-orientation $\pm \eta$ on $\overline{V}/\overline{Q}_k$. From our work above on the links of $\sbT(\hat{W} \vert (\overline{V}, \pm \alpha))$, we know that $\lk_{\sT(\hat{W} \vert (\overline{V}, \pm \alpha))}(\sigma)$ is $((n - 1) - k - 1)$-dimensional and $((n - 1) - k - 2) = ((n - 2) - k - 1)$-connected in either case. Using Galatius--Randall-Williams' Proposition \ref{GalatiusRandal-Williams14}, we conclude that $\tilde{h}_{2}$ is $(n - 2)$-connected. Since $\tilde{h}_{2}$ and $h_{2}$ are the same at the level of spaces and homology, then $h_{2}$ is also $(n - 2)$-connected. $f = h_{2} \circ h_1$ and $h_1$ is a homotopy equivalence. So $f$ is $(n - 2)$-connected as desired.\\
Suppose instead that $W'$ is a restricted summand of rank $2n - 3$. We write $\hat{W}'$ instead of $W'$ to keep with our conventions. Consider the poset inclusion maps
\begin{equation*}
\begin{tikzcd}
\sbT(\hat{W}' \vert (\overline{V}', \pm \alpha')) \arrow[r, "\hat{g}"', shift right, hook] \arrow[r, "\hat{f}", shift left, hook] & \sbT(\hat{W} \vert (\overline{V}, \pm \alpha)),
\end{tikzcd}
\end{equation*}
where $\hat{f}$ is an inclusion poset map that sends a summand $U$ to $U$ and $\hat{g}$ is a poset map that sends a summand $U$ to $U \oplus L$. Our strategy remains the same: show that $\hat{f}$ is nullhomotopic and $(n - 2)$-connected, and invoke the inductive hypothesis with which we conclude that $\sbT(\hat{W} \vert (\overline{V}, \pm \alpha))$ is $(n - 2)$-connected. \\
$\hat{g}$ is nullhomotopic since the image of $\hat{g}$ has cone point $L$. $\hat{f}$ and $\hat{g}$ are homotopic by Quillen's Proposition \ref{posetmaphomotopy} since $\hat{f}(U) \subseteq \hat{g}(U)$ for all $U \in \sbT(\hat{W}' \vert (\overline{V}', \pm \alpha'))$. So $\hat{f}$ is also nullhomotopic. \\
Consider the following filtration of $\sbT(\hat{W} \vert (\overline{V}, \pm \alpha))$:  
\begin{align*}
\hat{F}_{1} & := \sbT(\hat{W}' \vert (\overline{V}', \pm \alpha')),\\
\hat{F}_2 & := F_1 \cup \{U \in \sbT(\hat{W} \vert (\overline{V}, \pm \alpha)) \mid \height(U) > 0\}, \\
\hat{F}_3 & := \sbT(\hat{W} \vert (\overline{V}, \pm \alpha)).
\end{align*}
Set $\hat{h}_1\colon \hat{F}_1 \hookrightarrow \hat{F}_2$ and $\hat{h}_2\colon \hat{F}_2 \hookrightarrow \hat{F}_3$ to be the inclusion maps that send isotropic summands to themselves. $(\hat{h}_1)_{\leq U}$ is contractible with cone point $U$ if $U \in \hat{F}_1$ or with cone point $U \cap \hat{W}'$ if $U \in \hat{F}_2 \setminus \hat{F}_1$. There is a subtlety here: $U \cap \hat{W}'$ is a corank-1 summand instead of a corank-2 summand for isotropic summands $U \in F_2$. There exists a rank-1 summand $L'$ such that $\hat{W}' \oplus L \oplus L' = \hat{W}$ and $L \oplus L'$ is a symplectic summand, but $\pi(L') \not \subseteq \overline{V}$ since $\pi(L) \subset \overline{V}$. $U \cap \hat{W}' = U \cap (\hat{W}' \oplus L')$ so $U \cap \hat{W}'$ is corank-1. This fact is relevant to the rank-2 summands of $F_2$. By Quillen's Proposition \ref{posetmaphomotopy2}, $\hat{h}_1$ is a homotopy equivalence. \\
The rest of this argument is the same as the case where $W'$ is symplectic. Let $\tilde{\hat{h}}_{2}\colon \simp(F_{2}) \hookrightarrow \simp(F_3)$ be the simplicial map that is associated to the poset map $\hat{h}_2$. The isotropic flag associated to a $k$-simplex $\sigma \in \simp(\hat{F}_3) \setminus \simp(\hat{F}_{2})$ must be of the form
\begin{equation*}
0 \subsetneq U \subsetneq Q_1 \subsetneq \dots \subsetneq Q_k \subsetneq \hat{W},
\end{equation*}
where $U \in \hat{F}_3 \setminus \hat{F}_{2}$ of rank $1$ and $Q_i \in \hat{F}_{2}$ for $i \in \{1, \dots, k\}$. So, 
\begin{equation*}
\simp(\hat{F}_{2}) \cap \lk_{\simp(\hat{F}_3)}(\sigma) = \lk_{\sT(\hat{W} \vert (\overline{V}, \pm \alpha))}(\sigma).
\end{equation*}
By Lemma \ref{linkposetrestricted} and Remark \ref{PosetToSimpLink},
\begin{equation*}
\lk_{\sT(\hat{W} \vert (\overline{V}, \pm \alpha))}(\sigma) \cong \cT(U) * \cT(Q_1/U) * \dots *\cT(Q_{k - 1}/Q_{k - 1}) * \sT(Q_k^\perp/Q_k \vert (\overline{V}/\overline{Q}_k, \pm \eta)),
\end{equation*}
if $Q_k^\perp \cap \hat{W} = Q_k^\perp$, or
\begin{equation*}
\lk_{\sT(\hat{W} \vert (\overline{V}, \pm \alpha))}(\sigma) \cong \cT(U) * \cT(Q_1/U) * \dots *\cT(Q_{k - 1}/Q_{k - 1}) * \sT((Q_k^\perp \cap \hat{W})/Q_k \vert (\overline{V}/\overline{Q}_k, \pm \eta))
\end{equation*}
if $Q_k^\perp \cap \hat{W} \neq Q_k^\perp$, for some $\pm$-orientation $\pm \eta$ on $\overline{V}/\overline{Q}_k$. Once again, $\lk_{\sT(\hat{W} \vert (\overline{V}, \pm \alpha))}(\sigma)$ is $((n - 1) - k - 1)$-dimensional and $((n - 1) - k - 2) = ((n - 2) - k - 1)$-connected in either case. Galatius--Randall-Williams' Proposition \ref{GalatiusRandal-Williams14} says that $\tilde{\hat{h}}_{2}$ is $(n - 2)$-connected. Since $\tilde{\hat{h}}_{2}$ and $\hat{h}_{2}$ are the same at the level of spaces and homology, $\hat{h}_{2}$ is also $(n - 2)$-connected. Finally, $\hat{f} = \hat{h}_{2} \circ \hat{h}_1$ and $\hat{h}_1$ is a homotopy equivalence, which implies $\hat{f}$ is $(n - 2)$-connected. \\
$\hat{f}$ is $(n - 2)$-connected and nullhomotopic, and $\sbT(\hat{W}' \vert (\overline{V}', \pm \alpha'))$ is $(n - 3)$-connected by the Inductive Hypothesis of Item \emph{\ref{ItemA}} for $n' = n - 1$. So $\sbT(\hat{W} \vert (\overline{V}, \pm \alpha))$ is $(n - 2)$-connected.  

\setword{\emph{Inductive step for Item \ref{Item2}}}{InductiveStep2}. Consider the strictly increasing poset map 
\begin{align*}
\text{Span} & \colon \posety(\IP(\hat{W} \vert (\overline{V}, \pm \alpha)))   \rightarrow \sbT(\hat{W} \vert (\overline{V}, \pm \alpha))\\
\text{Span} & \colon  \{\vec{v}_1, \dots, \vec{v}_k\}  \mapsto \Span{\vec{v}_1, \dots, \vec{v}_k}. 
\end{align*}
 By the \ref{InductiveStep1}, $\sbT(\hat{W} \vert (\overline{V}, \pm \alpha))$ is CM of dimension $(n - 1)$. The fiber of $U \in \sbT(\hat{W} \vert (\overline{V}, \pm \alpha))$ with respect to the map $\text{Span}$ is $\posety(\BP(U))$, which is homotopy equivalent to $\BP(U)$. Maazen's Theorem \ref{CP17Thm4.2} says that $\BP(U)$ is CM of dimension $\height(U)$. So $\BP(U)$ is $(\height(U) - 1)$-connected. Quillen's Theorem \ref{Q78Cor9.7} says that $\IP(\hat{W} \vert (\overline{V}, \pm \alpha))$ is CM of dimension $(n - 1)$. 

\setword{\emph{Inductive step for Item \ref{Item3}}}{InductiveStep3}. We begin with the links of $\IP(W \vert (\overline{V}, \pm \alpha))$. Let $\sigma \in \IP(W \vert (\overline{V}, \pm \alpha))$ be a $(k - 1)$-simplex and set $\Span{\sigma} = U$ and $\pi(U) = (\overline{U}, \pm \beta_\pi)$. By Lemma \ref{SympCongruenceSimpLinks}, 
\begin{equation*}
\lk_{\IP(W \vert (\overline{V}, \pm \alpha))}(\sigma) \cong \IP(U^\perp/U \vert (\overline{V}/\overline{U}, \pm \eta)) \Span{U}
\end{equation*}
for some $\pm$-orientation $\pm \eta$ on $\overline{V}/\overline{U}$. $\IP(U^\perp/U \vert (\overline{V}/\overline{U}, \pm \eta)) \Span{U}$ is a complete join complex over $\IP(U^\perp/U \vert (\overline{V}/\overline{U}, \pm \eta))$ by Remark \ref{LabelingSystemRemark}. The Inductive Hypothesis of Item \ref{ItemC} for $n' = n - k$ says that $\IP(U^\perp/U \vert (\overline{V}/\overline{U}, \pm \eta))$ is CM of dimension $(n - k - 1)$. We invoke Hatcher--Wahl's Proposition \ref{HWProp3.5}, with which we conclude that $\IP(U^\perp/U \vert (\overline{V}/\overline{U}, \pm \eta)) \Span{U}$ is CM of dimension $((n - 1) - k)$. Thus $\lk_{\IP(W \vert (\overline{V}, \pm \alpha))}(\sigma)$ is $((n - 1) - k)$-dimensional and $((n - 1) - k - 1)$-connected.\\
$\IP(W \vert (\overline{V}, \pm \alpha))$ is $(n - 1)$-dimensional because a maximal simplex in $\IP(W \vert (\overline{V}, \pm \alpha))$ corresponds to an isotropic partial basis of size $n$. All that remains to prove is that $\IP(W \vert (\overline{V}, \pm \alpha))$ is $(n - 2)$-connected. Fix $q \in \{0, \dots, n - 2\}$ and let $S^q$ be a combinatorial triangulation of a $q$-sphere, and let 
\begin{equation*}
\varphi\colon S^q \rightarrow\IP(W \vert (\overline{V}, \pm \alpha))
\end{equation*}
be a simplicial map. We show that $\varphi$ can be homotoped to a constant map for all $q$. This implies that $\IP(W \vert (\overline{V}, \pm \alpha))$ is $(n - 2)$-connected. \\
Pick $\vec{l} \in W$ so that $\pi(\vec{l}) \notin \overline{V}$ and let $L = \Span{\vec{l}}$. Set $\hat{W}$ to be the restricted summand that satisfies $\hat{W} \oplus L = W$ and $\overline{V} \subset \pi(\hat{W})$. Let $F\colon W \rightarrow \mathbb{Z}$ be a linear map that takes a vector $\vec{v} \in W$ to the $\vec{l}$-coordinate of $\vec{v}$. Let $r\colon \IP(W \vert (\overline{V}, \pm \alpha)) \rightarrow \mathbb{Z}$ be a map that takes a vertex $\{\vec{v}\}$ to $\lvert F(\vec{v}) \rvert$. We define the complexity of $\varphi$ by 
\begin{align*}
R(\varphi) = \{\max r(\varphi(s)) \mid s \in S^q\}. 
\end{align*}
If $R(\varphi) = 0$, then $\varphi(S^q) \subseteq \IP(\hat{W} \vert (\overline{V}, \pm \alpha))$. The \ref{InductiveStep2} says that $\IP(\hat{W} \vert (\overline{V}, \pm \alpha))$ is $(n - 2)$-connected, which means that $\varphi$ can be homotoped to the constant map for $q \in \{0, \dots, n - 2\}$ as desired.\\
Suppose instead $R(\varphi) > 0$ and let $R(\varphi) = R$. Consider the following condition on a simplex $\sigma$ of $S^q$: 
\begin{equation}\label{conditionInduction}
r(\varphi(s)) = \text{ $R$ for all vertices $s$ of $\sigma$ }.
\end{equation}
By definition, there exists some $\sigma \in S^q$ satisfying Condition \ref{conditionInduction} because $R(\varphi) = R$. We can suppose $\sigma$ is simplex of maximal dimension $k \leq n - 1$. Let $\ell$ be the dimension of the simplex $\varphi(\sigma) \in \IP(W \vert (\overline{V}, \pm \alpha))$. Necessarily $\ell \leq k$, where $\ell < k$ if $\varphi$ is not injective on $\sigma$. Set $Y = \lk_{\IP(W \vert (\overline{V}, \pm \alpha))}(\varphi(\sigma))$. By the work above on the computation of the connectivity of links of $\IP(W \vert (\overline{V}, \pm \alpha))$, $Y$ is $((n - 1) - \ell - 2)$-connected. Since $Y$ retracts to $Y^{< R}$ by Lemma \ref{linkRetractionsymp}, then $Y^{< R}$ is also $(n - \ell - 3)$-connected. \\
Zeeman's relative simplicial approximation theorem \cite{Zee64} says that there exists a sub-complex $K \subset S^q$ that contains $\lk_{S^q}(\sigma)$ and a simplicial map $\psi\colon K \rightarrow Y^{< R}$ such that $\psi \simeq \varphi$ and $\psi \vert_{\lk_{S^q}(\sigma)} = \varphi \vert_{\lk_{S^q}(\sigma)}$. $\lk_{S^q}(\sigma)$ is a combinatorial $(q - k - 1)$-sphere. Since $q \leq n - 2$ and $\ell \leq k$, then $q - k - 1 \leq n - \ell - 3$. So $\varphi \vert_{\lk_{S^q}(\sigma)}$ is null-homotopic via a homotopy within $Y^{< R}$, because this simplicial complex is $(n - \ell - 3)$-connected. So there exists a $(q - k)$-ball $B$ and a simplicial isomorphism $\gamma$ such that $\gamma\colon K \cong B$ and $\gamma\colon \partial B \cong \lk_{S^q}(\sigma)$. The composition of simplicial maps $\tilde{\psi} = \psi \circ \gamma^{-1}$ gives a map 
\begin{equation*}
\tilde{\psi}\colon B \rightarrow Y^{< R}
\end{equation*}
such that $\tilde{\psi} \vert_{\partial B} = \varphi \vert_{\lk_{S^q}(\sigma)}$. \\
Consider the sub-complex $\sigma * B$. $\sigma *B$ is a $(q + 1)$-ball whose boundary is the union of the $q$-balls $\sigma * (\partial B)$ and $(\partial \sigma) * B$. $\sigma * (\partial B) = \starry_{S^q}(\sigma)$, which contains simplices that satisfy condition \ref{conditionInduction}, while $(\partial \sigma) * B$ contains simplices that do not satisfy Condition \ref{conditionInduction}. The map 
\begin{equation*}
(\varphi \vert_\sigma) * \tilde{\psi}\colon \sigma * B \rightarrow \varphi(\sigma) * Y^{< R}
\end{equation*}
extends $\tilde{\psi}$ to $\sigma * B$. Note that $(\varphi \vert_\sigma) * \tilde{\psi} = \varphi \vert_{\starry_{S^q}(\sigma)}$ and $\varphi(\sigma) * Y^{< R}$ is a sub-complex of $\IP(W \vert (\overline{V}, \pm \alpha))$. We can homotope $\varphi$ across $\sigma * B$ to replace $\varphi \vert_{\starry_{S^q}(\sigma)}$ with 
\begin{equation*}
(\varphi \vert_{\partial \sigma}) * \tilde{\psi}\colon (\partial \sigma) * B \rightarrow \IP(W \vert (\overline{V}, \pm \alpha)).
\end{equation*}
This modification to $\varphi$ eliminates $\sigma$ and does not add simplices that satisfy condition \ref{conditionInduction}. Every simplex added by this homotope is the join of a simplex in $\partial \sigma$ and $B$, where $\tilde{\psi}(B)$ is a sub-complex of $Y^{< R}$. Hence, each added simplex contains at least one vertex $s$ such that $r(\varphi(s)) < R$. Hence, repeating the process described above removes all simplices that satisfy condition \ref{conditionInduction}. We can iterate this procedure for smaller $R$ until $R(\varphi) = 0$.

\setword{\emph{Inductive step for Item \ref{Item4}}}{InductiveStep4}. Suppose $U, U' \in \sbT(W \vert (\overline{V}, \pm \alpha))$ such that $U \subsetneq U'$. Let $\pi(U) = \overline{U}$ and $\pi(U') = \overline{U}'$. Lemma \ref{SympCongruencePosetLinks} says that
\begin{itemize}[nosep, topsep=-0.5cm]
\item the lower link of $U$ is isomorphic to $\cbT(U)$, which is $(\height(U) - 1)$-dimensional and $(\height(U) - 2)$-connected by Solomon--Tits' Theorem \ref{SBrB} and Miller--Patzt--Putman's Lemma \ref{MPP21Lem3.10}.
\item The upper link of $U$ is isomorphic to $\sbT(U^\perp/ U \vert (\overline{V}/\overline{U}, \pm \eta))$ for some $\pm$-orientation $\eta$. By the Inductive Hypothesis of Item \emph{\ref{ItemA}} for $n' = (n - 1) - \height(U)$, $\sbT(U^\perp/ U \vert (\overline{V}/\overline{U}, \pm \eta))$ is $((n - 1) - \height(U) - 1)$-dimensional and $((n - 1) - \height(U) - 2)$-connected.
\item The interval from $U$ to $U'$ is isomorphic to $\cbT(U'/U)$ which is $(\height(U') - \height(U) - 2)$-dimensional and $(\height(U') - \height(U) - 3)$-connected by Solomon--Tits' Theorem \ref{SBrB} and Miller--Patzt--Putman's Lemma \ref{MPP21Lem3.10}.
\end{itemize}
Since $\overline{V}$ is of dimension $n$, then summands in $\sbT(W \vert (\overline{V}, \pm \alpha))$ have height at most $(n - 1)$. Thus, $\sbT(W \vert (\overline{V}, \pm \alpha))$ is $(n - 1)$-dimensional. Define 
\begin{align*}
t & \colon \sbT(W \vert (\overline{V}, \pm \alpha)) \rightarrow \mathbb{Z}, \\
t & \colon U \mapsto \rank (U), 
\end{align*} 
and consider the poset spanning map 
\begin{align*}
\text{Span} & \colon \posety(\IP(W \vert (\overline{V}, \pm \alpha))) \rightarrow \sbT(W \vert (\overline{V}, \pm \alpha)), \\
\text{Span} & \colon \{\vec{v}_1, \dots, \vec{v}_k\} \mapsto \Span{\vec{v}_1, \dots, \vec{v}_k}. 
\end{align*}
The fiber of $U \in \sbT(W \vert (\overline{V}, \pm \alpha))$ with respect to $\text{Span}$ is $\posety(\BP(U))$. By Maazen's Theorem \ref{CP17Thm4.2}, $\BP(U)$ is CM of dimension $\height(U) = (\rank U - 1)$ and, in particular, $(\rank(U) - 2) = (t(U) - 2)$-connected. The upper link of $U$ is $((n - 1) - \height(U) - 2) = ((n - 1) - t(U) - 1)$-connected by the work above. By van der Kallen--Looijenga's Theorem \ref{VdKaL11Cor2.2v2}, the poset map $\text{Span}$ is $(n - 1)$-connected. The \ref{InductiveStep3} says that $\IP(W \vert (\overline{V}, \pm \alpha))$ is $(n - 2)$-connected. We conclude that $\sbT(W \vert (\overline{V}, \pm \alpha))$ is $(n - 2)$-connected. 

The inductive step of all four items is completed and so is the proof of Proposition \ref{anotherInduction3}. 
\end{proof}


\section{Top-degree cohomology of $\Gamma_{2n}^\omega(p)$} \label{proofofA}

We state a proposition we require for the proof of Theorem \ref{theorem:main:surjection}.

\begin{proposition}\label{surjectivityProposition}
For $n \geq 0$ and a prime $p$, the map
\begin{equation}
\St_{2n}^\omega(\mathbb{Z}) \rightarrow \St_{2n}^{\omega, \pm}(\Field_p)
\end{equation}
is a surjection.
\end{proposition}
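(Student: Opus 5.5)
The plan is to apply Church--Putman's Proposition \ref{CPQuillenFiber} to the poset map induced by reduction mod $p$,
\begin{equation*}
\pi\colon \sbT_{2n}(\mathbb{Z}) \rightarrow \sbTD_{2n}(\Field_p), \qquad U \mapsto (\overline{U}, \pm \beta_\pi),
\end{equation*}
where $\pi(U) = (\overline{U}, \pm\beta_\pi)$ is the image of $U$ together with the $\pm$-orientation induced by $\pi$. The degenerate cases are handled separately. For $n = 0$ both buildings are empty, and both Steinberg modules are $\rh_{-1}(\emptyset) = \mathbb{Z}$. The map is then the identity, so surjectivity is trivial. From now on I assume $n \geq 1$.

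\textbf{Step 1: check the hypotheses on the codomain.} The map $\pi$ is a poset map, because inclusion of isotropic summands reduces to inclusion of isotropic subspaces. Two points need care here. First, the reduction of a summand has the same dimension, since a basis of a summand extends to a basis of $\mathbb{Z}^{2n}$ by Lemma \ref{NormalBasis}. Second, a proper inclusion of summands $U \subsetneq U'$ gives a proper inclusion of reductions. This second point is exactly what is needed: by Remark \ref{pmorientationdonotmatter}, comparability in $\sbTD_{2n}(\Field_p)$ depends only on the underlying subspaces. By Proposition \ref{SympMPP21Lem3.15}, the codomain $\sbTD_{2n}(\Field_p)$ is CM of dimension $(n-1)$.

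\textbf{Step 2: identify the fibers.} For $(\overline{V}, \pm\alpha) \in \sbTD_{2n}(\Field_p)$, the Remark after the definition of the $\pm$-congruence buildings identifies the poset fiber as
\begin{equation*}
\pi_{\leq (\overline{V},\pm\alpha)} = \sbT(\mathbb{Z}^{2n} \vert (\overline{V}, \pm\alpha)).
\end{equation*}
By Proposition \ref{anotherInduction2}, applied with $W = \mathbb{Z}^{2n}$, $m = n$, and $\dim \overline{V} = h + 1$ where $h = \height(\overline{V},\pm\alpha)$, this fiber is CM of dimension $h$. In particular it is $(h-1)$-connected, which is precisely the connectivity Proposition \ref{CPQuillenFiber} requires.

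\textbf{Step 3: conclude, and match the result to the statement.} Proposition \ref{CPQuillenFiber} now gives that
\begin{equation*}
\pi_*\colon \rh_{n-1}(\sbT_{2n}(\mathbb{Z})) \rightarrow \rh_{n-1}(\sbTD_{2n}(\Field_p))
\end{equation*}
is surjective. The left side is $\St_{2n}^\omega(\mathbb{Z})$ by definition; by Proposition \ref{SympMPP21Lem3.10} it is also isomorphic to $\St_{2n}^\omega(\mathbb{Q})$. The right side is $\St_{2n}^{\omega,\pm}(\Field_p)$ by definition, so this is the claimed surjection.

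The main obstacle is the fiber connectivity in Step 2, and that has already been done in Proposition \ref{anotherInduction2}. What remains is bookkeeping. Heights must be converted correctly: a subspace of dimension $d$ has height $d-1$. And the hypothesis of Proposition \ref{anotherInduction2} needs $(\overline{V},\pm\alpha)$ to lie in a restricted subspace $\hat{\overline{W}}$. This is harmless, since any proper isotropic subspace lies in some restricted subspace whose preimage is a restricted summand; alternatively, Proposition \ref{anotherInduction2} as stated for $W$ alone already suffices.
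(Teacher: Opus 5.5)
Your proposal is correct and follows the paper's proof: apply Proposition~\ref{CPQuillenFiber} to the reduction map $\sbT_{2n}(\mathbb{Z}) \to \sbTD_{2n}(\Field_p)$. Proposition~\ref{SympMPP21Lem3.15} gives Cohen--Macaulayness of the codomain, and Proposition~\ref{anotherInduction2} gives the $(\height - 1)$-connectivity of the fibers $\sbT(\mathbb{Z}^{2n} \vert (\overline{V}, \pm\alpha))$. Your closing aside about restricted subspaces is unnecessary but harmless: that hypothesis belongs to Proposition~\ref{anotherInduction3}, not Proposition~\ref{anotherInduction2}, and the preimage of a subspace under reduction mod $p$ is a finite-index sublattice rather than a summand.
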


\begin{proof}
Let $\pi\colon \mathbb{Z}^{2n} \rightarrow \Field_p^{2n}$ be the reduction mod-$p$ map, and let 
\begin{equation} \label{THEmap2}
\pi\colon \sbT_{2n}(\mathbb{Z}) \rightarrow \sbTD_{2n}(\Field_p)
\end{equation}
be an induced poset map that takes a nonzero isotropic direct summand $V \subset \mathbb{Z}^{2n}$ to an isotropic subspace $\overline{V} \subset \Field_p^{2n}$ equipped with an induced $\pm$-orientation. We check the conditions for Church--Putman's Proposition \ref{CPQuillenFiber}:
\begin{itemize}[nosep, topsep=-0.5cm]
  \item $\sTD_{2n}(\Field_p)$ is CM of dimension $(n - 1)$ by Proposition \ref{SympMPP21Lem3.15}. 
  \item the poset fiber of $(\overline{V}, \pm \alpha) \in \sbTD_{2n}(\Field_p)$ with respect to $\varphi$ is a symplectic Tits building with a $\pm$-congruence condition: $\sbT(\mathbb{Z}^{2n} \vert (\overline{V}, \pm \alpha))$. By Proposition \ref{anotherInduction2}, the fiber is CM of dimension $(\Dim \overline{V} - 1) = \height(\overline{V})$. So the poset fiber is $(\height(\overline{V}) - 1)$-connected. 
\end{itemize}
It follows that 
\begin{align} \label{themap5}
\St_{2n}^\omega(\mathbb{Z}) = \rh_{n - 1}(\sbT_{2n}(\mathbb{Z});\mathbb{Z}) \rightarrow \rh_{n - 1}(\sbTD_{2n}(\Field_p);\mathbb{Z}) = \St_{2n}^{\omega, \pm}(\Field_p)
\end{align}
is a surjection as desired.
\end{proof}

We restate Theorem \ref{theorem:main:surjection}. 

\begin{theorem} \label{theoremArestrated}
For $n \geq 0$ and $p$ a prime integer, the map
\begin{equation*}
\hh^{n^2}(\Gamma_{2n}^\omega(p);\mathbb{Q}) \rightarrow \rh_{n - 1}(\vert \sT_{2n}(\mathbb{Q})\vert /\Gamma_{2n}^\omega(p); \mathbb{Q})
\end{equation*}
is a surjection.
\end{theorem}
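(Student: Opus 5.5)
The plan is to chain together Borel--Serre duality, the simplicial model of Section \ref{simplicialmodelisquotient}, and Proposition \ref{surjectivityProposition}. By Equation \ref{BorelSerreDualityEq} with $i=0$, $\hh^{n^2}(\Gamma_{2n}^\omega(p);\mathbb{Q}) \cong (\St_{2n}^\omega(\mathbb{Q})\otimes\mathbb{Q})_{\Gamma_{2n}^\omega(p)}$. The map in the statement is the one induced by the quotient map $f$ of Equation \ref{quotientmap}. So it suffices to show that
\begin{equation*}
f_*\colon \St_{2n}^\omega(\mathbb{Q})\otimes\mathbb{Q} \rightarrow \rh_{n-1}(\lvert \sT_{2n}(\mathbb{Q})\rvert/\Gamma_{2n}^\omega(p);\mathbb{Q})
\end{equation*}
is surjective. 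Surjectivity then passes to the coinvariants, because $f_*$ factors through them.

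First I identify the target. By Proposition \ref{SympMPP21Lem3.10}, $\sT_{2n}(\mathbb{Q})\cong\sT_{2n}(\mathbb{Z})$ equivariantly, and $\St_{2n}^\omega(\mathbb{Q})=\St_{2n}^\omega(\mathbb{Z})$. Lemma \ref{regularityOfTitsBuildings} and Bredon's Theorem \ref{Bredon} identify the topological quotient with the simplicial quotient. Proposition \ref{aSimplicialModel} then gives $\sT_{2n}(\mathbb{Z})/\Gamma_{2n}^\omega(p)\cong \sTD_{2n}(\Field_p)$.

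The step needing the most care is checking that, under these identifications, the quotient map $f$ becomes the simplicial map $\varphi$ induced by reduction mod $p$. Indeed, $\varphi$ is $\Gamma_{2n}^\omega(p)$-invariant, and the isomorphism $\varphi_*$ in the proof of Proposition \ref{aSimplicialModel} is defined precisely so that $\varphi=\varphi_*\circ(\text{quotient})$. Hence $f_*$ agrees, up to the isomorphism $\varphi_*$, with the map $\St_{2n}^\omega(\mathbb{Z})\to\St_{2n}^{\omega,\pm}(\Field_p)=\rh_{n-1}(\sTD_{2n}(\Field_p);\mathbb{Z})$ induced by the poset map $\pi$ of Equation \ref{THEmap2}.

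By Proposition \ref{surjectivityProposition}, that map is surjective with integral coefficients. Tensoring with $\mathbb{Q}$ is right exact, so $\St_{2n}^\omega(\mathbb{Z})\otimes\mathbb{Q}\to\St_{2n}^{\omega,\pm}(\Field_p)\otimes\mathbb{Q}$ is surjective. The complex $\sTD_{2n}(\Field_p)$ is $(n-1)$-dimensional, so its top homology has no torsion issues. Its $\mathbb{Q}$-homology in degree $n-1$ is therefore $\rh_{n-1}(\,\cdot\,;\mathbb{Z})\otimes\mathbb{Q}$ by universal coefficients, since $\rh_{n-2}$ contributes no Tor term; in fact $\rh_{n-2}=0$ by Proposition \ref{SympMPP21Lem3.15} when $n\ge 2$.

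The degenerate cases $n=0,1$ need a direct check. For $n=0$ both sides are the reduced $(-1)$-homology of the empty complex, namely $\mathbb{Q}$, and the map is the identity. For $n=1$ the building is discrete, and surjectivity on $\rh_0$ follows from surjectivity on vertices.
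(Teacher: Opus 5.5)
Your proposal is correct and follows the same route as the paper. Borel--Serre duality reduces the theorem to surjectivity of $f_*$, and Propositions \ref{SympMPP21Lem3.10} and \ref{aSimplicialModel} identify $f_*$ with the map $\St_{2n}^\omega(\mathbb{Z})\to\St_{2n}^{\omega,\pm}(\Field_p)$ of Proposition \ref{surjectivityProposition}. You also make explicit several points the paper leaves implicit: that $f$ corresponds to the reduction-mod-$p$ map under these identifications, the passage from $\mathbb{Z}$- to $\mathbb{Q}$-coefficients (where the Tor term vanishes simply because $\mathbb{Q}$ is flat), and the degenerate cases $n=0,1$.
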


\begin{proof}[Proof of Theorem \ref{theorem:main:surjection} \& Theorem \ref{theoremArestrated}]
By Borel--Serre duality \cite[Theorem 11.4.2]{BorelSerreCorners}, 
\begin{equation*}
\hh^{n^2}(\Gamma_{2n}^\omega(p);\mathbb{Q}) \cong (\St_{2n}^\omega(\mathbb{Q}))_{\Gamma_{2n}^\omega(p)}. 
\end{equation*}
Consider the quotient map 
\begin{equation*}
f\colon \vert \sT_{2n}(\mathbb{Q})\vert \rightarrow \lvert \sT_{2n}(\mathbb{Q})\rvert/\Gamma_{2n}^\omega(p).
\end{equation*}
$f$ is $\Gamma_{2n}^\omega(p)$-invariant and it induces the following map on homology. 
\begin{equation*}
f_*\colon\St_{2n}^\omega(\mathbb{Q}) = \rh_{n - 1}(\lvert \sT_{2n}(\mathbb{Q})\rvert; \mathbb{Q}) \rightarrow \rh_{n - 1}(\lvert \sT_{2n}(\mathbb{Q})\rvert/\Gamma_{2n}^\omega(p); \mathbb{Q}). 
\end{equation*}
The coinvariants factor through this map. Hence, it suffices to demonstrate that $f_*$ is surjective. \\
Lemma \ref{SympMPP21Lem3.10} says that $\St_{2n}^\omega(\mathbb{Q}) \cong \St_{2n}^\omega(\mathbb{Z})$ while Proposition \ref{aSimplicialModel} implies 
\begin{equation*}
\rh_{n - 1}(\sT_{2n}(\mathbb{Q})/\Gamma_{2n}^\omega(p); \mathbb{Q}) \cong \St_{2n}^{\omega, \pm}(\Field_p).
\end{equation*}
Since the map
\begin{equation*}
\St_{2n}^\omega(\mathbb{Z}) \rightarrow \St_{2n}^{\omega, \pm}(\Field_p)
\end{equation*} 
is surjective by Proposition \ref{surjectivityProposition}, $f_*$ is surjective as desired.
\end{proof}

\begin{remark}
We examined several alternative approaches to the proof of Proposition \ref{surjectivityProposition}. One approach involved finding an explicit set of generators for $\St_{2n}^{\omega, \pm}(\Field_p)$ that lift easily to generators of $\St_{2n}^\omega(\Field_p)$ using Br\"uck--Sroka's \cite[Proposition 5.1 and 5.2]{BruckSroka24} strategy. This approach fails for two reasons. The first reason is that the $\pm$-oriented restricted symplectic Tits building of $\Field_p$ is not obviously contractible. The second, and more important reason, is that their strategy required the augmented partial basis complex to be spherical. For $R = \mathbb{Z}$, this simplicial complex is one more connected than the partial basis complex. We would require that the quotient of the augmented partial basis complex by a congruence subgroup be one more connected than a partial basis complex as well. Miller--Patzt-Putman determined that this is not true for primes $p > 7$ (c.f. \cite[Lemmas 2.43 and 2.50]{MiPaP21}). \\
Another approach involved demonstrating that the map
\begin{equation*}
f\colon \IP(\mathbb{Z}^{2n}) \rightarrow \IP(\Field_p^{2n}),
\end{equation*}
induces a surjection on homology using Church--Putman's Proposition \ref{CPQuillenFiber}. $\IP(\Field_p^{2n})$ is related to $\sbTD_{2n}(\Field_p)$ via poset map. This approach fails because the fibers of $f$ are not spherical. 
\end{remark}


\section{Numerical results} \label{computationalsection}

We restate Theorem \ref{theorem:main:computational}. 

\begin{theorem} \label{theoremBrestrated}
Fix a prime $p \geq 3$ and let $t^\omega(n, p):= \rank (\rh_{n - 1}(\sT_{2n}(\mathbb{Q})/\Gamma_{2n}^\omega(p); \mathbb{Z}))$. For $n \geq 1$, $t^\omega(n, p)$ equals
\begin{equation} \label{mainequation2}
\sum_{0 = m_{-1} < m_0 < \dots < m_k \leq n} \left( \prod_{i = 0}^{k - 1} p^{m_{i+1}-m_i\choose 2} \prod_{j = 0}^{m_i - 1} \frac{p^{m_{i + 1} - j} - 1}{p^{m_i - j} - 1} \right) \left( \prod_{i = 0}^{m_k - 1} \frac{p^{2n - 2i} - 1}{p^{m_k - i} - 1}\right) p^{{m_0 \choose 2} + (n - m_k)^2} \left(\frac{p - 3}{2}\right)^{k + 1}. 
\end{equation}
\end{theorem}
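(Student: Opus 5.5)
By Proposition \ref{aSimplicialModel}, $t^\omega(n,p)$ is the rank of $\rh_{n-1}(\sTD_{2n}(\Field_p);\mathbb{Z})$. By Proposition \ref{SympMPP21Lem3.15}, $\sTD_{2n}(\Field_p)$ is CM of dimension $(n-1)$, so it is $(n-2)$-connected and $(n-1)$-dimensional. Its only nonzero reduced homology is therefore the free group $\rh_{n-1}$, and
\begin{equation*}
t^\omega(n,p) = (-1)^{n-1}\tilde\chi(\sTD_{2n}(\Field_p)).
\end{equation*}
The plan is to compute this reduced Euler characteristic using the complete join structure over $\sT_{2n}(\Field_p)$ from the proof of Proposition \ref{SympMPP21Lem3.15}. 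Every fiber of $\tilde\varphi$ over a vertex has exactly $N := \frac{p-1}{2}$ points, by the count of $\pm$-orientations in Remark \ref{setOfOrientations}.

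\textbf{Step 1: expand the Euler characteristic over chains.} Each simplex $\sigma$ of $\sT_{2n}(\Field_p)$ with $|\sigma|$ vertices has $N^{|\sigma|}$ preimage simplices. Including the empty simplex, this gives
\begin{equation*}
\tilde\chi(\sTD_{2n}(\Field_p)) = -\sum_{\sigma} (-1)^{|\sigma|} N^{|\sigma|}.
\end{equation*}
Write $N = (N-1)+1$, so that $N^{|\sigma|} = \sum_{\rho\subseteq\sigma}(N-1)^{|\rho|}$, and swap the order of summation. For a fixed face $\rho$, the inner sum $\sum_{\sigma\supseteq\rho}(-1)^{|\sigma|}$ equals, up to the sign $(-1)^{|\rho|+1}$, the reduced Euler characteristic of $\lk_{\sT_{2n}(\Field_p)}(\rho)$. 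I obtain
\begin{equation*}
t^\omega(n,p) = \sum_{\rho} (N-1)^{|\rho|}\,\operatorname{rank}\rh_{\mathrm{top}}(\lk(\rho)),
\end{equation*}
where $\rho$ runs over all simplices of $\sT_{2n}(\Field_p)$, including the empty one. Here $N-1 = \frac{p-3}{2}$, which explains the factor $\left(\frac{p-3}{2}\right)^{k+1}$ in the formula.

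The sign bookkeeping needs care. Every link in $\sT_{2n}(\Field_p)$ is spherical of the complementary dimension, because the building is CM by Solomon--Tits' Theorem \ref{SBrB}. Consequently the signs $(-1)^{n-1}$, $(-1)^{|\rho|}$ and $(-1)^{\dim\lk\rho}$ should cancel uniformly, leaving every term positive.

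\textbf{Step 2: identify the links.} Let $\rho$ correspond to an isotropic flag with dimensions $m_0<\dots<m_k$, so $|\rho| = k+1$. By Remark \ref{PosetToSimpLink} and Lemma \ref{SympPosetLinks}, its link is the join
\begin{equation*}
\cT(\Field_p^{m_0}) * \cT(\Field_p^{m_1-m_0}) * \dots * \cT(\Field_p^{m_k-m_{k-1}}) * \sT(\Field_p^{2(n-m_k)}).
\end{equation*}
The top homology of a join of spherical complexes is the tensor product of the top homologies. By Theorem \ref{SBrB} its rank is therefore
\begin{equation*}
p^{\binom{m_0}{2}} \prod_{i=0}^{k-1} p^{\binom{m_{i+1}-m_i}{2}} \, p^{(n-m_k)^2}.
\end{equation*}
When $m_k = n$ the last factor is the empty complex, whose reduced top homology has rank $1 = p^{0}$.

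\textbf{Step 3: count the flags.} The number of isotropic flags of type $(m_0<\dots<m_k)$ is the number of isotropic $m_k$-subspaces of $\Field_p^{2n}$, namely $\prod_{i=0}^{m_k-1}\frac{p^{2n-2i}-1}{p^{m_k-i}-1}$. This is multiplied by the number of flags inside a fixed $m_k$-dimensional space, which is the product of Gaussian binomials $\binom{m_{i+1}}{m_i}_p = \prod_{j=0}^{m_i-1}\frac{p^{m_{i+1}-j}-1}{p^{m_i-j}-1}$. Substituting the counts from Steps 2 and 3 into the sum of Step 1 reproduces Equation \ref{mainequation2}.

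\textbf{Main obstacle.} I expect the main difficulty to be the sign bookkeeping in Step 1 together with matching the index conventions. In particular, the empty chain contributes the term $p^{n^2}$, which must be read as the $k=-1$ summand of the indexing $0=m_{-1}<m_0<\dots$. I would first verify the formula directly for $n=1$: there $\sTD_2(\Field_p)$ is a discrete set of $(p+1)\frac{p-1}{2}$ points, and the formula gives $p + (p+1)\frac{p-3}{2}$, which agrees.
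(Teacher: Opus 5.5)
Your proposal is correct, and it reaches the formula by a genuinely different route from the paper's.

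\textbf{The paper's route.} The paper applies the Bj\"orner--Wachs--Welker poset fiber theorem (Theorem \ref{nonrecursiveFormula}) to the forgetful map $\mathcal{F}$ from the flag poset of $\sbTD_{2n}(\Field_p)$ to that of $\sbT_{2n}(\Field_p)$. Over a $k$-simplex $b$, the fiber $\mathcal{F}_{\leq b}$ is a join of $k+1$ discrete sets of $\frac{p-1}{2}$ points, hence a wedge of $(\frac{p-3}{2})^{k+1}$ copies of $S^k$. This gives a homotopy equivalence of $\lvert \sTD_{2n}(\Field_p)\rvert$ with the wedge of $\lvert \sT_{2n}(\Field_p)\rvert$ and the spaces $\lvert \mathcal{F}_{\leq b}\rvert * \lvert \mathcal{B}_{>b}\rvert$. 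The decomposition of Theorem \ref{theorem:main:computational:restated} and the flag count follow from this.

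\textbf{Your route.} You use only that $\sTD_{2n}(\Field_p)$ is CM (Proposition \ref{SympMPP21Lem3.15}) to identify the rank with $(-1)^{n-1}\tilde\chi$. You then compute $\tilde\chi$ by counting simplices through the complete join structure. Your expansion $N^{|\sigma|}=\sum_{\rho\subseteq\sigma}(N-1)^{|\rho|}$ is exactly the numerical shadow of the paper's wedge decomposition. Here $(N-1)^{|\rho|}$ is the rank of the top homology of the fiber, and $\rank \rh_{\mathrm{top}}(\lk\rho)$ is the rank of the upper link.

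The sign check you flag does go through. For $|\rho|=k+1$ the link is spherical of dimension $n-k-2$, so $(-1)^{|\rho|}\tilde\chi(\lk\rho)=(-1)^{k+1}(-1)^{n-k-2}\rank \rh_{\mathrm{top}}(\lk\rho)=(-1)^{n-1}\rank \rh_{\mathrm{top}}(\lk\rho)$. This holds uniformly, including $\rho=\emptyset$ and maximal $\rho$ (empty link, $\tilde\chi=-1$, rank $1$).

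Your reading of the $k=-1$ summand as the empty flag contributing $p^{n^2}$ matches the paper's convention. Compare $S_1(0)=p^{n^2}$ in the proof of Corollary \ref{corollaryB2}.

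\textbf{What each route buys.} Your argument is more elementary: it avoids Theorem \ref{nonrecursiveFormula} and the check of its fiber-connectivity hypothesis. Nothing is lost for this statement, which only concerns a rank. Both sides of Theorem \ref{theorem:main:computational:restated} are free abelian, so that isomorphism "as abelian groups" carries no more information than the rank identity. The paper's route instead gives structural, homotopy-level information that an Euler characteristic cannot see. Each summand is identified as the top homology of a specific space $\mathcal{F}_{\leq b} * \mathcal{B}_{>b}$. As the paper remarks, this also yields filtrations whose quotients are isomorphic as representations.
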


Proposition \ref{aSimplicialModel} implies that
\begin{equation}
\rh_{n - 1}(\sT_{2n}(\mathbb{Q})/\Gamma_{2n}^\omega(p);\mathbb{Z}) \cong \rh_{n - 1}(\sTD_{2n}(\Field_p);\mathbb{Z}) = \St_{2n}^{\omega, \pm}(\Field_p). 
\end{equation}
We prove Theorem \ref{theorem:main:computational} by computing the rank of $\St_{2n}^{\omega, \pm}(\Field_p)$. The technique we use relies on the following theorem. 

\begin{theorem} \textup{\cite[Theorem 1.1]{BjornerWachsWelker}}\label{nonrecursiveFormula} Let $A$ and $B$ be finite posets where $\height(B_{\leq b}) < \infty$ for all $b \in B$. Let $F\colon A \rightarrow B$ be a map of posets such that for all $b \in B$ the fibre $F_{\leq b}$ is $(\Dim(F_{< b}) - 1)$-connected. Then we have 
\begin{align*}
\lvert A \rvert \simeq \lvert B \rvert \vee \left( \bigvee_{b \in B} \lvert F_{\leq b} \rvert * \lvert B_{> b} \rvert \right).
\end{align*}
\end{theorem}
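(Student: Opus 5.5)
The plan is to prove the splitting by induction on $\lvert B \rvert$, removing one element of $B$ at a time. At each step I compare how $\lvert A \rvert$ and $\lvert B \rvert$ change, and use the connectivity hypothesis to show that the change on the $A$-side is exactly a wedge summand. Because the order of removal matters, I will do a filtration by a linear extension rather than use a single mapping-cylinder poset. Coning off all of $A$ at once, as in the cylinder $M = A \sqcup B$, only yields the weaker relation "$\lvert B \rvert$ is $\lvert A \rvert$ with cones attached". For the claimed decomposition I have to show that each attaching map splits, not that it is null.

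\textbf{Setup and inductive step.} Fix a linear extension $b_1, \dots, b_N$ of $B$. Put $B_j = \{b_1, \dots, b_j\}$, a down-closed subposet, and $A_j = F^{-1}(B_j)$, with restricted map $F_j \colon A_j \to B_j$. For $b \in B_j$ the fibres $(F_j)_{\leq b}$ and $(F_j)_{<b}$ agree with $F_{\leq b}$ and $F_{<b}$, so the hypothesis is inherited. The upper links $(B_j)_{>b}$ grow with $j$. The base case $B_1 = \{b_1\}$ is clear: there $F_{\leq b_1} = A_1$ and $\lvert B_1 \rvert$ is a point. For the inductive step let $b = b_j$, which is maximal in $B_j$.
\begin{itemize}
\item $\lvert B_j \rvert = \lvert B_{j-1} \rvert \cup C\lvert B_{<b} \rvert$.
\item $\lvert A_j \rvert = \lvert A_{j-1} \rvert \cup \lvert F_{\leq b} \rvert$, glued along $\lvert F_{<b} \rvert = \lvert F_{\leq b} \rvert \cap \lvert A_{j-1} \rvert$.
\item For each $b' < b$, the upper link $B_{>b'}$ gains the element $b$ and the subposet $B_{>b'} \cap B_{<b}$, that is, it changes by a cone.
\end{itemize}

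\textbf{Key homotopy input.} Write $d = \Dim F_{<b}$, so $\lvert F_{<b} \rvert$ is a complex of dimension $d$. Since $\lvert F_{\leq b} \rvert$ is $(d-1)$-connected, cellular approximation makes the inclusion $\lvert F_{<b} \rvert \hookrightarrow \lvert F_{\leq b} \rvert$ null-homotopic. Hence, up to homotopy, gluing $\lvert F_{\leq b} \rvert$ onto $\lvert A_{j-1} \rvert$ along $\lvert F_{<b} \rvert$ amounts to two operations. First, cone off $\lvert F_{<b} \rvert$ inside $\lvert A_{j-1} \rvert$. Second, wedge on a copy of $\lvert F_{\leq b} \rvert$. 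Concretely, I will show
$$\lvert A_j \rvert \simeq \big(\lvert A_{j-1} \rvert \cup C\lvert F_{<b} \rvert\big) \vee \lvert F_{\leq b} \rvert.$$
This is the standard fact that a pushout along a null-homotopic map splits off the target as a wedge summand.

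\textbf{Matching with $\lvert B \rvert$.} By the inductive hypothesis $\lvert A_{j-1} \rvert \simeq \lvert B_{j-1} \rvert \vee \bigvee_{b' \in B_{j-1}} \lvert F_{\leq b'} \rvert * \lvert (B_{j-1})_{>b'} \rvert$. I then track where $\lvert F_{<b} \rvert$ sits in this decomposition. Applying the induction to $F_{<b} \to B_{<b}$ gives
$$\lvert F_{<b} \rvert \simeq \lvert B_{<b} \rvert \vee \bigvee_{b' < b} \lvert F_{\leq b'} \rvert * \lvert (B_{<b})_{>b'} \rvert.$$
Coning off $\lvert F_{<b} \rvert$ should therefore do two things. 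It cones off $\lvert B_{<b} \rvert$ inside $\lvert B_{j-1} \rvert$, which turns $\lvert B_{j-1} \rvert$ into $\lvert B_j \rvert$. It also cones off the subspace $\lvert F_{\leq b'} \rvert * \lvert (B_{<b})_{>b'} \rvert$ inside each summand $\lvert F_{\leq b'} \rvert * \lvert (B_{j-1})_{>b'} \rvert$. By the third bullet above, that produces exactly $\lvert F_{\leq b'} \rvert * \lvert (B_j)_{>b'} \rvert$. The newly wedged $\lvert F_{\leq b} \rvert$ equals $\lvert F_{\leq b} \rvert * \lvert (B_j)_{>b} \rvert$, because $b$ is maximal in $B_j$ and so $(B_j)_{>b}$ is empty. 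Taking $j = N$ finishes the proof.

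\textbf{Main obstacle.} The hard part is the compatibility in the matching step. The equivalence for $\lvert F_{<b} \rvert$ obtained by induction must agree with the one for $\lvert A_{j-1} \rvert$ when restricted along $F_{<b} \subseteq A_{j-1}$; only then does coning act summand by summand. My first attempt will be to make all the equivalences natural. I would build them from explicit poset-level homotopies, meaning Quillen's Proposition~\ref{posetmaphomotopy} applied to the maps $a \mapsto F(a)$ into the mapping cylinder. I would also strengthen the inductive statement to assert naturality with respect to restriction to down-closed subposets of $B$. If that proves too delicate, the fallback is to prove the statement first on homology, via Mayer--Vietoris with the same splitting, and then upgrade using the dimension bounds.
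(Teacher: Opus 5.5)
Your proposal has two genuine gaps: the key step fails under the hypothesis as printed (and the printed statement is itself false), and the central matching step is left unproved.

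\textbf{The null-homotopy step and the hypothesis.} With $d = \Dim F_{<b}$, you need to extend $\lvert F_{<b}\rvert \hookrightarrow \lvert F_{\leq b}\rvert$ over $C\lvert F_{<b}\rvert$, which has cells up to dimension $d+1$. That requires $\pi_i(\lvert F_{\leq b}\rvert) = 0$ for all $i \leq d$, i.e.\ $d$-connectivity. The $(d-1)$-connectivity you are given is not enough: the identity of $S^d$ is a counterexample.

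This is not a slip you can repair inside your argument, because the statement as printed is false. Take $B = \{b_0, b_1, b_2\}$ with $b_0 < b_1$ and $b_0 < b_2$, let $A = \{x, y\}$ be an antichain, and set $F(x) = F(y) = b_0$. Then $F_{<b_0} = \emptyset$ and $F_{<b_i} = F_{\leq b_i} = A$ for $i = 1, 2$. So every fibre is $(\Dim(F_{<b}) - 1)$-connected. Yet the right-hand side is $\simeq S^1 \vee S^0 \vee S^0$, which has three components, while $\lvert A\rvert = S^0$. Your step fails exactly here: $F_{<b_1} \hookrightarrow F_{\leq b_1}$ is the identity of $S^0$.

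Bj\"orner--Wachs--Welker actually assume that $F_{\leq b}$ is $\Dim(F_{<b})$-connected (their $\ell(f^{-1}(Q_{<q}))$). That is precisely what makes your cellular-approximation claim true. Since your argument uses the hypothesis at exactly this point, you should have flagged the discrepancy rather than asserted the step. The paper's own application survives: there $\mathcal{F}_{<b}$ is the face poset of the $(k-1)$-skeleton of a join of $k+1$ discrete sets. So $\Dim \mathcal{F}_{<b} = k-1$, not $k$, and $\mathcal{F}_{\leq b}$ is $(k-1)$-connected.

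\textbf{The matching step.} Even with the corrected hypothesis, the matching step is the real content of the theorem, and you flag it without closing it. Your inductive hypothesis supplies some equivalence for $\lvert A_{j-1}\rvert$ and some unrelated one for $\lvert F_{<b}\rvert$. The cofibre $\lvert A_{j-1}\rvert \cup C\lvert F_{<b}\rvert$ depends on the homotopy class of the inclusion transported through both. Nothing forces that class to be the wedge of the summand-wise inclusions.

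Neither remedy works:
\begin{itemize}
\item The null-homotopies of $\lvert F_{<b}\rvert \hookrightarrow \lvert F_{\leq b}\rvert$ come from a connectivity hypothesis, not from any poset map. So Quillen's homotopy property cannot produce them, let alone coherently.
\item The Mayer--Vietoris fallback has the same naturality problem for its splittings. At best it yields the homology statement, which incidentally is all Theorem~\ref{theorem:main:computational:restated} uses. No dimension bound upgrades that to a homotopy equivalence, since $\lvert B\rvert$ and the fibres can have arbitrary fundamental groups.
\end{itemize}

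\textbf{How to close the gap.} Build the equivalences once and coherently. Fix a null-homotopy $H_b$ for each $b$, ending at a point $c_b$. Let $\mathcal{E}$ be the diagram with the same spaces $\lvert F_{\leq b}\rvert$ but all maps into $\lvert F_{\leq b}\rvert$ constant at $c_b$. Along your linear extension, construct maps from $\operatorname{hocolim}_{S}\lvert F_{\leq b}\rvert$ (which is $\simeq \lvert F^{-1}(S)\rvert$ by the Projection Lemma) to $\operatorname{hocolim}_{S}\mathcal{E}$, simultaneously for all down-closed $S$. Each step glues homotopy equivalences along cofibrations, using $H_b$ on the new mapping-cylinder piece. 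The resulting maps are then strictly compatible with restriction.

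The Ziegler--\v{Z}ivaljevi\'c Wedge Lemma identifies $\operatorname{hocolim}\mathcal{E}$ with $\lvert B\rvert \vee \bigvee_b \lvert F_{\leq b}\rvert * \lvert B_{>b}\rvert$. It packages your local computations, which are correct: the pushout splitting and $X * (L \cup_K CK) = (X*L)\cup_{X*K} C(X*K)$. This homotopy-colimit route is the one Bj\"orner--Wachs--Welker take; the paper itself only cites their theorem.
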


Given a poset $A$ and its associated simplicial complex $\simp(A)$, we can associate a poset to the simplicial complex $\simp(A)$, $\posety(\simp(A))$, which we call a \emph{poset of flags of $A$}. The elements of $\posety(\simp(A))$ are flags $[a_0 < \dots < a_k]$ and the order is inclusion of flags. Note that $\posety(\simp(A))$ is homotopy equivalent to $A$. We apply Bj\"orner--Wachs--Welker's Theorem \ref{nonrecursiveFormula} to $\posety(\simp(\sbTD_{2n}(\Field_p)))$ and $\posety(\simp(\sbT_{2n}(\Field_p)))$. We denote the elements of $\posety(\simp(\sbT_{2n}(\Field_p)))$ by $[\overline{V}_0 \subsetneq \dots \subsetneq \overline{V}_k]$. We decorate the a $\pm$ the elements of $\posety(\simp(\sbTD_{2n}(\Field_p)))$: $[\overline{V}_0 \subsetneq \dots \subsetneq \overline{V}_k]^\pm$. \\
Recall that there exists a bijection between the set of $\pm$-orientations of $\overline{V}$ and the set $\Field_p^\times/\mathbb{Z}^\times$. $\lvert \Field_p^\times/\mathbb{Z}^\times \rvert = \frac{p - 1}{2}$. The set $\Field_p^\times/\mathbb{Z}^\times$ is homotopy equivalent to a wedge of $(\frac{p - 3}{2})$-many $S^0$. We make the convention of setting $\St(0) = \St^\omega(0) = \mathbb{Z}$ and $\St(\overline{V}) = \mathbb{Z}$ if $\Dim \overline{V} = 1$. These terms appear in a tensor product of $\mathbb{Z}$ so they vanish. 

\begin{theorem}
\label{theorem:main:computational:restated}
Fix a prime $p \geq 3$. For $n \geq 1$, $\St_{2n}^{\omega, \pm}(\Field_p)$ is isomorphic to 
\begin{equation}\label{symp2}
\bigoplus_{\substack{0 = \overline{V}_{-1} \subsetneq \overline{V}_0 \subsetneq \dots \subsetneq \overline{V}_k \subsetneq \Field_p^{2n}, \\ \omega \vert_{\overline{V}_i} \equiv 0}} \St(\overline{V}_0) \otimes \St(\overline{V}_1/\overline{V}_0) \otimes \dots \otimes \St(\overline{V}_k/\overline{V}_{k - 1}) \otimes \St^\omega(\overline{V}_k^\perp/\overline{V}_k) \otimes \rh_0(\Field_p^\times/\mathbb{Z}^\times; \mathbb{Z})^{\oplus (k + 1)},
\end{equation}
as abelian groups.
\end{theorem}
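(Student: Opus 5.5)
We apply Bj\"orner--Wachs--Welker's Theorem \ref{nonrecursiveFormula} to the forgetful map on flag posets
\begin{equation*}
F\colon A:=\posety(\simp(\sbTD_{2n}(\Field_p))) \rightarrow B:=\posety(\simp(\sbT_{2n}(\Field_p))), \quad [\overline{V}_0 \subsetneq \dots \subsetneq \overline{V}_k]^\pm \mapsto [\overline{V}_0 \subsetneq \dots \subsetneq \overline{V}_k].
\end{equation*}
It forgets the $\pm$-orientations. The fiber $F_{\leq b}$ over a flag $b=[\overline{V}_0\subsetneq\dots\subsetneq\overline{V}_k]$ is the face poset of the full subcomplex of $\sTD_{2n}(\Field_p)$ lying over the simplex $b$. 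By the complete join structure established in the proof of Proposition \ref{SympMPP21Lem3.15}, this subcomplex is the join $\varphi^{-1}(\overline{V}_0)*\dots*\varphi^{-1}(\overline{V}_k)$ of $k+1$ discrete sets, each of size $\frac{p-1}{2}$. It is therefore a join of $k+1$ copies of $\Field_p^\times/\mathbb{Z}^\times$. This join has dimension $k$, is $(k-1)$-connected, and is homotopy equivalent to a wedge of $k$-spheres. Its top reduced homology is $\rh_0(\Field_p^\times/\mathbb{Z}^\times)^{\otimes(k+1)}$, which is free of rank $(\frac{p-3}{2})^{k+1}$.

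Next we check the hypothesis of Theorem \ref{nonrecursiveFormula}. The subposet $F_{<b}$ consists of proper subflags together with all their orientations. It is the boundary of this join, namely the union of joins omitting one factor. It has dimension $k-1$, so $(\Dim F_{<b}-1)=k-2$, and $F_{\leq b}$ is $(k-1)$-connected, hence in particular $(k-2)$-connected. The hypothesis holds. The theorem then gives
\begin{equation*}
\lvert A\rvert \simeq \lvert B\rvert \vee \bigvee_{b} \lvert F_{\leq b}\rvert * \lvert B_{>b}\rvert.
\end{equation*}
Here $B_{>b}$ is the poset of flags strictly containing $b$, i.e.\ the face poset of $\lk_{\sT_{2n}(\Field_p)}(b)$. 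By Remark \ref{PosetToSimpLink} and Lemma \ref{SympPosetLinks}, this link is the join
\begin{equation*}
\cT(\overline{V}_0)*\cT(\overline{V}_1/\overline{V}_0)*\dots*\cT(\overline{V}_k/\overline{V}_{k-1})*\sT(\overline{V}_k^\perp/\overline{V}_k).
\end{equation*}
By Solomon--Tits' Theorem \ref{SBrB}, each factor is a wedge of spheres of the top dimension. We use the empty-complex conventions for $\dim 1$ quotients and for $\overline{V}_k^\perp=\overline{V}_k$, matching the convention $\St=\mathbb{Z}$ set above.

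Taking reduced homology, we use the join formula $\rh_{a+b+1}(X*Y)\cong\rh_a(X)\otimes\rh_b(Y)$ for free wedges of spheres. Each summand then contributes
\begin{equation*}
\rh_0(\Field_p^\times/\mathbb{Z}^\times)^{\otimes(k+1)}\otimes\St(\overline{V}_0)\otimes\dots\otimes\St^\omega(\overline{V}_k^\perp/\overline{V}_k)
\end{equation*}
in a single degree. A dimension count should show this degree is $n-1$: the factors contribute $k+(m_0-2)+\dots+(n-m_k-1)$, plus the join shifts. The $\lvert B\rvert$ term contributes $\St^\omega_{2n}(\Field_p)$, which corresponds to the empty flag, i.e.\ $k=-1$. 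This needs care: the indexing in \eqref{symp2} should absorb this term, or it must be matched separately. Everything is concentrated in degree $n-1$ and free, so $\rh_{n-1}(A)=\St^{\omega,\pm}_{2n}(\Field_p)$ is the stated direct sum. We read the notation ``$\rh_0(\cdot)^{\oplus(k+1)}$'' as the $(k+1)$-fold tensor power.

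The main obstacle is bookkeeping rather than topology. First, the degree shifts from the joins must be verified to land in degree $n-1$ uniformly, including the degenerate cases where some quotient has dimension $1$ or $\overline{V}_k$ is Lagrangian. Second, the wedge summand $\lvert B\rvert$ must be accounted for in the indexing of \eqref{symp2}. I would handle both by treating empty complexes as $S^{-1}$ and checking the degrees factor by factor.
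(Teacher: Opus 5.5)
Your proposal is correct and follows the paper's proof essentially step for step. You apply Bj\"orner--Wachs--Welker to the orientation-forgetting map of flag posets, take the fiber $F_{\le b}$ to be a join of $k+1$ copies of $\Field_p^\times/\mathbb{Z}^\times$, identify $B_{>b}$ with the join of Tits buildings from Lemma \ref{SympPosetLinks}, and absorb the $\lvert B\rvert$ summand as the $k=-1$ term via $\overline{V}_{-1}=0$; moreover, your $(k-1)$-connectivity of $F_{\le b}$ is exactly what the original Bj\"orner--Wachs--Welker hypothesis requires, namely that $F_{\le b}$ be $(\Dim F_{<b})$-connected, which is stronger than the version quoted in Theorem \ref{nonrecursiveFormula}.

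Reading $\rh_0(\cdot)^{\oplus(k+1)}$ as a $(k+1)$-fold tensor power is right: it is the only reading consistent with the paper's fiber computation and with Corollary \ref{corollaryB1}. Your degree check also closes at once: the link of a $k$-simplex in the $(n-1)$-dimensional Cohen--Macaulay complex $\sT_{2n}(\Field_p)$ is a wedge of $(n-k-2)$-spheres, so every join summand lands in degree $k+(n-k-2)+1=n-1$.
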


\begin{proof}
Fix $2n \geq 2$, set $\mathcal{A}^\pm := \posety(\simp(\sbTD_{2n}(\Field_p)))$ and $\mathcal{B}:= \posety(\simp(\sbT_{2n}(\Field_p)))$. Since $\mathcal{A}^\pm \simeq \sbTD_{2n}(\Field_p)$ and $\mathcal{B} \simeq \sbT_{2n}(\Field_p)$, it suffices to apply Bj\"orner--Wachs--Welker's Theorem \ref{nonrecursiveFormula} to $\mathcal{A}^\pm $ and $\mathcal{B}$ to conclude the theorem. Set
\begin{align*}
\mathcal{F}& \colon \mathcal{A}^\pm  \rightarrow \mathcal{B}\\
\mathcal{F}& \colon[\overline{V}_0 \subsetneq \dots \subsetneq \overline{V}_k]^\pm \mapsto [\overline{V}_0 \subsetneq \dots \subsetneq \overline{V}_k]
\end{align*}
to be the projection map that forgets the $\pm$-orientations on the elements of the isotropic flag. Fix $ [\overline{V}_0 \subsetneq \dots \subsetneq \overline{V}_k]
 \in \mathcal{B}$ and note that $\Dim(\mathcal{F}_{< [\overline{V}_0 \subsetneq \dots \subsetneq \overline{V}_k]}) = k$. The upper link of $[\overline{V}_0 \subsetneq \dots \subsetneq \overline{V}_k]$, $\mathcal{B}_{> [\overline{V}_0 \subsetneq \dots \subsetneq \overline{V}_k]}$, is homotopy equivalent to the simplicial link of the $k$-simplex in $\sT_{2n}(\Field_p)$ that corresponds to the isotropic flag $[\overline{V}_0 \subsetneq \dots \subsetneq \overline{V}_k]$. Using Lemma \ref{SympPosetLinks} and Remark \ref{PosetToSimpLink}, we determine
\begin{equation*}
\mathcal{B}_{> [\overline{V}_0 \subsetneq \dots \subsetneq \overline{V}_k]} \simeq \cT(\overline{V}_0)* \cT(\overline{V}_1/\overline{V}_0)* \dots * \cT(\overline{V}_k/\overline{V}_{k - 1})* \sT(\overline{V}_k^\perp/\overline{V}_k).
\end{equation*}
$\mathcal{B}_{> [\overline{V}_0 \subsetneq \dots \subsetneq \overline{V}_k]}$ is $(n - k - 2)$-dimensional and $(n - k - 3)$-connected by Solomon--Tits' Theorem \ref{SBrB}. Passing to homology we have
\begin{equation}\label{proofB1}
\rh_{n - k - 2}(\mathcal{B}_{> [\overline{V}_0 \subsetneq \dots \subsetneq \overline{V}_k]}) \cong \St(\overline{V}_0) \otimes \St(\overline{V}_1/\overline{V}_0) \otimes \dots \otimes \St(\overline{V}_k/\overline{V}_{k - 1}) \otimes \St^\omega(\overline{V}_k^\perp/\overline{V}_k).
\end{equation}
In the proof of Proposition \ref{SympMPP21Lem3.15} we demonstrated that $\sTD_{2n}(\Field_p)$ is a complete join complex over $\sT_{2n}(\Field_p)$. So there exists a projection map $\varphi\colon \sTD_{2n}(\Field_p) \rightarrow \sT_{2n}(\Field_p)$ for which $\varphi$ projects the full sub-complex $\varphi(s_0)^{-1} * \dots * \varphi(s_k)^{-1}$ onto the simplex associated to the isotropic flag $[\overline{V}_0 \subsetneq \dots \subsetneq \overline{V}_k]$, where $s_i$ are the vertices associated to the isotropic flags $[V_i]$ for $i \in \{0, \dots, k\}$. Since $\overline{V}_i$ can be equipped with $(\frac{p - 1}{2})$-many distinct $\pm$-orientations, then there exists a bijection between $\varphi(s_i)^{-1}$ and $\Field_p^\times/\mathbb{Z}^\times$ as sets. We conclude that 
\begin{equation*}
\mathcal{F}_{\leq [\overline{V}_0 \subsetneq \dots \subsetneq \overline{V}_k]} \cong \posety(*_{i = 0}^k \varphi(s_i)^{-1}) \simeq *_{i = 0}^k (\Field_p^\times/\mathbb{Z}^\times) \simeq \bigvee_{(\frac{p - 3}{2})^{k + 1}} S^k,
\end{equation*}
which is $k$-dimensional and $(k - 1)$-connected. Passing to homology we have
\begin{equation}\label{proofB2}
\rh_k(\mathcal{F}_{\leq [\overline{V}_0 \subsetneq \dots \subsetneq \overline{V}_k]}) \cong \rh_0(\Field_p^\times/\mathbb{Z}^\times;\mathbb{Z})^{\oplus (k + 1)}. 
\end{equation}
We invoke Bj\"orner--Wachs--Welker's Theorem \ref{nonrecursiveFormula} to get
\begin{equation*}
\lvert \mathcal{A}^\pm \rvert \simeq \lvert \mathcal{B} \rvert \vee \left( \bigvee_{[\overline{V}_0 \subsetneq \dots \subsetneq \overline{V}_k] \in \mathcal{B}} \lvert \mathcal{F}_{\leq [\overline{V}_0 \subsetneq \dots \subsetneq \overline{V}_k]} \rvert * \lvert \mathcal{B}_{> [\overline{V}_0 \subsetneq \dots \subsetneq \overline{V}_k]} \rvert \right).
\end{equation*}
Using Equations \ref{proofB1} and \ref{proofB2}, when we pass to homology we have that $\St_{2n}^{\omega, \pm}(\Field_p)$ is isomorphic to the direct sum of $\St_{2n}^\omega(\Field_p)$ and
\begin{equation}\label{proofB3}
\bigoplus_{\substack{0 \subsetneq \overline{V}_0 \subsetneq \cdots \subsetneq \overline{V}_k \subsetneq \Field_p^{2n}, \\ \omega \vert_{\overline{V}_i} \equiv 0}} \St(\overline{V}_0) \otimes \St(\overline{V}_1/\overline{V}_0) \otimes \dots \otimes \St(\overline{V}_k/\overline{V}_{k - 1}) \otimes \St^\omega(\overline{V}_k^\perp/\overline{V}_k) \otimes \rh_0(\Field_p^\times/\mathbb{Z}^\times;\mathbb{Z})^{\oplus (k + 1)}
\end{equation}
which we can rewrite as 
\begin{equation}\label{proofB4}
 \bigoplus_{\substack{0 = \overline{V}_{-1} \subsetneq \overline{V}_0 \subsetneq \cdots \subsetneq \overline{V}_k \subsetneq \Field_p^{2n}, \\ \omega \vert_{\overline{V}_i} \equiv 0}} \St(\overline{V}_0) \otimes \St(\overline{V}_1/\overline{V}_0) \otimes \dots \otimes \St(\overline{V}_k/\overline{V}_{k - 1}) \otimes \St^\omega(\overline{V}_k^\perp/\overline{V}_k) \otimes \rh_0(\Field_p^\times/\mathbb{Z}^\times;\mathbb{Z})^{\oplus (k + 1)}.
\end{equation}
\vspace{-.5cm}
\end{proof}

The following corollary is derived from Equation \ref{symp2}. Recall that $\rank \St_n(\Field_p) = p^{n \choose 2}$ and $\rank \St_{2n}^\omega(\Field_p) = p^{n^2}$. 

\begin{corollary}\label{corollaryB1}
For a prime $p \geq 3$, let $t^\omega(n, p) = \rank \St_{2n}^{\omega, \pm}(\Field_p)$. Then for $n \geq 1$
\begin{equation}\label{tomegaequation2}
t^\omega(n, p) := \sum_{\substack{0 = \overline{V}_{(-1)} \subsetneq \overline{V}_0 \subsetneq \dots \subsetneq \overline{V}_k \subsetneq \Field_p^{2n},\\ m_i = \Dim \overline{V}_i, \\ \omega \vert_{\overline{V}_i} \equiv 0}} p^{m_0 \choose 2} \cdot p^{m_1 - m_0 \choose 2} \cdots p^{m_k - m_{k - 1} \choose 2} \cdot p^{(n - m_k)^2} \cdot \left(\frac{p - 3}{2}\right)^{k + 1}.
\end{equation} 
\end{corollary}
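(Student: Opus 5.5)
The plan is to take ranks of both sides of the isomorphism in Theorem \ref{theorem:main:computational:restated}. Since every module appearing there is free abelian, ranks of direct sums add and ranks of tensor products multiply. For a summand indexed by an isotropic flag $0 \subsetneq \overline{V}_0 \subsetneq \dots \subsetneq \overline{V}_k \subsetneq \Field_p^{2n}$ with $m_i = \Dim \overline{V}_i$, the rank of that summand is then the product of the ranks of its tensor factors.

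The factors are handled one at a time.
\begin{itemize}
\item The quotient $\overline{V}_i/\overline{V}_{i-1}$ has dimension $m_i - m_{i-1}$. By Solomon--Tits' Theorem \ref{SBrB}, $\St(\overline{V}_i/\overline{V}_{i-1})$ has rank $p^{m_i - m_{i-1} \choose 2}$. This also covers the convention cases $\dim 0, 1$: there the rank is $1 = p^0$, and ${1 \choose 2} = 0$.
\item By Lemma \ref{IsotropicPerp}, $\overline{V}_k^\perp/\overline{V}_k$ is a symplectic space of dimension $2n - 2m_k$. Theorem \ref{SBrB} therefore gives $\St^\omega(\overline{V}_k^\perp/\overline{V}_k)$ rank $p^{(n-m_k)^2}$. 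When $m_k = n$ we use the convention $\St^\omega(0) = \mathbb{Z}$.
\item The set $\Field_p^\times/\mathbb{Z}^\times$ has $\frac{p-1}{2}$ points, so $\rh_0(\Field_p^\times/\mathbb{Z}^\times;\mathbb{Z})$ is free of rank $\frac{p-3}{2}$.
\end{itemize}

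The one point needing care is the last factor. It must be read as the $(k+1)$-fold tensor power of $\rh_0(\Field_p^\times/\mathbb{Z}^\times;\mathbb{Z})$, not as a direct sum of $k+1$ copies. This reading is forced by the proof of Theorem \ref{theorem:main:computational:restated}: there the fiber $\mathcal{F}_{\le [\dots]}$ is a join of $k+1$ discrete sets, which is a wedge of $(\frac{p-3}{2})^{k+1}$ spheres $S^k$. So the rank of this factor is $(\frac{p-3}{2})^{k+1}$.

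Multiplying these ranks and summing over all nonempty isotropic flags gives exactly the summand displayed in Equation \ref{tomegaequation2}. One more step is needed to match the statement. The decomposition in Theorem \ref{theorem:main:computational:restated} came from Theorem \ref{nonrecursiveFormula} as $\St_{2n}^\omega(\Field_p)$ plus the sum over flags. The index set in Equation \ref{proofB4}, with $\overline{V}_{-1} = 0$, is meant to absorb the $\St_{2n}^\omega(\Field_p)$ term as an empty flag with $k = -1$. That empty flag contributes $p^{n^2}$ with exponent $k+1 = 0$, i.e.\ the base term. I would write the sum as the statement's index set does and note that this base term is included. Beyond that, the proof is just bookkeeping of dimensions.
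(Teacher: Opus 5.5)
Your proposal is correct and follows the paper's own route: the corollary comes from taking ranks in the decomposition of Theorem \ref{theorem:main:computational:restated}, using $\rank \St_m(\Field_p) = p^{\binom{m}{2}}$ and $\rank \St^\omega_{2m}(\Field_p) = p^{m^2}$. Your two clarifications are exactly what the displayed formula needs: the factor $\rh_0(\Field_p^\times/\mathbb{Z}^\times;\mathbb{Z})^{\oplus(k+1)}$ must be read as a $(k+1)$-fold tensor power, as the join computation in that proof forces, and the empty flag ($k=-1$) contributes the base term $p^{n^2}$.
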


Let $\Gr_m(\Field_p^n)$ denote the \emph{$m$-th Grassmannian of $\Field_p^n$} and $\Gr^\omega_m(\Field_p^{2n})$ denote the \emph{$m$-th isotropic Grassmannian of $\Field_p^{2n}$}. An easy computation gives 
\begin{align}
  \lvert \Gr_{m}(\Field_p^{n}) \rvert & = \prod_{i = 0}^{m - 1} \frac{ p^{n - i} - 1}{ p^{m - i} - 1} \\ 
  \lvert \Gr^\omega_{m}(\Field_p^{2n}) \rvert & = \prod_{i = 0}^{m - 1} \frac{p^{2n - 2i} - 1}{p^{m - i} - 1}.
\end{align}
\begin{remark}
Given a sequence of integers $m_0 < \dots < m_k$, the product 
\begin{equation} \label{isoSuperGrass}
\left( \prod_{i = 0}^{k - 1} \lvert \Gr_{m_{i}}(\mathbb{F}_p^{m_{i + 1}}) \rvert \right) \lvert \Gr^\omega_{m_{k}}(\Field_p^{2n}) \rvert = \left(\prod_{i = 0}^{k - 1} \left( \prod_{j = 0}^{m_i - 1} \frac{ p^{m_{i + 1} - j} - 1}{ p^{m_i - j} - 1} \right) \right) \left(\prod_{i = 0}^{m_k - 1} \frac{p^{2n - 2i} - 1}{p^{m_k - i} - 1} \right),
\end{equation}
counts the number of isotropic flags 
\begin{equation*}
0 \subsetneq \overline{V}_0 \subsetneq \dots \subsetneq \overline{V}_k \subsetneq \Field_p^{2n}
\end{equation*}
that satisfy $\Dim \overline{V}_i = m_i$ for $i \in \{0, \dots, k\}$. 
\end{remark}

\begin{proof}[Proof of Theorem \ref{theorem:main:computational} \& Theorem \ref{theoremBrestrated}]
Proposition \ref{aSimplicialModel} implies that
\begin{equation}
\rh_{n - 1}(\sT_{2n}(\mathbb{Q})/\Gamma_{2n}^\omega(p);\mathbb{Z}) \cong \rh_{n - 1}(\sTD_{2n}(\Field_p);\mathbb{Z}) = \St_{2n}^{\omega, \pm}(\Field_p). 
\end{equation}
By Corollary \ref{corollaryB1}, $t^\omega(n, p)$ of Equation \ref{tomegaequation2} is the rank of $\St_{2n}^{\omega, \pm}(\Field_p)$. Hence, $t^\omega(n, p)$ is the rank of $\rh_{n - 1}(\sT_{2n}(\mathbb{Q})/\Gamma_{2n}^\omega(p);\mathbb{Z})$. All that remains to establish is that $t^\omega(n, p)$ of Equation \ref{mainequation} from Theorem \ref{theorem:main:computational} (Equation \ref{mainequation2} from Theorem \ref{theoremBrestrated}) is equivalent to $t^\omega(n, p)$ of Equation \ref{tomegaequation2} from Corollary \ref{corollaryB1}. But this follows from noticing that $t^\omega(n, p)$ of Equation \ref{tomegaequation2} depends on unique isotropic flags while $t^\omega(n, p)$ of Equation \ref{mainequation} depends on increasing sequences $m_0 < \dots < m_k$ that correspond to isotropic flags whose subspaces have dimension $\{m_0, \dots, m_k\}$. We account for this difference by including Equation \ref{isoSuperGrass} to each term of $t^\omega(n, p)$ of Equation \ref{mainequation}. 
\end{proof}

\begin{remark}
When $p = 3$, Equation \ref{tomegaequation2} simplifies to $\St_{2n}^\omega(\Field_3)$ since $\frac{p - 3}{2} = 0$. 
\end{remark}

A similar statement holds for the $\pm$-oriented Tits building and can be proven using a similar argument as for Theorem \ref{theorem:main:computational}. 

\begin{theorem} \textup{\cite{PeterCommunication}}\label{patzt}
Fix a prime $p \geq 3$. Then for $n \geq 2$, $\St_n^\pm(\Field_p)$ is isomorphic to 
\begin{equation}\label{normal2}
\bigoplus_{0 = \overline{V}_{-1} \subsetneq \overline{V}_0 \subsetneq \dots \subsetneq \overline{V}_k \subsetneq \Field_p^{n}} \St(\overline{V}_0) \otimes \St(\overline{V}_1/\overline{V}_0) \otimes \dots \otimes \St(\overline{V}_k/\overline{V}_{k - 1}) \otimes \St(\Field_p^n/\overline{V}_k) \otimes \rh_0(\Field_p^\times/\mathbb{Z}^\times; \mathbb{Z})^{\oplus (k + 1)},
\end{equation}
as abelian groups.
\end{theorem}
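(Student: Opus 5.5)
The plan is to copy the proof of Theorem \ref{theorem:main:computational:restated}, replacing the symplectic buildings by the ordinary ones. Set $\mathcal{A}^\pm := \posety(\simp(\cbTD_n(\Field_p)))$ and $\mathcal{B} := \posety(\simp(\cbT_n(\Field_p)))$. Let $\mathcal{F}\colon \mathcal{A}^\pm \to \mathcal{B}$ forget the $\pm$-orientations on every member of a flag $[\overline{V}_0 \subsetneq \dots \subsetneq \overline{V}_k]^\pm$. Since $\mathcal{A}^\pm \simeq \cTD_n(\Field_p)$ and $\mathcal{B} \simeq \cT_n(\Field_p)$, I will apply Bj\"orner--Wachs--Welker's Theorem \ref{nonrecursiveFormula} to $\mathcal{F}$ and then take $\rh_{n-2}$.

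\textbf{Fibers.} As in the proof of Proposition \ref{SympMPP21Lem3.15}, the forgetful map $\cTD(\Field_p^n) \to \cT(\Field_p^n)$ is a complete join complex. The preimage of each vertex $\overline{V}_i$ is a discrete set of $\frac{p-1}{2}$ points, because distinct orientations on the same subspace are incomparable (Remark \ref{pmorientationdonotmatter}). Hence
\begin{equation*}
\mathcal{F}_{\leq [\overline{V}_0 \subsetneq \dots \subsetneq \overline{V}_k]} \cong \posety\bigl(*_{i=0}^k \varphi^{-1}(s_i)\bigr) \simeq \bigvee_{(\frac{p-3}{2})^{k+1}} S^k .
\end{equation*}
This fiber is $(k-1)$-connected, while $\Dim(\mathcal{F}_{<[\cdots]}) = k$ because the flag poset below a $k$-flag is the boundary of a $k$-simplex. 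So the fiber hypothesis of Theorem \ref{nonrecursiveFormula} holds, and $\rh_k$ of the fiber is $\rh_0(\Field_p^\times/\mathbb{Z}^\times;\mathbb{Z})^{\otimes(k+1)}$. This has rank $(\frac{p-3}{2})^{k+1}$, matching the paper's notation.

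\textbf{Upper links.} The upper link $\mathcal{B}_{>[\overline{V}_0\subsetneq\dots\subsetneq\overline{V}_k]}$ is the poset of flags containing the given one. It is homotopy equivalent to the simplicial link of the corresponding $k$-simplex of $\cT_n(\Field_p)$. By the linear analogue of Lemma \ref{SympPosetLinks} (lower links $\cbT(\overline{V}_0)$, intervals $\cbT(\overline{V}_{i}/\overline{V}_{i-1})$, and upper link $\cbT(\Field_p^n/\overline{V}_k)$) together with Remark \ref{PosetToSimpLink}, this link is
\begin{equation*}
\cT(\overline{V}_0) * \cT(\overline{V}_1/\overline{V}_0) * \dots * \cT(\Field_p^n/\overline{V}_k).
\end{equation*}
By Solomon--Tits (Theorem \ref{SBrB}) this join is a wedge of spheres of dimension $n-k-3$. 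The dimension count is
\begin{equation*}
\textstyle\sum (\dim - 2) + (k+1) = n - 2(k+2) + k+1 = n-k-3 .
\end{equation*}
Its top homology is $\St(\overline{V}_0)\otimes\cdots\otimes\St(\Field_p^n/\overline{V}_k)$ by the K\"unneth formula for joins of spheres, using the convention $\St(\cdot)=\mathbb{Z}$ in dimensions $0$ and $1$.

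\textbf{Assembly.} Theorem \ref{nonrecursiveFormula} gives
\begin{equation*}
\lvert\mathcal{A}^\pm\rvert \simeq \lvert\mathcal{B}\rvert \vee \bigvee_{b} \lvert\mathcal{F}_{\le b}\rvert * \lvert\mathcal{B}_{>b}\rvert .
\end{equation*}
Each wedge summand is a join of a wedge of $S^k$'s with a wedge of $S^{n-k-3}$'s, so it is a wedge of $S^{n-2}$'s. Its top homology is the tensor product of the two factors, all groups being free. Taking $\rh_{n-2}$ yields $\St_n(\Field_p)$ plus the sum over nonempty flags. Absorbing $\St_n(\Field_p)$ as the empty-flag term, using the convention $\overline{V}_{-1}=0$ exactly as in the passage from \eqref{proofB3} to \eqref{proofB4}, gives \eqref{normal2}.

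\textbf{Main obstacle.} The only delicate point is checking the fiber connectivity hypothesis and the dimension count for the upper links at the extreme flags. These are the flags where some quotient has dimension $1$, so the corresponding $\cT$ is empty, and in particular the case $\overline{V}_k$ of codimension $1$. There I will verify directly that the empty join factors are consistent with $\St = \mathbb{Z}$ in degree $-1$. This is why the hypothesis $n\ge 2$ is needed.
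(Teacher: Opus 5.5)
Your proposal is correct and is essentially the argument the paper has in mind. It is Bj\"orner--Wachs--Welker applied to the orientation-forgetting map $\mathcal{F}$ on flag posets, where the fibers are joins of $\frac{p-1}{2}$-point discrete sets and the upper links are joins of ordinary Tits buildings, exactly mirroring the proof of Theorem \ref{theorem:main:computational:restated}.

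There is one small slip. The poset of proper subflags of a $k$-flag is the face poset of $\partial\Delta^k$, so it has dimension $k-1$, not $k$. This does no harm: Bj\"orner--Wachs--Welker's actual hypothesis is that $\mathcal{F}_{\le b}$ be $\dim(\mathcal{F}_{<b})$-connected, so the $(k-1)$-connectivity you verify is exactly what is required. Also, your tensor power $\otimes(k+1)$ is the right reading of the exponent written $\oplus(k+1)$ in the statement.
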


The following corollary is derived from Equation \ref{normal2}.

\begin{corollary} \textup{\cite{PeterCommunication}}\label{compformula1}
For a prime $p \geq 3$, let $t(n, p) = \rank \St_n^\pm(\Field_p)$. Then for $n \geq 2$,
\begin{equation*} \label{normalpmsteinberg}
t(n, p) := \sum_{\substack{0 = \overline{V}_{-1} \subsetneq \overline{V}_0 \subsetneq \dots \subsetneq \overline{V}_k \subsetneq \Field_p^{n},\\ m_i = \Dim \overline{V}_i}} p^{m_0 \choose 2} \cdot p^{m_1 - m_0 \choose 2} \cdots p^{m_k - m_{k - 1} \choose 2} \cdot p^{n - m_k \choose 2} \cdot \left(\frac{p - 3}{2}\right)^{k + 1}.
\end{equation*}
\end{corollary}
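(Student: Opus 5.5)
The plan is to obtain the count by taking ranks term by term in the decomposition of Theorem \ref{patzt}, just as Corollary \ref{corollaryB1} is obtained from Theorem \ref{theorem:main:computational:restated}. All modules involved are finitely generated free abelian groups. Hence rank is additive over the finite direct sum in Equation \ref{normal2} and multiplicative over tensor products, and $t(n,p)=\rank \St_n^\pm(\Field_p)$ is the sum, over the indexing flags $0=\overline{V}_{-1}\subsetneq \overline{V}_0\subsetneq\dots\subsetneq\overline{V}_k\subsetneq\Field_p^n$, of the products of the ranks of the individual tensor factors.

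Next I compute the rank of each factor.
\begin{itemize}
\item For an $\Field_p$-vector space $\overline{U}$ of dimension $d\geq 2$, Solomon--Tits' Theorem \ref{SBrB} says that $\cT(\overline{U})$ is a wedge of $p^{d\choose 2}$ spheres of dimension $d-2$. So $\rank \St(\overline{U})=p^{d\choose 2}$.
\item The convention $\St(\overline{U})=\mathbb{Z}$ for $\dim\overline{U}\in\{0,1\}$ gives rank $1=p^{d\choose 2}$, so the same formula holds in every dimension.
\item With $m_i=\Dim\overline{V}_i$, the factors $\St(\overline{V}_0),\St(\overline{V}_1/\overline{V}_0),\dots,\St(\Field_p^n/\overline{V}_k)$ therefore contribute $p^{m_0\choose 2}p^{m_1-m_0\choose 2}\cdots p^{n-m_k\choose 2}$.
\item The set $\Field_p^\times/\mathbb{Z}^\times$ has $\frac{p-1}{2}$ points, so $\rh_0(\Field_p^\times/\mathbb{Z}^\times;\mathbb{Z})$ is free of rank $\frac{p-3}{2}$.
\end{itemize}

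The orientation factor attached to a flag of length $k+1$ comes from the fibre $\mathcal{F}_{\leq[\cdots]}$, which is the join of $k+1$ copies of this discrete set. Its top reduced homology is the $(k+1)$-fold tensor power of $\rh_0(\Field_p^\times/\mathbb{Z}^\times;\mathbb{Z})$, and this is the meaning of the exponent written in Equation \ref{normal2}. Its rank is therefore $\left(\frac{p-3}{2}\right)^{k+1}$, which is the factor appearing in the claimed sum. Multiplying these contributions gives exactly the summand of the corollary for each flag.

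The one point needing care, which I expect to be the main obstacle, is bookkeeping rather than mathematics: reading the orientation exponent as a tensor power, and checking how the summand $\St_n(\Field_p)$ enters. That summand comes from the $\lvert\mathcal{B}\rvert$ term in Bj\"orner--Wachs--Welker's Theorem \ref{nonrecursiveFormula}. I plan to treat it as the empty-flag term $k=-1$ of the indexing sum, where $0=\overline{V}_{-1}$ and the only remaining factor is $\St(\Field_p^n/\overline{V}_{-1})$. This term contributes $p^{n\choose 2}\cdot\left(\frac{p-3}{2}\right)^0=p^{n\choose 2}$, consistent with the general summand. I will verify that the convention $\overline{V}_{-1}=0$ in the indexing set is meant to include this term; otherwise I will add it explicitly. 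With that settled, summing over all flags gives the stated formula for $t(n,p)$ when $n\geq 2$.
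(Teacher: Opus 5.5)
Your proposal is correct and follows the paper's route: the paper obtains the corollary by reading off ranks term by term from Equation \ref{normal2}, exactly as Corollary \ref{corollaryB1} is read off from Theorem \ref{theorem:main:computational:restated}, with $\rank \St(\overline{U}) = p^{\binom{d}{2}}$ from Solomon--Tits and $\rank \rh_0(\Field_p^\times/\mathbb{Z}^\times;\mathbb{Z}) = \frac{p-3}{2}$. Both of your bookkeeping resolutions are the intended ones. The orientation factor has to be read as a $(k+1)$-fold tensor power, because the fibre is a wedge of $\left(\frac{p-3}{2}\right)^{k+1}$ copies of $S^k$. The convention $\overline{V}_{-1}=0$ is meant to absorb the $\St_n(\Field_p)$ summand as the empty-flag term, just as the paper does in the symplectic case when passing from Equation \ref{proofB3} to Equation \ref{proofB4}.
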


\begin{remark}
Theorem \ref{theorem:main:computational:restated} establishes $\St_{2n}^{\omega, \pm}(\Field_p)$ is isomorphic to the group described in Equation \ref{symp2}, call it $G$. There is a natural way of turning both $\St_{2n}^{\omega, \pm}(\Field_p)$ and $G$ into $\GL_{2n}^\pm (\Field_p)$-representations. The proof of Theorem \ref{theorem:main:computational:restated} does not prove that $\St_{2n}^{\omega, \pm}(\Field_p)$ and $G$ are isomorphic as representations. But the proof does show that they have filtrations with filtration quotients isomorphic as representations. \\
Likewise, $\St_{n}^{\pm}(\Field_p)$ and the group described in Equation \ref{normal2} might not be isomorphic as $\GL_n^\pm(\Field_p)$-representations, but a similar proof does show that they have filtrations with filtration quotients isomorphic as representations.
\end{remark}

We restate and prove Corollary \ref{corollary:main:computational}.

\begin{corollary}\label{corollaryB2}
Fix a prime $p \geq 3$ and let $t^\omega(n, p):= \rank (\rh_{n - 1}(\sT_{2n}(\mathbb{Q})/\Gamma_{2n}^\omega(p); \mathbb{Z}))$. For $n \geq 1$,
\begin{equation}
t^\omega(n, p) \geq p^{n^2} \cdot \left( \frac{p - 1}{2} \right)^n. 
\end{equation}
\end{corollary}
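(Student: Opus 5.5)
The plan is to bound the formula of Theorem \ref{theorem:main:computational} (equivalently Corollary \ref{corollaryB1}) from below term by term, grouping its terms by the sequence of dimensions $m_0 < \dots < m_k$. The target is an identity of the form
\begin{equation*}
p^{n^2}\left(\frac{p-1}{2}\right)^n = p^{n^2}\left(1 + \frac{p-3}{2}\right)^n = \sum_{S \subseteq \{1,\dots,n\}} p^{n^2}\left(\frac{p-3}{2}\right)^{\lvert S\rvert}.
\end{equation*}
A subset $S=\{m_0<\dots<m_k\}$ of size $k+1$ corresponds exactly to one index sequence $0=m_{-1}<m_0<\dots<m_k\le n$ in Equation \ref{mainequation}, with the same power $\left(\frac{p-3}{2}\right)^{k+1}$. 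The empty subset corresponds to the summand $\St_{2n}^\omega(\Field_p)$, which appears in the decomposition of $\St_{2n}^{\omega,\pm}(\Field_p)$ obtained in the proof of Theorem \ref{theorem:main:computational:restated} and has rank $p^{n^2}$ by Theorem \ref{SBrB}. Since $p\ge 3$, every term is nonnegative. It therefore suffices to show that for each fixed sequence $m_0<\dots<m_k\le n$ the coefficient
\begin{equation*}
\left(\prod_{i=0}^{k-1}\lvert\Gr_{m_i}(\Field_p^{m_{i+1}})\rvert\right)\lvert\Gr^\omega_{m_k}(\Field_p^{2n})\rvert\; p^{\binom{m_0}{2}+\sum_{i=0}^{k-1}\binom{m_{i+1}-m_i}{2}+(n-m_k)^2}
\end{equation*}
is at least $p^{n^2}$.

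For this I will use the elementary inequality $(p^a-1)/(p^b-1)\ge p^{a-b}$ for $a\ge b\ge 1$, which holds because $p^a-1\ge p^a-p^{a-b}$. Applied factor by factor, it gives $\lvert\Gr_{m_i}(\Field_p^{m_{i+1}})\rvert\ge p^{m_i(m_{i+1}-m_i)}$. For the isotropic Grassmannian it gives $\lvert\Gr^\omega_{m_k}(\Field_p^{2n})\rvert\ge \prod_{i<m_k}p^{2n-m_k-i}=p^{2nm_k-m_k^2-\binom{m_k}{2}}$.

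The remaining step is the exponent bookkeeping, which I expect to be the only place needing care. Using $\binom{a+d}{2}=\binom{a}{2}+\binom{d}{2}+ad$ repeatedly with $d=m_{i+1}-m_i$, the sum $\binom{m_0}{2}+\sum_{i}\left[\binom{m_{i+1}-m_i}{2}+m_i(m_{i+1}-m_i)\right]$ telescopes to $\binom{m_k}{2}$. Adding the remaining contributions, the total exponent is
\begin{equation*}
\binom{m_k}{2}+2nm_k-m_k^2-\binom{m_k}{2}+(n-m_k)^2=n^2 .
\end{equation*}
Hence each grouped coefficient is at least $p^{n^2}$. Summing over all subsets $S$ and comparing with the binomial expansion above yields $t^\omega(n,p)\ge p^{n^2}\left(\frac{p-1}{2}\right)^n$.

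Conceptually, this says that the number of isotropic flags of a given type, which is $\lvert G/P\rvert$, is at least $p^{\dim U_P}$, while the Steinberg factors contribute $p^{\dim U_L}$, and $\dim U_P+\dim U_L=n^2$.
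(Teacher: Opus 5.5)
Your proposal is correct and takes essentially the same approach as the paper: expand $p^{n^2}\left(\frac{p-1}{2}\right)^n$ binomially, group the terms of Corollary \ref{corollaryB1} by dimension sequence (the paper's $S_2(r_0,\dots,r_{k-1})$), bound the Grassmannians below by $p^{m(n-m)}$ and $p^{m(2n-m)-\binom{m}{2}}$, and telescope the exponent to $n^2$. Your handling of the empty flag as the $\St_{2n}^\omega(\Field_p)$ summand of rank $p^{n^2}$ matches the paper's $k=0$ case.
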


\begin{proof}[Proof of Corollary \ref{corollary:main:computational} \& Corollary \ref{corollaryB2}]
Note that 
\begin{equation}\label{simplesum}
p^{n^2} \cdot \left(\frac{p - 1}{2} \right)^n = \sum_{k = 0}^n {n \choose k} ~ p^{n^2} \cdot \left(\frac{p - 3}{2} \right)^k.
\end{equation} 
For $k \in \{0, \dots, n\}$ set 
\begin{equation}
S_1(k):= \sum_{\substack{0 \subsetneq \overline{V}_0 \subsetneq \dots \subsetneq \overline{V}_{k - 1} \subsetneq \Field_p^{2n},\\ m_i = \Dim \overline{V}_i, \\ \omega \vert_{\overline{V}_i} \equiv 0}} p^{m_0 \choose 2} \cdot p^{m_1 - m_0 \choose 2} \cdots p^{m_{k - 1} - m_{k - 2} \choose 2} \cdot p^{(n - m_{k - 1})^2} \cdot \left(\frac{p - 3}{2} \right)^k,
\end{equation}
and for $\{r_0, \dots, r_{k - 1}\} \subset \{1, \dots, n\}$ such that $r_i < r_{i + 1}$ set
\begin{equation} \label{Shortsum}
S_2(r_0, \dots, r_{k - 1})  := \sum_{\substack{r_i = \Dim \overline{V}_i, \\0 \subsetneq \overline{V}_0 \subsetneq \dots \subsetneq \overline{V}_{k - 1} \subsetneq \Field_p^{2n}, \\ \omega \vert_{\overline{V}_i} \equiv 0}} p^{r_0 \choose 2} \cdot p^{r_1 - r_0 \choose 2} \cdots p^{r_{k - 1} - r_{k - 2} \choose 2} \cdot p^{(n - r_{k - 1})^2} \cdot \left(\frac{p - 3}{2} \right)^k.
\end{equation}
$S_1(k)$ is the sum of the terms in $t^\omega(n, p)$ that correspond to isotropic flags of length $k$. It follows that
\begin{equation*}
t^\omega(n, p) = \sum_{k = 0}^n S_1(k). 
\end{equation*}
$S_2(r_0, \dots, r_{k - 1})$ is the sum of the terms in $S_1(k)$ that correspond to isotropic flags whose subspaces have dimensions $r_0 < \dots < r_{k - 1}$. It follows that
\begin{equation}\label{s1s2}
S_1(k) = \sum_{1 \leq r_0 < \dots < r_{k - 1} \leq n} S_2(r_0, \dots, r_{k - 1}). 
\end{equation}
To complete this proof we check that for each $k \in \{0, \dots, n\}$, 
\begin{equation}\label{completeproof}
S_1(k) \geq {n \choose k} ~ p^{n^2} \cdot \left(\frac{p - 3}{2} \right)^k.
\end{equation}
When $k = 0$, 
\begin{equation*}
S_1(0) = \rank \St_{2n}^\omega(\Field_p) = p^{n^2} = {n \choose 0} ~ p^{n^2} \cdot \left(\frac{p - 3}{2} \right)^0.
\end{equation*}
Fix $k \in \{1, \dots, n\}$. We estimate $S_2(r_0, \dots, r_{k - 1})$ for an arbitrary subset $\{r_0, \dots, r_{k - 1}\} \subset \{1, \dots, n\}$. The product 
\begin{equation} \label{isoSuperGrass2}
\left( \prod_{i = 1}^{k - 1} \lvert \Gr_{r_{i - 1}}(\mathbb{F}_p^{r_i}) \rvert \right) \lvert \Gr^\omega_{r_{k - 1}}(\Field_p^{2n}) \rvert
\end{equation}
corresponds to the number of terms in $S_2(r_0, \dots, r_{k - 1})$. That is, the number of isotropic flags whose subspaces have dimensions $r_0 < \dots < r_{k - 1}$. An easy computation gives 
\begin{align}
  \lvert \Gr_{m}(\Field_p^{n}) \rvert & \geq p^{m(n - m)}, \label{isoSuperGrass2a} \\ 
  \lvert \Gr^\omega_{m}(\Field_p^{2n}) \rvert & \geq p^{m(2n - m) - {m \choose 2}}. \label{isoSuperGrass3}
\end{align}
An equivalent formulation of $S_2(r_0, \dots, r_{k - 1})$ is 
\begin{equation}\label{completeS2}
p^{r_0 \choose 2} \cdot \left( \prod_{i = 1}^{k - 1} \lvert \Gr_{r_{i - 1}}(\mathbb{F}_p^{r_i}) \rvert \cdot p^{r_i - r_{i - 1} \choose 2} \right) \lvert \Gr^\omega_{r_{k - 1}}(\Field_p^{2n}) \rvert \cdot p^{(n - r_{k - 1})^2} \cdot \left( \frac{p - 3}{2} \right)^k. 
\end{equation}
For $i \in \{1, \dots, k - 1\}$
\begin{equation*}
\lvert \Gr_{r_{i - 1}}(\mathbb{F}_p^{r_i}) \rvert \cdot p^{r_i - r_{i - 1} \choose 2} \geq p^{r_{i - 1}(r_i - r_{i - 1})} \cdot p^{r_i - r_{i - 1} \choose 2} = p^{{r_i \choose 2} - {r_{i - 1} \choose 2}},
\end{equation*}
so
\begin{equation}\label{S2c}
\left( \prod_{i = 1}^{k - 1} \lvert \Gr_{r_{i - 1}}(\mathbb{F}_p^{r_i}) \rvert \cdot p^{r_i - r_{i - 1} \choose 2} \right) \geq p^{\sum_{i = 1}^{k - 1} {r_i \choose 2} - {r_{i - 1} \choose 2} } = p^{{r_{k - 1} \choose 2} - {r_0 \choose 2}}.
\end{equation}
And 
\begin{equation} \label{S2b}
 \lvert \Gr^\omega_{r_{k - 1}}(\Field_p^{2n}) \rvert p^{(n - r_{k - 1})^2} \geq p^{r_{k - 1}(2n - r_{k - 1}) - {r_{k - 1} \choose 2}} \cdot p^{(n - r_{k - 1})^2} = p^{n^2 - {r_{k - 1} \choose 2}}.
\end{equation}
Thanks to our approximations in Equations \ref{S2c} and \ref{S2b}, we get 
\begin{equation*}
S_2(r_0, \dots, r_{k - 1}) \geq p^{{r_0 \choose 2} + {r_{k - 1} \choose 2} - {r_0 \choose 2} + n^2 - {r_{k - 1} \choose 2}} \cdot \left(\frac{p - 3}{2} \right)^k = p^{n^2} \cdot \left(\frac{p - 3}{2} \right)^k. 
\end{equation*}
Since $\{r_0, \dots, r_{k - 1}\} \subset \{1, \dots, n\}$, then there are ${n \choose k}$-many subsets $\{r_0, \dots, r_{k - 1}\}$ and so 
\begin{equation*}
S_1(k) = \sum_{1 \leq r_0 < \dots < r_{k - 1} \leq n} S_2(r_0, \dots, r_{k - 1}) \geq \sum_{1 \leq r_0 < \dots < r_{k - 1} \leq n} p^{n^2} \cdot \left(\frac{p - 3}{2} \right)^k = {n \choose k} ~ p^{n^2} \cdot \left(\frac{p - 3}{2} \right)^k,
\end{equation*}
We conclude that 
\begin{equation}
t^\omega(n, p) = \sum_{k = 0}^n S_1(k) \geq \sum_{k = 0}^n {n \choose k} ~ p^{n^2} \cdot \left(\frac{p - 3}{2}\right)^k = p^{n^2} \cdot \left( \frac{p - 1}{2} \right)^n. 
\end{equation}
\end{proof}

\printbibliography

@article {BjornerWachsWelker,
    AUTHOR = {Bj\"orner, Anders and Wachs, Michelle L. and Welker, Volkmar},
     TITLE = {Poset fiber theorems},
   JOURNAL = {Trans. Amer. Math. Soc.},
  FJOURNAL = {Transactions of the American Mathematical Society},
    VOLUME = {357},
      YEAR = {2005},
    NUMBER = {5},
     PAGES = {1877--1899},
      ISSN = {0002-9947,1088-6850},
   MRCLASS = {05E25 (06A11 55P10)},
  MRNUMBER = {2115080},
MRREVIEWER = {Edward\ B.\ Swartz},
       DOI = {10.1090/S0002-9947-04-03496-8},
       URL = {https://doi.org/10.1090/S0002-9947-04-03496-8},
}

@article {BorelSerreCorners,
    AUTHOR = {Borel, A. and Serre, J.-P.},
     TITLE = {Corners and arithmetic groups},
   JOURNAL = {Comment. Math. Helv.},
  FJOURNAL = {Commentarii Mathematici Helvetici},
    VOLUME = {48},
      YEAR = {1973},
     PAGES = {436--491},
      ISSN = {0010-2571,1420-8946},
   MRCLASS = {22E40},
  MRNUMBER = {387495},
MRREVIEWER = {M.\ S.\ Raghunathan},
       DOI = {10.1007/BF02566134},
       URL = {https://doi.org/10.1007/BF02566134},
}

@book {BrownBuildings,
    AUTHOR = {Brown, Kenneth S.},
     TITLE = {Buildings},
    SERIES = {Springer Monographs in Mathematics},
      NOTE = {Reprint of the 1989 original},
 PUBLISHER = {Springer-Verlag, New York},
      YEAR = {1998},
     PAGES = {viii+215},
      ISBN = {0-387-98624-3},
   MRCLASS = {20E42},
  MRNUMBER = {1644630},
}

@article {ChurchPutnam17,
    AUTHOR = {Church, Thomas and Putman, Andrew},
     TITLE = {The codimension-one cohomology of {${\rm SL}_n\mathbb{Z}$}},
   JOURNAL = {Geom. Topol.},
  FJOURNAL = {Geometry \& Topology},
    VOLUME = {21},
      YEAR = {2017},
    NUMBER = {2},
     PAGES = {999--1032},
      ISSN = {1465-3060,1364-0380},
   MRCLASS = {11F75 (20E42 20G30 20J06 57Q05)},
  MRNUMBER = {3626596},
MRREVIEWER = {Steffen\ Kionke},
       DOI = {10.2140/gt.2017.21.999},
       URL = {https://doi.org/10.2140/gt.2017.21.999},
}

@article {GalatiusRandal-Williams14,
    AUTHOR = {Galatius, S\o ren and Randal-Williams, Oscar},
     TITLE = {Homological stability for moduli spaces of high dimensional
              manifolds. {I}},
   JOURNAL = {J. Amer. Math. Soc.},
  FJOURNAL = {Journal of the American Mathematical Society},
    VOLUME = {31},
      YEAR = {2018},
    NUMBER = {1},
     PAGES = {215--264},
      ISSN = {0894-0347,1088-6834},
   MRCLASS = {57R90 (55P47 57R15 57R56)},
  MRNUMBER = {3718454},
MRREVIEWER = {Sam\ Nariman},
       DOI = {10.1090/jams/884},
       URL = {https://doi.org/10.1090/jams/884},
}

@article {HatcherWahl,
    AUTHOR = {Hatcher, Allen and Wahl, Nathalie},
     TITLE = {Stabilization for mapping class groups of 3-manifolds},
   JOURNAL = {Duke Math. J.},
  FJOURNAL = {Duke Mathematical Journal},
    VOLUME = {155},
      YEAR = {2010},
    NUMBER = {2},
     PAGES = {205--269},
      ISSN = {0012-7094,1547-7398},
   MRCLASS = {57M07 (20F28)},
  MRNUMBER = {2736166},
MRREVIEWER = {Mihalis\ A.\ Sykiotis},
       DOI = {10.1215/00127094-2010-055},
       URL = {https://doi.org/10.1215/00127094-2010-055},
}

@article {MiPaP21,
    AUTHOR = {Miller, Jeremy and Patzt, Peter and Putman, Andrew},
     TITLE = {On the top-dimensional cohomology groups of congruence
              subgroups of {${\rm SL}(n, \mathbb{Z})$}},
   JOURNAL = {Geom. Topol.},
  FJOURNAL = {Geometry \& Topology},
    VOLUME = {25},
      YEAR = {2021},
    NUMBER = {2},
     PAGES = {999--1058},
      ISSN = {1465-3060,1364-0380},
   MRCLASS = {11F75},
  MRNUMBER = {4251441},
MRREVIEWER = {Alexander\ D.\ Rahm},
       DOI = {10.2140/gt.2021.25.999},
       URL = {https://doi.org/10.2140/gt.2021.25.999},
}

@article {MiVdK,
    AUTHOR = {Mirzaii, B. and van der Kallen, W.},
     TITLE = {Homology stability for unitary groups},
   JOURNAL = {Doc. Math.},
  FJOURNAL = {Documenta Mathematica},
    VOLUME = {7},
      YEAR = {2002},
     PAGES = {143--166},
      ISSN = {1431-0635,1431-0643},
   MRCLASS = {19G99 (11E70 20J99)},
  MRNUMBER = {1911214},
MRREVIEWER = {Stanis\l aw\ Betley},
}

@article {Q78,
    AUTHOR = {Quillen, Daniel},
     TITLE = {Homotopy properties of the poset of nontrivial {$p$}-subgroups
              of a group},
   JOURNAL = {Adv. in Math.},
  FJOURNAL = {Advances in Mathematics},
    VOLUME = {28},
      YEAR = {1978},
    NUMBER = {2},
     PAGES = {101--128},
      ISSN = {0001-8708},
   MRCLASS = {20J99},
  MRNUMBER = {493916},
MRREVIEWER = {Kenneth\ S.\ Brown},
       DOI = {10.1016/0001-8708(78)90058-0},
       URL = {https://doi.org/10.1016/0001-8708(78)90058-0},
}

@incollection {Solomon,
    AUTHOR = {Solomon, Louis},
     TITLE = {The {S}teinberg character of a finite group with {$BN$}-pair},
 BOOKTITLE = {Theory of {F}inite {G}roups ({S}ymposium, {H}arvard {U}niv.,
              {C}ambridge, {M}ass., 1968)},
     PAGES = {213--221},
 PUBLISHER = {W. A. Benjamin, Inc., New York-Amsterdam},
      YEAR = {1969},
   MRCLASS = {20.25},
  MRNUMBER = {246951},
MRREVIEWER = {T.\ Ono},
}

@article {VanDerKallenLooijenga,
    AUTHOR = {van der Kallen, Wilberd and Looijenga, Eduard},
     TITLE = {Spherical complexes attached to symplectic lattices},
   JOURNAL = {Geom. Dedicata},
  FJOURNAL = {Geometriae Dedicata},
    VOLUME = {152},
      YEAR = {2011},
     PAGES = {197--211},
      ISSN = {0046-5755,1572-9168},
   MRCLASS = {11E57 (05E18 11H06 19B14)},
  MRNUMBER = {2795243},
MRREVIEWER = {B.\ Sury},
       DOI = {10.1007/s10711-010-9553-0},
       URL = {https://doi.org/10.1007/s10711-010-9553-0},
}

@article {BruckSroka24,
    AUTHOR = {Br\"uck, Benjamin and Sroka, Robin J.},
     TITLE = {Apartment classes of integral symplectic groups},
   JOURNAL = {J. Topol. Anal.},
  FJOURNAL = {Journal of Topology and Analysis},
    VOLUME = {17},
      YEAR = {2025},
    NUMBER = {6},
     PAGES = {1821--1840},
      ISSN = {1793-5253,1793-7167},
   MRCLASS = {20E42 (20G20)},
  MRNUMBER = {4926622},
       DOI = {10.1142/S1793525324500286},
       URL = {https://doi.org/10.1142/S1793525324500286},
}

@book {MH73,
    AUTHOR = {Milnor, John and Husemoller, Dale},
     TITLE = {Symmetric bilinear forms},
    SERIES = {Ergebnisse der Mathematik und ihrer Grenzgebiete [Results in
              Mathematics and Related Areas]},
    VOLUME = {Band 73},
 PUBLISHER = {Springer-Verlag, New York-Heidelberg},
      YEAR = {1973},
     PAGES = {viii+147},
   MRCLASS = {15A63 (10C05 57D65)},
  MRNUMBER = {506372},
MRREVIEWER = {Louis\ H.\ Kauffman},
}

@book {N72,
    AUTHOR = {Newman, Morris},
     TITLE = {Integral matrices},
    SERIES = {Pure and Applied Mathematics},
    VOLUME = {Vol. 45},
 PUBLISHER = {Academic Press, New York-London},
      YEAR = {1972},
     PAGES = {xvii+224},
   MRCLASS = {15A33},
  MRNUMBER = {340283},
MRREVIEWER = {B.\ M.\ Stewart},
}

@article {Zee64,
    AUTHOR = {Zeeman, E. C.},
     TITLE = {Relative simplicial approximation},
   JOURNAL = {Proc. Cambridge Philos. Soc.},
  FJOURNAL = {Proceedings of the Cambridge Philosophical Society},
    VOLUME = {60},
      YEAR = {1964},
     PAGES = {39--43},
      ISSN = {0008-1981},
   MRCLASS = {55.25},
  MRNUMBER = {158403},
MRREVIEWER = {M.\ L.\ Curtis},
       DOI = {10.1017/s0305004100037415},
       URL = {https://doi.org/10.1017/s0305004100037415},
}

@article {Maaz,
    AUTHOR = {Maazen, Hendrik},
     TITLE = {Stabilit\'e{} de l'homologie de {${\rm GL}\sb{n}$}},
   JOURNAL = {C. R. Acad. Sci. Paris S\'er. A-B},
  FJOURNAL = {Comptes Rendus Hebdomadaires des S\'eances de l'Acad\'emie des
              Sciences. S\'eries A et B},
    VOLUME = {288},
      YEAR = {1979},
    NUMBER = {15},
     PAGES = {707--708},
      ISSN = {0151-0509},
   MRCLASS = {20J05},
  MRNUMBER = {532394},
MRREVIEWER = {S.\ Moran},
}

@book {Bredon72,
    AUTHOR = {Bredon, Glen E.},
     TITLE = {Introduction to compact transformation groups},
    SERIES = {Pure and Applied Mathematics},
    VOLUME = {Vol. 46},
 PUBLISHER = {Academic Press, New York-London},
      YEAR = {1972},
     PAGES = {xiii+459},
   MRCLASS = {57E15},
  MRNUMBER = {413144},
}

@misc{BruckPatztSroka2023Arxiv,
      title={A presentation of symplectic Steinberg modules and cohomology of $\operatorname{Sp}_{2n}(\mathbb{Z})$}, 
      author={Benjamin Brück and Peter Patzt and Robin J. Sroka},
      year={2023},
      eprint={2306.03180},
      archivePrefix={arXiv},
      primaryClass={math.AT},
      url={https://arxiv.org/abs/2306.03180}, 
}

@inproceedings {Charney87,
    AUTHOR = {Charney, Ruth},
     TITLE = {A generalization of a theorem of {V}ogtmann},
 BOOKTITLE = {Proceedings of the {N}orthwestern conference on cohomology of
              groups ({E}vanston, {I}ll., 1985)},
   JOURNAL = {J. Pure Appl. Algebra},
  FJOURNAL = {Journal of Pure and Applied Algebra},
    VOLUME = {44},
      YEAR = {1987},
    NUMBER = {1-3},
     PAGES = {107--125},
      ISSN = {0022-4049,1873-1376},
   MRCLASS = {18F25 (11E70 19D55 19G99 20J05)},
  MRNUMBER = {885099},
MRREVIEWER = {Ross\ Staffeldt},
       DOI = {10.1016/0022-4049(87)90019-3},
       URL = {https://doi.org/10.1016/0022-4049(87)90019-3},
}

@article {Par97,
    AUTHOR = {Paraschivescu, Andrei},
     TITLE = {On a generalization of the double coset formula},
   JOURNAL = {Duke Math. J.},
  FJOURNAL = {Duke Mathematical Journal},
    VOLUME = {89},
      YEAR = {1997},
    NUMBER = {1},
     PAGES = {1--8},
      ISSN = {0012-7094,1547-7398},
   MRCLASS = {20J05},
  MRNUMBER = {1458968},
MRREVIEWER = {David\ Benson},
       DOI = {10.1215/S0012-7094-97-08901-8},
       URL = {https://doi.org/10.1215/S0012-7094-97-08901-8},
}

@book {Knus91,
    AUTHOR = {Knus, Max-Albert},
     TITLE = {Quadratic and {H}ermitian forms over rings},
    SERIES = {Grundlehren der mathematischen Wissenschaften [Fundamental
              Principles of Mathematical Sciences]},
    VOLUME = {294},
      NOTE = {With a foreword by I. Bertuccioni},
 PUBLISHER = {Springer-Verlag, Berlin},
      YEAR = {1991},
     PAGES = {xii+524},
      ISBN = {3-540-52117-8},
   MRCLASS = {11Exx (11E39 11E81 16E20 19Gxx)},
  MRNUMBER = {1096299},
MRREVIEWER = {Rudolf\ Scharlau},
       DOI = {10.1007/978-3-642-75401-2},
       URL = {https://doi.org/10.1007/978-3-642-75401-2},
}

@article {BieriEckmann73,
    AUTHOR = {Bieri, Robert and Eckmann, Beno},
     TITLE = {Groups with homological duality generalizing {P}oincar\'e{}
              duality},
   JOURNAL = {Invent. Math.},
  FJOURNAL = {Inventiones Mathematicae},
    VOLUME = {20},
      YEAR = {1973},
     PAGES = {103--124},
      ISSN = {0020-9910,1432-1297},
   MRCLASS = {20J05},
  MRNUMBER = {340449},
MRREVIEWER = {L.\ Ribes},
       DOI = {10.1007/BF01404060},
       URL = {https://doi.org/10.1007/BF01404060},
}

@article {MR1749441,
    AUTHOR = {Gunnells, Paul E.},
     TITLE = {Symplectic modular symbols},
   JOURNAL = {Duke Math. J.},
  FJOURNAL = {Duke Mathematical Journal},
    VOLUME = {102},
      YEAR = {2000},
    NUMBER = {2},
     PAGES = {329--350},
      ISSN = {0012-7094,1547-7398},
   MRCLASS = {11F75 (11F80)},
  MRNUMBER = {1749441},
MRREVIEWER = {Ian\ Kiming},
       DOI = {10.1215/S0012-7094-00-10226-8},
       URL = {https://doi.org/10.1215/S0012-7094-00-10226-8},
}

@misc{68a8695d93b544d594c5cde50fd83301,
title = "Patterns in the homology of algebras: Vanishing, stability, and higher structures",
author = "Sroka, {Robin Janik}",
year = "2021",
language = "English",
isbn = "978-87-7125-043-5",
publisher = "Department of Mathematical Sciences, Faculty of Science, University of Copenhagen",
}

@incollection {MR3290086,
    AUTHOR = {Church, Thomas and Farb, Benson and Putman, Andrew},
     TITLE = {A stability conjecture for the unstable cohomology of {${\rm
              SL}_n(\mathbb{Z})$}, mapping class groups, and {${\rm Aut}(F_n)$}},
 BOOKTITLE = {Algebraic topology: applications and new directions},
    SERIES = {Contemp. Math.},
    VOLUME = {620},
     PAGES = {55--70},
 PUBLISHER = {Amer. Math. Soc., Providence, RI},
      YEAR = {2014},
      ISBN = {978-0-8218-9474-3},
   MRCLASS = {11F75 (20F28 20G10)},
  MRNUMBER = {3290086},
MRREVIEWER = {Jean\ Raimbault},
       DOI = {10.1090/conm/620/12366},
       URL = {https://doi.org/10.1090/conm/620/12366},
}

@misc {PeterCommunication,
    AUTHOR = {Patzt, Peter},
    howpublished="Private communication"
}

@article {MR4806366,
    AUTHOR = {Br\"uck, Benjamin and Santos Rego, Yuri and Sroka, Robin J.},
     TITLE = {On the top-dimensional cohomology of arithmetic {C}hevalley
              groups},
   JOURNAL = {Proc. Amer. Math. Soc.},
  FJOURNAL = {Proceedings of the American Mathematical Society},
    VOLUME = {152},
      YEAR = {2024},
    NUMBER = {10},
     PAGES = {4131--4139},
      ISSN = {0002-9939,1088-6826},
   MRCLASS = {20E42 (11F75 57M07)},
  MRNUMBER = {4806366},
MRREVIEWER = {Martino\ Garonzi},
       DOI = {10.1090/proc/16948},
       URL = {https://doi.org/10.1090/proc/16948},
}

@article {MR4011804,
    AUTHOR = {Church, Thomas and Farb, Benson and Putman, Andrew},
     TITLE = {Integrality in the {S}teinberg module and the top-dimensional
              cohomology of {$\SL_n \mathcal{O}_K$}},
   JOURNAL = {Amer. J. Math.},
  FJOURNAL = {American Journal of Mathematics},
    VOLUME = {141},
      YEAR = {2019},
    NUMBER = {5},
     PAGES = {1375--1419},
      ISSN = {0002-9327,1080-6377},
   MRCLASS = {20E42 (20G10 51E24)},
  MRNUMBER = {4011804},
MRREVIEWER = {Matthias\ Wendt},
       DOI = {10.1353/ajm.2019.0036},
       URL = {https://doi.org/10.1353/ajm.2019.0036},
}

@article {MR4917220,
    AUTHOR = {Br\"uck, Benjamin and Himes, Zachary},
     TITLE = {Top-degree rational cohomology in the symplectic group of a
              number ring},
   JOURNAL = {Selecta Math. (N.S.)},
  FJOURNAL = {Selecta Mathematica. New Series},
    VOLUME = {31},
      YEAR = {2025},
    NUMBER = {3},
     PAGES = {Paper No. 55, 23},
      ISSN = {1022-1824,1420-9020},
   MRCLASS = {11F75 (20E42 55U10)},
  MRNUMBER = {4917220},
MRREVIEWER = {B.\ Sury},
       DOI = {10.1007/s00029-025-01051-8},
       URL = {https://doi.org/10.1007/s00029-025-01051-8},
}

\end{document}